\documentclass[12pt]{article}

\usepackage{latexsym} 
\usepackage{amsmath, amsfonts} 
\usepackage[dvips]{epsfig}

\usepackage[margin=1in]{geometry} 
\usepackage{amssymb}

\usepackage{xcolor} 

\allowdisplaybreaks[4]

\newcommand{\q}{\quad}
\newcommand{\p}{\partial}
\newcommand{\e}{\epsilon}
\newcommand{\n}{\nabla}
\newcommand{\om}{\int_\Omega}
\newcommand{\omt}{\iint_{\Omega_t}}

\newcommand{\dv}{{\rm div}}
\newcommand{\cl}{{\rm curl}}

\newcommand{\ef}{\eqref}
\newcommand{\wt}{\widetilde}

\newcommand{\be}{\begin{equation}}
\newcommand{\ee}{\end{equation}}

\newtheorem{thm}{Theorem}[section]

\newtheorem{lem}[thm]{Lemma}
\newtheorem{prop}[thm]{Proposition}
\newtheorem{rmk}[thm]{Remark}
\newtheorem{defn}[thm]{Definition}

\numberwithin{equation}{section}
\makeatletter
\renewcommand{\@seccntformat}[1]{
  \csname the#1\endcsname.\hspace{0.5em}
}
\makeatother

\begin{document}

\begin{center}\large  
\textbf{Low Mach number limit of the full Navier-Stokes equations with no-slip boundary conditions and large temperature variations }    
\end{center}  
\begin{center}      
{Xiaoyu Gu\textsuperscript{1}, Song Jiang\textsuperscript{1}, Qiangchang Ju\textsuperscript{1}, Yaobin Ou\textsuperscript{2,*}}
\vspace{1.6mm}\\    
\textsuperscript{1} \small Institute of Applied Physics and Computational Mathematics, Beijing 100088, China\\ 
\textsuperscript{2} \small School of Mathematics, Renmin University of China, Beijing 100872,   China \\ 
\textsuperscript{*} \small Corresponding author.\\  
\small{Emails:   
guxiaoyu@ruc.edu.cn, jiang@iapcm.ac.cn, ju\_qiangchang@iapcm.ac.cn, ou@ruc.edu.cn}
\end{center}

\noindent{\bf Abstract:}{  In this paper, we rigorously justify the low Mach number limit of the full compressible Navier-Stokes equations  in  a three-dimensional smooth bounded domain, subject to the no-slip boundary condition for the velocity and  well-prepared initial data. In the presence of heat conduction and large temperature variations, we establish  the uniform estimates for strong solutions on a time interval independent of the Mach number by constructing an  energy functional in a Sobolev framework.  
The no-slip boundary condition cannot  provide the vorticity boundary relation used in the slip-boundary setting. 
To overcome this difficulty, a divergence-free Stokes lifting of the tangential  boundary trace  for the time derivative of the  modified velocity is employed  to define a test function vanishing on the boundary, which  enables  the transfer of {\it a priori} estimates from the velocity to the temperature.
}

\vskip 2mm 
\noindent  {\textbf{Keywords:}} full compressible Navier-Stokes equations; no-slip boundary condition; large temperature variations; low Mach number limit; well-prepared initial data \\
\noindent {\textbf{MSC:}  76N99, 35M33, 35Q30}

\section{Introduction and main results} 
We consider the full compressible Navier-Stokes equations in a smooth bounded domain $\Omega\subset\mathbb R^3$:
\begin{equation}\label{ns}\left\{\begin{aligned}
&\rho_t+\dv(\rho\mathbf{u})=0,  \\
&\rho(\mathbf{u}_t+\mathbf{u}\cdot\nabla \mathbf{u})+\nabla p=\mu\Delta \mathbf{u} +(\mu+\lambda)\nabla\dv \mathbf{u} , \\
&\rho(e_t+\mathbf{u}\cdot\nabla e)+p\dv \mathbf{u}-\kappa\Delta \Theta=2\mu|D(\mathbf{u})|^2+\lambda (\dv \mathbf{u})^2.
\end{aligned}\right.\end{equation}  
Here, $\rho,  \mathbf{u}, e,\Theta,$ and $p$ denote the  density,  velocity field, internal energy, temperature  and pressure,  respectively, and $D(\mathbf{u})=\frac12(\nabla \mathbf{u}+\nabla \mathbf{u}^{\mathsf T})$. 
The constants $\mu$ and $ \lambda$ denote the shear and bulk viscosity coefficients, respectively, and satisfy the physical conditions $\mu>0$ and $2\mu+3\lambda\geq0 $. The constant $\kappa>0$ denotes the thermal conductivity.
 We assume that the fluid is an ideal polytropic gas governed by the equations of state
\be\label{state}
p=R\rho \Theta,\q e=C_v \Theta,
\ee
where $R>0$ is the specific gas constant and $C_v>0$ is the specific heat at constant volume.

Let $\e\in(0,1]$ denote the Mach number, the ratio of a characteristic flow velocity to a characteristic sound speed.
In this paper, we investigate the low Mach number limit of the compressible flow regime for $\e \in(0,1]$.  
To this end, we introduce the following change of variables:
$$
t \rightarrow \e t,~ x \rightarrow  x,~  \mu  \rightarrow   \e \mu, ~ \lambda  \rightarrow   \e \lambda, ~ \kappa  \rightarrow   \e \kappa,~  
 \mathbf{u} \rightarrow \e \mathbf{u},~ \rho \rightarrow  \rho,~  \Theta \rightarrow  \Theta.
$$  
Thus, the system \ef{ns} takes the dimensionless form 
\begin{equation}\label{non-ns}\left\{\begin{aligned}
&\rho^\e_t+\dv(\rho^\e\mathbf{u}^\e)=0,   \\
&\rho^\e(\mathbf{u}^\e_t+\mathbf{u}^\e\cdot\nabla \mathbf{u}^\e)+\frac{1}{\e^2}\nabla p^\e=\mu\Delta \mathbf{u}^\e+(\mu+\lambda)\nabla\dv \mathbf{u}^\e,   \\ 
&C_v\rho^\e(\Theta_t^\e+\mathbf{u}^\e\cdot\nabla \Theta^\e)+p^\e \dv\mathbf{u}^\e=\kappa\Delta\Theta^\e +\e^2[2\mu|D(\mathbf{u}^\e)|^2+\lambda(\dv\mathbf{u}^\e)^2].
\end{aligned}\right.\end{equation}

A formal asymptotic expansion suggests that, as $\e\to0$, the solutions to \eqref{non-ns} converge to a limit $(r,\omega, \pi )$ satisfying the following low Mach number system (see Chapter 1 of \cite{Lions}):
\begin{equation}\label{ins0}\left\{\begin{aligned}
&r_t+\dv(r\omega)=0,   \\
&r(\omega_t+\omega\cdot\nabla\omega)+ \nabla \pi =\mu\Delta \omega +(\mu+\lambda)\nabla\dv \omega , \\ 
& \gamma C_v \dv\omega = \dv\bigg [\kappa \n\bigg(\frac{1}{r} \bigg ) \bigg],
\end{aligned}\right.\end{equation}
where $\gamma=1+R/C_v$ denotes the ratio of specific heats.   
The purpose of the low Mach number approximation is to verify that the compression due to pressure variations can be neglected.
When heat conduction is absent, or when the limiting temperature is spatially constant, the right-hand side of $\ef{ins0}_3$ vanishes.
Thus, the divergence constraint reduces to
$$
\dv \omega =0,       
$$  
and the corresponding low Mach number limit is referred to as the incompressible limit. 
There is an extensive literature on the incompressible limit of compressible flows under various assumptions.
However, the general low Mach number model \ef{ins0} filters acoustic compressibility while retaining the effects of temperature variations and heat conduction through a nontrivial divergence constraint. This motivates the study of the low Mach number limit for the full compressible Navier-Stokes system in the presence of large temperature variations and heat conduction.

For isentropic flows, the incompressible limit was investigated extensively 
in the literature  \cite{Ebin, KM,Ukai,Sch, Beirao,LM, DGLM, Sec, Danchin,  DM,MRS,OY,MSWZ} and the references therein. 
In this case, the pressure depends only on the density. 
The changes of variables such as
$$ \rho^\e=1+\e\rho_1^\e \quad\text{or}\quad \rho^\e=e^{\e\rho_1^\e}, $$
can be introduced to obtain a skew-symmetric leading-order operator of the form 
$\e^{-1}(\dv,\nabla)$, which   yields the   $\e$-uniform estimates for strong solutions. 
Then the convergence can be demonstrated by   compactness arguments for well-prepared initial data,  and by the oscillatory group method for the ill-prepared initial data.

The non-isentropic problem is  more delicate when heat conduction or entropy production is present, owing to the coupling of the singular pressure term with both the density and the temperature. 
For the non-isentropic Euler system $(\mu=\lambda=\kappa=0)$, one may refer to    \cite{Ala04,Bresch,MS,Sch86} for extensive studies of the incompressible limit.    
For the non-isentropic Navier-Stokes system with $\kappa=0$, Kim and Lee \cite{KL} and Jiang and Ou \cite{JO} established the incompressible limit in $\mathbb R^3$ and   bounded domains, respectively, allowing large variations of both density and temperature. 
When the  heat conduction is present but   temperature variations are small, the authors in \cite{DJO,RO} treated the slip-boundary case by   energy methods, whereas   the no-slip case was studied in  \cite{ROzamp} using   interior   and boundary estimates.   
Moreover, the low Mach number limits of the weak solutions to  non-isentropic Navier-Stokes system with ill-prepared initial data were verified by Feireisl and  Novotný \cite{F,FN2007,FN2013} in $\mathbb R^3,\mathbb T^3 ,$ or an unbounded domain with a compact boundary. 
By contrast, when $\kappa>0$ and the initial temperature variations are large, the leading penalization operator is no longer skew-symmetric, preventing a direct energy argument and making $\e$-uniform estimates substantially more delicate. 
In $\mathbb R^d$ or $\mathbb T^d$, Alazard \cite{Ala} employed a localization in frequency space to establish uniform estimates for the solutions of the full Navier-Stokes equations with ill-prepared initial data and justified the low Mach number limit. 
 Jiang et al. \cite{JJLX} proved the low Mach number limit for the magnetohydrodynamic equations with ill-prepared initial data in $\mathbb R^3$  by energy estimates.  
In bounded domains, the boundary gives rise to additional difficulties in the control of boundary terms and higher-order normal derivatives.
 Ju and Ou \cite{JO2022} exploited the skew-symmetric structure associated with
$$(\mathbf{v}^\e,q^\e):= \left(\mathbf{u}^\e-\frac{\kappa}{2}\nabla\theta^\e,q^\e\right), $$
together with the vorticity-slip boundary condition, to derive $\e$-uniform {\it a priori} estimates and hence established the convergence for the well-prepared initial data. Recently, Sun \cite{Sun} subsequently treated the Navier-slip case within the framework of conormal Sobolev spaces for the  ill-prepared initial data. 
The analyses in \cite{JO2022,Sun} rely essentially on slip-type boundary conditions. However, the boundary  effects in the case of no-slip boundary condition are stronger, which   makes the uniform control of high-order normal derivatives more difficult. 
Therefore, the low Mach number problem for the full Navier-Stokes system with no-slip boundary conditions and large temperature variations remains open.

In this paper, we study the low Mach number limit of \ef{non-ns}  with no-slip boundary conditions  and large temperature variations in a smooth bounded domain $\Omega\subset\mathbb R^3$.  
We establish the $\e$-uniform estimates for strong solutions on a  time interval independent of $\e$, and consequently justify the convergence from \ef{non-ns}   to  \ef{ins0}, as $\e\rightarrow 0$.   
Large temperature variations destroy the skew-symmetric structure of the  singular terms for hyperbolic-parabolic system.
Under the no-slip boundary condition, the lack of vorticity boundary condition prevents us from obtaining high-order normal derivative estimates. 
Although the modified velocity restores the  skew-symmetric structure for  singular acoustic part, it remains strongly coupled with the temperature through  both its evolution equation and its tangential boundary trace. 
To overcome these difficulties, we exploit two skew-symmetric structures of the full compressible Navier-Stokes system: one coupling the temperature with  weighted unknowns, which  yields the uniform estimates for the time derivative of temperature, while the other coupling the pressure with the modified velocity, which is used to control the time derivative of  velocity and higher-order spatial derivatives. 
A key ingredient is the use of a divergence-free Stokes lifting of the boundary trace of the time derivative of modified velocity, which enables us to define a test function that vanishes on the boundary.

In what follows, we assume $R=C_v=1$ for brevity.
From  $\ef{non-ns}_1$ and $\ef{non-ns}_3$,  
one gets the pressure equation
\be 
\label{pres}
p_t^\e+\mathbf{u}^\e\cdot\nabla p^\e+ 2p^\e\dv\mathbf{u}^\e =  \kappa \Delta\Theta^\e + \e^2 [2\mu|D(\mathbf{u}^\e)|^2+\lambda(\dv\mathbf{u}^\e)^2].
\ee

Under the large temperature variations,  we introduce the change of variables (cf. \cite{JO2022}):  
\be\label{change1} 
p^\e=1+\e q^\e, \q \Theta^\e=1+\theta^\e.  
\ee 
By virtue of \ef{state}, one has $1+\e q^\e=\rho^\e(1+\theta^\e )$. 
Then we  adapt the compressible Navier-Stokes equations \ef{non-ns} and the pressure equation \ef{pres} into the  ``overdetermined'' system:
\begin{equation}\label{ns1}\left\{\begin{aligned}
&\rho^\e_t+\dv(\rho^\e\mathbf{u}^\e)=0  &{\rm in}\ \Omega_t, \\
&\rho^\e(\mathbf{u}^\e_t+\mathbf{u}^\e\cdot\nabla \mathbf{u}^\e)+\frac{1}{\e}\nabla q^\e=\mu\Delta \mathbf{u}^\e+(\mu+\lambda)\nabla\dv \mathbf{u}^\e &{\rm in}\ \Omega_t, \\
&\rho^\e(\theta_t^\e+\mathbf{u}^\e\cdot\nabla \theta^\e)+(1+\e q^\e)\dv\mathbf{u}^\e =\kappa\Delta\theta^\e+\e^2[2\mu|D(\mathbf{u}^\e)|^2+\lambda(\dv\mathbf{u}^\e)^2] &{\rm in}\ \Omega_t, \\
&\frac{1}{2}(q_t^\e+\mathbf{u}^\e\cdot\nabla q^\e)+ q^\e\dv\mathbf{u}^\e+\frac{1}{\e}\dv\mathbf{u}^\e-\frac{\kappa}{2\e}\Delta\theta^\e =\frac{\e}{2} [2\mu|D(\mathbf{u}^\e)|^2+\lambda(\dv\mathbf{u}^\e)^2] &{\rm in}\ \Omega_t, 
\end{aligned}\right.\end{equation}
where  $\Omega_t:=\Omega\times(0, t]$ and each of $\ef{ns1}_{1,3,4}$  coincides with the others. 
Assume that $(\rho^\e, \mathbf{u}^\e,q^\e,\theta^\e)$ satisfies the initial condition: 
\be\label{ini}     
(\rho^\e, \mathbf{u}^\e, q^\e, \theta^\e)|_{t=0}= (\rho_0^\e, \mathbf{u}_0^\e, q_0^\e, \theta_0^\e) :=(\rho_0^\e, \mathbf{u}_0^\e, (\rho_0^\e\Theta_0^\e-1)/\e, \Theta_0^\e-1)  \q {\rm in}\ \Omega,   
\ee    
and the boundary conditions:   
\be\label{bdry}   
\mathbf{u}^\e=0, \q \frac{\partial \theta^\e}{\partial \mathbf{n}}=0  \q {\rm on}~ \partial \Omega_t,
\ee  
where $\partial\Omega_t:=\partial\Omega\times(0, t]$ and $\mathbf{n}$ denotes the unit outward normal vector to $\Omega$.
Due to the presence of the term $-\frac{\kappa}{2\e}\Delta \theta^\e$ in $\ef{ns1}_{4}$,  the singular differential operators of $O(\frac 1 \e)$ in $\ef{ns1}_{2,4}$ are not skew-symmetric.  
Define the modified velocity  (see \cite{Ala,JO2022}) by
$$\mathbf{v}^\e:=\mathbf{u}^\e-\frac \kappa 2 \nabla \theta^\e.$$
Then the equations  \ef{ns1}, \ef{ini}, and \ef{bdry}  can be  written equivalently as  follows:
\begin{equation}\label{ns0}\left\{\begin{aligned} &\rho^\e_t+\dv(\rho^\e\mathbf{u}^\e)=0  &{\rm in}\  \Omega_t, \\ &\rho^\e(\mathbf{v}^\e_t+\mathbf{u}^\e\cdot\nabla \mathbf{v}^\e)+\frac{1}{\e}\nabla q^\e-\mu\Delta \mathbf{v}^\e-\xi\nabla\dv \mathbf{v}^\e-\beta\nabla\Delta\theta^\e =f^\e  &{\rm in}\  \Omega_t, \\
&\frac{1}{2} (q_t^\e+\mathbf{u}^\e\cdot\nabla q^\e)+\frac{1}{\e}\dv\mathbf{v}^\e=g^\e  &{\rm in}\  \Omega_t, \\
&\rho^\e(\theta_t^\e+\mathbf{u}^\e\cdot\nabla \theta^\e)-\frac{\kappa}{2} \Delta\theta^\e +\dv\mathbf{v}^\e=h^\e  &{\rm in}\  \Omega_t, \\ 
&\mathbf{v}^\e\cdot\mathbf{n}=0 ,\q  \mathbf{v}^\e  =-\frac{\kappa}{2} \n_\tau\theta^\e, \q \frac{\partial \theta^\e}{\partial \mathbf{n}}=0  &{\rm on}\  \partial\Omega_t, \\
&(\rho^\e, \mathbf{v}^\e, q^\e, \theta^\e)|_{t=0}= (\rho_0^\e, \mathbf{u}_0^\e-\frac{\kappa}{2}\n\theta_0^\e, q_0^\e, \theta_0^\e)(x) & {\rm in}\ \Omega,  \end{aligned}\right.\end{equation}
where   $\n_\tau\theta^\e|_{\p\Omega_t}:=  \n\theta^\e-  \frac{\p\theta^\e}{\p\mathbf{n}}\mathbf{n}$, $\xi:=\mu+\lambda+\frac{\kappa}{2}$, $\beta:= \frac{\kappa}{2}(2\mu+\lambda-\frac{\kappa}{2})$, and 
\begin{equation}\label{fgh}\begin{aligned}
f^\e&:=\frac{\kappa}{2} [\nabla \rho^\e(\theta^\e_t+\mathbf{u}^\e\cdot\nabla\theta^\e)+\rho^\e\nabla \mathbf{u}^\e\nabla\theta^\e-\nabla h^\e], \\ g^\e&:=-q^\e\dv\mathbf{u}^\e+\frac \e 2 [2\mu|D(\mathbf{u}^\e)|^2+\lambda(\dv \mathbf{u}^\e)^2 ], \\ h^\e&:=-\e q^\e\dv\mathbf{u}^\e+\e^2 [2\mu|D(\mathbf{u}^\e)|^2+\lambda(\dv \mathbf{u}^\e)^2 ].
\end{aligned}\end{equation} 
Note that the singular terms of $O(\frac{1}{\e})$ in the equations $\ef{ns0}_{2,3}$ for $(\mathbf{v}^\e,q^\e)$  are skew-symmetric.  
However, the equation for $\mathbf{v}^\e$ contains a third-order derivative term of the temperature $-\beta \n \Delta \theta^\e$, which gives rise to   essential difficulties in obtaining  uniform estimates.

\begin{rmk} 
The boundary conditions for $\mathbf{v}^\e$ in 
$\ef{ns0}_5$  are  compatible.
Indeed, $\mathbf{v}^\e\cdot \mathbf{n}|_{\p\Omega_t}=0$ can be obtained by $\frac{\p\theta^\e}{\p\mathbf{n}}|_{\p\Omega_t}=0$ and $  \mathbf{v}^\e |_{\p\Omega_t}=-\frac{\kappa}{2} \n_\tau\theta^\e$. 
We emphasize that
the tangential boundary trace of $\mathbf{v}^\e$ plays a key role in
establishing uniform estimates for $\p_t\mathbf{v}^\e$.
\end{rmk}

We first state the following local well-posedness result for \ef{non-ns}-\ef{ini}-\ef{bdry}. It can be derived by arguments similar to those employed in the proof of local well-posedness for the compressible Navier-Stokes system with Dirichlet boundary conditions (see, e.g. \cite{CCK,MSWZ,MN80}).

\begin{thm}[Local existence] \label{le}  
Let $\epsilon \in (0, 1]$ be a fixed number and $\Omega \subset \mathbb{R}^3$ be a simply connected, bounded domain with smooth boundary $\partial\Omega$. Assume that the initial data $\rho_0^\epsilon\in H^2(\Omega), \mathbf{u}_0^\epsilon \in H^3(\Omega),$ and  $\Theta_0^\epsilon \in H^4(\Omega)$ satisfy $\rho_0^\epsilon \geq m > 0$ and $ \Theta_0^\epsilon \geq m$ for a given constant $m>0$, and 
$$ 
( \rho_t^\epsilon(0),   \mathbf{u}_t^\epsilon(0) ) \in H^{1}(\Omega), \quad  \Theta_t^\epsilon(0) \in H^{2}(\Omega), \q ( \rho_{tt}^\epsilon(0),   \mathbf{u}_{tt}^\epsilon(0) ,\Theta_{tt}^\epsilon(0)) \in L^2(\Omega).
$$  
Let the following compatibility conditions be satisfied:  
$$
\mathbf{u}_0^\epsilon  = \mathbf{u}_t^\epsilon(0) =  0, \q \frac{\partial\Theta_0^\epsilon}{\partial \mathbf{n}} = \frac{\partial\Theta_t^\epsilon(0)}{\partial \mathbf{n}}  = 0 \quad \text{on} ~ \partial\Omega.  
$$  
Then there exists a positive constant $T^\epsilon = T^\epsilon(\rho_0^\epsilon, \mathbf{u}_0^\epsilon, \Theta_0^\epsilon, m, \epsilon)$, such that the initial-boundary value problem \ef{non-ns}-\ef{ini}-\ef{bdry} admits a unique strong solution $(\rho^\epsilon, \mathbf{u}^\epsilon, \Theta^\epsilon)$, satisfying 
\be\nonumber
\begin{aligned}
&\rho^\epsilon > 0,\quad 1 + \theta^\epsilon > 0 \quad \text{ in } \Omega \times (0, T^\epsilon), \\
&(\partial_t^k \rho^\epsilon, \partial_t^k \mathbf{u}^\epsilon) \in C([0, T^\epsilon], H^{2-k}(\Omega)),  \quad \partial_t^k \mathbf{u}^\epsilon \in L^2(0, T^\epsilon; H^{3-k}(\Omega)),\q  k=0,1,2,\\
&   \Theta^\epsilon \in C([0, T^\epsilon], H^{3 }(\Omega)) \cap L^2(0, T^\epsilon; H^{4}(\Omega)),  ~ \Theta^\epsilon _t \in C([0, T^\epsilon], H^{1 }(\Omega)) \cap L^2(0, T^\epsilon; H^{2}(\Omega)), \\
&\Theta^\epsilon_{tt} \in C([0, T^\epsilon], L^{2}(\Omega)) \cap L^2(0, T^\epsilon; H^{1}(\Omega)). 
\end{aligned} 
\ee  
\end{thm}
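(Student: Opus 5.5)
Since $\epsilon\in(0,1]$ is fixed, the factors $1/\epsilon^2$ and $\epsilon^2$ in \ef{non-ns} are harmless constants. The statement is then a standard local well-posedness result for the full compressible Navier--Stokes system with Dirichlet data for $\mathbf{u}$ and Neumann data for $\Theta$. I would follow the linearization--iteration scheme of Cho--Choe--Kim \cite{CCK} and Matsumura--Nishida \cite{MN80}, replacing the Dirichlet condition for the temperature by the Neumann condition.

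\textbf{Step 1: linearization.} Fix $T>0$ and a small parameter $T_*\le T$. Take a given triple $(\bar\rho,\bar{\mathbf u},\bar\Theta)$ lying in the class of the theorem on $[0,T_*]$ with fixed bounds $c_0\le c_1\le c_2$. Solve three linear problems:
\begin{enumerate}
\item the transport equation $\rho_t+\bar{\mathbf u}\cdot\n\rho+\rho\,\dv\bar{\mathbf u}=0$, by characteristics (no inflow, since $\bar{\mathbf u}|_{\p\Omega}=0$);
\item the linear Lam\'e system $\rho\mathbf u_t-\mu\Delta\mathbf u-(\mu+\lambda)\n\dv\mathbf u=-\rho\bar{\mathbf u}\cdot\n\bar{\mathbf u}-\e^{-2}\n(\rho\bar\Theta)$ with $\mathbf u|_{\p\Omega}=0$;
\item the linear heat equation $\rho(\Theta_t+\bar{\mathbf u}\cdot\n\Theta)-\kappa\Delta\Theta=-\rho\bar\Theta\,\dv\bar{\mathbf u}+\e^2Q(\n\bar{\mathbf u})$ with $\p_{\mathbf n}\Theta=0$.
\end{enumerate}
Each problem is solved by Galerkin approximation or by classical parabolic theory.

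\textbf{Step 2: a priori bounds.} For $\rho$, the $H^2$ estimate with $\partial_t^k$ follows from energy estimates on the differentiated transport equation, using $\bar{\mathbf u}\in L^2H^3$. Positivity $\rho\ge m e^{-\int\|\dv\bar{\mathbf u}\|_\infty}$ is immediate.

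For $\mathbf u$ and $\Theta$, differentiate in time up to twice. Test with $\p_t^k\mathbf u$ and $\p_t^{k+1}\mathbf u$, and similarly for $\Theta$; the time derivatives keep the homogeneous boundary conditions. Then recover spatial regularity from elliptic estimates for the Lam\'e operator with Dirichlet data and for the Neumann Laplacian, which gives $\mathbf u\in L^2H^3$ and $\Theta\in L^\infty H^3\cap L^2H^4$. The compatibility conditions in the statement are exactly what makes $\mathbf u_t(0)$, $\mathbf u_{tt}(0)$, $\Theta_t(0)$, $\Theta_{tt}(0)$ admissible initial data for the differentiated problems. Since $\rho\in H^2$ only, $\n(\rho\bar\Theta)$ and $\rho_t$ are in $H^1$, which is consistent with $\mathbf u\in L^\infty H^2$.

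Choosing $c_0\ll c_1\ll c_2$ and then $T_*$ small, the Gronwall bounds reproduce the constants. Thus the map $(\bar\rho,\bar{\mathbf u},\bar\Theta)\mapsto(\rho,\mathbf u,\Theta)$ sends the ball into itself. Positivity of $\Theta$ for short time follows from $\Theta\in C([0,T_*],H^2)\subset C(\bar\Omega\times[0,T_*])$ and $\Theta_0\ge m$.

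\textbf{Step 3: contraction and uniqueness.} Show the map is a contraction in the low norm $L^\infty L^2$ for $\rho$ and $L^\infty L^2\cap L^2H^1$ for $(\mathbf u,\Theta)$, after possibly shrinking $T_*$. Take the fixed point, which inherits the high-order bounds by weak-$*$ lower semicontinuity, and obtain the continuity in time by the usual argument. Uniqueness follows from the same difference estimate.

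\textbf{Main obstacle.} I expect the hardest step to be the highest-order time-derivative estimate for $\mathbf u_{tt}$ and $\Theta_{tt}$. The density has only $\rho_{tt}\in L^2$, so terms like $\rho_{tt}\mathbf u_t$ and $\n\rho_{tt}$ (through $\p_t^2\n p$) must be handled by integrating by parts in $x$, moving the derivative onto $\mathbf u_{tt}$, which is legitimate because $\mathbf u_{tt}=0$ on $\p\Omega$. Alternatively one can pass to the time-weighted formulation of \cite{CCK}. I would try the integration by parts first.
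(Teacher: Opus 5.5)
Your proposal is correct and follows essentially the route the paper intends. The paper gives no proof of its own for this theorem; it only says the result follows from the standard linearization--iteration arguments for compressible Navier--Stokes with Dirichlet data (Cho--Choe--Kim, Matsumura--Nishida and related works), and your scheme is that argument with the Neumann Laplacian replacing the Dirichlet one for $\Theta$. You have also correctly identified the delicate top-order step: $\nabla\rho_{tt}$ enters through $\partial_t^2\nabla p$ and must be moved onto $\mathbf u_{tt}\in H^1_0$ by integration by parts. At that level you can test only with $\partial_t^2\mathbf u$ and $\partial_t^2\Theta$, not with third time derivatives.
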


\begin{rmk} 
For brevity, we write $\rho_t^\e(0)$ for the quantity 
$$   \rho_t^\e|_{t=0} := -\dv(\rho_0^\e \mathbf{u}_0^\e), $$
  obtained by setting $t=0$ in equation $\eqref{non-ns}_1$. The quantities $\mathbf{u}_t^\e(0)$  and $\Theta_t^\e(0)$
  are defined by setting $t=0$ in   $\eqref{non-ns}_{2,3}$.  In addition, we can define $\mathbf{u}_{tt}^\e(0),\rho_{tt}^\e(0)$,  and $\Theta_{tt}^\e(0)$ inductively.  
\end{rmk}

\begin{defn}   
For $t\in (0,1]$, we define the energy functional as 
\be\nonumber\begin{aligned}
M^\e (t)&:=\|(\rho^\e, q^\e)\|^2_{L^\infty (0, t;H^2(\Omega))}+\|\mathbf{u}^\e\|^2_{L^\infty (0, t;H^1(\Omega))}+\|\theta^\e\|^2_{L^\infty (0, t;H^2(\Omega))}\\
&\q +\|(\rho^\e)^{-1}\|^2_{L^\infty ((0, t)\times\Omega)} +\|(\rho_t^\e, \mathbf{u}_t^\e, q_t^\e)\|^2_{L^\infty (0, t;L^2(\Omega))}+\|\theta_t^\e\|^2_{L^\infty (0, t;H^1(\Omega))}
\\
&\quad + \|(\mathbf{u}^\e, \nabla\theta^\e)\|^2_{L^2 (0, t;H^3(\Omega))}+\|(\mathbf{u}_t^\e,  \nabla\theta_t^\e)\| ^2_{L^2(0, t;H^1(\Omega))}.
\end{aligned}\ee
\end{defn}

We have  the following uniform estimates theorem. 

\begin{thm}[Uniform estimates] \label{uethm}
Let $\Omega\subset \mathbb{R}^3$ be a simply connected, bounded domain with smooth boundary $\p\Omega$. Suppose that  
\be\begin{aligned} \label{data}
&\|(\rho_0^\e, \mathbf{u}_0^\e,  q_0^\e )\|_{H^2(\Omega)}   + \|\theta_0^\e \|_{H^3(\Omega)}+\|(\rho_t^\e , \mathbf{u}_t^\e ,  q_t^\e )(0)\|_{L^2(\Omega)}   + \|\theta_t^\e(0) \|_{H^1(\Omega)}    \\
&+ \|\e ( \n\mathbf{u}_{t}^\e ,  \n q_{t}^\e )(0)\|_{L^2(\Omega)}   + \|(  \e^2 \mathbf{u}_{tt}^\e ,  \e^2  q_{tt}^\e,  \e  \theta_{tt}^\e )(0)\|_{L^2(\Omega)}   \leq M_0,  
\end{aligned}  
\ee 
$\rho_0^\e\geq M_0^{-1}>0$, and  $\Theta_0^\e \geq A>0$  for certain positive constants $M_0$ and $A$ independent of $\e$. Assume that $\beta>0$. Then there exist positive constants  $\e_0=\e_0(\Omega, M_0,  A)$,  $T=T(\Omega, M_0,  A)$ and $C=C(\e_0, T)$ independent of $\e$ such that the unique solution $(\rho^\e, \mathbf{u}^\e, q^\e, \theta^\e)$ of \ef{ns1}-\ef{ini}-\ef{bdry} exists on $\Omega\times(0, T]$, which is obtained in Theorem \ref{le}, and satisfies the uniform bound: 
\be\label{bound} 
M^\e(t)\leq C, \q  {\rm for~all}~ \e\in (0, \e_0]~ {\rm and}~ t\in(0, T], 
\ee
and   
\be \nonumber
\Theta^\e (x, t) \geq C^{-1}, \q (x, t)\in \Omega\times [0, T].
\ee
\end{thm}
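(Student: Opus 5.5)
The proof will be a continuity (bootstrap) argument built on Theorem \ref{le}. Assume that on some interval $(0,T_*]$, with $T_*\le \min\{1,T^\e\}$, the solution satisfies $M^\e(t)\le 2C_1$ and $\Theta^\e\ge A/2$, $\rho^\e\ge (2M_0)^{-1}$. Under these assumptions I will derive an inequality of the form
\[
M^\e(t)\le C_0(M_0,A)\exp\bigl\{(t^{1/2}+\e)\,C(M^\e(t))\bigr\},
\]
with $C(\cdot)$ a nondecreasing function independent of $\e$. Choosing $C_1=C_0$ and then $T,\e_0$ small closes the bootstrap, and the standard continuation argument extends the solution to $(0,T]$. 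The lower bounds for $\rho^\e$ and $\Theta^\e$ follow from the transport equation $\ef{ns1}_1$ along characteristics, and from $\Theta^\e=(1+\e q^\e)/\rho^\e$ together with the $L^\infty_tH^1$ bound on $\theta_t^\e$ (giving $|\theta^\e(t)-\theta^\e_0|\lesssim t^{1/2}C(M)$ via $H^2\subset L^\infty$ interpolation), each for $t$ small.

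The estimates will be ordered as follows.

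\emph{Step 1 (low order and time derivatives of $\theta$).} Use the first skew-symmetric structure, pairing the temperature equation $\ef{ns0}_4$ with the weighted unknowns. Concretely, multiply $\ef{ns1}_2$ by $\e^{0}\mathbf u$, $\ef{ns1}_4$ by $q$, and $\ef{ns0}_4$ by suitable weights, so that the $O(1/\e)$ terms $\frac1\e\nabla q\cdot\mathbf u+\frac1\e q\dv\mathbf u$ cancel after integrating by parts with $\mathbf u|_{\p\Omega}=0$. Then do the same for $\p_t$ of the system, using the data bounds on $\e\mathbf u_t(0)$, $\e^2\mathbf u_{tt}(0)$, $\e\theta_{tt}(0)$. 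The heat equation is parabolic with Neumann data, so testing $\p_t\ef{ns0}_4$ with $\theta_t$ and $-\Delta\theta_t$ gives $\theta_t\in L^\infty H^1$ and $\nabla\theta_t\in L^2H^1$. In this step $\dv\mathbf v_t$ appears as a source, and it is controlled by the $(\mathbf v_t,q_t)$ energy.

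\emph{Step 2 ($\p_t\mathbf v$, $\p_tq$ in $L^2$).} Differentiate $\ef{ns0}_{2,3}$ in $t$ and test with $\mathbf v_t$ and $q_t$, so that the singular terms cancel thanks to $\mathbf v\cdot\mathbf n=0$. The difficulty is that $\mathbf v_t\ne0$ on $\p\Omega$: the viscous term $\mu\Delta\mathbf v_t$ and the third-order term $\beta\nabla\Delta\theta_t$ produce boundary integrals involving $\p_{\mathbf n}\mathbf v_t$ and $\Delta\theta_t$, which are not controlled. Here I will construct the Stokes lifting $\mathbf w$ solving
\[
-\mu\Delta\mathbf w+\nabla\pi=0,\qquad \dv\mathbf w=0,\qquad \mathbf w|_{\p\Omega}=-\tfrac\kappa2\nabla_\tau\theta_t .
\]
This problem has zero flux, so it is solvable, with $\|\mathbf w\|_{H^{s}}\lesssim\|\nabla\theta_t\|_{H^{s-1}}$, and similarly for $\mathbf w_t$ in terms of $\theta_{tt}$. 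I will then test with $\mathbf v_t-\mathbf w$, which vanishes on $\p\Omega$. The singular term $\frac1\e\nabla q_t\cdot\mathbf w$ integrates to zero because $\dv\mathbf w=0$ and $\mathbf w\cdot\mathbf n=0$. The remaining $\mathbf w$-terms are bounded using Step 1, and $\beta\nabla\Delta\theta_t\cdot(\mathbf v_t-\mathbf w)$ integrates by parts without boundary terms into $-\beta\Delta\theta_t\,\dv\mathbf v_t$. The condition $\beta>0$ lets me combine this with $\ef{ns0}_4$ differentiated, $\dv\mathbf v_t=h_t-\rho\theta_{tt}-\dots+\frac\kappa2\Delta\theta_t$, producing a good sign term $\frac{\beta\kappa}{2}\|\Delta\theta_t\|^2$ plus a term $\beta\rho\theta_{tt}\Delta\theta_t$. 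The latter I will write as a time derivative $-\frac\beta2\frac{d}{dt}\int\rho|\nabla\theta_t|^2$ plus lower-order terms; I expect this identity to be the transfer ``from velocity to temperature''.

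\emph{Step 3 (spatial regularity).} Obtain $\nabla q$ and $\dv\mathbf v$ from the equations, namely $\frac1\e\nabla q=-\rho\mathbf v_t+\dots$ and $\frac1\e\dv\mathbf v=-\frac12q_t+\dots$, which gives $\|\nabla q\|,\|\dv\mathbf v\|=O(\e)$ once the time derivatives are bounded. Then treat $(\mathbf u,\pi=q/\e)$ via the stationary Stokes problem with Dirichlet data $\mathbf u=0$, source $\rho\mathbf u_t+\dots$ and prescribed $\dv\mathbf u=\dv\mathbf v+\frac\kappa2\Delta\theta$. Cattabriga estimates then give $\mathbf u\in L^2H^3\cap L^\infty H^2$ and $\nabla q\in L^\infty H^1$. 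Elliptic Neumann estimates for $\theta$ give $\nabla\theta\in L^2H^3$. For $\rho\in H^2$, use the transport equation with $\|\nabla\mathbf u\|_{L^1W^{1,\infty}}\lesssim t^{1/2}M$. This avoids any need for vorticity boundary conditions.

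The main obstacle is Step 2: controlling the boundary-coupled third-order term $\beta\nabla\Delta\theta_t$ together with the commutator terms generated by $\mathbf w$, specifically $\rho\mathbf w_t$, which requires $\nabla\theta_{tt}$ in a negative norm. For $\rho\mathbf w_t$ I would first try writing $\int\rho\mathbf v_{tt}\cdot\mathbf w=\frac{d}{dt}\int\rho\mathbf v_t\cdot\mathbf w-\int(\rho\mathbf w)_t\cdot\mathbf v_t$, and bounding $\|\mathbf w_t\|_{L^2}$ by $\|\theta_{tt}\|_{H^{1/2}(\p\Omega)}$. That quantity in turn is controlled via the heat equation by $\|\nabla\theta_t\|_{H^1}+\|\dv\mathbf v_t\|$, absorbed using the $\e$-weighted bounds.
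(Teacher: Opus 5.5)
Your overall architecture matches the paper: a bootstrap with continuation, a divergence-free lifting $\mathbf w$ of $-\frac\kappa2\n_\tau\theta_t$ so that $Z=\mathbf v_t-\mathbf w$ vanishes on $\p\Omega$ while the singular terms still cancel, and Stokes regularity for $\mathbf u$. Your substitution of $\p_t\ef{ns0}_4$ into $\beta\int\Delta\theta_t\,\dv\mathbf v_t$ is also correct and gives $\frac{\beta\kappa}{2}\|\Delta\theta_t\|^2+\frac\beta2\frac{d}{dt}\|\sqrt\rho\,\n\theta_t\|^2$. But the estimates do not close. Your lifting bound $\|\mathbf w\|_{H^s}\lesssim\|\n\theta_t\|_{H^{s-1}}$ is off by one derivative. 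In fact $\|\mathbf w\|_{H^1}\lesssim\|\n_\tau\theta_t\|_{H^{1/2}(\p\Omega)}\lesssim\|\theta_t\|_{H^2}$, and this is a top-order cost, since $H^{1/2}(\p\Omega)$ is exactly the trace space of $H^1(\Omega)$. Hence $\mu\int\n\mathbf w:\n Z$ costs $C\|\theta_t\|^2_{L^2_tH^2}$ with an $O(1)$ constant, which cannot be absorbed into $\frac{\beta\kappa}{2}\|\Delta\theta_t\|^2_{L^2_tL^2}$. You therefore need an independent bound for $\theta_t$ in $L^2_tH^2$, and your Step 1 does not supply one:
\begin{itemize}
\item It uses the heat equation with $\dv\mathbf v_t$ as a source, giving $\|\theta_t\|^2_{L^2_tH^2}\le C_0+C\|\dv\mathbf v_t\|^2_{L^2_tL^2}$. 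This is circular with Step 2, and there is no small parameter to break the loop.
\item Your recipe for the ``weighted'' structure (testing $\ef{ns1}_2$ by $\mathbf u$ and $\ef{ns1}_4$ by $q$) leaves the non-skew singular term $-\frac{\kappa}{2\e}\int q\,\Delta\theta$ uncancelled. That term is precisely the obstruction created by large temperature variations.
\end{itemize}
The missing idea is the second system \ef{2ns}, in the unknowns $\eta=\frac12(\e\sigma-\vartheta)$, $\mathbf u_\e=\e\mathbf u$, $\vartheta=\ln\Theta$. Its $O(\e^{-1})$ part is exactly skew-symmetric, and its velocity unknown $\mathbf u_{\e t}$ vanishes on $\p\Omega$, so $\p_t\ef{2ns}$ can be estimated (interior, tangential, normal) with no lifting at all. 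Together with the $\e$-weighted bounds on $\e^2\mathbf u_{tt}$, $\e\vartheta_{tt}$, $\e^2\mathbf u_t$, this gives $\|\vartheta_t\|^2_{L^\infty_tH^1}+\|\vartheta_t\|^2_{L^2_tH^2}\le C_0+\delta\|\mathbf u_t\|^2_{L^2_tH^1}$ with $\delta$ arbitrarily small (modulo factors harmless for small $t,\e$). Only then does the lifting estimate $\|\mathbf u_t\|^2_{L^2_tH^1}\le C_0+C_2\|\theta_t\|^2_{L^2_tH^2}$ close, by taking $\delta C_2<1$.

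Two further steps fail as written. First, the term $\rho\mathbf w_t$. Bounding $\|\mathbf w_t\|_{L^2}$ by $\|\theta_{tt}\|_{H^{1/2}(\p\Omega)}$ requires $\theta_{tt}\in L^2_tH^1$ uniformly. The heat equation only controls $\theta_{tt}$ in $L^2_tL^2$, by $\|\Delta\theta_t\|+\|\dv\mathbf v_t\|+\cdots$, and the weighted bound only gives $\|\theta_{tt}\|_{L^2_tH^1}\lesssim\e^{-1}$. With a stationary lifting, even the $H^{-1}$ norm of $\mathbf w_t$ would still require a boundary trace of $\theta_{tt}$, which $\theta_{tt}\in L^2_tL^2$ does not have. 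The paper instead uses the time-dependent Stokes lifting of Lemma \ref{lem2.4} (with $\ell_0=0$), for which $\|\ell_t\|_{L^2_tH^{-1}}$ is controlled by $\|g\|_{L^2_tH^{1/2}(\p\Omega)}+\|g\|_{H^{1/2}_tH^{-1/2}(\p\Omega)}$. It pairs $\rho\ell_t$ with $Z\in H^1_0$ and uses Lemma \ref{lem2.5} to interpolate $\|\theta_t\|^2_{H^{1/2}_tH^1}\lesssim\|\theta_t\|_{H^1_tL^2}\|\theta_t\|_{L^2_tH^2}$, so only $\|\theta_{tt}\|_{L^2_tL^2}$ enters, multiplied by $\delta$.

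Second, your Step 3 is circular. The Stokes estimate gives $\mathbf u\in L^2_tH^3$ only if $\dv\mathbf u\in L^2_tH^2$. The relation $\dv\mathbf v=-\frac\e2q_t+\cdots$ would need $\e q_t\in L^2_tH^2$, which is not in the energy. The Neumann estimate for $\theta\in L^2_tH^4$ in turn needs $\dv\mathbf v\in L^2_tH^2$. Bounding $\|\n^2\dv\mathbf v\|_{L^2_tL^2}$ and $\|\n^2\Delta\theta\|_{L^2_tL^2}$ simultaneously is the entire second half of the paper, via second-order interior, tangential, mixed and normal energy estimates for $(q,\mathbf v,\theta)$:
\begin{itemize}
\item The $\beta\n\Delta\theta$ terms cancel against $\ef{ns0}_4$ tested by second derivatives of $\Delta\theta$.
\item The tangential estimate tests with $\p_\tau^2\mathbf u$, which vanishes on $\p\Omega$ (unlike $\p_\tau^2\mathbf v$), and needs $9\mu+6\lambda-\kappa>0$.
\item The normal estimates use the equation for $(\mu+\xi)\p_{\mathbf n}\dv\mathbf v-\frac1\e\p_{\mathbf n}q+\beta\p_{\mathbf n}\Delta\theta$, whose right-hand side contains no $\p_{\mathbf n}^2\mathbf u$, together with a Stokes estimate for $D_{33\tau}U$.
\end{itemize}
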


Consequently, the compactness arguments imply the subsequent convergence results.  
\begin{thm}[Low Mach number limit] 
Let $\Omega\subset \mathbb{R}^3$ be a simply connected, bounded domain with smooth boundary $\p\Omega$.
Suppose that $(\rho^\e, \mathbf{u}^\e, q^\e, \theta^\e)$ is the solution to \ef{ns1} in $\Omega\times(0, T]$,  which satisfies the uniform boundedness \ef{bound}. Then as $\e\rightarrow 0$, there exists a subsequence (still denoted by the same notation) such that the convergence results hold  
$$\mathbf{u}^\e   \rightharpoonup  \omega \q {weakly-*}~{\rm in }~L^\infty(0, T;H^1(\Omega)), $$
$$ 
(\rho^\e,   \theta^\e) \rightharpoonup ( r, \theta )\q  {weakly-*}~{\rm in }~L^\infty(0, T;H^2(\Omega)), 
$$    
with $r(1+\theta)=1$. Moreover,  there is a function $\pi(x, t)$  such that $(\omega, \theta, \pi) $ satisfies  the low Mach number Navier-Stokes equations: 
\begin{equation}\label{ins} 
\left\{\begin{aligned}
&(1+\theta)^{-1}(\omega_t+\omega\cdot\nabla \omega)+ \nabla \pi=\mu\Delta \omega+(\mu+\lambda)\nabla\dv \omega &{\rm in}\ \Omega_T, \\
&(1+\theta)^{-1}(\theta_t +\omega\cdot\nabla \theta ) =\frac{\kappa}{2}\Delta\theta    &  {\rm in}\ \Omega_T, \\
&\dv \omega= \frac{\kappa}{2}\Delta\theta   &{\rm in}\ \Omega_T, \\
&\omega=0, \q\frac{\partial \theta }{\partial \mathbf{n}}=0   &{\rm on}\ \partial \Omega_T, \\ 
&(r, \omega,\theta)|_{t=0}= (r_0, \omega_0,\theta_0)(x)   &{\rm in}\ \Omega, 
\end{aligned}\right.\end{equation} 
where $(r_0, \omega_0,\theta_0)$ is the weak limit of $(\rho^\e, \mathbf{u}^\e,\theta^\e)|_{t=0}$ in $H^2(\Omega)$. 
\end{thm}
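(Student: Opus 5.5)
The convergence follows from the uniform bound \eqref{bound} of Theorem \ref{uethm} by compactness, so no new estimates are needed.

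\textbf{Step 1: weak and strong convergence.} By \eqref{bound}, $\mathbf{u}^\e$ is bounded in $L^\infty(0,T;H^1)\cap L^2(0,T;H^3)$ with $\mathbf{u}^\e_t$ bounded in $L^2(0,T;H^1)$. Also $\rho^\e,\theta^\e$ are bounded in $L^\infty(0,T;H^2)$ with $\rho^\e_t\in L^\infty(0,T;L^2)$ and $\theta^\e_t\in L^\infty(0,T;H^1)$ bounded, and $\nabla\theta^\e$ is bounded in $L^2(0,T;H^3)$. Banach--Alaoglu then gives the stated weak-* limits along a subsequence. The Aubin--Lions lemma upgrades these to strong convergence:
\begin{itemize}
\item $\mathbf{u}^\e\to\omega$ in $L^2(0,T;H^2)\cap C([0,T];L^2)$;
\item $\rho^\e\to r$ and $\theta^\e\to\theta$ in $C([0,T];H^{1})$ (indeed in $H^{2-\delta}$);
\item $\nabla\theta^\e\to\nabla\theta$ in $L^2(0,T;H^2)$.
\end{itemize}
Since $q^\e$ is bounded in $L^\infty(0,T;H^2)$, the identity $1+\e q^\e=\rho^\e(1+\theta^\e)$ gives $\rho^\e(1+\theta^\e)\to1$ uniformly, hence $r(1+\theta)=1$. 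The lower bound on $\Theta^\e$ passes to the limit, so $1+\theta\ge C^{-1}$ and $r=(1+\theta)^{-1}$ is well defined.

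\textbf{Step 2: passing to the limit in the equations.} In the distributional sense:
\begin{itemize}
\item Multiply $\eqref{ns1}_4$ by $\e$. Every term except $\dv\mathbf{u}^\e-\frac\kappa2\Delta\theta^\e$ is $O(\e)$ in $L^2(0,T;L^2)$, using the bounds on $q^\e_t$, $q^\e$, $\mathbf{u}^\e$. This gives $\dv\omega=\frac\kappa2\Delta\theta$, which is $\eqref{ins}_3$.
\item In $\eqref{ns1}_3$, the term $(1+\e q^\e)\dv\mathbf{u}^\e$ converges to $\dv\omega$, the $\e^2$ terms vanish, and the products $\rho^\e\theta^\e_t$ and $\rho^\e\mathbf{u}^\e\cdot\nabla\theta^\e$ converge by strong times weak convergence. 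Substituting $\dv\omega=\frac\kappa2\Delta\theta$ yields $r(\theta_t+\omega\cdot\nabla\theta)=\frac\kappa2\Delta\theta$, which is $\eqref{ins}_2$.
\item For the momentum equation $\eqref{ns1}_2$, test with $\phi\in C_c^\infty(\Omega_T)$ with $\dv\phi=0$. The singular term $\frac1\e\nabla q^\e$ drops out, and the nonlinear terms converge by the strong convergences above. This gives $\eqref{ins}_1$ tested against solenoidal fields. De Rham's theorem, with $\Omega$ simply connected, then supplies a pressure $\pi$ (for instance in $L^2(0,T;H^1)$, from the regularity of the remaining terms).
\end{itemize}

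\textbf{Step 3: boundary and initial data.} The boundary conditions $\omega=0$ and $\partial_{\mathbf{n}}\theta=0$ hold because traces are continuous under the strong convergence in $L^2(0,T;H^2)$. The initial data are identified through the $C([0,T];L^2)$ convergence, since the initial values of the limits are the weak limits of $(\rho^\e,\mathbf{u}^\e,\theta^\e)|_{t=0}$. The continuity equation $r_t+\dv(r\omega)=0$ also passes to the limit, and it is consistent with $r=(1+\theta)^{-1}$ via $\eqref{ins}_{2,3}$.

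\textbf{Main obstacle.} The expected difficulty is the singular term $\e^{-1}\nabla q^\e$ in the momentum equation. With well-prepared data it is not controlled uniformly in a strong sense, so it must be eliminated entirely by testing only against divergence-free functions. The pressure $\pi$ is then recovered a posteriori, which is where the simple connectivity of $\Omega$ and the regularity of the other terms are used.
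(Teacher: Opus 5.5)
Your proposal is correct and follows essentially the paper's argument: weak-* compactness plus the Aubin--Lions--Simon lemma, $r(1+\theta)=1$ from $1+\e q^\e=\rho^\e(1+\theta^\e)$, direct passage to the limit in $\eqref{ns1}_{3,4}$, and removal of the singular gradient term in the momentum equation. The only variation is that you test against solenoidal fields and invoke de Rham's theorem, which actually needs no simple connectivity, whereas the paper applies $\cl$ and uses simple connectivity to recover $\pi$. Your closing remark also overstates the obstacle: \eqref{bound} together with $\eqref{ns1}_2$ already bounds $\e^{-1}\nabla q^\e$ in $L^2(0,T;H^1(\Omega))$, so its weak limit could even be taken directly as $\nabla\pi$.
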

\textbf{Proof}.  Due to the uniform boundedness \ef{bound},  there exists a subsequence (still denoted by the same notation) such that the following convergence holds: 
\begin{equation} 
\label{com1}
\begin{aligned}   
& \rho^\epsilon  \rightharpoonup r \quad  \text{weakly}-* \text{ in } L^\infty(0, T; H^2(\Omega))  , \\   
& \mathbf{u}^\epsilon \rightharpoonup   \omega \quad \text{weakly}-* \text{ in } L^\infty(0, T; H^1 (\Omega))~  \text{and} ~\text{weakly}~\text{in} ~  L^2(0, T; H^3(\Omega)),  
\\     
& \theta^\epsilon \rightharpoonup \theta \quad \text{weakly}-* \text{ in } L^\infty(0, T; H^2 (\Omega))  ~  \text{and} ~\text{weakly}~ \text{in } L^2(0, T; H^4(\Omega)) , 
\\   
& \rho_t^\epsilon  \rightharpoonup  r_t  \quad \text{weakly}-* \text{ in }  L^\infty(0, T; L^2(\Omega)),\\ 
& \mathbf{u}_t^\epsilon \rightharpoonup \omega_t \quad \text{weakly}-* \text{ in } L^\infty(0, T; L^2(\Omega))  \text{~and } ~\text{weakly}~ \text{in ~} L^2(0, T; H^1(\Omega)) , \\ 
& \theta_t^\epsilon \rightharpoonup \theta_t \quad  \text{weakly}-* \text{ in } L^\infty(0, T; H^1(\Omega)) ~ \text{and} ~  \text{weakly}~\text{in } L^2(0, T; H^2(\Omega)) .
\end{aligned}
\end{equation}  
Then the Aubin-Lions-Simon compactness lemma yields 
\begin{equation} 
\label{com2} 
\begin{aligned} 
& (\rho^\epsilon, \theta^\e)  \rightarrow  (r,\theta)  \q &  \text{ in }~ C ([0, T],  H^{2-\delta} (\Omega)),  \\
&  \mathbf{u}^\epsilon \rightarrow \omega \q & \text{ in } ~ C ([0, T],  H^{1-\delta} (\Omega)),  
\end{aligned}
\end{equation} 
for any $\delta\in (0,1)$. 
Next, we aim to prove that \ef{ns1} converges to \ef{ins} in $L^2(0,T; L^2(\Omega))$ by applying the convergence results \ef{com1} and \ef{com2}.  From the definition $p^\e=\rho^\e(1+\theta^\e)=1+\e q^\e$, it holds that 
$$ r(1+\theta)=1 $$ 
when $\e\rightarrow 0$.  Passing to the limit  as $\e\rightarrow 0 $  in $\ef{ns1}_{3,4}$ yields $\ef{ins}_{2,3}$. For the momentum equation, applying $\cl$ to $\ef{ns1}_{2}$ to remove the singular term and letting $\e\rightarrow 0 $ yield that  
$$
\cl[ r(\omega_t+\omega\cdot \n \omega ) -\mu \Delta \omega-(\mu+\lambda) \n \dv\omega ]=0 .
$$ 
Therefore, we conclude that there is a function $\pi(x,t)$  with  $\n\pi\in L^2(0, T; L^2(\Omega))$ such that the equation  $\ef{ins}_{1}$ holds.
\hfill$\Box$

\begin{rmk}  
The construction of test functions from solutions to Stokes problems can also be adapted to the analysis of singular limits for other fluid systems (e.g., the compressible magnetohydrodynamic equations) with no-slip boundary conditions.
\end{rmk}

The rest of this paper is organized as follows. Section 2 discusses the difficulties and strategies and provides a detailed explanation of the main ideas in the upper half-space. Section 3 recalls some well-known lemmas. Section 4 establishes the uniform energy estimates in general bounded domains.

Finally, we introduce the notation that will be used throughout this paper. The standard Sobolev space  $W^{k,2}(\Omega)$ is denoted by $H^k(\Omega)$, equipped with the norm $\|\cdot\|_{H^k}$. We also adopt the following abbreviations:
 $$ L^p_t(H^k) \equiv L^p(0,t; H^k(\Omega)), \quad \|\cdot\|_{L^p_t(H^k)} \equiv \|\cdot\|_{L^p(0,t; H^k)}.
 $$ 
 Throughout the paper, $C$ represents a generic positive constant whose value may vary from line to line but remains independent of $\e$. Furthermore, the notation  $f \lesssim g$ indicates that $f \leq Cg$. 
From now on,  we omit the superscript $\e$ of all variables for simplicity.

\section{Main difficulties and strategies}

\q   Two main difficulties arise in the present problem. The first is caused by the boundary. In contrast to the whole-space and periodic settings considered in \cite{Ala,Bresch,KL}, the  difficulty lies in establishing higher-order estimates uniform in $\e$  on a time interval independent of $\e$. 
In a bounded domain, acoustic waves interact with the boundary. Hence, the Fourier-based methods used in domains without boundaries are no longer directly available. Moreover, normal differentiation produces boundary terms that are not controlled by the basic energy estimates. Thus, high-order normal derivatives cannot be estimated by a direct differentiation argument.

The second difficulty is essentially structural and arises from heat conduction and large temperature variations. When  $\kappa=0 $, the leading acoustic operator has the skew-symmetric form 
$\e^{-1}(\dv,\nabla)$
as exploited in \cite{Ala04,JO,KL,MS}, and the    singular terms cancel in the $L^2$ energy identity.    
For $\kappa>0$ and large temperature variations, however, the penalization operator is no longer skew-symmetric in the original variables, owing to the nonlinear coupling of the singular pressure term with both the density and the temperature.

In order to recover the skew-symmetric structure in the case of large temperature variations, one effective method is to introduce a modified velocity $\mathbf{v}:=\mathbf{u}-\frac{\kappa}{2}\n \theta$  (cf. \cite{Ala,JO2022,Sun}). 
However, this transformation does not fully decouple $\mathbf {v}$ from the temperature. 
The equation for $\mathbf {v}$ still contains third-order derivatives of temperature, while its tangential trace depends on the tangential temperature gradient under the no-slip boundary condition. 
In the vorticity-slip and Navier-slip settings considered in \cite{JO2022,Sun}, high-order estimates are closed through a div-curl decomposition of $\mathbf {v}$, while the estimate for the curl part relies crucially on the boundary condition satisfied by  $\cl\mathbf v $. This mechanism is not available under the no-slip condition. Consequently, the high-order normal estimates obtained in \cite{JO2022,Sun} cannot be transferred directly to the present setting.

To overcome  these difficulties, we   employ two  skew-symmetric structures of the full compressible Navier-Stokes equations and a divergence-free Stokes lifting of the tangential  boundary trace of  $\mathbf v_t$  to close the  $\e$-uniform high-order estimates.  On the one hand, we utilize the system \ef{ns0} satisfied by $(q,\mathbf{v},\theta)$ to estimate the high-order norms   $\|(\n^2 \dv \mathbf{v},\n^2\Delta \theta)\|_{L_t^2 (L^2)}^2$ and the low-order norms $\|  \mathbf{v}_t \|_{L_t^\infty (L^2) }^2+\|  \mathbf{v}_t \|_{ L_t^2 (H^1)}^2$.  
On the other hand, the  system to be introduced below is used for estimating  $\|  \theta_t \|_{L_t^\infty (H^1)}^2+ \|  \theta_t \|_{  L_t^2 (H^2)}^2$. Next we present our main ideas for   the  uniform estimates.  

 First, the estimates of density and the low-order estimates $(q ,\mathbf{v} ,\theta )$ can be obtained by using the standard hyperbolic regularity and   Newton-Leibniz's formula  for time variables.

Next, we introduce  the following exponential transformation (see \cite{Ala,Sun}),  
\be\label{change2} 
p =e^{\e \sigma }, \quad \Theta =e^{\vartheta }, 
\ee 
which is equivalent to the linear transformation \ef{change1} in the sense of that
\be\label{rela}
\e q =e^{\e \sigma }-1 , \quad \theta =e^{\vartheta }-1. 
\ee
Thus the equations $\ef{non-ns}_2$,  $\ef{non-ns}_3$, and \ef{pres} can be  rewritten as 
\begin{equation}\left\{
\begin{aligned} \label{1ns}
& \frac{1}{2} ( \sigma _t + \mathbf{u} \cdot \n \sigma )+\frac{1}{\e}\dv \mathbf{u}  -\frac{1}{\e}\frac{\kappa}{2} \frac{1}{p } \dv (\Theta  \n\vartheta )=\frac{\e }{2 p }  [2\mu|D(\mathbf{u} )|^2+\lambda(\dv\mathbf{u}  )^2] &{\rm in }~ \Omega_t, \\
& \frac{1}{\Theta } (\mathbf{u}_t +\mathbf{u} \cdot \n \mathbf{u} )+\frac{1}{\e}\n \sigma  - \frac{1}{p } ( \mu\Delta\mathbf{u}  +(\mu+\lambda)\n\dv \mathbf{u}   )=0 & {\rm in }~ \Omega_t, \\
&\vartheta_t  +\mathbf{u} \cdot \n \vartheta  +\dv \mathbf{u}  -\kappa  \frac{1}{p  } \dv (\Theta  \n\vartheta )=\frac{\e^2 }{p }[2\mu|D(\mathbf{u} )|^2+ \lambda(\dv\mathbf{u}  )^2]  &{\rm in }~ \Omega_t. 
\end{aligned} \right. 
\end{equation}  
Following \cite{Ala}, we set
$$
\eta:=\frac{1}{2}(\e\sigma- \vartheta)
\quad {\rm and}\quad
\mathbf u_\e:=\e\mathbf u.
$$
A direct computation shows that
$(\eta,\mathbf u_\e,\vartheta)$ satisfies the following system:
\begin{equation}\label{2ns}
\left\{
\begin{aligned} 
&  \eta_t  +\mathbf{u} \cdot \n  \eta    +\frac{1}{2 \e}\dv \mathbf{u}_\e  = 0 &{\rm in }~ \Omega_t, \\
& \frac{1}{\Theta } (\mathbf{u}_{\e t} +\mathbf{u}  \cdot \n\mathbf{u}_{\e}   ) +\frac{1}{\e}(2\n \eta  + \n \vartheta ) - \frac{1}{p } ( \mu\Delta\mathbf{u}_\e  +(\mu+\lambda)\n\dv \mathbf{u}_\e   )=0 &{\rm in }~ \Omega_t, \\ 
& \vartheta_t  +\mathbf{u} \cdot \n  \vartheta   +\frac{1}{\e} \dv \mathbf{u}_\e -\kappa  \frac{1}{p  } \dv (\Theta  \n\vartheta )=\frac{\e^2 }{p }  [2\mu|D(\mathbf{u} )|^2+\lambda(\dv\mathbf{u}  )^2]
~ &{\rm in }~ \Omega_t,  \\
&\mathbf{u}_{\e}=0, \q  \frac{\p\vartheta  }{\p\mathbf{n}}=0  
&{\rm on}~  \p\Omega_t,  \\
&(\mathbf{u}_{\e t},  \eta_{t} ,  \vartheta_{t} )|_{t=0}=(\mathbf{u}_{\e t}(0),  \eta _{t} (0),  \vartheta _{t}(0))(x)  &{\rm in }~ \Omega,   
\end{aligned} \right. 
\end{equation}  
where $\mathbf{u}_{\e t}:= \p_t\mathbf{u}_{\e}$.

We  explain the roles of the two   systems under different transformations.
The system \ef{ns0}  under the linear transformation \ef{change1} has the 
explicit skew-symmetric structure for the modified velocity and the pressure.
The   system $\ef{1ns}$ under the exponential transformation \ef{change2} has an  additional advantage that the coefficient of $\vartheta_t $   is constant. Subtracting $  \frac{1}{2}\ef{1ns}_3   $ from $\e \ef{1ns}_1$ gives  $\ef{2ns}_1$. Thereby, the principal singular part  of $\ef{2ns}$  is skew-symmetric in the variables $(\eta ,\mathbf{u}_\e,\vartheta )$ and has the same form as the   small temperature variations case (see \cite{DJO,RO,ROzamp}).  
Thus, we shall establish the uniform estimates using the skew-symmetric structures  in both systems  \ef{ns0} and $\ef{2ns}$.

Using  \ef{2ns}, we establish the   estimates for
 \be\nonumber\begin{aligned}
M_1  (t)&:=    \| ( \eta_t  ,  \mathbf{u}_{\e t} ,  \vartheta_t     ) \|^2_{L^\infty (0, t;L^2(\Omega))}
+  \|  (   \n \mathbf{u}_{\e t} ,    \n \vartheta_t   )  \|^2_{L^2 (0, t;L^2(\Omega))} \\
&\q +
\| ( \n \eta_t ,  \e\dv \mathbf{u}_t , \nabla\vartheta_t  )\|^2_{L^\infty (0, t;L^2(\Omega))}+\|(\e  \dv \mathbf{u}_t ,  \nabla\vartheta_t  )\|^2_{L^2 (0, t;H^1(\Omega))},
\end{aligned}\ee 
and 
\be\nonumber\begin{aligned}
M_2  (t)&:= \|(\e^2  \sigma_{tt} ,  \e^2 \mathbf{u}_{tt}  , \e \vartheta_{tt}  )\|^2_{L^\infty (0, t;L^2(\Omega))}+\| (\e^2    \mathbf{u}_{tt} , \e\vartheta_{tt}  ) \|^2_{L^2 (0, t;H^1(\Omega))}    \\ 
&\q  + \| \e^2 \mathbf{u}_{t}   \|^2_{L^\infty (0, t;H^1(\Omega))}+ \| \e^2 \mathbf{u}_{t}   \|^2_{L^2 (0, t;H^2 (\Omega))},
\end{aligned}\ee  
   by employing standard $L^2$ energy estimates,     interior   and boundary estimates. These quantities provide the uniform control of $\vartheta_t $ in  $L^\infty(0,t;H^1(\Omega))
\cap L^2(0,t;H^2(\Omega))$. 
Relation  \ef{rela} then yields the corresponding $\e$-uniform estimate for  $\theta_t $.

We next seek an $\e$-uniform bound for  $\|\mathbf{v}_t \|_{L_t^\infty (L^2) }^2+\|  \mathbf{v}_t \|_{ L_t^2 (H^1)}^2$. 
A direct energy estimate is not available due to the nonhomogeneous boundary trace of $\mathbf{v}_t $.  Indeed, taking 
$\mathbf{v}_t $ as a test function in $\p_t\ef{ns0}_2$ yields a boundary term that cannot be controlled: 
$$-\om \Delta \mathbf{v}_t\cdot\mathbf{v}_t dx=\om |\n\mathbf{v}_t |^2 dx-\int_{\p\Omega } (\mathbf{n}\cdot \n \mathbf{v}_t  )\cdot \mathbf{v}_t  dS .$$
To overcome this obstruction, we construct a divergence-free Stokes lifting $\ell $ of the boundary data $\mathbf{v}_t|_{\p\Omega_t}=-\frac{\kappa}{2} \n_\tau \theta_t$ by solving the Stokes problem \ef{stokes} with $g= -\frac{\kappa}{2} \n_\tau \theta_t  $  and $\ell_0=0$.  
For the trace space specified in Lemma \ref{lem2.4}, the existence, uniqueness, and the required estimates follow from Theorem 6.1 in \cite{Fursikov}. 
We then use $Z:=\mathbf{v}_t -\ell$, which satisfies 
$$Z|_{\p\Omega_t}=0,\q {\rm and}\q \dv Z=\dv\mathbf{v}_t\q {\rm in}~\Omega_t, $$ 
as a new test  function  in  $\p_t \ef{ns0}_2$. 
Moreover, since $\ell $ is divergence-free and has zero normal trace, the correction by $\ell $ does not alter the cancellation of the singular pressure terms.  
Combining the resulting energy estimate with the bounds for $\ell $   in Lemma  \ref{lem2.4}, we obtain the desired $\e$-uniform estimate for $\mathbf v_t $.

We also remark that the method of constructing special test functions  is typically applicable to other problems as well. 
For instance, test functions constructed from the Stokes problem are used in \cite{CMT,LR}, while those based on  the Bogovskii problem are used in \cite{FJLX,MS22}.

Finally, we apply the Stokes theory to reduce the estimates of $\|\mathbf{u} \|_{L_t^2(H^3)}^2$ to $\|\n^2\dv\mathbf{u} \|_{L_t^2(L^2)}^2$. Because of
the strong coupling of the modified velocity and the temperature, we  control $\|\n^2\dv\mathbf{v} \|_{L_t^2(L^2)}^2$ and $\|\n^2\Delta \theta \|_{L_t^2(L^2)}^2$ simultaneously in the interior part and boundary part. 
We shall establish the uniform {\it a priori} estimates in general bounded domains in Section \ref{ue}.

\vskip4mm       

\textbf{Simplified problem in the upper half-space:}  To clarify our main ideas, we shall outline the derivation of uniform estimates for a simplified system   in $\Omega_t:=\Omega\times(0,t]$ with $\Omega=\mathbb{R}^3_+$, which reads as follows: 
\begin{equation}\label{sim0}\left\{\begin{aligned} 
&\frac{1}{2} q_t +\frac{1}{\e}(\dv\mathbf{u}-\frac{\kappa}{2}\Delta \theta)=0  &{\rm in}\  \Omega_t, \\
& \mathbf{u}_t +\frac{1}{\e}\nabla q-\mu\Delta \mathbf{u} -(\mu+\lambda)\nabla\dv \mathbf{u}   =0 &{\rm in}\  \Omega_t, \\
& \theta_t-  {\kappa} \Delta\theta  +\dv\mathbf{u} =0  &{\rm in}\  \Omega_t, \\
&\mathbf{u}=0,\q \p_3\theta =0 &{\rm on}\  \p\Omega_t,\\
&(q ,  \mathbf{u} ,  \theta )|_{t=0}= (  q_0, \mathbf{u}_0,  \theta_0)(x)  &{\rm in}\ \Omega. 
\end{aligned}\right.\end{equation}  
 In the half-space $\mathbb{R}^3_+$,  we denote by $\partial_{i},i=1,2$  the tangential derivative and by $\partial_{3}$  the normal derivative. 
By introducing the modified velocity $\mathbf{v}:=\mathbf{u}-\frac{\kappa}{2}\n \theta$  (see \cite{Ala,JO2022,Sun}),  we recover the skew-symmetric structure for the pressure and the modified velocity:
\begin{equation}\label{sim1}\left\{\begin{aligned} 
&\frac{1}{2} q_t +\frac{1}{\e}\dv\mathbf{v}=0  &{\rm in}\  \Omega_t, \\
& \mathbf{v}_t +\frac{1}{\e}\nabla q-\mu\Delta \mathbf{v} -\xi\nabla\dv \mathbf{v} -\beta\nabla\Delta\theta  =0 &{\rm in}\  \Omega_t, \\
& \theta_t- \frac{\kappa}{2} \Delta\theta  +\dv\mathbf{v} =0  &{\rm in}\  \Omega_t, \\
&\mathbf{v}  =- \frac{\kappa}{2} \n\theta  ,\q \p_3\theta  =0 &{\rm on}\  \p\Omega,\\
&(q ,  \mathbf{v} ,  \theta )|_{t=0}= (  q_0, \mathbf{u}_0-\frac{\kappa}{2}\n\theta_0,  \theta_0)(x)  &{\rm in}\ \Omega. 
\end{aligned}\right.
\end{equation}  
In contrast to the slip-boundary setting considered in \cite{JO2022,Sun},   the modified velocity satisfies only the homogeneous normal boundary condition 
$\mathbf{v}\cdot \mathbf{n}|_{\p\Omega}=0$, 
while no boundary condition is available for its vorticity. This lack of boundary control poses a major obstacle to establishing uniform {\it a priori} estimates.

{\it Step 1}. The low-order  estimates for $(q,\mathbf{v},\theta )$ can be shown by Newton-Leibniz's formula  for time variables,
\be\label{s0}   
\|q\|_{L_t^\infty(L^2)}^2+\|\mathbf{v}\|_{L_t^\infty(H^1)}^2+\|\theta\|_{L_t^\infty(H^2)}^2\leq C_0(M_0)+ t (\|q_t\|_{L_t^\infty(L^2)}^2+\|\mathbf{v}_t\|_{L_t^2(H^1)}^2+\|\theta_t\|_{L_t^2(H^2)}^2).
\ee

{\it Step 2}.  Due to the strong coupling between the equations for $(\mathbf{v}_t,\theta_t)$ and the lack of   boundary information, the estimates of $ \|\mathbf{v}_t\|_{L^2_t(H^1)}$ and $\|\theta_t\|_{L^2_t(H^2)}$ cannot be derived simultaneously from the energy estimates associated with $ \p_t\eqref{sim1}$. Motivated by the additional dissipation provided by $\kappa>0$, we shall  first derive the uniform estimate for $\|\theta_t\|_{L_t^2(H^2)}$ by combining the dissipative mechanism with suitable skew-symmetric coupling structure, and subsequently use it to estimate $ \|\mathbf{v}_t\|_{L^2_t(H^1)}$.

To this end, we introduce the   variables  (cf. \cite{Ala}) for the simplified system,
$$(\eta_t,\mathbf{u}_{\e t},\theta_t):=( \e q_t-\theta_t,\e\mathbf{u}_t,\theta_t)$$
 which satisfy 
\begin{equation}\label{sim5}\left\{\begin{aligned} 
&\eta_{tt} +\frac{1}{\e}\dv\mathbf{u}_{\e t}=0  &{\rm in}\  \Omega_t, \\
& \mathbf{u}_{\e tt} +\frac{1}{\e}\nabla (\eta_t+\theta_t)-\mu\Delta \mathbf{u}_{\e t} -(\mu+\lambda)\nabla\dv \mathbf{u}_{\e t}    =0  \q     &{\rm in}\  \Omega_t, \\
& \theta_{tt}+\frac{1}{\e}\dv\mathbf{u}_{\e t} -  \kappa  \Delta\theta_t  =0  &{\rm in}\  \Omega_t,\\
&\mathbf{u}_{\e t}=0,\q \partial_3\theta_t=0 &{\rm on}\  \p\Omega_t,\\
&(\eta_{t},\mathbf{u}_{\e t},\theta_{t})|_{t=0}=(\eta_{t}(0,x),\mathbf{u}_{\e t}(0,x),\theta_{t}(0,x)) &{\rm in}\  \Omega. 
\end{aligned}\right.\end{equation}    
Here,  $\theta_{t}(0,x):= \kappa \Delta\theta_0-\dv\mathbf{u}_0$,  
 while $\eta_{t}(0,x) $ and $\mathbf{u}_{\e t}(0,x)$ are defined analogously by taking $t=0$ in \ef{sim0}. In addition, $(\e\p_t)  (\eta_{t},\mathbf{u}_{\e t},\theta_{t})(0,x)$ can  be defined  inductively.    
 Due to \ef{rela} and \ef{data}, we know that $\|(\e\p_t)^k (\eta_{t} ,\mathbf{u}_{\e t},\theta_{t})(0,x)\|_{L^2} ^2\leq C_0(M_0)$ for $k=0,1$.  
Employing the skew-symmetric structure of   \eqref{sim5},  and the boundary conditions   $\ef{sim5}_4$, one 
tests $(\e\p_t )^k\ef{sim5}_{1,2,3}$ by  $(\e\p_t)^k (\eta_{t},\mathbf{u}_{\e t},\theta_{t})$, respectively, to derive 
\begin{equation}\label{s11}\begin{aligned}
& \left\|(\e\p_t )^k\left( \eta_t ,  \mathbf{u}_{\e t},  \theta_t   \right)\right\|^2_{L_t^\infty(L^2)}
+ \left\| (\e\p_t )^k\left(    \mathbf{u}_{\e t},     \theta_t  \right)  \right\|^2_{L^2_t(H^1)} \leq C_0(M_0) \q {\rm for}~k=0,1. 
\end{aligned}\end{equation} 
We next obtain the higher-order spatial estimates for 
$$\left\|\left( \n\eta_t ,  \dv\mathbf{u}_{\e t},  \n\theta_t   \right)\right\|^2_{L_t^\infty(L^2)}
+ \left\| \left(     \dv \mathbf{u}_{\e t},   \n \theta_t  \right)  \right\|^2_{L^2_t(H^1)}, $$ 
by treating tangential and normal derivatives separately, without invoking the Stokes regularity.  
 Indeed, the $\e$-uniform energy norm $\|\n\theta_t\|^2_{L_t^\infty(L^2)}+\|\n\theta_t\|^2_{L^2_t(H^1)}$ above is our primary object of interest, whereas the remaining weighted   quantities arise as by-products. 
Testing $\p_i\ef{sim5}_{1,2,3}$ by  $\p_i(\eta_t,\mathbf{u}_{\e t},\theta_t)$, respectively, we deduce the tangential estimates:
\begin{equation}\label{s12}
\left\|\left( \p_i\eta_t ,  \p_i\mathbf{u}_{\e t},  \p_i\theta_t   \right)\right\|^2_{L_t^\infty(L^2)}
+ \left\| \left(    \p_i\n \mathbf{u}_{\e t},   \p_i\dv \mathbf{u}_{\e t},  \p_i\n \theta_t  \right)  \right\|^2_{L^2_t(L^2)}\leq C_0(M_0).
\end{equation}  
Then, from $\ef{sim5}_{2}$, we have
\begin{equation}\label{sim6}
(2\mu+\lambda)\partial_3\dv \mathbf{u}_{\e t}  -
\mathbf{u}_{\e tt}^3 -\frac{1}{\e}\partial_3 (\eta_t+\theta_t) =\mu(\partial_3\dv \mathbf{u}_{\e t}-\Delta \mathbf{u}_{\e t} ^3) \q    {\rm in}\  \Omega_t, 
\end{equation}  
where the right-hand side term does not contain $\partial_3^2\mathbf{u}_{\e t}$ (cf. \cite{Valli1983,Valli1986}).  
Multiplying \ef{sim6} by $\partial_3\dv \mathbf{u}_{\e t}$, and $\p_3\ef{sim5}_{1,3}$ by   $\p_3(\eta_t, \theta_t)$, respectively, then summing them together, we obtain
\begin{equation}\label{s13}\begin{aligned}
&\left\|\left( \partial_3\eta_t ,  \partial_3\mathbf{u}_{\e t}^3,  \partial_3\theta_t   \right)\right\|^2_{L_t^\infty(L^2)}
+ \left\| \left(    \partial_3\dv \mathbf{u}_{\e t},   \partial_3\n \theta_t  \right)  \right\|^2_{L^2_t(L^2)}\\
& \leq C_0(M_0)  +\omt - \partial_3 \mathbf{u}_{\e tt}^3\cdot(\p_{1}\mathbf{u}_{\e t}^1+\p_{2}\mathbf{u}_{\e t}^2 ) +\mu (\partial_3\dv \mathbf{u}_{\e t}-\Delta \mathbf{u}_{\e t}^3 )\cdot \partial_3\dv \mathbf{u}_{\e t} dxds\\
&\leq C_0(M_0) +\delta( \| \mathbf{u}_{ t}\|_{L_t^2(H^1)}^2+ \|   \partial_3\dv \mathbf{u}_{\e t}  \|^2_{L^2_t(L^2)} )+C_\delta(\|\e \mathbf{u}_{\e tt}\|_{L_t^2(H^1)}^2 +\|  \p_i\n \mathbf{u}_{\e t}\|^2_{L^2_t(L^2)} ),
\end{aligned}    
\end{equation}   
where   integration by parts and $\mathbf{u}_{\e t}|_{\p\Omega_t}=0$ were used.  
Inserting \ef{s11} and \ef{s12} into \ef{s13}, we deduce that  
\begin{equation}\label{s15}
\left\|\left( \n\eta_t ,  \dv\mathbf{u}_{\e t},  \n\theta_t   \right)\right\|^2_{L_t^\infty(L^2)}
+ \left\| \left(     \dv \mathbf{u}_{\e t},   \n \theta_t  \right)  \right\|^2_{L^2_t(H^1)}\leq C_0(M_0)+\delta \| \mathbf{u}_{ t}\|_{L_t^2(H^1)}^2, 
\end{equation}   
where $\delta>0$ is a sufficiently small constant.

{\it Step 3}. To estimate $\|\mathbf{v}_t\|_{L^2_t(H^1)}$, the test function $\mathbf{v}_t $ cannot be employed in the equation $\p_t\eqref{sim1}_2$, because $\mathbf{v}_t $ lacks the boundary conditions required to eliminate the boundary term arising from the diffusion term $-\int_\Omega \Delta \mathbf{v}_t \cdot \mathbf{v}_t  dx$.  Note that 
$$\mathbf{v}_t |_{\p\Omega_t}=-\frac{\kappa}{2} \n_\tau \theta_t := -\frac{\kappa}{2} (\n  \theta_t- (\p_3\theta_t) \mathbf{n}),$$
since $ \p_{3}\theta_t|_{\p\Omega_t}=0.$  Then we construct a Stokes-type divergence-free extension $\ell$ of the boundary data $-\frac{\kappa}{2} \n_\tau \theta_t$ by solving the Stokes problem \ef{stokes} with $g= -\frac{\kappa}{2} \n_\tau \theta_t  $ and $\ell_0=0$.  
For an appropriate choice of the trace space, the existence and uniqueness of \ef{stokes} follow from Theorem 6.1 in \cite{Fursikov} (see Lemma \ref{lem2.4}).

 Then,  we introduce a new test function $Z:=\mathbf{v}_t-\ell$, which satisfies
$$Z|_{\p\Omega_t}=0,\q {\rm and}\q \dv Z=\dv\mathbf{v}_t\q {\rm in}~\Omega_t, $$ 
where the latter follows from  $\dv\ell=0$.
Multiplying $\p_t\eqref{sim1}_2 $ by $Z$, and  $\p_t\eqref{sim1}_1 $ by $q_t$ in $L^2(\Omega\times(0,t))$ yields that 
\begin{equation}\label{s8}
\begin{aligned}
&\frac{1}{2}\|Z\|_{L_t^\infty(L^2)}^2-\frac{1}{\e}\omt q_t \dv Z dxds+\mu\|\n Z\|_{L_t^2(L^2)}^2+\xi\|\dv Z\|_{L_t^2(L^2)}^2\\
&\leq C_0(M_0)+ \delta \| Z\|_{L_t^2(H^1)}^2 + C_\delta (\|\ell\|_{L_t^2(H^1)}^2+ \|\ell_t\|_{L_t^2(H^{-1})}^2+\|\theta_t\|_{L_t^2(H^2)}^2)  , 
\end{aligned} 
\end{equation}   
and
\be \label{s9}
\frac{1}{4}\|q_t\|_{L_t^\infty(L^2)}^2+\frac{1}{\e}\omt q_t \dv \mathbf{v}_t dxds\leq \frac{1}{4}\|q_t(0)\|_{L^2}^2.
\ee 
In view of Lemmas \ref{lem2.4} and \ref{lem2.5}, and applying the equation $\p_t\ef{sim1}_3$, one has 
\be\begin{aligned}\label{s10}
&\|\ell\|^2_{L_t^\infty(L^2)}+\|\ell\|^2_{L_t^2(H^1)}+\|\ell_t\|^2_{L_t^2(H^{-1})}  \\
&\leq C(\kappa, \Omega) [ \|\n_\tau \theta_t\|^2_{L^2_t(H^{ 1/2}(\partial\Omega))} +\|\n_\tau \theta_t\|^2_{H^{1/ 2}_t(H^{- 1 /2}(\partial\Omega))} ]\\
 &\leq C(\kappa, \Omega) [ \|  \theta_t\|^2_{L^2_t(H^2(\Omega))} +\|  \theta_t\|^2_{H^{1/ 2}_t(H^{1   }(\Omega))} ]\\
&\leq C(\kappa, \Omega) [ \|  \theta_t\|^2_{L^2_t(H^2 )} +\|  \theta_t\|_{H^{1}_t(L^{2} )}\|  \theta_t\|_{L^2_t(H^2)} ]\\
&\leq C(\kappa, \Omega) [ \|  \theta_t\|^2_{L^2_t(H^2 )} + \|  \theta_t\|_{L^2_t(H^2)} (\|  \theta_t\|_{L^2_t(L^2)} +\|  \theta_{tt}\|_{L^2_t(L^2)} ) ]\\
&\leq \delta \|  \theta_{tt}\|_{L^2_t(L^2)} ^2+C_\delta C(\kappa, \Omega)\|  \theta_t\|^2_{L^2_t(H^2)}\\
&\leq \delta \|  \dv\mathbf{v}_t\|_{L^2_t(L^2)} ^2+C_\delta C(\kappa, \Omega)\|  \theta_t\|^2_{L^2_t(H^2)},
\end{aligned}   
\ee    
which is a key technical step in controlling $\|\mathbf{u}_t\|_{L^2_t(H^1)}$ by means of $ \|\theta_t\|_{L^2_t(H^2)} $.  
Adding   \ef{s8} and \ef{s9}, then applying  \ef{s10},  we have   
\be\nonumber
\|Z\|_{L_t^\infty(L^2)}^2+\|q_t\|_{L_t^\infty(L^2)}^2+\|Z\|_{L_t^2(H^1)}^2 \leq  C_0(M_0)+ \delta \|  \dv\mathbf{v}_t\|_{L^2_t(L^2)}^2 +C_\delta  \|  \theta_t\|^2_{L^2_t(H^2)}. 
\ee   
By choosing above $\delta$ sufficiently small, and  applying \ef{s11}, \ef{s15} and \ef{s10}, we obtain 
\be  \begin{aligned}\label{s10.5}
\|\mathbf{v}_t\|_{L_t^\infty(L^2) }^2 +\|q_t\|_{L_t^\infty(L^2)}^2+\|\mathbf{v}_t\|_{  {L_t^2(H^1)}}^2 +\|\mathbf{u}_t\|_{L_t^\infty(L^2) }^2 +\|\mathbf{u}_t\|_{  {L_t^2(H^1)}}^2   \leq  C_0(M_0) +C_2 \|  \theta_t\|^2_{L^2_t(H^2)},
\end{aligned}   
\ee  
since $\mathbf{u}_t=\mathbf{v}_t+\frac{\kappa}{2}\n\theta_t=Z+\ell +\frac{\kappa}{2}\n\theta_t$.
Thus,  choosing $\delta$ in \ef{s15} sufficiently small such that $\delta C_2<1$, we derive the uniform bound  from \ef{s15} and  \ef{s10.5},  
\be\label{s10.6}
\|  \theta_t\|^2_{L^\infty_t(H^1)}+\|  \theta_t\|^2_{L^2_t(H^2)}+ \|\mathbf{u}_t\|_{L_t^\infty(L^2) }^2 +\|\mathbf{u}_t\|_{  {L_t^2(H^1)}}^2   \leq  C_0(M_0).  
\ee

Combining \ef{s0} and  \ef{s10.6}, we conclude  the estimates in {\it Step 1}, {\it Step 2},  and {\it Step 3} as follows:
\be\begin{aligned}\label{low}
&\|q\|_{L_t^\infty(L^2)}^2+\|\mathbf{v}\|_{L_t^\infty(H^1)}^2+\|\theta\|_{L_t^\infty(H^2)}^2 +\|  \theta_t\|^2_{L^\infty_t(H^1)}+\|  \theta_t\|^2_{L^2_t(H^2)} \\
& + \|q_t\|_{L_t^\infty(L^2)}^2+\|\mathbf{u}_t\|_{L_t^\infty(L^2) }^2 +\|\mathbf{u}_t\|_{  {L_t^2(H^1)}}^2 \leq C_0(M_0),
\end{aligned}\ee
by choosing  $t<1$.

{\it Step 4}. The final goal is to establish the estimates for higher-order spatial derivatives. 
We regard the momentum equation  $\ef{sim0}_2$ under the no-slip boundary condition as a Stokes problem: 
\begin{equation}\label{sim2}\left\{\begin{aligned} 
& -\mu\Delta \mathbf{u}  +\frac{1}{\e}\nabla q=- \mathbf{u}_t+(\mu+\lambda )\nabla\dv \mathbf{u}  &{\rm in}\  \Omega_t, \\
& \dv\mathbf{u}=\dv\mathbf{u}   &{\rm in}\  \Omega_t, \\
& \mathbf{u} =0 &{\rm on}\  \p\Omega_t.
\end{aligned}\right.\end{equation} 
Stokes regularity yields the following estimate:
$$\|\mathbf{u}\|_{L^2_t(H^3)}^2+\left\|\frac{\n q}{\e}\right \|_{L^2_t(H^1)}^2\leq C( \|\n^2\dv \mathbf{u}\|_{L^2_t(L^2)}^2+\|   \mathbf{u}_t\|_{L^2_t(H^1)}^2+\| \mathbf{u}\|_{L^2_t(L^2)}^2).$$
Thus, it remains to estimate $\|\n^2\dv \mathbf{u}\|_{L^2_t(L^2)}^2$.
In view of the equivalence of systems \eqref{sim0} and \eqref{sim1}, we work with \eqref{sim1} to derive an estimate for
$\|\n^2\dv \mathbf{v}\|_{L^2_t(L^2)}^2$
which allows us to exploit the skew-symmetric structure of the system. Due to the higher-order coupling term $-\beta \n\Delta \theta$ in the modified velocity equation $\eqref{sim1}_2$, the estimate for $\|\n^2\Delta \theta\|_{L^2_t(L^2)}^2$ is derived simultaneously.

We next outline the strategy for estimating $\|\n^2\dv \mathbf{v}\|_{L^2_t(L^2)}^2$ and $\|\n^2\Delta \theta\|_{L^2_t(L^2)}^2$. 
The main difficulty  lies in estimating the second-order tangential derivatives since 
the viscous contribution 
$-\mu\om \p_i^2\Delta \mathbf{v}\cdot \p_i^2 \mathbf{v} dx$ cannot be handled directly via integration by parts. 
This is in sharp contrast to the zero heat conduction case \cite{JO} and the small temperature variations setting \cite{ROzamp}, where the tangential estimates are straightforward.

\textbf{Tangential estimates}: Multiplying  $\p_i^2\ef{sim1}_2$ by $\frac{1}{2}\p_i^2\mathbf{u}$  and using $\p_i^2\mathbf{u}|_{\p\Omega}=0$ yields that 
 \begin{equation}   \nonumber
\begin{aligned} 
& \frac{1}{4}\frac{d}{dt}\om |\p_i^2\mathbf{u}|^2 dx - \frac{1}{2\e} \om \p_i^2q \cdot \p_i^2\dv \mathbf{u} dx +\frac{\mu}{2} \om \p_i^2\n \mathbf{u}\cdot  \p_i^2\n \mathbf{v} dx\\
& +\frac{\xi}{2} \om \p_i^2\dv \mathbf{u}\cdot  \p_i^2\dv \mathbf{v} dx  + \frac{\beta}{2} \om \p_i^2\Delta \theta\cdot \p_i^2 \dv\mathbf{u} dx \\
&= - \frac{\kappa}{4}\om \p_i^2\theta_t \cdot \p_i^2 \dv\mathbf{u} dx.  
\end{aligned} 
\end{equation}  
Indeed, the mixed viscosity terms can be transformed into 
 \begin{equation} 
\nonumber 
\begin{aligned} 
\frac{\xi}{2} \om \p_i^2\dv \mathbf{u}\cdot  \p_i^2\dv \mathbf{v} dx = \frac{\xi}{2} \om | \p_i^2\dv \mathbf{v}|^2 dx+\frac{\xi\kappa}{4} \om \p_i^2\Delta \theta \cdot  \p_i^2\dv \mathbf{u} dx-\frac{\xi\kappa^2}{8} \om |\p_i^2\Delta \theta |^2 dx,
\end{aligned}  
\end{equation}  
and
 \begin{equation} 
\nonumber 
\begin{aligned} 
 \frac{\mu}{2} \om \p_i^2\n \mathbf{u}\cdot  \p_i^2\n \mathbf{v} dx  &=\frac{\mu}{4} \om |\p_i^2\n \mathbf{u}|^2 dx+\frac{\mu}{4}\bigg( \om |\p_i^2\n \mathbf{u}|^2 dx-   {\kappa} \om \p_i^2\n \mathbf{u}\cdot  \p_i^2\n ^2\theta dx \\
&\q +\frac{\kappa^2}{4}    \om  | \p_i^2\n ^2\theta|^2  dx   \bigg)- \frac{\mu\kappa^2}{16}    \om  | \p_i^2\n ^2\theta|^2  dx \\
&=\frac{\mu}{4} \om |\p_i^2\n \mathbf{u}|^2 dx+\frac{\mu}{4} \om |\p_i^2\n \mathbf{v}|^2 dx-\frac{\mu\kappa^2}{16}   \om  | \p_i^2\Delta\theta|^2  dx. 
\end{aligned}  
\end{equation}  
Here, we apply the fact that 
 \begin{equation} 
\nonumber 
\begin{aligned} 
-\om  | \p_i^2\n ^2\theta|^2  dx
&=-\sum_{j=1}^3\sum_{k=1}^3\om   \p_i^2 \p_j\p_k \theta : \p_i^2\p_j\p_k\theta  dx\\
&= \sum_{j=1}^3\sum_{k=1}^3\om   \p_i^2 \p_j^2\p_k \theta \cdot \p_i^2 \p_k\theta  dx
- \sum_{k=1}^3\int_{\p\Omega}  \p_i^2 \p_{3}\p_k \theta \cdot \p_i^2 \p_k\theta   dS\\
&=- \sum_{j=1}^3\sum_{k=1}^3\om   \p_i^2 \p_j^2  \theta \cdot \p_i^2 \p_k^2\theta  dx + \sum_{j=1}^3 \int_{\p\Omega}  \p_i^2 \p_j^2  \theta  \cdot \p_i^2 \p_{3}\theta   dS\\
&=-\om  | \p_i^2 \Delta  \theta |^2 dx,\q \q i=1,2,
\end{aligned}  
\end{equation}  
where the first boundary term vanishes since $\p_i^2 \p_{3}\p_k \theta |_{\p\Omega_t}=0$ if $k=1,2$, and $\p_i^2  \p_k \theta |_{\p\Omega_t}=0$ if $ k=3$.
Collecting the above equalities, we obtain  
 \begin{equation}  
\label{s1}
\begin{aligned} 
& \frac{1}{4}\frac{d}{dt}\om |\p_i^2\mathbf{u}|^2 dx - \frac{1}{2\e} \om \p_i^2q \cdot \p_i^2\dv \mathbf{u} dx +\frac{\mu}{4} \om |\p_i^2\n \mathbf{u}|^2 dx+\frac{\mu}{4} \om |\p_i^2\n \mathbf{v}|^2 dx \\
& +\frac{\xi}{2} \om | \p_i^2\dv \mathbf{v}|^2 dx+\bigg(\frac{\beta}{2} +\frac{\xi\kappa}{4} \bigg)\om \p_i^2\Delta \theta \cdot  \p_i^2\dv \mathbf{u} dx-\bigg(\frac{\xi\kappa^2}{8}+\frac{\mu\kappa^2}{16} \bigg) \om |\p_i^2\Delta \theta |^2 dx\\
&\leq \delta \|\p_i^2\dv \mathbf{u}\|_{L^2}^2+ C_\delta \|\theta_t\|_{H^2}^2. 
\end{aligned}   
\end{equation}   
Then,  multiplying  $\p_i^2\ef{sim1}_1$ by $\p_i^2 q$, and  $\p_i^2\ef{sim1}_3$ by $-\p_i^2 \Delta \theta$ gives 
 \begin{equation} 
\label{s2}
 \frac{1}{4}\frac{d}{dt}\om |\p_i^2 q|^2 dx + \frac{1}{\e} \om \p_i^2q \cdot \p_i^2\dv \mathbf{v} dx =0, 
\end{equation}  
and 
\begin{equation} 
\label{s3}
  \frac{1}{2}\frac{d}{dt}\om |\p_i^2\n\theta|^2 dx  +\kappa \om |\p_i^2\Delta \theta|^2  dx -  \om \p_i^2\dv \mathbf{u} \cdot \p_i^2 \Delta \theta dx =0,
\end{equation}   
respectively. To eliminate the singular terms in \ef{s1} and \ef{s2}, we take the inner product of $\p_i^2\ef{sim1}_3$ and $-\frac{1}{2\e}  \p_i^2 q$ in $\Omega$ to deduce 
\begin{equation} 
\label{s4}
  \frac{\kappa}{2\e}   \om \p_i^2\Delta \theta\cdot\p_i^2 q   dx -   \frac{1}{2\e}  \om \p_i^2\dv \mathbf{u} \cdot \p_i^2 q dx \leq \delta \left\|\frac{\n q}{\e}\right\|_{ H^1}^2+C_\delta \|\theta_t\|_{H^2}^2.
\end{equation}  
By computing $\ef{s1} +\ef{s2}+(\frac{\xi\kappa}{4}+\frac{\beta}{2})\times\ef{s3}+\ef{s4}$, we  remove both  the singular terms  and the high-order terms on the left-hand side to  conclude that 
\begin{equation} 
\label{s5}
\begin{aligned} 
& \frac{1}{4}\frac{d}{dt}\om |\p_i^2\mathbf{u}|^2+|\p_i^2 q|^2+ (\frac{\xi\kappa}{2}+ {\beta} )|\p_i^2\n\theta|^2 dx +\frac{\mu}{4} \om |\p_i^2\n \mathbf{u}|^2 +|\p_i^2\n \mathbf{v}|^2 dx \\
&+\frac{\xi}{2} \om  | \p_i^2\dv \mathbf{v}|^2 dx  +\frac{\kappa}{2}\left( \frac{\xi\kappa}{4}+ \beta  - \frac{\mu\kappa}{8} \right)\om |\p_i^2\Delta\theta|^2dx\\
&\leq \delta ( \|\p_i^2\dv \mathbf{u}\|_{L^2}^2+\left\|\frac{\n q}{\e}\right\|_{ H^1}^2) + C_\delta \|\theta_t\|_{H^2}^2.
\end{aligned} 
\end{equation}

\textbf{Mixed tangential-normal estimates}: 
Note that $\cl\cl \mathbf{v}=\cl\cl \mathbf{u}$ in $\Omega_t$, and  the term ``$\cl\cl \mathbf{v}\cdot \mathbf{n}$''   does not involve the second-order normal derivative $\partial_3^2 \mathbf{v}$ (cf. \cite{Valli1983,Valli1986}). 
 These observations are crucial for estimating the mixed tangential-normal and the second-order normal derivatives of $\dv\mathbf{v}$.
We derive the following equation from  $\ef{sim1}_2$:
\be\label{sim3}
(\mu+\xi)\partial_3 \dv \mathbf{v}-\frac{1}{\e}\partial_3 q+ \beta\partial_3\Delta\theta =\mathbf{v}_t ^3+\mu(\partial_3 \dv \mathbf{u}-\Delta \mathbf{u}^3 )\q {\rm in}\  \Omega_t,
\ee  
where $ \mathbf{u}^3$ is the third component of $\mathbf{u}$ and $\mathbf{v}_t ^3=\p_t (\mathbf{u} ^3-\frac{\kappa}{2}\partial_3\theta)$. 
Meanwhile, we use the equation $\p_3\ef{sim1}_3 $ to verify the  estimates of mixed tangential-normal and   second-order normal derivatives  for $\Delta \theta$,    
\be\label{sim4}   
 \partial_3 \theta_t -\frac{\kappa}{2}\partial_3\Delta \theta+ \partial_3  \dv \mathbf{v}=0 \q {\rm in}\  \Omega_t. 
\ee    
Exploiting the balanced structure of $\partial_3\ef{sim1}_1$, \ef{sim3}, and \ef{sim4}, we establish the mixed tangential-normal estimates by integrating the resulting equalities of $\partial_i\partial_3\ef{sim1}_1\cdot \partial_i\partial_3q$, $\partial_i \ef{sim3}\cdot \partial_i\partial_3\dv \mathbf{v}$, and $-\beta \partial_i\ef{sim4}\cdot \partial_i\partial_3\Delta \theta $ in $\Omega$, respectively, 
$$
\frac{1}{4}\frac{d}{dt}\om  |\p_i\partial_3q|^2 dx+\frac{1}{\e}\om\p_i \partial_3q \cdot \p_i\partial_3\dv \mathbf{v} dx=0,
$$  
\begin{equation}  
\nonumber 
\begin{aligned} 
&(\mu+\xi) \om |\p_i\partial_3\dv \mathbf{v}|^2  dx  -\frac{1}{\e}\om\p_i \partial_3q \cdot \p_i\partial_3\dv \mathbf{v} dx+\beta\om \p_i\partial_3\Delta\theta\cdot \p_i\partial_3\dv \mathbf{v} dx\\
&\leq \delta\|\p_i\partial_3\dv \mathbf{v}\|^2_{L^2}+C_\delta(\|\mathbf{v}_t\|_{H^1}^2+ \|\p_i^2\n\mathbf{u}\|^2_{L^2}),
\end{aligned} 
\end{equation}  
and     
\begin{equation}\label{s6}
\frac{\beta}{2}\frac{d}{dt}\om |\p_i\partial_3\n\theta|^2 dx +\frac{\beta\kappa}{2}\om |\p_i\partial_3\Delta\theta|^2dx-\beta\om \p_i\partial_3\Delta\theta\cdot \p_i\partial_3\dv \mathbf{v} dx=0,
\end{equation}
where the  singular terms  and the high-order terms can be eliminated directly.

\textbf{Normal estimates}: Integrating the equalities of
$\partial_3^2\ef{sim1}_1\cdot  \partial_3^2q$, $\partial_3 \ef{sim3}\cdot  \partial_3^2 \dv \mathbf{v}$, and $-\beta \partial_3\ef{sim4}\cdot  \partial_3^2\Delta \theta $ over $\Omega$, and adding them together  
yields the second-order normal estimates:
\begin{equation} \label{s7}\begin{aligned} 
&\frac{1}{4}\frac{d}{dt}\om  | \partial_3^2 q|^2 dx+(\mu+\xi) \om | \partial_3^2 \dv \mathbf{v}|^2  dx+ \frac{\beta\kappa}{2}\om | \partial_3^2\Delta\theta|^2dx\\
&\leq C(\|\mathbf{v}_t\|_{H^1}^2+\|\theta_t\|_{H^2}^2+\|\p_i^2\n\mathbf{u}\|^2_{L^2}+\|\partial_3^2\p_i\mathbf{u}\|_{L^2}^2). 
\end{aligned} 
\end{equation} 
Due to $\partial_3^2\p_i\mathbf{u}=\Delta\p_i\mathbf{u}-\sum_{j=1}^2 \p_j^2\p_i\mathbf{u}$, and the regularity theory for the auxiliary Stokes problem $\p_i\ef{sim2}$ for $\p_i\mathbf{u}$, one  gets 
\begin{equation} \nonumber\begin{aligned}
\|\partial_3^2\p_i\mathbf{u}\|_{L^2}^2
&\leq C(\|\Delta\p_i\mathbf{u}\|_{L^2}^2+\| \p_j^2\p_i\mathbf{u}\|_{L^2}^2 )\\
&\leq C(\|\p_i\dv\mathbf{u}\|_{H^1}^2+ \|\mathbf{u}_t\|_{H^1}^2+\| \p_i^2\n\mathbf{u}\|_{L^2}^2).
\end{aligned}  
 \end{equation}

To close the preceding energy estimates, the coefficients of the
dissipative terms on the left-hand sides of \ef{s5}, \ef{s6}, and \ef{s7}
must be positive.  More precisely, we require
$$ \beta>0,\q\xi>0,\q {\rm and }\q  \frac{\xi\kappa}{4}+ \beta  - \frac{\mu\kappa}{8} >0.$$ 
Indeed, under the physical assumptions
$\mu>0$ and $2\mu+3\lambda\geq0$, we have
$\xi\geq \frac{\mu}{3}+\frac{\kappa}{2}>0$
and
$\frac{\xi\kappa}{4}+\beta-\frac{\mu\kappa}{8}
=\frac{\kappa}{8}(9\mu+6\lambda-\kappa).$
Consequently, the above positivity conditions reduce to $0<\kappa<4\mu+2\lambda,$
i.e., the condition $\beta>0$ in Theorem \ref{uethm}.

Adding all estimates in this step and choosing $\delta>0$   sufficiently small,  we conclude that  
\be\label{s7.5}\begin{aligned}
&\|q\|_{L^\infty_t(H^2)}^2+\|\mathbf{u}\|_{L^2_t(H^3)}^2+\left\|\frac{\n q}{\e}\right \|_{L^2_t(H^1)}^2+\|  \theta\|_{L^2_t(H^4)}^2\\
&\leq C_0(M_0)+C( \|\theta_t\|_{L^2_t(H^2)}^2+ \|\mathbf{u}_t\|_{L^2_t(H^1)}^2).
\end{aligned} \ee

In summary, combining the low-order estimate \ef{low}   and the high-order spatial  estimate  \ef{s7.5}, and choosing $\delta>0$ sufficiently small, we obtain the following uniform estimates for the simplified system \ef{sim0}, 
\begin{equation}\begin{aligned}
&\|q\|_{L_t^\infty(H^2)}^2+\|\mathbf{u}\|_{L_t^\infty(H^1)\cap L_t^2(H^3)}^2+\|\theta\|_{L_t^\infty(H^2)\cap L_t^2(H^4)}^2+\left\|\frac{\n q}{\e}\right \|_{L^2_t(H^1)}^2
\\   
&+\|\mathbf{u}_t\|_{L_t^\infty(L^2)\cap {L_t^2(H^1)}}^2 +\|\theta_t\|_{L_t^\infty(H^1)\cap {L_t^2(H^2)}}^2 +\|\theta_{tt}\|_{ {L_t^2(L^2)}}^2 \leq C_0(M_0).
\end{aligned}\end{equation}

\section{Preliminaries}
\subsection{Some basic inequalities}
We first present some  useful  interpolation inequalities and trace inequalities.
\begin{lem}\label{interpo}{\rm (Interpolation inequality,  see
Friedman \cite{Frie},  Part 1,  Theorem 10.1)} Let $\Omega\subset {\mathbb R}^N$
be a bounded domain with a $C^\kappa$-boundary and  $\mathbf{u}$ be any function
in $W^{\kappa, r}(\Omega)\cap L^q(\Omega)$ with $1\le r, q\le\infty$. For
any integer $j$ with $0\le j<\kappa$,  and for any number $a$ in the
interval $[j/\kappa,  1]$,  set
$\frac{1}{p}=\frac{j}{N}+a(\frac{1}{r}-\frac{\kappa}{N})+(1-a)\frac{1}{q}.$
 If $\kappa-j-N/r$ is not a nonnegative integer,  then
\begin{equation}\label{inter1}
\|D^j \mathbf{u}\|_{L^p(\Omega)}\le C
\|\mathbf{u}\|_{W^{\kappa, r}(\Omega)}^a\|\mathbf{u}\|_{L^q(\Omega)}^{1-a}.
\end{equation}
 If $\kappa-j-N/r$ is a nonnegative integer,  then \eqref{inter1} only holds for $a=j/\kappa$.
 The constant $C$ depends only on $\Omega, r, q, \kappa, j, a$.
\end{lem}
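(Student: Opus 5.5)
This interpolation inequality is the classical Gagliardo--Nirenberg inequality on a bounded domain, and the plan is to reduce it to the whole-space case by extension. First I will fix a bounded extension operator $E: W^{\kappa,r}(\Omega)\to W^{\kappa,r}(\mathbb R^N)$ that is simultaneously bounded $L^q(\Omega)\to L^q(\mathbb R^N)$. Stein's universal extension works, and for a $C^\kappa$ boundary so does a reflection of order $\kappa$ in local charts combined with a partition of unity. To keep constants controlled, I will multiply by a fixed cutoff $\chi\in C_c^\infty(\mathbb R^N)$ equal to $1$ near $\overline\Omega$. Then $Eu$ has compact support, agrees with $u$ on $\Omega$, and satisfies $\|Eu\|_{W^{\kappa,r}}\le C\|u\|_{W^{\kappa,r}(\Omega)}$ and $\|Eu\|_{L^q}\le C\|u\|_{L^q(\Omega)}$.

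Next I will prove the whole-space inequality $\|D^j w\|_{L^p}\le C\|D^\kappa w\|_{L^r}^a\|w\|_{L^q}^{1-a}$ for compactly supported smooth $w$. For the basic case $j=0$, $a=1$ with $1/p=1/r-\kappa/N$, this is Sobolev embedding, obtained from the Gagliardo inequality $\|w\|_{L^{N/(N-1)}}\le C\|\nabla w\|_{L^1}$ applied to $|w|^\gamma$ and iterated. The endpoint $j/\kappa\le a$ with $j=1$, $\kappa=2$ follows from integrating by parts in $\int |\partial_i w|^{p}$ and using Hölder, which gives $\|Dw\|_{L^p}^2\lesssim \|w\|_{L^q}\|D^2w\|_{L^r}$ with $2/p=1/q+1/r$. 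General $(j,\kappa)$ at $a=j/\kappa$ then follow by induction on $\kappa$ through the log-convexity of $j\mapsto\|D^jw\|$. Intermediate values $a\in(j/\kappa,1)$ are obtained by interpolating the $a=j/\kappa$ estimate with the Sobolev estimate $a=1$ via Hölder.

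Finally I will apply the whole-space bound to $w=Eu$. Because $\Omega$ is bounded, lower-order norms are absorbed into the full $W^{\kappa,r}$ norm, so $\|D^j u\|_{L^p(\Omega)}\le\|D^jEu\|_{L^p}\le C\|u\|_{W^{\kappa,r}(\Omega)}^a\|u\|_{L^q(\Omega)}^{1-a}$.

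\textbf{Main obstacle.} The delicate point is the exceptional case where $\kappa-j-N/r$ is a nonnegative integer. There the $a=1$ Sobolev endpoint fails: it would require embedding into $L^\infty$, or $W^{m,N}\hookrightarrow L^\infty$, which is false. So the Hölder interpolation to reach $a>j/\kappa$ breaks down, and only $a=j/\kappa$ survives. I will handle this by checking that the inductive argument for $a=j/\kappa$ never passes through the failing embedding, following Friedman, Part 1, Theorem 10.1, where this is carried out in detail.
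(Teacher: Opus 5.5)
The paper does not prove this lemma; it quotes it from Friedman. Your reduction to $\mathbb R^N$ through an extension that is bounded on both $W^{\kappa,r}$ and $L^q$ is fine. The whole-space argument, however, has a genuine hole in the non-exceptional supercritical regime $\kappa-j-N/r>0$. Your only way to reach $a>j/\kappa$ is H\"older interpolation between the $a=j/\kappa$ bound and the Sobolev endpoint $a=1$, whose exponent is $1/p_1=1/r-(\kappa-j)/N$. That endpoint is an $L^{p_1}$ estimate with $p_1<\infty$ only when $(\kappa-j)r<N$. When $(\kappa-j)r>N$ you have $1/p_1<0$, so there is no $L^p$ endpoint to interpolate with. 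Moreover, the admissible range of $a$ then contains a value $a^*$ with $1/p=0$, i.e.\ an $L^\infty$ bound on $D^j\mathbf u$. Sobolev embeddings into finite $L^p$, integration by parts and H\"older can never produce an $L^\infty$ estimate, so these cases are not covered. They are not marginal: the paper relies on exactly such instances, e.g.\ $\|\mathbf u\|_{L^\infty}\le C\|\mathbf u\|_{H^2}^{3/4}\|\mathbf u\|_{L^2}^{1/4}$, where $N=3$, $\kappa=2$, $j=0$, $r=q=2$ and $\kappa-j-N/r=1/2$, and likewise the $L^\infty$ bound on $\nabla\mathbf u$.

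What is missing is a Morrey-type endpoint. One way to get it is as follows. For $(\kappa-j)r>N$, combine the embedding $W^{\kappa-j,r}(Q)\hookrightarrow L^\infty(Q)$ on unit cubes with $\|w\|_{W^{\kappa,r}(Q)}\le C(\|D^\kappa w\|_{L^r(Q)}+\|w\|_{L^1(Q)})$. This gives $\|D^jw\|_{L^\infty}\le C(\|D^\kappa w\|_{L^r}+\|w\|_{L^q})$. Applying it to $w(\lambda\,\cdot)$ and optimizing in $\lambda$ yields the multiplicative bound at $a=a^*$. H\"older interpolation between $a=j/\kappa$ and $a^*$ then covers every $a$ with $1/p\ge 0$. (Nirenberg instead interprets $1/p<0$ as a H\"older seminorm.)

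Your ``main obstacle'' paragraph also targets the wrong case. In the exceptional case the lemma only asserts $a=j/\kappa$, which your induction already delivers without any embedding, so that is the easy part. Your claim that only $a=j/\kappa$ survives there is also inaccurate: Nirenberg shows $j/\kappa\le a<1$ still holds for $1<r<\infty$. This does not affect what has to be proved.
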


We record the following interpolation inequalities for later use.
\begin{lem}\label{inter} Let $\Omega$ be a bounded,  open domain of $\mathbb{R}^3$  with a $C^3$ boundary. Then the following inequalities
are satisfied:
$$
\|\mathbf{u}\|_{L^3}\leq C\|\mathbf{u}\|_{H^1}^{\frac{1}{2}}\|\mathbf{u}\|_{L^2}^{\frac{1}{2}}, \quad\|\nabla^2\mathbf{u}\|_{L^2}\leq C\|\mathbf{u}\|_{H^3}^{\frac{1}{2}}\|\nabla
\mathbf{u}\|_{L^2}^{\frac{1}{2}}, 
$$
$$
\|\nabla^2\mathbf{u}\|_{L^3}\leq C\|\mathbf{u}\|_{H^3}^{\frac{3}{4}}\|\mathbf{u}\|_{H^1}^{\frac{1}{4}}, \q \|\nabla^2\mathbf{u}\|_{L^4}\leq
C\|\mathbf{u}\|_{H^3}^{\frac{7}{8}}\|\mathbf{u}\|_{H^1}^{\frac{1}{8}}, \q 
\|\nabla \mathbf{u}\|_{L^\infty}\leq  C\|\mathbf{u}\|_{H^3}^{\frac{7}{8}}\|\mathbf{u}\|_{H^1}^{\frac{1}{8}}.
$$
Moreover,  by the Sobolev embedding theorem,  we have 
$$
\|\mathbf{u}\|_{L^\infty} \leq C\|\mathbf{u}\|_{H^2}^{\frac{3}{4}}\|\mathbf{u}\|_{L^2}^{\frac{1}{4}}\leq C\|\mathbf{u}\|_{H^2}^{\frac{7}{8}}\|\mathbf{u}\|_{L^2}^{\frac{1}{8}}, 
$$
$$
\|\mathbf{u}\|_{L^2_t(L^3)}\leq Ct^{\frac1 4}\|\mathbf{u}\|_{L^\infty_t(L^2)}^{\frac1 2}\|\mathbf{u}\|_{L^2_t(H^1)}^{\frac1 2},  \q 
\|\mathbf{u}\|_{L^2_t(L^\infty)}\leq Ct^{\frac1 8}\|\mathbf{u}\|_{L^2_t(H^2)}^{\frac3 4}\|\mathbf{u}\|_{L^\infty_t(H^1)}^{\frac1 4}.
$$
\end{lem}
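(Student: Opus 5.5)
The inequalities of Lemma \ref{inter} all follow from the Gagliardo--Nirenberg inequality of Lemma \ref{interpo} with $N=3$. The plan is to choose $(\kappa,j,r,q)$ for each estimate and solve the scaling relation
$$\frac1p=\frac jN+a\Big(\frac1r-\frac\kappa N\Big)+(1-a)\frac1q$$
for $a$. The admissibility condition is that $\kappa-j-N/r$ is not a nonnegative integer, or else $a=j/\kappa$. Where the natural lower norm is $\|\nabla\mathbf{u}\|$ or $\|\mathbf{u}\|_{H^1}$ rather than $\|\mathbf{u}\|$, I would apply Lemma \ref{interpo} to $\nabla\mathbf{u}$. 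Then $\|\nabla\mathbf{u}\|_{W^{2,2}}\le \|\mathbf{u}\|_{H^3}$.

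\emph{The $L^3$ bound.} Take $\kappa=1$, $j=0$, $r=q=2$, $p=3$. This gives $\frac13=a(\frac12-\frac13)+(1-a)\frac12$, so $a=\frac12$. Here $1-\frac32<0$, so the condition holds.

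\emph{The bound on $\nabla^2\mathbf{u}$ in $L^2$.} Apply Lemma \ref{interpo} to $\mathbf w=\nabla\mathbf{u}$ with $\kappa=2$, $j=1$, $r=q=p=2$. This gives $a=\frac12=j/\kappa$, which is always allowed.

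\emph{The bounds on $\nabla^2\mathbf{u}$ in $L^3$ and $L^4$.} Again use $\mathbf w=\nabla\mathbf{u}$, $\kappa=2$, $j=1$, $r=q=2$. For $p=3$ we solve $\frac13=\frac13+a(\frac12-\frac23)+(1-a)\frac12$, which gives $a=\frac34$. For $p=4$ we get $a=\frac78$. Here $\kappa-j-N/r=-\frac12$, so both are admissible. Since $\|\nabla\mathbf{u}\|_{L^2}\le\|\mathbf{u}\|_{H^1}$, the stated forms follow.

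\emph{The $L^\infty$ bound on $\nabla\mathbf{u}$.} Take $j=0$, $p=\infty$ for $\mathbf w=\nabla\mathbf{u}$ with $\kappa=2$. This yields $a=\frac34$, and $2-\frac32=\frac12$ is not an integer. So $\|\nabla\mathbf{u}\|_{L^\infty}\le C\|\mathbf{u}\|_{H^3}^{3/4}\|\nabla \mathbf{u}\|_{L^2}^{1/4}$. To reach the exponent $\frac78$, write $\|\nabla\mathbf{u}\|_{L^2}\le \|\mathbf{u}\|_{H^3}^{1/2}\|\mathbf{u}\|_{H^1}^{1/2}$, which is the case $j=1$, $\kappa=3$, $a=\frac12$ relative to $\|\mathbf{u}\|_{H^1}$. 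Then
$$\|\mathbf{u}\|_{H^3}^{3/4+1/8}\|\mathbf{u}\|_{H^1}^{1/8}=\|\mathbf{u}\|_{H^3}^{7/8}\|\mathbf{u}\|_{H^1}^{1/8}.$$

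\emph{The $L^\infty$ bound on $\mathbf{u}$.} With $\kappa=2$, $j=0$, $r=q=2$, $p=\infty$, we get $a=\frac34$; this is Agmon's inequality. The weaker exponent $\frac78$ follows from the elementary comparison $\|\mathbf{u}\|_{L^2}\le\|\mathbf{u}\|_{H^2}$:
$$\|\mathbf{u}\|_{H^2}^{3/4}\|\mathbf{u}\|_{L^2}^{1/4}=\|\mathbf{u}\|_{H^2}^{3/4}\|\mathbf{u}\|_{L^2}^{1/8}\|\mathbf{u}\|_{L^2}^{1/8}\le \|\mathbf{u}\|_{H^2}^{7/8}\|\mathbf{u}\|_{L^2}^{1/8}.$$

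\emph{The time-integrated bounds.} Square the spatial bounds and integrate over $(0,t)$, then apply H\"older's inequality in time.

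For the first one, $\|\mathbf{u}\|_{L^3}^2\le C\|\mathbf{u}\|_{L^2}\|\mathbf{u}\|_{H^1}$. Hence
$$\int_0^t\|\mathbf{u}\|_{L^3}^2\,ds\le C\|\mathbf{u}\|_{L^\infty_t(L^2)}\int_0^t\|\mathbf{u}\|_{H^1}\,ds\le C\|\mathbf{u}\|_{L^\infty_t(L^2)}\,t^{1/2}\|\mathbf{u}\|_{L^2_t(H^1)}.$$
Taking square roots gives the factor $t^{1/4}$.

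For the second one, use $\|\mathbf{u}\|_{L^\infty}^2\le C\|\mathbf{u}\|_{H^2}^{3/2}\|\mathbf{u}\|_{L^2}^{1/2}\le C\|\mathbf{u}\|_{H^2}^{3/2}\|\mathbf{u}\|_{H^1}^{1/2}$. H\"older with exponents $\frac43$ and $4$ gives
$$\int_0^t\|\mathbf{u}\|_{H^2}^{3/2}\,ds\le t^{1/4}\|\mathbf{u}\|_{L^2_t(H^2)}^{3/2}.$$
Taking square roots gives the factor $t^{1/8}$.

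There is no real obstacle. The only points needing care are the admissibility check in the borderline cases, where $\kappa-j-N/r$ must not be a nonnegative integer, and the reduction from $\|\nabla\mathbf{u}\|$ to $\|\mathbf{u}\|_{H^1}$ when applying the lemma to $\nabla\mathbf{u}$. The $C^3$ boundary assumption is what Lemma \ref{interpo} requires for $\kappa\le3$.
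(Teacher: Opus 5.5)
Your proposal is correct and follows the route the paper implicitly takes (the paper gives no separate proof): each spatial bound is an instance of Lemma \ref{interpo}, applied to $\mathbf{u}$ or to $\nabla\mathbf{u}$, followed by trivial norm comparisons, and the two time-integrated bounds follow from H\"older in time exactly as you show. One small correction: $\|\nabla\mathbf{u}\|_{L^2}\le\|\mathbf{u}\|_{H^3}^{1/2}\|\mathbf{u}\|_{H^1}^{1/2}$ is not the case $j=1$, $\kappa=3$, $a=\frac12$ of Lemma \ref{interpo}, since with $p=q=2$ that lemma gives $a=\frac13$ against $\|\mathbf{u}\|_{L^2}$. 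The inequality nevertheless holds trivially from $\|\nabla\mathbf{u}\|_{L^2}\le\|\mathbf{u}\|_{H^1}\le\|\mathbf{u}\|_{H^3}$, which is the same weakening you already used for $\|\mathbf{u}\|_{L^\infty}$.
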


\begin{lem}\label{lem1} {\rm (see Bourguignon and Brezis \cite{Brezis})} Let $\Omega$ be a bounded domain in $\mathbb{R}^N$ with smooth boundary $\partial\Omega$ and its unit outward normal $\mathbf{n}$. Then there exists a constant $C>0$ independent of $\mathbf{u}$, such that
$$
\Vert\mathbf{u}\Vert_{H^s(\Omega)}\leq C(\Vert{\rm div}\mathbf{u}\Vert_{H^{s-1}(\Omega)}+\Vert{\rm curl}\mathbf{u}\Vert_{H^{s-1}(\Omega)}
+\Vert\mathbf{u}\cdot\mathbf{n}\Vert_{H^{s-\frac{1}{2}}(\partial\Omega)}
+\Vert\mathbf{u}\Vert_{H^{s-1}(\Omega)}) 
$$
for any $\mathbf{u}\in [H^s(\Omega)]^N$, $s\geq1$.
\end{lem}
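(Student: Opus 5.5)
This estimate is a classical fact about vector fields on smooth bounded domains. I would prove it by induction on the integer part of $s$, using the Hodge/Friedrichs elliptic estimate in the base case $s=1$ and a localization plus tangential-differentiation argument for higher $s$. Non-integer $s$ then follows by interpolation between consecutive integers.

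\textbf{Base case $s=1$.} I start from the identity
$$\int_\Omega |\nabla \mathbf{u}|^2\,dx=\int_\Omega \big(|\dv\mathbf{u}|^2+|\cl\mathbf{u}|^2\big)\,dx+\int_{\p\Omega}(\text{boundary terms}),$$
valid first for smooth $\mathbf{u}$ with $\mathbf{u}\cdot\mathbf{n}=0$. In that case the boundary term reduces to $\int_{\p\Omega} B(\mathbf{u},\mathbf{u})\,dS$, where $B$ is the second fundamental form of $\p\Omega$. This term is bounded by $C\|\mathbf{u}\|_{L^2(\p\Omega)}^2\le \delta\|\nabla\mathbf{u}\|_{L^2}^2+C_\delta\|\mathbf{u}\|_{L^2}^2$ by the trace interpolation inequality. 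For general $\mathbf{u}$, I subtract $\n\phi$, where $\phi$ solves the Neumann problem $\Delta\phi=c$, $\p_{\mathbf{n}}\phi=\mathbf{u}\cdot\mathbf{n}$, with the constant $c$ chosen for compatibility. Elliptic regularity gives $\|\n\phi\|_{H^1}\le C(\|\mathbf{u}\cdot\mathbf{n}\|_{H^{1/2}(\p\Omega)}+\|\mathbf{u}\|_{L^2})$. Moreover $\n\phi$ has zero curl and constant divergence, so applying the tangential case to $\mathbf{u}-\n\phi$ yields the estimate.

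\textbf{Inductive step.} Assume the bound holds for $s-1$. I cover $\overline\Omega$ by interior balls and boundary charts flattening $\p\Omega$, and use a partition of unity $\chi_k$. For interior pieces, the identity $-\Delta=\cl\cl-\n\dv$ gives $\|\chi\mathbf{u}\|_{H^s}\lesssim\|\dv\|_{H^{s-1}}+\|\cl\|_{H^{s-1}}+\|\mathbf{u}\|_{H^{s-1}}$. For boundary pieces, I apply tangential derivatives $\p_\tau$ to $\chi\mathbf{u}$ and use the $s-1$ estimate for $\p_\tau(\chi\mathbf{u})$. The commutators $[\p_\tau,\dv]$ and $[\p_\tau,\cl]$ are lower order and are absorbed by the term $\|\mathbf{u}\|_{H^{s-1}}$. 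The normal trace satisfies $\|\p_\tau(\chi\mathbf{u})\cdot\mathbf{n}\|_{H^{s-3/2}}\lesssim\|\mathbf{u}\cdot\mathbf{n}\|_{H^{s-1/2}}+\|\mathbf{u}\|_{H^{s-1}}$, again using the trace theorem. This controls all derivatives that contain at least one tangential direction.

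\textbf{Normal derivative.} The pure normal derivative $\p_{\mathbf{n}}\mathbf{u}$ is recovered algebraically. The normal component of $\p_{\mathbf{n}}\mathbf{u}$ equals $\dv\mathbf{u}$ minus tangential derivatives. Its tangential components equal the tangential components of $\cl\mathbf{u}\times\mathbf{n}$ plus tangential derivatives of the normal component. Iterating this in $H^{s-1}$ closes the induction.

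The main obstacle I expect is the bookkeeping in the boundary charts: the coordinate change makes $\dv$ and $\cl$ variable-coefficient, and one must check that all commutator terms are genuinely of order at most $s-1$. Since smoothness of $\p\Omega$ is assumed, this is routine, and density of smooth fields in $H^s$ completes the proof.
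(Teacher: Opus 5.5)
The paper gives no proof of this lemma. It is quoted from Bourguignon--Brezis, where it follows from elliptic regularity theory for the div--curl system with the normal-trace boundary condition. In the paper it is only applied with $s=1$, to $\nabla\vartheta_t$ in Remark~\ref{rmk}.

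Your route is genuinely different, and for integer $s$ it is correct:
\begin{itemize}
\item \emph{Base case $s=1$.} The Gaffney--Friedrichs identity, with the boundary term $\int_{\partial\Omega}B(\mathbf{u},\mathbf{u})\,dS$ absorbed by the trace interpolation inequality, combined with the Neumann lifting of $\mathbf{u}\cdot\mathbf{n}$, is sound.
\item \emph{Inductive step.} Applying the $(s-1)$-estimate to $\partial_\tau(\chi\mathbf{u})$ controls every $s$-th derivative with a tangential direction. The commutators $[\partial_\tau,\dv]$ and $[\partial_\tau,\cl]$ are first order. On $\partial\Omega$ one has $\partial_\tau(\chi\mathbf{u})\cdot\mathbf{n}=\partial_\tau(\chi\mathbf{u}\cdot\mathbf{n})-\chi\mathbf{u}\cdot\partial_\tau\mathbf{n}$, and the trace $H^{s-1}(\Omega)\to H^{s-3/2}(\partial\Omega)$ is available for $s\ge 2$.
\item \emph{Normal derivative.} Reading the pure normal derivative off $\dv\mathbf{u}$ and $\cl\mathbf{u}$ is the same tangential/normal splitting in local coordinates that the paper uses in Section~\ref{ue}, following Valli--Zajaczkowski.
\end{itemize}
Your argument is elementary, self-contained, and works in any dimension with $\cl\mathbf{u}$ taken as the antisymmetric part of $\nabla\mathbf{u}$. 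The elliptic-theory route instead gives all real $s\ge1$, and $L^p$ versions, in one stroke.

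The step that does not hold as written is your claim that non-integer $s$ follows by interpolation between consecutive integers. Your integer results are a priori inequalities, not bounds for a linear operator defined on the data spaces, and such inequalities do not interpolate in general. The map $\mathbf{u}\mapsto(\dv\mathbf{u},\cl\mathbf{u},\mathbf{u}\cdot\mathbf{n})$ has a proper range cut out by compatibility conditions, for instance $\int_\Omega\dv\mathbf{u}\,dx=\int_{\partial\Omega}\mathbf{u}\cdot\mathbf{n}\,dS$, and $\dv\cl\mathbf{u}=0$ when $N=3$. The interpolation space between two such ranges need not coincide with the corresponding subspace at the intermediate level.

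To cover fractional $s$ you need one of the following:
\begin{itemize}
\item A linear reconstruction operator bounded at both integer levels, which can then be interpolated legitimately. For $N=3$, take $\nabla\phi$ from the Neumann problem plus $\cl\psi$ with $-\Delta\psi=\cl\mathbf{u}$, $\psi\times\mathbf{n}=0$, $\dv\psi=0$ on $\partial\Omega$; use Hodge theory for general $N$. This recovers $\mathbf{u}$ up to an element of a finite-dimensional space of smooth harmonic fields, whose $H^s$ norm is controlled by its $L^2$ norm.
\item A direct fractional version of your localization argument, using tangential Fourier multipliers in place of $\partial_\tau$.
\end{itemize}
Without one of these, your proof covers integer $s$ only. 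That is all the paper actually uses.
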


Next, we consider the nonhomogeneous boundary value problem for the Stokes equations:
\begin{equation}\label{stokes} 
\left\{\begin{aligned}
&\p_t \ell - \Delta \ell+\n\pi_\ell=0 &{\rm in}\ \Omega_t, \\
&\dv \ell=0    &  {\rm in}\ \Omega_t, \\
&\ell = g \q   &{\rm on}\ \partial \Omega_t, \\
&\ell |_{t=0}= \ell_0   &{\rm in}\ \Omega.
\end{aligned}\right.\end{equation}

 \begin{lem}\label{lem2.4}{\rm (see Fursikov et al. \cite{Fursikov}, Theorem  6.1 with $s=1$)}
Suppose that 
$$ g\in L^2(0,t;H^{1/2}(\partial\Omega)) \cap H^{1/2}(0,t;H^{-1/2}(\partial\Omega)) $$
and  $g\cdot \mathbf{n}=0$ on the boundary $\partial \Omega_t$. Suppose further that $\ell_0\in L^2$ and  the compatibility condition $\ell_0\cdot \mathbf{n}|_{\p\Omega}=0$.
 There exists a unique 
solution $(\ell,\n \pi_\ell)\in \mathcal{V}^1(\Omega_t)\times  L^2(0,t;H^{-1}(\Omega))$ for the problem \ef{stokes} with $\mathcal{V}^1(\Omega_t):=\{ v\in L^2(0,t;H^{1}(\Omega)): v_t\in L^2(0,t;H^{-1}(\Omega)) ,\dv v=0\}$. Moreover, the solution satisfies that
\be\begin{aligned}\nonumber
&\|\ell\|^2_{L_t^\infty(L^2)}+\|\ell\|^2_{L_t^2(H^1)}+\|\ell_t\|^2_{L_t^2(H^{-1})} +\|\n\pi_\ell\|^2_{L_t^2(H^{-1})} \\
&\leq C(\Omega) \left[\|\ell_0\|^2_{ L^2} +\|g\|^2_{L^2_t(H^{ 1/2}(\partial\Omega))} +\|g\|^2_{H^{1/ 2}_t(H^{- 1 /2}(\partial\Omega))} \right].
\end{aligned}\ee  
\end{lem}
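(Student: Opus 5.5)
The plan is to split off the initial datum, solve the boundary-driven part by a Fourier–Laplace transform in time, and recover the time regularity from parameter-dependent resolvent estimates.

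\emph{Splitting by linearity.} Write $\ell=\ell^{(1)}+\ell^{(2)}$:
\begin{itemize}
\item $\ell^{(1)}$ solves \eqref{stokes} with $g=0$ and datum $\ell_0$;
\item $\ell^{(2)}$ solves \eqref{stokes} with boundary data $g$ and $\ell^{(2)}|_{t=0}=0$.
\end{itemize}
For $\ell^{(1)}$, the hypotheses $\ell_0\in L^2$ and $\ell_0\cdot\mathbf{n}|_{\p\Omega}=0$ place $\ell_0$ in the closure $H$ of smooth solenoidal fields in $L^2$, provided $\ell_0$ is divergence-free. Standard Galerkin or semigroup theory for the Stokes operator on $V=\{v\in H^1_0:\dv v=0\}$ then gives the energy identity
\[
\tfrac12\|\ell^{(1)}(t)\|_{L^2}^2+\int_0^t\|\n\ell^{(1)}\|_{L^2}^2\,ds=\tfrac12\|\ell_0\|_{L^2}^2 .
\]
This bounds the $L^\infty_t(L^2)\cap L^2_t(H^1)$ norms. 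Since $\p_t\ell^{(1)}=\Delta\ell^{(1)}-\n\pi$ lies in $L^2(0,t;V')$, we get $\ell^{(1)}_t\in L^2_t(H^{-1})$ modulo gradients.

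\emph{Boundary-driven part via the resolvent problem.} Since $g\cdot\mathbf{n}=0$ and $g(0)$ is not prescribed, I extend $g$ by zero for negative times and by a bounded extension operator beyond $t$. This keeps $g\in L^2(\mathbb R;H^{1/2}(\p\Omega))\cap H^{1/2}(\mathbb R;H^{-1/2}(\p\Omega))$ with comparable norm. Taking the Fourier transform in time gives, for each $\tau\in\mathbb R$, the resolvent Stokes problem
\[
i\tau\hat\ell-\Delta\hat\ell+\n\hat\pi=0,\qquad \dv\hat\ell=0,\qquad \hat\ell|_{\p\Omega}=\hat g(\tau).
\]
The key step, and the main obstacle, is the parameter-uniform estimate
\[
\|\hat\ell\|_{H^1}^2+|\tau|\,\|\hat\ell\|_{L^2}^2\le C(\Omega)\bigl(\|\hat g\|_{H^{1/2}(\p\Omega)}^2+|\tau|\,\|\hat g\|_{H^{-1/2}(\p\Omega)}^2\bigr).
\]

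To prove it, I first lift $\hat g$ by the stationary Stokes solution $G$: $-\Delta G+\n P=0$, $\dv G=0$, $G|_{\p\Omega}=\hat g$. This is solvable because $\int_{\p\Omega}\hat g\cdot\mathbf{n}=0$. It satisfies $\|G\|_{H^1}\lesssim\|\hat g\|_{H^{1/2}}$, and by a transposition/duality argument (Lions–Magenes very weak solutions) $\|G\|_{L^2}\lesssim\|\hat g\|_{H^{-1/2}}$. Then $w=\hat\ell-G\in V$ solves
\[
i\tau w-\Delta w+\n\tilde\pi=-i\tau G .
\]
Testing with $\bar w$ and taking real and imaginary parts gives
\[
\|\n w\|^2+|\tau|\|w\|^2\lesssim|\tau|\,|(G,w)|\le|\tau|\,\|G\|\,\|w\| ,
\]
which yields $|\tau|\|w\|^2\lesssim|\tau|\|G\|^2$ and $\|\n w\|^2\lesssim|\tau|\|\hat g\|_{H^{-1/2}}^2$. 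Adding back $G$ gives the displayed estimate. The duality bound for $G$ in $L^2$ is the delicate point, because it needs $\p\Omega$ smooth and uses the adjoint Stokes problem with $H^2$ regularity.

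\emph{Time-domain conclusions, pressure, and uniqueness.} Plancherel turns the resolvent estimate into
\[
\|\ell^{(2)}\|_{L^2(H^1)}^2+\|\ell^{(2)}\|_{H^{1/2}(L^2)}^2\lesssim \|g\|^2_{L^2_t(H^{1/2}(\p\Omega))}+\|g\|^2_{H^{1/2}_t(H^{-1/2}(\p\Omega))}.
\]
Causality (the data vanish for $t<0$, as seen through the Laplace transform with $\mathrm{Re}$ parameter) gives $\ell^{(2)}(0)=0$. The equation shows that $\ell^{(2)}_t$, paired against divergence-free test fields in $H^1_0$, is bounded by $\|\n\ell^{(2)}\|$, so $\ell^{(2)}_t\in L^2_t(H^{-1})$. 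The pressure is then recovered by de Rham's theorem with $\|\n\pi_\ell\|_{L^2_t(H^{-1})}\lesssim\|\ell_t\|_{L^2_t(H^{-1})}+\|\ell\|_{L^2_t(H^1)}$.

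For the $L^\infty_t(L^2)$ bound I would use $\ell^{(2)}=w+G$. The Lions–Magenes embedding $\{w\in L^2(H^1_0):w_t\in L^2(H^{-1})\}\subset C(L^2)$ handles $w$. For $G$, the time-continuity comes from the trace-type embedding for $L^2(H^1)\cap H^{1/2}(L^2)$; this is essentially the borderline part of Fursikov's argument, and I would cite it as done there. Uniqueness follows by the energy identity for the difference of two solutions, which has zero data.
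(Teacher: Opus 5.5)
The paper gives no proof of this lemma; it cites Fursikov et al., so your argument has to stand on its own. Your resolvent bound $\|\hat\ell\|_{H^1}^2+|\tau|\,\|\hat\ell\|_{L^2}^2\lesssim\|\hat g\|_{H^{1/2}}^2+|\tau|\,\|\hat g\|_{H^{-1/2}}^2$ is correct. The proposal breaks down where the $\ell_t$ and $\nabla\pi_\ell$ parts of the estimate are decided.

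\emph{Zero extension.} Extension by zero to $t<0$ is not bounded on $H^{1/2}(0,t;X)$, because $1/2$ is the critical exponent. The zero extension of $g$ lies in $H^{1/2}(\mathbb{R};H^{-1/2}(\partial\Omega))$ exactly when $\int_0^t\|g(s)\|^2_{H^{-1/2}(\partial\Omega)}\,\frac{ds}{s}<\infty$ (the Lions--Magenes $H^{1/2}_{00}$ condition), i.e.\ when $g$ is compatible with $\ell_0=0$ at $t=0$. A cleverer extension will not fix this. Take $g(t)\equiv g_0$ tangential with $\mathrm{div}_{\partial\Omega}g_0\neq0$ and $\ell_0=0$. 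In the half-space model (Laplace variable $s$, tangential frequency $\xi$, longitudinal amplitude $\alpha$ of $\hat g_0$) one finds $\hat\pi_\ell=-i\alpha\,(\sqrt{s+|\xi|^2}+|\xi|)\,s^{-1}e^{-|\xi|x_3}$. Hence $\|\pi_\ell(t)\|_{L^2/\mathbb{R}}\sim t^{-1/2}$ as $t\to0$, so $\nabla\pi_\ell\notin L^2_t(H^{-1})$ and $\ell_t\notin L^2_t(H^{-1})$, even though the $L^\infty_t(L^2)\cap L^2_t(H^1)$ bounds hold. The same example with $g=0$ and $\ell_0=G_0$ (the stationary Stokes lifting of $g_0$) shows that $\ell_0\in L^2$ only yields $\ell^{(1)}_t\in L^2_t(V')$; that is what your ``modulo gradients'' amounts to. So the $\ell_t$ and pressure bounds need a compatibility hypothesis at $t=0$ that your argument must assume rather than derive. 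This matters for the paper, where $\ell_0=0$ but $g(0)=-\frac{\kappa}{2}\nabla_\tau\theta_t(0)$ need not vanish.

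\emph{Pressure step.} Even with compatibility, your derivation of $\ell_t\in L^2_t(H^{-1})$ and of the pressure is circular. Testing against solenoidal $H^1_0$ fields bounds $\ell_t$ only in $L^2_t(V')$. For general $\phi\in H^1_0$ one has $\langle\ell_t,\phi\rangle=-(\nabla\ell,\nabla\phi)+(\pi_\ell,\mathrm{div}\,\phi)$, so $\ell_t\in L^2_t(H^{-1})$ is \emph{equivalent} to $\pi_\ell\in L^2_t(L^2)$, and de Rham's theorem cannot supply it. What is missing is a resolvent pressure estimate $\|\hat\pi(\tau)\|_{L^2/\mathbb{R}}\lesssim\|\hat g\|_{H^{1/2}}+|\tau|^{1/2}\|\hat g\|_{H^{-1/2}}$. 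This is where the $H^{1/2}_t(H^{-1/2})$ hypothesis is genuinely used: in the half-space, $\|\hat\pi\|_{L^2}^2\simeq(|\xi|+|\tau|/|\xi|)|\alpha|^2$. Your bound $\|\hat\ell\|_{L^2}\lesssim|\tau|^{-1/2}\|\hat g\|_{H^{1/2}}+\|\hat g\|_{H^{-1/2}}$ only gives $|\tau|\,\|\hat\ell\|_{H^{-1}}\lesssim|\tau|^{1/2}\|\hat g\|_{H^{1/2}}+|\tau|\,\|\hat g\|_{H^{-1/2}}$, which is a factor $|\tau|^{1/2}$ short. The splitting $\hat\ell=G+w$ cannot recover that factor, since $|\tau|\,\|G\|_{H^{-1}}$ by itself is not controlled by the right-hand side. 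This is not cosmetic for the paper: its test function $Z$ is not solenoidal, so a $V'$ bound on $\ell_t$ would not control $Q_1$.

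\emph{The $L^\infty_t(L^2)$ bound.} Your argument via $\ell^{(2)}=w+G$ also fails. $G_t$ lies only in $H^{-1/2}_t(L^2)$, so in general $w_t\notin L^2_t(H^{-1})$. Moreover $L^2_t(H^1)\cap H^{1/2}_t(L^2)\not\subset C([0,t];L^2)$ (the borderline case), so neither piece is continuous on its own. The order should be reversed: once $\ell_t\in L^2_t(H^{-1})$ is proved, $\ell\in C([0,t];L^2)$ follows from $[H^1(\Omega),H^{-1}(\Omega)]_{1/2}=L^2(\Omega)$ and the intermediate-derivative theorem.
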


In addition, we reproduce the following  
mixed derivative estimate.  
\begin{lem} {\rm (see Boulakia et al.   \cite{BGT}, Lemma  A.5)}\label{lem2.5} 
Let $\Omega\subset\mathbb R^{3}$ be a bounded domain with smooth boundary. 
Let $s \in [0,1]$, $m_1, m_2 \in [0,\infty)$ and $m := sm_1 + (1-s)m_2$. Then, there exists a constant $C$ independent of $T$ such that
\be 
\|f\|_{H^s(0,T;H^m(\Omega))} \leqslant C \|f\|_{H^1(0,T;H^{m_1}(\Omega))}^s \|f\|_{L^2(0,T;H^{m_2}(\Omega))}^{1-s}  
\ee  
for all $f \in H^1(0,T;H^{m_1}(\Omega)) \cap L^2(0,T;H^{m_2}(\Omega))$. 
\end{lem}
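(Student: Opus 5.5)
We prove the inequality with the Gagliardo--Slobodeckij definition of the time-fractional norm. For a Hilbert space $X$ and $s\in(0,1)$ we set
\[
\|f\|_{H^s(0,T;X)}^2:=\|f\|_{L^2(0,T;X)}^2+\int_0^T\!\!\int_0^T\frac{\|f(t)-f(\tau)\|_X^2}{|t-\tau|^{1+2s}}\,d\tau\, dt .
\]
The endpoints $s=0$ and $s=1$ are trivial, so assume $0<s<1$. The plan has three steps: reduce the spatial variable to Fourier multipliers on $\mathbb R^3$, prove a scale-invariant one-dimensional estimate in time (this is where $T$-independence comes from), and recombine the two with Hölder's inequality.

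\textbf{Step 1 (reduction to $\mathbb R^3$).} Let $E$ be Stein's universal extension operator for the smooth bounded domain $\Omega$. It is bounded $H^k(\Omega)\to H^k(\mathbb R^3)$ for every $k\geq 0$ simultaneously, is linear, and acts only in $x$, so it commutes with $\partial_t$ and with time differences. Hence
\[
\|Ef\|_{H^1(0,T;H^{m_1}(\mathbb R^3))}\le C\|f\|_{H^1(0,T;H^{m_1}(\Omega))},\qquad \|Ef\|_{L^2(0,T;H^{m_2}(\mathbb R^3))}\le C\|f\|_{L^2(0,T;H^{m_2}(\Omega))}.
\]
In the other direction, restriction to $\Omega$ gives $\|f\|_{H^s(0,T;H^m(\Omega))}\le \|Ef\|_{H^s(0,T;H^m(\mathbb R^3))}$. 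So it suffices to prove the inequality on $\mathbb R^3$. There we write $\|\cdot\|_{H^k}^2=\int\langle\zeta\rangle^{2k}|\hat\cdot(\zeta)|^2d\zeta$. By Fubini, every norm involved becomes a $\zeta$-integral of scalar norms of $g_\zeta(t):=\widehat{Ef}(t,\zeta)$ on $(0,T)$.

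\textbf{Step 2 (scalar inequality, the key point).} For a scalar $g\in H^1(0,T)$ we claim
\[
|g|_{H^s(0,T)}^2\le C_s\|g\|_{L^2(0,T)}^{2(1-s)}\|g'\|_{L^2(0,T)}^{2s},
\]
with $C_s$ independent of $T$. To see this, write the seminorm as $2\int_0^T h^{-1-2s}\|g(\cdot+h)-g\|_{L^2(0,T-h)}^2\,dh$. We have
\[
\|g(\cdot+h)-g\|_{L^2(0,T-h)}\le \min\bigl(h\|g'\|_{L^2},\,2\|g\|_{L^2}\bigr).
\]
Split the $h$-integral at $h_0=\|g\|_{L^2}/\|g'\|_{L^2}$: use the first bound for $h<h_0$ and the second for $h>h_0$. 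Both pieces integrate to $C_s\|g\|^{2(1-s)}\|g'\|^{2s}$, since $0<s<1$; extending the $h$-range to $(0,\infty)$ only enlarges the integral. A dilation check confirms that both sides scale like $T^{1-2s}$, which is why no $T$ appears. Adding $\|g\|_{L^2}^2=\|g\|^{2(1-s)}\|g\|^{2s}$ then gives
\[
\|g\|_{H^s(0,T)}^2\le C\|g\|_{L^2}^{2(1-s)}\|g\|_{H^1(0,T)}^{2s}.
\]

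\textbf{Step 3 (recombination).} Since $m=sm_1+(1-s)m_2$, we have $\langle\zeta\rangle^{2m}=(\langle\zeta\rangle^{2m_1})^s(\langle\zeta\rangle^{2m_2})^{1-s}$. Applying Step 2 to $g_\zeta$ gives
\[
\|Ef\|_{H^s_t(H^m)}^2\le C\int\bigl(\langle\zeta\rangle^{2m_1}\|g_\zeta\|_{H^1(0,T)}^2\bigr)^s\bigl(\langle\zeta\rangle^{2m_2}\|g_\zeta\|_{L^2(0,T)}^2\bigr)^{1-s}d\zeta .
\]
Hölder's inequality in $\zeta$ with exponents $1/s$ and $1/(1-s)$ bounds this by $C\|Ef\|_{H^1_t(H^{m_1})}^{2s}\|Ef\|_{L^2_t(H^{m_2})}^{2(1-s)}$. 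Combining with Step 1 proves the lemma.

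The main obstacle is the uniformity of $C$ in $T$. A time-extension argument, by reflection and cutoff, would introduce $T$-dependence through the cutoff or through low-frequency terms. The plan avoids any time extension and instead relies on the intrinsic seminorm and the scale-invariant splitting in Step 2; the spatial extension in Step 1 costs no $T$-dependence.
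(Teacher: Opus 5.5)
Your proof is correct. It necessarily differs from the paper, which gives no argument for this lemma and simply imports it from Boulakia et al., Lemma A.5.

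You give a self-contained, elementary proof in four moves:
\begin{itemize}
\item a universal spatial extension, which acts only in $x$ and so introduces no $T$-dependence;
\item Plancherel, to reduce to the scalar functions $g_\zeta$;
\item the scale-invariant one-dimensional bound $|g|_{H^s(0,T)}^2\le C_s\|g\|_{L^2}^{2(1-s)}\|g'\|_{L^2}^{2s}$, obtained by splitting the difference-quotient integral at $h_0=\|g\|_{L^2}/\|g'\|_{L^2}$;
\item H\"older in $\zeta$ with exponents $1/s$ and $1/(1-s)$, to recombine the weights $\langle\zeta\rangle^{2m}=\langle\zeta\rangle^{2sm_1}\langle\zeta\rangle^{2(1-s)m_2}$.
\end{itemize}
Each of these steps checks out, including the two pieces of the $h$-integral and the final H\"older step.

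What your route buys over the citation is that it exposes a point the statement leaves implicit. The $T$-independence of $C$ is a property of the \emph{norm} chosen on $H^s(0,T;X)$. It holds for the intrinsic Gagliardo--Slobodeckij norm you fix, but not for the quotient norm inherited from $H^s(\mathbb R;X)$. To see this, take $f(t,x)=\phi(x)$. The right-hand side is then $O(T^{1/2})$. For $s>1/2$, the embedding $H^s(\mathbb R;X)\hookrightarrow L^\infty(\mathbb R;X)$ keeps the quotient norm of $f$ bounded below by $c\|\phi\|_{H^m}$. For $0<s\le 1/2$, a lower bound on the $H^s$-capacity of an interval of length $T$ gives the same failure. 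So your remark that a time-extension argument cannot produce a $T$-uniform constant is exactly right.

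The points you pass over quickly are routine but worth writing out:
\begin{itemize}
\item Stein's theorem is stated for integer orders. For non-integer $m_1$, $m_2$, $m$ you need interpolation, together with the identification of $H^k(\Omega)$ with the restriction space of $H^k(\mathbb R^3)$; this is valid for smooth $\Omega$.
\item You need a short Fubini argument showing that, for a.e.\ $\zeta$, $g_\zeta\in H^1(0,T)$ with $g_\zeta'=\widehat{\partial_t Ef}(\cdot,\zeta)$. The same argument justifies the identity $\|Ef\|_{H^s(0,T;H^m)}^2=\int\langle\zeta\rangle^{2m}\|g_\zeta\|_{H^s(0,T)}^2\,d\zeta$.
\end{itemize}
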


\subsection{Local coordinates} 
Now we introduce the local coordinates to derive   estimates near the boundary.  We choose as local coordinates 
 the isothermal coordinates $\lambda(\psi, \phi)$  (see \cite{Valli1983, Valli1986}),  where
$$\lambda_\psi\cdot\lambda_\psi>0, \q \lambda_\phi\cdot\lambda_\phi>0,  \q \lambda_\psi\cdot\lambda_\phi=0.
$$   
We cover the boundary $\partial\Omega$ by a finite number of bounded open sets $W^k\subset\mathbb{R}^3, k=1, 2, \cdots, L, $ such that  any $x\in W^k\cap\Omega$ can be written as 
$$x=\Lambda^k(\psi, \phi, r)\equiv\lambda^k(\psi, \phi) +rn(\lambda^k(\psi, \phi)),  $$  
where $\lambda^k(\psi, \phi)$ is the isothermal coordinates of $  W^k\cap   \p\Omega$  and $n$ is the unit outer normal to $\partial\Omega$.  Below,  we will omit the superscript $k$ in each $W^k$ for simplicity. Employing  the properties of isothermal coordinates, 
we construct the orthonormal system:  
$$
e_1:=\frac{\lambda_\psi}{|\lambda_\psi|}, \quad e_2:=\frac{\lambda_\phi}{|\lambda_\phi|}, \quad e_3:=e_1\times e_2\equiv n(\lambda). 
$$          
Direct calculation shows that $J\in C^2$ and 
\begin{equation*}\begin{aligned}
J & :=det\frac{\partial x}{\partial ( \psi, \phi, r)}=(\Lambda_\psi\times\Lambda_\phi, r )\cdot e_3\\
&=|\lambda_\psi \|\lambda_\phi|+r(|\lambda_\psi |n_\phi\cdot e_2+|\lambda_\phi |n_\psi\cdot e_1 )+r^2[(n_\psi\cdot e_1)(n_\phi\cdot e_2 )-(n_\psi\cdot e_2)(n_\phi\cdot e_1 )]>0, 
\end{aligned}\end{equation*}
for sufficiently small $r>0$.
Moreover,  we can easily derive the following relations (see  \cite{Valli1983}): 
\begin{equation}\begin{aligned}\nonumber
[\nabla(\Lambda^{-1})^1]\circ\Lambda&=(1/J)(\Lambda_\phi\wedge e_3), \\
[\nabla(\Lambda^{-1})^2]\circ\Lambda&=(1/J)(e_3\wedge \Lambda_\psi), \\
[\nabla(\Lambda^{-1})^3]\circ\Lambda&=(1/J)(\Lambda_\phi\wedge \Lambda_\psi),
\end{aligned}\end{equation} 
where the notation ``$\circ$'' denotes composition of maps. Set $y:=(\psi, \phi, r)$,  $a_{ij}=((Jac\Lambda)^{-1})_{ij}=(\frac{\partial y}{\partial x})_{ij}$. Then,  the normal direction $n=(a_{31}, a_{32}, a_{33})$,  the tangential directions $\tau_i=(a_{i1}, a_{i2}, a_{i3})$, $ (i=1, 2), $ and 
\begin{equation}\begin{aligned}\label{a3j}
a_{ij}a_{3j}=0,  \quad\rm{for}~i=1, 2.
\end{aligned}\end{equation}
Let   $D_i$  denote differentiation in the local coordinates. To be precise,  $D_3$ is the normal derivative and $D_i$ for $i=1, 2$ are the tangential derivatives in the  local coordinates.  
Moreover,  we have
$$
\partial_{x_j}=\sum_{k=1}^{3}a_{kj}D_k.
$$

Denote $\widetilde{\Omega}:=\Lambda^{-1}(W\cap\Omega) $ and  
$U (t, y):=\mathbf{u} (t, \Lambda(y))$.
The following properties hold.
\begin{prop}[see \cite{Valli1983}]
Let $\chi \in C_0^\infty (\Lambda^{-1}(W))$, then
$D_i (Ja_{ij})=0$, for $ j=1, 2, 3$; $\chi D_\tau U =0 $; $\chi D_\tau D_\xi U=0 $ on $\partial\widetilde{\Omega}$ in the tangential directions $\tau, \xi=1, 2$.  
\end{prop}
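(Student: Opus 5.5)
The statement to be proved is the proposition from \cite{Valli1983}: for $\chi\in C_0^\infty(\Lambda^{-1}(W))$ we have $D_i(Ja_{ij})=0$ for $j=1,2,3$, together with $\chi D_\tau U=0$ and $\chi D_\tau D_\xi U=0$ on $\partial\widetilde\Omega$ for tangential directions $\tau,\xi\in\{1,2\}$. The plan is to treat the divergence identity and the boundary identities separately. The first is a purely geometric identity for the transformation $\Lambda$. The second is a consequence of the no-slip condition $\mathbf u=0$ on $\partial\Omega$.

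For $D_i(Ja_{ij})=0$, write $a_{ij}=\partial y_i/\partial x_j$, where $y=\Lambda^{-1}(x)$. By the cofactor formula for the inverse of $Jac\Lambda$, the row $(Ja_{i1},Ja_{i2},Ja_{i3})$ equals the cross product of the two columns $\Lambda_{y_k},\Lambda_{y_l}$ of $Jac\Lambda$ with $k,l\neq i$, in cyclic order. This is exactly the set of relations recorded before the proposition:
\[
J\nabla(\Lambda^{-1})^1\circ\Lambda=\Lambda_\phi\wedge e_3 .
\]
Here I use $\Lambda_r=n(\lambda)=e_3$. Concretely, $Ja_{1\cdot}=\Lambda_\phi\times\Lambda_r$, $Ja_{2\cdot}=\Lambda_r\times\Lambda_\psi$ and $Ja_{3\cdot}=\Lambda_\psi\times\Lambda_\phi$, up to the fixed orientation convention. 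Then
\[
\sum_i D_i(Ja_{i\cdot})=D_\psi(\Lambda_\phi\times\Lambda_r)+D_\phi(\Lambda_r\times\Lambda_\psi)+D_r(\Lambda_\psi\times\Lambda_\phi).
\]
Expanding by the product rule, the six resulting terms cancel in pairs because mixed partials of $\Lambda$ commute. For instance, $\Lambda_{\phi\psi}\times\Lambda_r$ cancels against $\Lambda_r\times\Lambda_{\psi\phi}$. This is the Piola identity. The only point to check is that $\Lambda\in C^2$, which holds since $\lambda$ is smooth and $n\circ\lambda$ is $C^2$ for a smooth boundary. Equivalently, one can derive the identity in weak form: for any test function $\varphi$, change variables in $\int\partial_{x_j}\varphi\,dx=0$, which gives $\int\sum_iJa_{ij}D_i\tilde\varphi\,dy=0$.

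For the boundary identities, note that $\partial\widetilde\Omega\cap\Lambda^{-1}(W)$ is the flat piece $\{r=0\}$, because $\Lambda(\psi,\phi,0)=\lambda(\psi,\phi)\in\partial\Omega$. Since $\mathbf u=0$ on $\partial\Omega$, we get $U(t,\psi,\phi,0)=0$ for all $(\psi,\phi)$ in the chart. Differentiating this identity along the flat boundary in $\psi$ or $\phi$ gives $D_\tau U=0$ and then $D_\tau D_\xi U=0$ at $r=0$. Multiplying by $\chi$ preserves these. Justifying the traces needs enough regularity: $U\in H^3$ for a.e. $t$ makes $D_\tau D_\xi U\in H^1$, so its trace is meaningful. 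The identity holds for smooth approximants vanishing on $\{r=0\}$ and passes to the limit by continuity of the trace map. The same argument applies to $\mathbf u_t$ and to any other quantity vanishing on $\partial\Omega$.

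The only mildly delicate step is the sign and orientation bookkeeping in identifying $Ja_{i\cdot}$ with the cross products. Once the three relations listed before the proposition are accepted, the cancellation is mechanical.
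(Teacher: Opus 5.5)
Your argument is correct, and since the paper gives no proof of its own (it only cites Valli), it is the standard verification behind that reference: $D_i(Ja_{ij})=0$ is the Piola identity for the cofactor rows $Ja_{i\cdot}$, and the boundary relations follow by differentiating $U|_{r=0}=0$ along the flat piece, with $\chi$ vanishing on the rest of $\partial\widetilde\Omega$.

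One caution concerns your closing remark. The paper's printed formula $[\nabla(\Lambda^{-1})^3]\circ\Lambda=(1/J)(\Lambda_\phi\wedge\Lambda_\psi)$ has the opposite sign to the cofactor formula with $J=(\Lambda_\psi\times\Lambda_\phi)\cdot e_3$. With that sign the sum would reduce to $-2D_r(\Lambda_\psi\times\Lambda_\phi)$, which is nonzero on curved boundaries. So the cancellation rests on your own cyclic formulas, which are forced rather than a matter of orientation convention, not on the relations as listed.
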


The relations between the different coordinates are 
$$
\|D_y U\|_{L^p(\widetilde{\Omega})}\leq C\|\nabla_x \mathbf{u}\|_{L^p(\Omega)}, \quad \|D_y^2U\|_{L^p(\widetilde{\Omega})}\leq C\|\nabla_x \mathbf{u}\|_{W^{1, p}(\Omega)}  \quad{\rm for}~ 1\leq p\leq \infty. $$
These inequalities also hold for the other variables expressed in local coordinates.

\section{Uniform  estimates in general bounded domains}\label{ue}

In this section, suppose $(\rho,\mathbf{v},q,\theta)$ is a smooth solution to the problem \ef{ns0} on $[0,T]$. We assume that $\Theta=1+\theta\geq \frac{A}{2}$ in $\overline{\Omega}\times[0,T]$ where $A>0$ is the initial temperature lower bound in Theorem \ref{uethm}.  
For $t\in[0,T]$ , we define the auxiliary  energy functional: 
 \be\nonumber\begin{aligned}
M_1  (t)&:=    \| ( \eta_t  ,  \mathbf{u}_{\e t} ,  \vartheta_t     ) \|^2_{L^\infty (0, t;L^2(\Omega))}
+  \|  (   \n \mathbf{u}_{\e t} ,    \n \vartheta_t   )  \|^2_{L^2 (0, t;L^2(\Omega))} \\
&\q +
\| ( \n \eta_t ,  \e\dv \mathbf{u}_t , \nabla\vartheta_t  )\|^2_{L^\infty (0, t;L^2(\Omega))}+\|(\e  \dv \mathbf{u}_t ,  \nabla\vartheta_t  )\|^2_{L^2 (0, t;H^1(\Omega))},
\end{aligned}\ee  
\be\nonumber\begin{aligned}
M_2  (t)&:= \|(\e^2  \sigma_{tt} ,  \e^2 \mathbf{u}_{tt}  , \e \vartheta_{tt}  )\|^2_{L^\infty (0, t;L^2(\Omega))}+\| (\e^2    \mathbf{u}_{tt} , \e\vartheta_{tt}  ) \|^2_{L^2 (0, t;H^1(\Omega))}    \\ 
&\q  + \| \e^2 \mathbf{u}_{t}   \|^2_{L^\infty (0, t;H^1(\Omega))}+ \| \e^2 \mathbf{u}_{t}   \|^2_{L^2 (0, t;H^2 (\Omega))},
\end{aligned}\ee  
and the total energy:
$$\mathcal{E}(t):=M  (t)+M_1  (t)+M_2  (t). $$     
Due to the relation  \ef{rela} and the initial condition \ef{data}, we know that  $\mathcal{E}(0)\leq  C_0 (M_0)$ where $C_0$ is a positive non-decreasing and continuous function. 
\begin{rmk}  \label{ptt}
In view of $\ef{ns1}_{2, 3}$, the relation  \ef{rela} and the definition of $\mathcal{E} (t)$,  we deduce that for any $t\in [0,T]$, 
 $$
\|\e\mathbf{u}\|_{L_t^\infty(H^2)}^2 +\|\e\theta \|_{L_t^\infty(H^3)} ^2
+\|\e^2\mathbf{u} \|_{L_t^\infty(H^3)}^2
\leq C [ (\mathcal{E}  (t))^3+\mathcal{E}  (t) ].
 $$
Moreover,  we can derive from $\p_t\ef{ns0}_{3,4}$   that 
  $$
\|\theta _{tt}\|_{L_t^2(L^2)}^2 + \|\e q _{tt}\|_{L_t^2(L^2)}^2 \leq C[(\mathcal{E}  (t))^3+ \mathcal{E}  (t) ].   $$
\end{rmk}

Then we aim to establish the following energy inequality.  
\begin{prop} \label{lemprop} 
For $\e\in(0,1]$ and $ t\in(0,1]$, there exist positive, non-decreasing, and continuous functions $C_0,C_1$
independent of $\e$ and $t$, such that
\begin{equation}\label{prop}
\mathcal{E} (t)
\leq 
C_0(M_0 )
{\rm exp}\left\{
\bigl(t^{\frac{1}{16}}+\e^{\frac{1}{4}}\bigr)
C_1\bigl(\mathcal{E} (t)\bigr)
\right\}.
\end{equation}

\end{prop}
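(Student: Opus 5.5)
The plan is to carry out, in the general bounded domain, the four steps of the half-space model of Section 2. Every linear estimate there is now perturbed by nonlinear, variable-coefficient and geometric error terms. Each such error will be bounded either by $t^{\alpha}C(\mathcal{E}(t))$ with $\alpha\ge\frac1{16}$, through Hölder in time and the interpolation inequalities of Lemma \ref{inter} (e.g. $\|\mathbf{u}\|_{L^2_t(L^\infty)}\le Ct^{1/8}\|\mathbf{u}\|^{3/4}_{L^2_t(H^2)}\|\mathbf{u}\|^{1/4}_{L^\infty_t(H^1)}$), or by $\e^{1/4}C(\mathcal{E}(t))$, using Remark \ref{ptt} for terms carrying explicit powers of $\e$. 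The positive functions $C_0,C_1$ then arise in the form $\mathcal{E}(t)\le C_0(M_0)+(t^{1/16}+\e^{1/4})C_1(\mathcal{E}(t))$, or in Gronwall form, which gives \eqref{prop}.

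\textbf{Step 1 (density and low order).} Transport estimates for $\rho$ in $H^2$ and for $\rho^{-1}$ along characteristics, together with $\|f\|_{L^\infty_t(X)}\le\|f(0)\|_X+t^{1/2}\|f_t\|_{L^2_t(X)}$, give the $L^\infty_t$ bounds on $\rho,\rho^{-1},q,\mathbf{v},\theta$ and on $\rho_t$ in terms of $C_0(M_0)$ plus $t^{1/2}C(\mathcal{E})$.

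\textbf{Step 2 ($M_1,M_2$ via \eqref{2ns}).} Apply $(\e\partial_t)^k\partial_t$ to \eqref{2ns} for $k=0,1$ and test with the same derivatives of $(2\eta,\mathbf{u}_\e,\vartheta)$, weighted by $\Theta$ so that the momentum coefficient $1/\Theta$ is symmetrized. The $\e^{-1}$ terms cancel, because $\mathbf{u}_\e=0$ and $\partial_{\mathbf n}\vartheta=0$ on the boundary. Commutators with the variable coefficients $1/\Theta$, $1/p$ produce terms like $\e^{-1}\Theta_t$, which we control because $\Theta_t=O(1)$. Next, obtain $\nabla\eta_t$, $\e\,\mathrm{div}\,\mathbf{u}_t$ and $\nabla^2\vartheta_t$ by localizing. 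In the interior we differentiate directly. Near the boundary we use the local coordinates of Section 3.2, with tangential derivatives $D_1,D_2$, and the normal component of the momentum equation as in \eqref{sim6}, where $\mu(D_3\mathrm{div}-\Delta U^3)$ contains no $D_3^2$. This yields $M_1+M_2\le C_0+\delta\|\mathbf{u}_t\|^2_{L^2_t(H^1)}+(t^{1/16}+\e^{1/4})C_1(\mathcal{E})$. The relation \eqref{rela} converts the bounds on $\vartheta_t$ into bounds on $\theta_t$.

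\textbf{Step 3 ($\mathbf{v}_t$, the Stokes lifting).} Let $\ell$ solve \eqref{stokes} with $g=-\frac\kappa2\nabla_\tau\theta_t$ and $\ell_0=0$; note $g\cdot\mathbf n=0$. Test $\partial_t\eqref{ns0}_2$ with $Z=\mathbf{v}_t-\ell$, and $\partial_t\eqref{ns0}_3$ with $q_t$. Since $Z|_{\partial\Omega}=0$ and $\mathrm{div}Z=\mathrm{div}\mathbf{v}_t$, the singular terms still cancel up to the commutators from $\rho$ and $\mathbf{u}\cdot\nabla$, which are of lower order. The term $\rho\ell_t\cdot Z$ is bounded by duality, $\|\ell_t\|_{H^{-1}}\|\rho Z\|_{H^1}$. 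The $\beta\nabla\Delta\theta_t$ term is handled by integrating by parts onto $\mathrm{div}Z$, giving $\|\theta_t\|_{H^2}$. We then bound $\ell$ by Lemma \ref{lem2.4}, Lemma \ref{lem2.5} and the trace theorem exactly as in \eqref{s10}, using $\theta_{tt}\sim\mathrm{div}\mathbf{v}_t+\Delta\theta_t+$ lower order from $\eqref{ns0}_4$. This gives $\|\mathbf{u}_t\|^2_{L^\infty_t(L^2)\cap L^2_t(H^1)}\le C_0+C_2\|\theta_t\|^2_{L^2_t(H^2)}+\dots$. Combining with Step 2 and choosing $\delta C_2<1$ closes the time-derivative bounds.

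\textbf{Step 4 (high order).} Stokes regularity for $\eqref{ns1}_2$ with $\mathbf{u}=0$ on the boundary reduces $\|\mathbf{u}\|_{L^2_t(H^3)}$ to $\|\nabla^2\mathrm{div}\mathbf{u}\|_{L^2_t(L^2)}$. In local coordinates we reproduce the tangential combination \eqref{s1}–\eqref{s5}, the mixed estimates based on \eqref{sim3}–\eqref{s6}, and the normal estimate \eqref{s7}. The condition $\beta>0$ keeps the dissipation coefficients positive. Cutoff and curvature terms are lower order and are absorbed by interpolation.

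\textbf{Main obstacle.} I expect the hardest part to be the boundary tangential estimate in Step 4 in curved coordinates. The identity $\|D_i^2\nabla^2\theta\|=\|D_i^2\Delta\theta\|$ now holds only up to curvature terms. Moreover, the singular cross terms $\e^{-1}D_i^2q\,D_i^2\Delta\theta$ must still cancel with a variable coefficient $\rho$. My first attempt would be to keep $\frac1\e\nabla q$ on the right-hand side in $H^1$, bounded by the Stokes estimate, and absorb it with $\delta$.
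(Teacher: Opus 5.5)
Your proposal is correct and is essentially the paper's own proof. Section~\ref{ue} carries out exactly your four steps: density and low order in Lemmas~\ref{lemrho}--\ref{lemlow}; $M_1,M_2$ from \eqref{2ns}, with interior and local-coordinate boundary estimates, in Lemmas~\ref{lem3.2}--\ref{lem3.3} and Remark~\ref{rmk}; the Stokes-lifting test function $Z=\mathbf{v}_t-\ell$ in Lemma~\ref{lemut}; and Stokes regularity plus tangential, mixed and normal estimates in Lemmas~\ref{lemSto}--\ref{lemD33t}. Your ``first attempt'' at the obstacle, keeping $\e^{-1}\nabla^2 q$ on the right-hand side and absorbing it through the Stokes bound, is precisely how the paper handles the $\rho$-weighted cross term $\mathcal{O}_8$ in Lemma~\ref{lemtan}.

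Two small points should be tightened. First, the $\e^{-1}\Theta_t$ term created by the $\Theta$-weight in $\partial_t\eqref{2ns}_2$ is harmless because it multiplies $2\nabla\eta+\nabla\vartheta=\e\nabla\sigma$; boundedness of $\Theta_t$ alone would not suffice. Second, the components $\|\e^2\mathbf{u}_t\|_{L^\infty_t(H^1)}$ and $\|\e^2\mathbf{u}_t\|_{L^2_t(H^2)}$ of $M_2$ are not produced by your energy identities. They need the separate argument of Lemma~\ref{leme2t}, which combines Lam\'e-elliptic regularity with testing $\eqref{2nst}_2$ against $\mathbf{u}_{\e tt}$.
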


\subsection{Estimates of  density }
\begin{lem}\label{lemrho}   For any $t\in[0, T]$ and $\e\in(0,1]$,  it holds that 
\begin{equation}\label{lemma1}
\Vert (\rho, \rho^{-1}) \Vert^2_{L_{x, t}^\infty}+\Vert \rho\Vert^2_{L_t^\infty(H^2)}+ \| \rho_t\|^2_{L_t^\infty(L^2)}+ \|\rho_t\|^2_{L_t^2(H^1)}\leq C_0(M_0){\rm exp}\{t^{\frac 1 4} C_1(\mathcal{E} (t)) \}.
\end{equation} 
\end{lem}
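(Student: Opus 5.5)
The density bounds will come from the transport structure of $\eqref{ns1}_1$, treated as a linear hyperbolic equation $\rho_t+\mathbf{u}\cdot\nabla\rho=-\rho\,\dv\mathbf{u}$ driven by the velocity. Since $\mathbf{u}=0$ on $\partial\Omega$, no characteristics enter through the boundary, so no boundary data for $\rho$ are needed. The key input throughout is $\int_0^t\|\nabla\mathbf{u}\|_{L^\infty}ds$, which Lemma \ref{inter} and Hölder in time bound by
\[
Ct^{1/2}\|\mathbf{u}\|_{L^2_t(H^3)}^{7/8}\|\mathbf{u}\|_{L^\infty_t(H^1)}^{1/8}\le t^{1/2}C_1(\mathcal{E}(t)).
\]
The same argument bounds $\int_0^t\|\mathbf{u}\|_{H^3}ds$. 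This gives $\int_0^t\|\mathbf{u}\|_{H^3}ds\le t^{1/2}\mathcal{E}^{1/2}$.

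\emph{Pointwise bounds.} Along the particle paths $X(s;x)$ generated by $\mathbf{u}$, we have $\frac{d}{ds}\rho(X)=-\rho\,\dv\mathbf{u}$. Hence
\[
\rho(x,t)=\rho_0(X(0))\exp\Big\{-\int_0^t\dv\mathbf{u}\,ds\Big\}.
\]
Using $\rho_0\ge M_0^{-1}$ and $\|\rho_0\|_{L^\infty}\le C\|\rho_0\|_{H^2}\le CM_0$, we get
\[
\|(\rho,\rho^{-1})\|_{L^\infty_{x,t}}^2\le C_0(M_0)\exp\{t^{1/2}C_1(\mathcal{E})\}.
\]

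\emph{$H^2$ bound.} I will apply $\partial^\alpha$ with $|\alpha|\le2$ to the transport equation, multiply by $\partial^\alpha\rho$, and integrate over $\Omega$. The convection term gives $-\frac12\int|\partial^\alpha\rho|^2\dv\mathbf{u}$ with no boundary term, because $\mathbf{u}\cdot\mathbf{n}=0$. The commutators $[\partial^\alpha,\mathbf{u}\cdot\nabla]\rho$ and the terms $\partial^\alpha(\rho\,\dv\mathbf{u})$ are bounded by
\[
C\big(\|\nabla\mathbf{u}\|_{L^\infty}\|\rho\|_{H^2}+\|\nabla^2\mathbf{u}\|_{L^4}\|\nabla\rho\|_{L^4}+\|\rho\|_{L^\infty}\|\mathbf{u}\|_{H^3}\big)\|\rho\|_{H^2}\le C\|\mathbf{u}\|_{H^3}\big(\|\rho\|_{H^2}^2+\|\rho\|_{H^2}\big).
\]
Gronwall then yields $\|\rho\|_{L^\infty_t(H^2)}^2\le C_0(M_0)\exp\{C\int_0^t\|\mathbf{u}\|_{H^3}\}$. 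With the $L^2_t(H^3)$ bound above, this is at most $C_0\exp\{t^{1/2}C_1(\mathcal{E})\}$. This step is the one needing the most care, namely checking that every product closes with $\|\mathbf{u}\|_{H^3}$ in $L^1_t$ and no $\e^{-1}$ enters. None does, because the continuity equation carries no singular term.

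\emph{Time derivative.} The equation itself gives $\rho_t=-\mathbf{u}\cdot\nabla\rho-\rho\,\dv\mathbf{u}$. Therefore
\[
\|\rho_t\|_{L^2}\le C\|\mathbf{u}\|_{H^1}\|\rho\|_{H^2}.
\]
For this I would use $\|\mathbf{u}\|_{L^4}\|\nabla\rho\|_{L^4}+\|\rho\|_{L^\infty}\|\dv\mathbf{u}\|_{L^2}$. The resulting bound is uniform in $t$ but not small. To obtain the exponential form with $t^{1/4}$, I would instead write $\rho_t(t)=\rho_t(0)+\int_0^t\rho_{tt}$ in $L^2$, where
\[
\rho_{tt}=-\mathbf{u}_t\cdot\nabla\rho-\mathbf{u}\cdot\nabla\rho_t-\rho_t\dv\mathbf{u}-\rho\,\dv\mathbf{u}_t .
\]
Each term is bounded in $L^1_t(L^2)$ by $t^{1/2}C_1(\mathcal{E})$, using $\mathbf{u}_t\in L^2_t(H^1)$ and $\mathbf{u}\in L^2_t(H^3)$. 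Combined with $\|\rho_t(0)\|_{L^2}\le M_0$ from \eqref{data}, this gives the estimate.

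For $\|\rho_t\|_{L^2_t(H^1)}$, I use $\|\nabla\rho_t\|_{L^2}\le C\|\mathbf{u}\|_{H^3}^{1/2}\|\mathbf{u}\|_{H^1}^{1/2}\|\rho\|_{H^2}$, with $\|\mathbf{u}\|_{L^\infty}$ and $\|\nabla\mathbf{u}\|_{L^4}$ interpolated via Lemma \ref{inter}. Integrating in time then gives a factor $t^{1/2}\mathcal{E}^{1/2}$, which keeps the product of the form $t^{1/4}C_1(\mathcal{E})$. Collecting all these estimates and using $t\le1$ to absorb powers, every bound takes the form $C_0(M_0)\exp\{t^{1/4}C_1(\mathcal{E}(t))\}$.
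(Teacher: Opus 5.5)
Your proposal is correct and follows essentially the same route as the paper: characteristics for the pointwise bounds on $\rho$ and $\rho^{-1}$, an $H^2$ energy estimate closed by Gronwall with $\int_0^t\|\mathbf{u}\|_{H^3}\,ds\le t^{1/2}\mathcal{E}(t)^{1/2}$, and the bound on $\nabla\rho_t$ read off directly from the continuity equation. The only minor difference is for $\|\rho_t\|_{L^\infty_t(L^2)}$: the paper tests the time-differentiated continuity equation with $\rho_t$ and integrates the transport term by parts, whereas you bound $\rho_{tt}$ in $L^1_t(L^2)$; this works equally well, since the term $\mathbf{u}\cdot\nabla\rho_t$ is controlled by your $L^2_t(H^1)$ bound on $\rho_t$.
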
 
\textbf{Proof}. Define the particle path $X_t(x)=X(x, t)$ associated to $\mathbf{u}$, 
\be\nonumber
\left\{\begin{aligned}
&\p_t X(x, t)= \mathbf{u}(t, X(x, t)), \\ 
&X(x, 0)=x. 
\end{aligned} \right. 
\ee
From the characteristics method,  the  solution of    $\ef{ns1}_1$ can be expressed as
\be \nonumber
\rho (x, t)=\rho_0(X(x, 0) ){\rm exp}\left ( -\int_0^t \dv\mathbf{u}(X(x, s), s) ds \right). 
\ee
 Thus  we get 
\begin{align}\label{rholb}
\|\rho^{-1}\|_{L_{x, t}^\infty}&\leq\|\rho_0^{-1}\|_{L^\infty(\Omega)}{\rm exp}\left(C\int_0^t\|\mathbf{u}\|_{H^3}ds\right) \leq  C_0(M_0){\rm exp}\{ C t^{\frac 1 2 } \mathcal{E} (t) \}. 
\end{align}
Similarly,  one can derive the upper bound of $\rho$.  Then,  multiplying  $\partial^\alpha {\eqref{ns1}}_1$ by $\partial^\alpha\rho$ ($|\alpha|\in [0, 2]$),  integrating the resulting equality over  $ \Omega\times[0, t]$,  we obtain  that 
$$ 
 \|\rho\|^2_{L_t^\infty(H^2)}\leq \|\rho_0\|^2_{ H^2} {\rm exp} \{C t^{\frac 1 2}\|\mathbf{u}\|_{L_t^2(H^3)} \}\leq  C_0(M_0){\rm exp}\{ C t^{\frac 1 2 } \mathcal{E} (t) \}.
$$ 
By taking the product of $\p_t \eqref{ns1}_1$ and $\rho_t$ in $L^2(\Omega\times[0, t])$,  one has
\begin{equation}\label{low21}\begin{aligned} 
\|\rho_t\|_{L^\infty_t(L^2)}^2
&\leq \|\rho_t(0)\|_{ L^2}^2 + C t^{\frac 1 2}(\|\rho_t\|_{L^\infty_t(L^2)}^2\| \mathbf{u}\|_{L^2_t(H^3)} + \|\rho_t\|_{L^\infty_t(L^2)}\|\rho\|_{L^\infty_t(H^2)}\| \mathbf{u}_t\|_{L^2_t(H^1)} )\\
&\leq  C_0(M_0){\rm exp}\{ C t^{\frac 1 2 } \mathcal{E} (t) \}.
\end{aligned}\end{equation} 
Moreover,  we directly obtain from $ \eqref{ns1}_1$ that 
\begin{equation}\label{low21-1}
\begin{aligned}
\|\rho_t\|_{L^2_t(H^1)}^2
&\leq C t^{\frac 1 2} \|\rho\|_{L_t^\infty(H^2)}^2 \| \mathbf{u}\|_{L^2_t(H^3)}  \| \mathbf{u}\|_{L^\infty_t(H^1)}  
\leq C t^{\frac 1 4} \mathcal{E} (t)^2.
\end{aligned}
\end{equation} 
From \ef{rholb},  \ef{low21}, and \ef{low21-1},  we conclude \ef{lemma1} immediately. 
\hfill$\Box$

\subsection{Low-order estimates of  $(q,\mathbf{v},\theta )$}
\begin{lem}\label{lemlow}   For any $t\in[0, T]$ and $\e\in(0,1]$,  it holds that 
\begin{equation} \nonumber
\|(\rho ,  q )\|_{L^\infty( L^2 )} ^2+\| \mathbf{v} \|_{L^\infty( H^1 )}^2 +\| \theta \|_{L^\infty( H^2 )} ^2 
\leq C_0(M_0){\rm exp}\{t  C_1(\mathcal{E} (t)) \} .
\end{equation} 
\end{lem}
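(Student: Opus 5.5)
The plan is to use the Newton--Leibniz formula in time, as in Step 1 of the half-space outline (estimate \ef{s0}). For any quantity $F$ with $F_t\in L^2_t$ we have
$$\|F(s)\|\le \|F(0)\|+\int_0^s\|F_t\|\,d\tau\le \|F(0)\|+s^{1/2}\|F_t\|_{L^2_t}.$$
Squaring this gives $\|F\|^2_{L^\infty_t}\le 2\|F(0)\|^2+2t\|F_t\|^2_{L^2_t}$. We then bound $\|F_t\|^2_{L^2_t}$ in the appropriate norm by $t\|F_t\|^2_{L^\infty_t}$ or directly by $\mathcal{E}(t)$, so that the time-increment term is $t\,C(\mathcal{E}(t))$. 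Finally $a+tC\le a\exp\{tC/a\}$ together with $a\ge C_0^{-1}$, or more simply $C_0(M_0)+t\,C(\mathcal E)\le C_0(M_0)\exp\{tC_1(\mathcal E)\}$ (enlarging $C_0\ge1$), yields the claimed exponential form.

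The terms are treated as follows.
\begin{enumerate}
\item For $\rho$ and $q$ in $L^\infty_t(L^2)$, take $F=\rho$ and $F=q$. Then $\|\rho_t\|_{L^2_t(L^2)}^2\le t\|\rho_t\|^2_{L^\infty_t(L^2)}\le t\,\mathcal{E}(t)$, and likewise for $q_t$, since both $\rho_t$ and $q_t$ belong to $L^\infty_t(L^2)$ in $M(t)$. The initial values are bounded by $M_0$ through \ef{data}.
\item For $\theta$ in $L^\infty_t(H^2)$, take $F=\theta$ with values in $H^2$. Here $\|\theta_t\|^2_{L^2_t(H^2)}\le \|\theta_t\|^2_{L^2_t(L^2)}+\|\nabla\theta_t\|^2_{L^2_t(H^1)}\le (t+1)\mathcal{E}(t)$. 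This alone only gives a factor $t$ in front of $\mathcal E$, which is exactly what is needed: $2t\|\theta_t\|^2_{L^2_t(H^2)}\le 4t\,\mathcal{E}(t)$.
\item For $\mathbf v$ in $L^\infty_t(H^1)$, write $\mathbf{v}=\mathbf{u}-\frac{\kappa}{2}\nabla\theta$, so that $\mathbf v_t=\mathbf u_t-\frac\kappa2\nabla\theta_t$. The $L^2_t(H^1)$ norm of $\mathbf v_t$ is controlled by $\|\mathbf{u}_t\|_{L^2_t(H^1)}+\|\nabla\theta_t\|_{L^2_t(H^1)}\le C\mathcal{E}(t)^{1/2}$. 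For the initial value, $\|\mathbf v(0)\|_{H^1}\le \|\mathbf u_0\|_{H^1}+C\|\theta_0\|_{H^2}\le CM_0$.
\end{enumerate}

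Summing the three estimates gives
$$\|(\rho,q)\|^2_{L^\infty_t(L^2)}+\|\mathbf v\|^2_{L^\infty_t(H^1)}+\|\theta\|^2_{L^\infty_t(H^2)}\le C_0(M_0)+Ct\,\mathcal{E}(t),$$
and this is converted to the exponential form as described above.

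The only point requiring care is the choice of function spaces. The time derivatives must be measured in norms that actually appear in $M(t)$, namely $\theta_t\in L^2_t(H^2)$ through $\nabla\theta_t\in L^2_t(H^1)$, and $\mathbf u_t\in L^2_t(H^1)$. One should not try to bound $\mathbf v_t$ via $L^\infty_t$, since $\nabla\theta_t$ is controlled in $L^\infty_t$ only in $L^2$, not in $H^1$. The rigorous justification of Newton--Leibniz for the strong solutions follows from the regularity classes in Theorem \ref{le}. There is no real obstacle; this is a routine low-order bound.
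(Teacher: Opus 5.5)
Your proposal is correct and is essentially the paper's proof: you apply Newton--Leibniz in time to $(\rho,q)$ using $\|(\rho_t,q_t)\|_{L^\infty_t(L^2)}$, and to $(\mathbf v,\theta)$ using $\|\mathbf v_t\|_{L^2_t(H^1)}$ and $\|\theta_t\|_{L^2_t(H^2)}$ from $\mathcal E(t)$, which gives $C_0(M_0)+Ct\,\mathcal E(t)$ and then the exponential form. Your Cauchy--Schwarz version $\|F\|^2_{L^\infty_t}\le 2\|F(0)\|^2+2t\|F_t\|^2_{L^2_t}$ is a slightly more precise form of the inequality the paper writes without explicit constants.
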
  
\textbf{Proof}.  By Newton-Leibniz's formula  for time variables, we   check that  
\be\begin{aligned}\nonumber
\|(\rho ,  q )\|_{L^\infty(0, t;L^2(\Omega))}^2 
&\leq  \|(\rho _0,  q_0)\|_{ L^2(\Omega)} ^2 +\int_0^t \|(\rho_t,  q_t)\|_{ L^2(\Omega)}^2 ds\\
&\leq C_0(M_0)+ Ct\|(\rho_t,  q_t)\|_{L^\infty( L^2 )}^2 \\
& \leq C_0(M_0)+ Ct \mathcal{E}(t), 
\end{aligned}\ee 
and  
\be\begin{aligned}\nonumber
\| \mathbf{v} \|_{L^\infty( H^1 )}^2 +\| \theta \|_{L^\infty( H^2 )} ^2 &\leq C_0(M_0) + Ct(  \| \mathbf{v}_t  \|_{L^2( H^1 )}^2 +\| \theta_t \|_{L^2(H^2 )} ^2 )\\
&\leq C_0(M_0)+ Ct \mathcal{E}(t).
\end{aligned}\ee   
The proof is completed. 
\hfill$\Box$

\subsection{Estimates of $( \eta_t ,  \mathbf{u}_{\e t}, \vartheta_t   )$}  \label{sec3.2}  
To establish the uniform bounds for $\vartheta_t$, we use the system  \ef{2ns} to  estimate $M_1(t)$ and $M_2(t)$.  
For convenience, we give the following estimates by use of  \ef{rela} and   Remark \ref{ptt},
$$ \|\Theta^{-1}\|_{L_t^\infty(H^2)} \lesssim  \|\Theta^{-1}\|_{L_{t,x}^\infty} + \|\Theta^{-1}\|_{L_{t,x}^\infty} ^2 \|\Theta \|_{L_t^\infty(H^2)} + \|\Theta^{-1}\|_{L_{t,x}^\infty} ^3 \|\Theta \|_{L_t^\infty(H^2)}  ^2\leq C_1(\mathcal{E}(t)) , $$ 
$$
 \|p^{-1}\|_{L_t^\infty(H^2)} \lesssim \|\Theta^{-1}\|_{L_t^\infty(H^2)}  \|\rho^{-1}\|_{L_t^\infty(H^2)} \leq C_1(\mathcal{E}(t)) ,
$$ 
$$
\|\n\sigma\|_{L_t^\infty(H^1)}=\| {p}^{-1}\n q\|_{L_t^\infty(H^1)}\lesssim  \|p^{-1}\|_{L_t^\infty(H^2)}  \|\n q\|_{L_t^\infty(H^1)} \leq C_1(\mathcal{E}(t)) ,
$$
$$
\left\|\frac{\n\sigma}{\e}\right\|_{L_t^2(H^1)}=\left\|{p}^{-1} \frac{\n q}{\e}\right\|_{L_t^2(H^1)}\lesssim \|p^{-1}\|_{L_t^\infty(H^2)}\left\| \frac{\n q}{\e}\right\|_{L_t^2(H^1)} \leq C_1(\mathcal{E}(t)) , 
$$ 
$$
\|\n\vartheta\|_{L_t^\infty(H^1)}=\|\Theta^{-1}\n\theta\|_{L_t^\infty(H^1)}\lesssim \|\Theta^{-1}\|_{L_t^\infty(H^2)}\|\n\theta\|_{L_t^\infty(H^1)}\leq C_1(\mathcal{E}(t)) ,
$$ 
$$
\|\n\vartheta\|_{L_t^2(H^2)}=\|\Theta^{-1}\n\theta\|_{L_t^2(H^2)}\lesssim  \|\Theta^{-1}\|_{L_t^\infty(H^2)}\|\n \theta\|_{L_t^2(H^2)} \leq C_1(\mathcal{E}(t)) , $$ 
and 
\begin{equation}\nonumber\begin{aligned}
&\|\e\sigma_{tt}\|_{L_t^2(L^2)}+\|\vartheta_{tt}\|_{L_t^2(L^2)}+ \|\eta_{tt}\|_{L_t^2(L^2)} \\
&\lesssim   \|\e q_{tt}\|_{L_t^2(L^2)}\|p^{-1} \|_{L_t^\infty(H^2)}+\|\e\sigma_t\|^2_{L_t^\infty( H^1)} + \| \theta_{tt}\|_{L_t^2(L^2)} \|\Theta^{-1} \|_{L_t^\infty(H^2)}+\|\e\vartheta_t\|^2_{L_t^\infty( H^1)}\\
& \leq  C_1(\mathcal{E}(t)) , 
\end{aligned}\end{equation}    
which will be applied in this part frequently.

\begin{lem}\label{lem3.2} 
For any $t\in[0, T]$ and $\e\in(0, 1]$,  we have the following estimate
\begin{equation}\label{lem1t1}\begin{aligned}
& \left\|\left( \eta_t ,  \mathbf{u}_{\e t},  \vartheta_t   \right)\right\|^2_{L_t^\infty(L^2)}
+ \left\| \left(   \n \mathbf{u}_{\e t},    \n \vartheta_t  \right)  \right\|^2_{L^2_t(L^2)} \leq C_0(M_0){\rm exp}\{t^{\frac 1 4} C_1( \mathcal{E}(t)) \}. 
\end{aligned}\end{equation} 
\end{lem}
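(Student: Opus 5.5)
We differentiate the system \eqref{2ns} once in time and run a basic $L^2$ energy estimate, using the skew-symmetric structure of the singular part. Concretely, we apply $\p_t$ to $\eqref{2ns}_{1,2,3}$. We then test the resulting equations by $2\eta_t$, $\mathbf{u}_{\e t}$ and $\vartheta_t$ respectively, and integrate over $\Omega\times(0,t)$.

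The choice of weights is what kills the singular terms. The $\frac1\e$ contributions are
\[
\frac1\e\om \dv\mathbf{u}_{\e t}\,\eta_t\,dx+\frac1\e\om \n(2\eta_t+\vartheta_t)\cdot \mathbf{u}_{\e t}\,dx+\frac1\e\om \dv\mathbf{u}_{\e t}\,\vartheta_t\,dx .
\]
After integration by parts with $\mathbf{u}_{\e t}|_{\p\Omega}=0$ these cancel exactly; this is precisely why the weight $2$ is put on the $\eta$-equation. The weight $\Theta^{-1}$ in front of $\mathbf{u}_{\e tt}$ gives $\frac12\frac{d}{dt}\om \Theta^{-1}|\mathbf{u}_{\e t}|^2$ plus a commutator $\om (\Theta^{-1})_t|\mathbf{u}_{\e t}|^2$. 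Since $\Theta\ge A/2$, the energy controls $\|\mathbf{u}_{\e t}\|_{L^2}^2$.

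The viscous term $-p^{-1}(\mu\Delta+(\mu+\lambda)\n\dv)\mathbf{u}_{\e t}$ yields coercive dissipation $\gtrsim \|\n\mathbf{u}_{\e t}\|_{L^2}^2$, using the lower bound of $p^{-1}$ from Lemma \ref{lemrho} and $\Theta\ge A/2$. The price is commutator terms with $\n p^{-1}$, which are absorbed by Young's inequality. For the heat term, $-\kappa\om p^{-1}\dv(\Theta\n\vartheta_t)\vartheta_t\,dx$ gives $\kappa\om p^{-1}\Theta|\n\vartheta_t|^2$ plus lower-order terms, with no boundary contribution since $\p_{\mathbf n}\vartheta_t=0$.

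Every remaining term is lower order. These include the terms where $\p_t$ falls on the coefficients $\Theta^{-1}$, $p^{-1}$ and $\Theta$, the transport terms $\mathbf{u}_t\cdot\n(\eta,\mathbf{u}_\e,\vartheta)$ and $\mathbf{u}\cdot\n(\cdot)_t$, and the $\e^2$ source. We bound them with the auxiliary estimates collected before the lemma: $\|(\Theta^{-1},p^{-1})\|_{L^\infty_t(H^2)}$, $\|\n\vartheta\|_{L^\infty_t(H^1)}$, $\|(\vartheta_{tt},\eta_{tt},\e\sigma_{tt})\|_{L^2_t(L^2)}\le C_1(\mathcal E(t))$, and Remark \ref{ptt}. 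We also use Lemma \ref{inter} in the form $\|f\|_{L^2_t(L^3)}\lesssim t^{1/4}\|f\|_{L^\infty_t(L^2)}^{1/2}\|f\|_{L^2_t(H^1)}^{1/2}$ and Hölder in time. Each term is then bounded by $\delta\|(\n\mathbf{u}_{\e t},\n\vartheta_t)\|_{L^2_t(L^2)}^2+t^{1/4}C_1(\mathcal E(t))$. The transport terms are made symmetric by $\om \mathbf{u}\cdot\n f\, f=-\frac12\om \dv\mathbf{u}\, |f|^2$. The initial energy is bounded by $C_0(M_0)$ via \eqref{data} and \eqref{rela}.

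The main obstacle is that the weight $\Theta^{-1}$, and the factor $p^{-1}$ in the viscous term, are not constant. Their derivatives $\p_t\Theta^{-1}$ and $\n p^{-1}$ are not small, so the terms they produce must be shown to carry a positive power of $t$. The delicate ones are $\om \p_t(p^{-1})\Delta\mathbf{u}_\e\cdot\mathbf{u}_{\e t}$ and $\om p^{-1}\p_t(\Theta)\n\vartheta\cdot\n\vartheta_t$. For the first, note that $\e\mathbf{u}\in L^2_t(H^3)$ and $\p_t p^{-1}\in L^\infty_t(L^2)\cap L^2_t(H^1)$ up to $\mathcal E$. We place $\p_t p^{-1}$ in $L^\infty_t(L^3)$, $\Delta\mathbf{u}_\e$ in $L^2_t(L^6)$ and $\mathbf{u}_{\e t}$ in $L^2_t(L^2)\subset t^{1/2}L^\infty_t(L^2)$, which gives a factor $t^{1/2}$. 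The second is handled the same way. Finally, taking $\delta$ small closes the estimate in the form $C_0(M_0)+t^{1/4}C_1(\mathcal E)$. This is then written as $C_0(M_0)\exp\{t^{1/4}C_1(\mathcal E(t))\}$, either directly or through a Gronwall argument on the $\int_0^t\|\cdot\|^2$ terms.
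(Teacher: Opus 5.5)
You follow the paper's route: differentiate \eqref{2ns} in time, run a symmetrized $L^2$ estimate, and bound every remainder by a positive power of $t$ times $C_1(\mathcal{E}(t))$. However, the cancellation you single out as the key step is miscomputed. The $\eta$-equation carries $\frac{1}{2\e}\dv\mathbf{u}_{\e t}$, while the momentum equation carries $\frac{2}{\e}\n\eta_t$. So with the multiplier $2\eta_t$, your three displayed terms add up, after integration by parts, to $\frac{1}{\e}\om\eta_t\dv\mathbf{u}_{\e t}\,dx-\frac{2}{\e}\om\eta_t\dv\mathbf{u}_{\e t}\,dx=-\frac{1}{\e}\om\eta_t\dv\mathbf{u}_{\e t}\,dx\neq0$; only the $\vartheta_t$ contributions cancel. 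The symmetrizing weight is $4$. The paper tests $\p_t\eqref{2ns}_1$ by $4\eta_t$, which produces the energy $2\|\eta_t\|_{L^2}^2$ and the term $\frac{2}{\e}\omt\eta_t\dv\mathbf{u}_{\e t}\,dxds$, and then the cancellation is exact. For this particular lemma your slip happens not to be fatal. Since $\frac1\e\dv\mathbf{u}_{\e t}=\dv\mathbf{u}_t$ and $\mathcal{E}(t)$ already contains $\|\mathbf{u}_t\|^2_{L^2_t(H^1)}$, the leftover satisfies $\bigl|\omt\eta_t\dv\mathbf{u}_t\,dxds\bigr|\le Ct^{1/2}\|\eta_t\|_{L^\infty_t(L^2)}\|\mathbf{u}_t\|_{L^2_t(H^1)}\le Ct^{1/2}\mathcal{E}(t)$. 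At this order none of the three $\frac1\e$ terms is genuinely singular. The same symmetrization is reused in Lemma \ref{lem3.3}, though, where $\n\dv\mathbf{u}_t\in L^2_t(L^2)$ is not part of $\mathcal{E}$ and exact cancellation is indispensable. So you should fix the weight rather than rely on this accident.

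Your coercivity step also runs the bounds in the wrong direction. The lower bound $\Theta\ge A/2$ controls the weight $\Theta^{-1}$ from above. To extract $\|\mathbf{u}_{\e t}\|_{L^2}^2$ from $\om\Theta^{-1}|\mathbf{u}_{\e t}|^2dx$, and the dissipation from $\om p^{-1}|\n\mathbf{u}_{\e t}|^2dx$ and $\om\rho^{-1}|\n\vartheta_t|^2dx$, you need upper bounds on $\Theta$ and $\rho$. These must be of the form $C_0(M_0)\exp\{t^{1/4}C_1(\mathcal{E}(t))\}$, for instance from Lemma \ref{lemrho} together with $\|\Theta\|_{L^\infty}\le\|\Theta_0\|_{L^\infty}+Ct^{1/2}\|\Theta_t\|_{L^2_t(H^2)}$. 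A bare factor $C_1(\mathcal{E}(t))$ multiplying $C_0(M_0)$ would spoil the form of \eqref{lem1t1} that the continuity argument needs. Lastly, placing $\p_t p^{-1}=-p^{-1}\e\sigma_t$ in $L^\infty_t(L^3)$ is justified by $\e\sigma_t=2\eta_t+\vartheta_t\in L^\infty_t(H^1)$, which is part of $M_1$. It does not follow from the $L^\infty_t(L^2)\cap L^2_t(H^1)$ membership you quote. With these corrections your argument coincides with the paper's.
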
 
\textbf{Proof}. Differentiating \ef{2ns} with respect to time yields 
\begin{equation}\label{2nst}
\left\{
\begin{aligned} 
&  \eta_{tt} +\mathbf{u}\cdot \n  \eta_t +\mathbf{u}_t\cdot \n  \eta   +\frac{1}{2 \e}\dv \mathbf{u}_{\e t} = 0 &{\rm in }~ \Omega_t, \\
&  \mathbf{u}_{\e tt} +\mathbf{u}\cdot \n\mathbf{u}_{\e t}    +\frac{1}{\e} \Theta (2\n \eta_t + \n \vartheta_t) - \frac{1}{\rho} ( \mu\Delta\mathbf{u}_{\e t}  + (\mu+\lambda)\n\dv \mathbf{u}_{\e t}   ) =g_1  &{\rm in }~ \Omega_t,  \\ 
& \vartheta_{tt}+\mathbf{u}\cdot \n  \vartheta_t +\frac{1}{\e} \dv \mathbf{u}_{\e t} -\kappa  \frac{1}{p} \dv (\Theta \n\vartheta_t ) =  h_1 ~ &{\rm in }~ \Omega_t,  \\
&\mathbf{u}_{\e t}=0, \q  \frac{\p\vartheta_t }{\p\mathbf{n}}=0 
&{\rm on}~  \p\Omega_t,  \\
&(\mathbf{u}_{\e tt},  \eta_{tt},  \vartheta_{tt})|_{t=0}=(\mathbf{u}_{\e tt}(0),  \eta_{tt} (0),  \vartheta_{tt}(0))(x)  &{\rm in }~ \Omega,   
\end{aligned} \right.   
\end{equation}  
where 
$$
g_1:= -    \mathbf{u}_t\cdot \n\mathbf{u}_{\e }  +   \vartheta_t (\mathbf{u}_{\e t} +\mathbf{u}\cdot \n\mathbf{u}_{\e }   ) - \frac{1}{\rho} \e \sigma_t ( \mu\Delta\mathbf{u}_{\e }  +(\mu+\lambda)\n\dv \mathbf{u}_{\e } ),   $$
$$
h_1:= -   \Theta^{-1}  \mathbf{u}_t \cdot \n  e^\vartheta +\kappa  \frac{1}{p} \dv ( \vartheta_t \n \Theta )  - \kappa  \frac{1}{p} \e \sigma_t \Delta \Theta  +\e^2  \p_t\Big [\frac{1}{p}  (2\mu|D(\mathbf{u})|^2+\lambda(\dv\mathbf{u} )^2)\Big ]. 
$$
In addition,  the initial values $(\eta_t(x, 0),  \mathbf{u}_{\e t}(x, 0),  \vartheta_{t}(x, 0) )$ are given by
\begin{equation}\nonumber
\begin{aligned}  
&  \eta_t(x, 0) := - \mathbf{u}_0\cdot \n  \eta_0   - \frac{1}{2 \e}\dv \mathbf{u}_\e(0)  , \\
& \mathbf{u}_{\e t}(x, 0):=-\mathbf{u}_0\cdot \n\mathbf{u}_{\e } (0) - \frac{\Theta_0}{\e}(2\n \eta_0 + \n \vartheta_0) + \frac{\Theta_0}{p_0} ( \mu\Delta\mathbf{u}_\e(0)  +(\mu+\lambda)\n\dv \mathbf{u}_\e(0) ),  \\
&\vartheta_{t}(x, 0) := -\mathbf{u}_0\cdot \n  \vartheta_0 - \frac{1}{\e} \dv \mathbf{u}_\e (0)+\kappa  \frac{1}{p_0} \dv (\Theta _0\n\vartheta_0)+\frac{\e^2 }{p_0}  [2\mu|D(\mathbf{u}_0)|^2+\lambda(\dv\mathbf{u}_0 )^2]. 
\end{aligned}   
\end{equation}   
We multiply $\ef{2nst}_1$ by $4\eta_t $ and integrate the resulting equality over $ \Omega\times(0, t)$ to obtain 
\begin{equation}\label{et1}
\begin{aligned} 
&2\|\eta_t \|_{L^2}^2(t)+\frac{2}{\e} \omt \eta_t  \dv  \mathbf{u}_{\e t} dxds\\
& =2\|\eta_t (0)\|_{L^2}^2 +\omt 2 \dv \mathbf{u} |\eta_t|^2-4 (\mathbf{u}_{t}\cdot \n\eta )\eta_t  dxds\\
&\leq C_0(M_0) + t^{\frac 1 2}(\|\eta_t \|_{L_t^\infty(L^2)}^2 \|\mathbf{u}\|_{L_t^2(H^3)}+\|\eta_t \|_{L_t^\infty(L^2)} \|\n\eta \|_{L_t^\infty(H^1)}\|\mathbf{u}_t\|_{L_t^2(H^1)} ).
\end{aligned}   
\end{equation} 
By integrating the inner product of $\ef{2nst}_2$ and $\Theta \mathbf{u} _{\e t} $ in $L^2(\Omega\times(0, t))$,  one has 
\begin{equation}\label{et2}
\begin{aligned} 
& \frac{1}{2}\|\sqrt{\Theta}\mathbf{u} _{\e t} \|_{L^2}^2(t)-\frac{1}{\e} \omt ( 2 \eta_t + \vartheta_t)\dv  \mathbf{u}_{\e t} dxds \\ 
& + \mu \|\sqrt{p}^{-1} \n \mathbf{u}_{\e t} \|_{L_t^2(L^2)}^2  + (\mu+\lambda) \|\sqrt{p}^{-1} \dv \mathbf{u}_{\e t} \|_{L_t^2(L^2)} ^2  \\ 
& = \frac{1}{2}\|\sqrt{\Theta_0}\mathbf{u}_{\e t}(0) \|_{L^2}^2 +  \frac{1}{2} \omt (\Theta_t+\dv(\Theta \mathbf{u} ))|\mathbf{u} _{\e t}|^2 dxds + \omt \Theta g_1\cdot \mathbf{u}_{\e t} dxds \\
&\q - \omt \mu  \n (p^{-1}) \cdot \n\mathbf{u}_{\e t} \cdot\mathbf{u}_{\e t} +(\mu+\lambda) \dv \mathbf{u}_{\e t}  \n (p^{-1}) \cdot\mathbf{u}_{\e t}  dxds\\
&\leq C_0(M_0) +t^{\frac 1 4} \|\mathbf{u}_{\e t} \|_{L_t^\infty(L^2)}\|\mathbf{u}_{\e t} \|_{L_t^2(H^1)}(\| \Theta \|_{L_t^\infty(H^2)} \|\mathbf{u}\|_{L_t^2(H^3)}^{\frac 1 2}\|\mathbf{u}\|_{L_t^\infty(H^1)}^{\frac 1 2}  \\
&\q +\| \Theta _t\|_{L_t^\infty(H^1)} )
+ t^{\frac 1 4}  \|\n (p^{-1})\|_{L_t^\infty(H^1)} \|\mathbf{u}_{\e t} \|_{L_t^2(H^1)}^{\frac 3 2}\|\mathbf{u}_{\e t} \|_{L_t^\infty(H^1)}^{\frac 1 2} 
\\
&\q + t^{\frac 1 2}  \|\mathbf{u}_{\e t} \|_{L_t^2(H^1)} \| \Theta \|_{L_t^\infty(H^2)}[\|\mathbf{u}_{t} \|_{L_t^\infty(L^2)} \|\e \mathbf{u}\|_{L_t^\infty(H^2)} +(\|\mathbf{u}_{\e t} \|_{L_t^\infty(L^2)}  \\
&\q +\|\mathbf{u} \|_{L_t^\infty(H^1)}^2  )\| \vartheta_t \|_{L_t^\infty(H^1)}  +\|\rho^{-1}\|_{L^\infty_{x, t}} \|\e \sigma_t\|_{L_t^\infty(H^1)}\|\e \mathbf{u}\|_{L_t^\infty(H^2)} ].  
\end{aligned}   
\end{equation}  
Then,  taking the product of $\ef{2nst}_3$ and $ \vartheta_{t} $ in $L^2(\Omega\times(0, t))$ yields that 
\begin{equation}\label{et3}
\begin{aligned} 
& {\frac 1 2} \|\vartheta_t\|^2_{L^2}(t) + {\frac 1 \e}\omt   \vartheta_t  \dv  \mathbf{u}_{\e t} dxds +\kappa\|\sqrt{\rho}^{-1} \n \vartheta_t\|_{L_t^2(L^2)}^2 \\
& = {\frac 1 2} \|\vartheta_t(0)\|^2_{L^2}+ {\frac 1 2} \omt  \dv \mathbf{u} |\vartheta_t|^2  dxds +\omt  -\kappa \Theta \vartheta_t \n \vartheta_t\cdot \n(p^{-1})  +  \vartheta_t h_1 dxds\\
&\leq C_0(M_0)+ t^{\frac 1 2} \|\vartheta_t \|_{L_t^\infty(L^2)}^2 \|\mathbf{u}\|_{L_t^2(H^3)} +t \|\Theta \|_{L_{x, t}^\infty } \|\vartheta_t \|_{L_t^\infty(H^1)}^2  \|\n (p^{-1})\|_{L_t^\infty(H^1)}  \\
&\q + t^{\frac 1 2}  \|\vartheta_t \|_{L_t^\infty(H^1)} [ \|\Theta^{-1} \|_{L_{x, t}^\infty } \|\mathbf{u}_t \|_{L_t^\infty(L^2)}\|\Theta \|_{L_t^\infty(H^2)}  + \|p^{-1}\|_{L_{x, t}^\infty} \|\Theta \|_{L_t^\infty(H^2)}^2(\|\vartheta_t \|_{L_t^\infty(H^1)}\\ 
&\q +\|\e \sigma_t \|_{L_t^\infty(H^1)} ) + \|p^{-1}\|_{L_{x, t}^\infty}  (\|\e \sigma_t \|_{L_t^\infty(H^1)} \|\e \mathbf{u}\|_{L_t^\infty(H^2)} ^2 + \e \|\e \mathbf{u}\|_{L_t^\infty(H^2)}  \|\mathbf{u}_t \|_{L_t^2(H^1)})] . 
\end{aligned}    
\end{equation} 
From \ef{et1},  \ef{et2},  and \ef{et3},  we immediately derive \ef{lem1t1}.
\hfill$\Box$

\vskip3mm

Next, we aim to obtain the estimate of $M_2(t)$.   
\begin{lem}\label{llemtt}
For any $t\in[0, T]$ and $\e\in(0, 1]$,  it holds that
\begin{equation}\label{lemtt} \begin{aligned}
& \left\|\e ( \eta_{tt} ,  \mathbf{u}_{\e tt},  \vartheta_{tt}   )\right\|^2_{L^2}(t)
+ \left\|\e  ( \n \mathbf{u}_{\e tt},   \dv \mathbf{u}_{\e tt},  \n \vartheta_{tt}) \right\|^2_{L^2_t(L^2)}\\
&\leq  C_0(M_0){\rm exp}\{(t^{\frac 1 4} + \e^{\frac 1 2} ) C_1(\mathcal{E} (t)) \}. 
\end{aligned}\end{equation} 
\end{lem}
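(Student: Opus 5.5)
We will mimic the proof of Lemma \ref{lem3.2} one time derivative higher, with every unknown weighted by $\e$. Differentiate \ef{2nst} once more in time and multiply through by $\e$. This gives a system for $(\e\eta_{tt},\e\mathbf{u}_{\e tt},\e\vartheta_{tt})$ with the same principal part as \ef{2nst}:
\begin{equation}\nonumber
\frac{1}{2\e}\dv(\e\mathbf{u}_{\e tt}),\qquad \frac{1}{\e}\Theta\bigl(2\n(\e\eta_{tt})+\n(\e\vartheta_{tt})\bigr),\qquad \frac{1}{\e}\dv(\e\mathbf{u}_{\e tt}),
\end{equation}
together with the viscous and heat-conducting terms. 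The boundary conditions $\e\mathbf{u}_{\e tt}=0$ and $\p_{\mathbf n}(\e\vartheta_{tt})=0$ on $\p\Omega_t$ follow by differentiating $\ef{2nst}_4$ in time. The initial data $\e(\eta_{tt},\mathbf{u}_{\e tt},\vartheta_{tt})(0)$ are bounded in $L^2$ by $C_0(M_0)$ through \ef{rela} and the assumption \ef{data} on $(\e^2\mathbf{u}_{tt},\e^2 q_{tt},\e\theta_{tt})(0)$.

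We test the three equations by $4\e\eta_{tt}$, $\Theta\,\e\mathbf{u}_{\e tt}$ and $\e\vartheta_{tt}$, respectively, and integrate over $\Omega\times(0,t)$. The weight $\Theta$ in the momentum test function is the same as in \ef{et2}. With it the singular terms combine as
\begin{equation}\nonumber
\frac{2}{\e}\e^2\!\omt \eta_{tt}\dv\mathbf{u}_{\e tt}-\frac{1}{\e}\e^2\!\omt(2\eta_{tt}+\vartheta_{tt})\dv\mathbf{u}_{\e tt}+\frac{1}{\e}\e^2\!\omt\vartheta_{tt}\dv\mathbf{u}_{\e tt}=0
\end{equation}
after integration by parts, using $\mathbf{u}_{\e tt}|_{\p\Omega}=0$. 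The dissipation produces $\mu\|\e\n\mathbf{u}_{\e tt}\|^2_{L^2_t(L^2)}$, $(\mu+\lambda)\|\e\dv\mathbf{u}_{\e tt}\|^2_{L^2_t(L^2)}$ and $\kappa\|\e\n\vartheta_{tt}\|^2_{L^2_t(L^2)}$, with weights $p^{-1}$, $\rho^{-1}$ bounded below by Lemma \ref{lemrho}. The lower-order terms from $\n(p^{-1})$ and $\n\Theta$ are handled exactly as in \ef{et2} and \ef{et3}; they are absorbed by Young's inequality, leaving small multiples of $t^{1/4}$.

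The remaining terms are the commutators created when the time derivative falls on the coefficients. The first kind is the transport term $\e\mathbf{u}_t\cdot\n\eta_t$. The second kind comes from $\p_t g_1$ and $\p_t h_1$, which contain $\mathbf{u}_{tt}$, $\vartheta_{tt}$, $\e\sigma_{tt}$, $\Theta_t$ times second derivatives of $\mathbf{u}_\e$ or $\vartheta$, and so on. The third kind is the singular commutator $\frac{1}{\e}\Theta_t(2\n\eta_t+\n\vartheta_t)\cdot\e^2\mathbf{u}_{\e tt}$. Each is controlled by $\mathcal{E}(t)$ using the extra weight $\e$:
\begin{itemize}
\item $\e\mathbf{u}_{tt}=\mathbf{u}_{\e tt}$ is bounded in $L^\infty_t(L^2)\cap L^2_t(H^1)$ by $M_2(t)$, since $\e\cdot\e\mathbf{u}_{tt}=\e^2\mathbf{u}_{tt}$.
\item The bounds on $\|(\e\sigma_{tt},\vartheta_{tt},\eta_{tt})\|_{L^2_t(L^2)}$ listed before Lemma \ref{lem3.2} control the second-order time derivatives of the scalar unknowns.
\item $\n\eta_t$ and $\n\vartheta_t$ are bounded in $L^\infty_t(L^2)$ through $M_1(t)$.
\item Interpolation (Lemma \ref{inter}) then produces either a factor $t^{1/4}$ or a leftover power of $\e$.
\end{itemize}

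I expect the main obstacle to be the terms in which the time derivative carries no small time factor and only an $\e$-weight is available. The typical examples are $\e^2\p_t(p^{-1})\Delta\mathbf{u}_{\e t}$ and the quadratic viscous heating $\e^2\p_t^2[p^{-1}(2\mu|D(\mathbf{u})|^2+\lambda(\dv\mathbf{u})^2)]$. These involve $\e\mathbf{u}_{tt}$ in $H^1$ or $\e^2\mathbf{u}_t$ in $H^2$. For them I would use the $\|\e^2\mathbf{u}_t\|_{L^2_t(H^2)}$ part of $M_2$ together with Remark \ref{ptt}, and split off one power $\e^{1/2}$ via $\|\e\mathbf{u}\|_{L^\infty_t(H^2)}$. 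This is what yields the $\e^{1/2}C_1(\mathcal{E}(t))$ in the exponent. Finally, I would collect all the bounds into a Gronwall-type inequality of the form $Y(t)\le C_0(M_0)+(t^{1/4}+\e^{1/2})C_1(\mathcal{E}(t))\,Y(t)$, or its exponential version, which gives \ef{lemtt}.
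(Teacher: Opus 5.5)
Your plan follows the paper's argument in all essentials: an $\e^2$-weighted $L^2$ energy estimate for the twice time-differentiated system, cancellation of the $O(\e^{-1})$ coupling by integration by parts using $\mathbf{u}_{\e tt}|_{\p\Omega}=0$, and all other terms bounded through $\mathcal{E}(t)$ with a factor $t^{1/4}$ or a power of $\e$. The real difference is where you put the temperature weight, and as stated your weight is wrong.

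\textbf{How the two choices compare.} The paper tests \eqref{2nstt} with $4\e^2\Theta\eta_{tt}$, $\e^2\mathbf{u}_{\e tt}$ and $\e^2\Theta\vartheta_{tt}$. The singular terms then do not cancel exactly. By \eqref{tt4} they leave $-\e\omt(2\eta_{tt}+\vartheta_{tt})\n\Theta\cdot\mathbf{u}_{\e tt}\,dxds$, which is $O(t^{1/4})$ because $\eta_{tt},\vartheta_{tt}\in L^2_t(L^2)$ and $\e\mathbf{u}_{\e tt}\in L^\infty_t(L^2)$. You instead leave the scalar tests unweighted and weight the momentum test. This gives exact cancellation, at the cost of extra commutators $\p_t(\Theta^{-1})|\e\mathbf{u}_{\e tt}|^2$ and $\n(p^{-1})$-terms, handled as in \eqref{et2}. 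Either choice works.

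\textbf{The weight must be $\Theta^{-1}$.} In your version the momentum test function has to be $\Theta^{-1}\e\mathbf{u}_{\e tt}$, not $\Theta\,\e\mathbf{u}_{\e tt}$. With $\Theta$, the singular momentum term carries $\Theta^2$ and leaves
\[
\e\omt(2\eta_{tt}+\vartheta_{tt})(1-\Theta^2)\dv\mathbf{u}_{\e tt}\,dxds .
\]
Under large temperature variations this has no small factor in $t$ or $\e$, which would destroy the form of \eqref{lemtt}. The identity displayed in \eqref{et2}, with dissipation weight $p^{-1}$, is in fact the one produced by $\Theta^{-1}$. So are the weights $p^{-1},\rho^{-1}$ you state, so this is a slip in the named test function rather than in your computation.

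\textbf{Two smaller corrections.} First, $M_2$ controls $\e\mathbf{u}_{\e tt}=\e^2\mathbf{u}_{tt}$, not $\mathbf{u}_{\e tt}$. Second, in the paper the factor $\e^{1/2}$ comes from the transport term $\e^2\omt\Theta\,\eta_{tt}\,\mathbf{u}_t\cdot\n\eta_t\,dxds$ in \eqref{tt1}, via
\[
\|\e^2\mathbf{u}_t\|_{L^2_t(L^\infty)}\lesssim\e^{1/2}\|\e^2\mathbf{u}_t\|^{3/4}_{L^2_t(H^2)}\|\mathbf{u}_t\|^{1/4}_{L^2_t(H^1)} .
\]
It does not come from the viscous and viscous-heating commutators you single out; those are bounded with a factor $t^{1/4}$ or a full power of $\e$ (see $R_2$, $R_4$).
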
 
\textbf{Proof}. Differentiate  $ \ef{2ns}$  twice with respect to $t$ yields 
\begin{equation}\label{2nstt}
\left\{
\begin{aligned} 
&  \eta_{ttt} +\mathbf{u}\cdot \n  \eta_{tt} +\mathbf{u}_{tt}\cdot \n  \eta + 2 \mathbf{u}_{t}\cdot \n  \eta_t   + \frac{1}{2 \e}\dv \mathbf{u}_{\e tt} = 0 &{\rm in }~ \Omega_t, \\
&  \mathbf{u}_{\e ttt} +\mathbf{u}\cdot \n\mathbf{u}_{\e tt}    +\frac{1}{\e} \Theta (2\n \eta_{tt} + \n \vartheta_{tt}) - \frac{1}{\rho} ( \mu\Delta\mathbf{u}_{\e tt}  + (\mu+\lambda)\n\dv \mathbf{u}_{\e tt}   ) =g_2  &{\rm in }~ \Omega_t,  \\ 
& \vartheta_{ttt}+\mathbf{u}\cdot \n  \vartheta_{tt} +\frac{1}{\e} \dv \mathbf{u}_{\e tt} -\kappa  \frac{1}{p} \dv (\Theta \n\vartheta_{tt} ) =  h_2   &{\rm in }~ \Omega_t,  \\
&\mathbf{u}_{\e tt}=0, \q  \frac{\p\vartheta_{tt}}{\p\mathbf{n}}=0 
&{\rm on}~  \p\Omega_t,  \\
&(\mathbf{u}_{\e tt},  \eta_{tt},  \vartheta_{tt})|_{t=0}=(\mathbf{u}_{\e tt}(0),  \eta_{tt} (0),  \vartheta_{tt}(0))(x)  &{\rm in }~ \Omega,   
\end{aligned} \right.   
\end{equation}  
where the initial  data are defined as
 \begin{equation}\nonumber
\begin{aligned} 
\mathbf{u}_{\e tt} (x, 0)&:=  -\mathbf{u}_0\cdot \n\mathbf{u}_{\e t}  (0)  -\frac{1}{\e} \Theta_0 (2\n \eta_t(0) + \n \vartheta_t (0) ) \\
&\q +\frac{1}{\rho_0} ( \mu\Delta\mathbf{u}_{\e t} (0) + (\mu+\lambda)\n\dv \mathbf{u}_{\e t} (0)   ) +g_1(x, 0), \\
 \eta_{tt}(x, 0)&:= -\mathbf{u}_0\cdot \n  \eta_t (0)-\mathbf{u}_t(0)\cdot \n  \eta _0  -\frac{1}{2 \e}\dv \mathbf{u}_{\e t} (0), \\
  \vartheta_{tt}(x, 0)&:=  -\mathbf{u}_0\cdot \n  \vartheta_t(0) -\frac{1}{\e} \dv \mathbf{u}_{\e t}(0) +\kappa  \frac{1}{p_0} \dv (\Theta_0 \n\vartheta_t(0) ) + h_1 (x, 0), 
\end{aligned} 
\end{equation} 
and the right-hand side terms are defined as 
 \begin{equation}\nonumber
\begin{aligned} 
g_2& := - \mathbf{u}_t\cdot \n\mathbf{u}_{\e t} -\vartheta_t (\mathbf{u}_{\e tt} +\mathbf{u}\cdot \n\mathbf{u}_{\e t})-\frac{1}{\rho} \e \sigma_t ( \mu\Delta\mathbf{u}_{\e t}  + (\mu+\lambda)\n\dv \mathbf{u}_{\e t} ) + \p_t g_1 - \vartheta_t g_1\\
&=-(\mathbf{u}_{tt} \cdot \n \mathbf{u}_{\e}+2\mathbf{u}_{t}\cdot \n\mathbf{u}_{\e t} ) -2\vartheta_{t}(\mathbf{u}_{\e t t}+\mathbf{u} \cdot \n \mathbf{u}_{\e t}) +(\vartheta_t^2-\vartheta_{tt} ) (\mathbf{u}_{\e t}+\mathbf{u} \cdot \n \mathbf{u}_{\e })\\
&\quad -\frac{2}{\rho} \e \sigma_t (\mu\Delta\mathbf{u}_{\e t}  + (\mu+\lambda)\n\dv \mathbf{u}_{\e t}  ) +\frac{1}{\rho}((2\vartheta_t- \e \sigma_t) \e \sigma_t-  \e \sigma_{tt} )( \mu\Delta\mathbf{u}_{\e }  + (\mu+\lambda)\n\dv \mathbf{u}_{\e }  ), 
\end{aligned} 
\end{equation} 
 \begin{equation}\nonumber
\begin{aligned} 
h_2&:= - \mathbf{u}_t\cdot \n\vartheta_t - \kappa\frac{1}{p}\e \sigma_t \dv(\Theta\n\vartheta_t)+\kappa\frac{1}{p} \dv(\Theta \vartheta_t \n\vartheta_t)+\p_t h_1\\
&= -( \mathbf{u}_{tt}\cdot \n\vartheta+2\mathbf{u}_t\cdot \n\vartheta_t   )- \frac{2\kappa}{p}\e \sigma_t \dv(\Theta\n\vartheta_t+ \Theta \vartheta_t\n \vartheta )+\frac{\kappa}{p} ( (\e \sigma_t)^2-\e \sigma_{tt} ) \dv(\Theta\n\vartheta  )\\
&\q + \frac{\kappa}{p} \dv(\Theta\vartheta_t^2\n\vartheta+\Theta\vartheta_{tt}\n\vartheta+2\Theta\vartheta_t\n\vartheta_t) -\e^2 \frac{2}{p} \e \sigma_t (2\mu|D(\mathbf{u})|^2+\lambda(\dv\mathbf{u} )^2 )\\
&\q +\e^2 \frac{2}{p}  (2\mu D(\mathbf{u}):D(\mathbf{u}_t)+\lambda \dv\mathbf{u}\dv\mathbf{u} _t  ) .
\end{aligned} 
\end{equation} 
 Multiplying $\ef{2nstt}_1$ by $4 \e^2 \Theta \eta_{tt}$ and integrating the resulting equality over $\Omega\times(0,t)$,  we derive 
\begin{align} 
&2 \|\e\sqrt{\Theta}\eta_{tt}\|_{L^2}^2(t) +2\e \omt \Theta \eta_{tt} \dv\mathbf{u}_{\e tt} dxds \nonumber \\
&= 2 \|\e\sqrt{\Theta_0}\eta_{tt}(0)\|_{L^2}^2 + 2\omt  |\e \eta_{tt}|^2 (\dv ( \Theta \mathbf{u} )+\Theta_t) dx \nonumber\\
&\q - 4 \omt \e^2 \Theta \eta_{tt} (\mathbf{u}_{tt}\cdot \n  \eta + 2 \mathbf{u}_{t}\cdot \n  \eta_t)  dxds \nonumber \\
&\lesssim C_0(M_0)+  t^{\frac 1 2}   \|\e \eta_{tt}\| _{L_t^\infty(L^2)}^2( \|\mathbf{u}\|_{L_t^2(H^3)} \|\Theta\|_{L_t^\infty(H^2)}+\|\Theta_t\|_{L_t^2(H^2)}     \label{tt1}
 \\ 
&\q+ \| \mathbf{u}\|_{L_t^2(H^3)}^{\frac 1 2} \| \mathbf{u}\|_{L_t^\infty(H^1)}^{\frac 1 2} \|\Theta\|_{L_t^\infty(H^2)}^{\frac 1 2} \|\Theta\|_{L_t^2(H^4)}^{\frac 1 2}  ) + t^{\frac 1 4}\|\eta_{tt}\| _{L_t^2(L^2)}  (\e \|\n\sigma\|_{L_t^\infty(H^1)} \nonumber \\
&\q \cdot \|\e^2 \mathbf{u}_{tt}\|_{L_t^\infty(L^2)} ^{\frac 1 2} \|\e^2 \mathbf{u}_{tt}\|_{L_t^2 (H^1)} ^{\frac 1 2}\|\Theta\|_{L_t^\infty(H^2)}  + \|\e^2 \mathbf{u}_{tt}\|_{L_t^\infty(L^2)} \|\n\Theta\|_{L_t^\infty(H^1)} ^{\frac 1 2} \|\n\Theta\|_{L_t^2(H^3)} ^{\frac 1 2} )  \nonumber \\
&\q +\e^{\frac 1 2 } \|\n\eta_{t}\| _{L_t^\infty(L^2)} \|\eta_{tt}\| _{L_t^2(L^2)} \|\e^2 \mathbf{u}_{t}\|_{L_t^2 (H^2)} ^{\frac 3 4} \| \mathbf{u}_{t}\|_{L_t^2 (H^1)} ^{\frac 1 4} \|\Theta\|_{L_t^\infty(H^2)}\nonumber   \\
&\leq C_0(M_0){\rm exp}\{(t^{\frac 1 4} + \e^{\frac 1 2} ) C_1(\mathcal{E} (t)) \}.    \nonumber 
\end{align} 
Next we integrate the product of $\ef{2nstt}_3$ and $ \e^2 \Theta \vartheta_{tt}$ in $L^2(\Omega\times(0,t))$ to get 
\begin{equation} \label{tt2}
\begin{aligned} 
&\frac{1}{2} \|\e\sqrt{\Theta}\vartheta_{tt}\|^2_{L^2}(t) +\e \omt \Theta \vartheta_{tt} \dv\mathbf{u}_{\e tt} dxds+ \kappa\| \e \Theta \sqrt{\rho}^{-1}\n \vartheta_{tt}\| _{L_t^2(L^2)}^2 \\ 
&= \frac{1}{2} \|\e\sqrt{\Theta_0}\vartheta_{tt}(0)\|_{L^2}^2
+R_1+R_2, 
\end{aligned} 
\end{equation} 
where $R_1$ and $R_2$  are defined and estimated as follows
\begin{equation} \nonumber
\begin{aligned} 
R_1:&=\frac{1}{2} \omt  {\Theta}_t |\e\vartheta_{tt}|^2 dxds -\kappa \omt \e^2 {\Theta} \n {\vartheta}_{tt}\cdot \n (\rho^{-1}) {\vartheta}_{tt} dxds\\
&\lesssim t^\frac{1}{2}\|{\Theta}_t\|_{L_t^2(H^2)} \|\e\vartheta_{tt}\|_{L_t^\infty(L^2)} ^2 + t^\frac{1}{4} \|\Theta\|_{L_t^\infty(H^2)}  \|\e\vartheta_{tt}\|_{L_t^2(H^1)} ^{\frac 3 2}\|\e\vartheta_{tt}\|_{L_t^\infty(L^2)} ^{\frac 1 2} \| \rho^{-1}\|_{L_t^\infty(H^2)} \\
&\leq t^{\frac{1}{4}}  C_1(\mathcal{E} (t)), 
\end{aligned} 
\end{equation} 
 and
\begin{align} 
R_2:&= \omt \e^2 {\Theta} \vartheta_{tt} \cdot h_2 dxds   \nonumber \\
& \lesssim t^{\frac 1 4}   (\|\e^2 \mathbf{u}_{tt}\|_{L_t^\infty(L^2)} \|\vartheta_{tt} \|_{L_t^2(L^2)}  \|\Theta \|_{L_t^2(H^4)}^{\frac 1 2}\|\Theta \|_{L_t^\infty(H^2)}^{\frac 1 2} + \|\e \vartheta_{tt} \|_{L_t^\infty(L^2)}^{\frac 1 2}  \|\e \vartheta_{tt} \|_{L_t^2(H^1)}^{\frac 1 2} 
\nonumber \\
&\q \cdot\|\Theta\|_{L_t^\infty(H^2)} \| \mathbf{u}_{t}\|_{L_t^\infty(L^2)}\|\vartheta_{t} \|_{L_t^2(H^2)}) + t^{\frac 1 2}  \|p^{-1}\|_{L_{x, t}^\infty}    ( \|\e \sigma_t\|^2_{L_{t}^\infty(H^1)} +\|\e^2 \sigma_{tt}\|^2_{L_{t}^\infty(L^2)} )
\nonumber\\
&\q \cdot  \|\e \vartheta_{tt} \|_{L_t^2(H^1)}  \| \Theta \|_{L_t^\infty(H^2)}^{\frac 1 2} \| \e \Theta \|_{L_t^\infty(H^3)}^{\frac 1 2}+
t^{\frac 1 4} \|\Theta\|_{L_t^\infty(H^2)} ^2 \|p^{-1}\|_{L_{x, t}^\infty}[ \|\e \vartheta_{tt} \|_{L_t^\infty(L^2)}(\| \vartheta_{t} \|_{L_t^\infty(H^1)}^2
  \nonumber \\ 
&\q  + \|\e \vartheta_{tt} \|_{L_t^2(H^1)}  )\|\Theta \|_{L_t^2(H^4)}^{\frac 1 2}\|\Theta \|_{L_t^\infty(H^2)}^{\frac 1 2} +\| \vartheta_{t} \|_{L_t^2(H^2)}  \| \vartheta_{t} \|_{L_t^\infty(H^1)}  \|\e \vartheta_{tt} \|_{L_t^\infty(L^2)}^{\frac 1 2} \|\e \vartheta_{tt} \|_{L_t^2(H^1)}^{\frac 1 2}] 
\nonumber \\ 
&\q + t^{\frac 1 2} \|p^{-1}\|_{L_{x, t}^\infty} \|\Theta\|_{L_t^\infty(H^2)} \|\e \vartheta_{tt} \|_{L_t^\infty(L^2)} [ \e^3 \|\e \sigma_t\|_{L_{t}^\infty(H^1)}
\| \mathbf{u}\|_{L_t^2(H^3)} \| \mathbf{u}\|_{L_t^\infty(H^1)}  \nonumber\\
&\q + \e  \|\e \sigma_t\|_{L_{t}^\infty(H^1)}   \|\Theta\|_{L_t^\infty(H^2)}\| \vartheta_{t} \|_{L_t^2(H^2)}+ \| \e^2\mathbf{u}_t\|_{L_t^2(H^2)} \| \e \mathbf{u}\|_{L_t^\infty(H^2)} ]   \nonumber \\
&\leq (t^{\frac{1}{4}}+\e ) C_1(\mathcal{E} (t)). 
\nonumber 
\end{align} 
Then we take the inner product of  $\ef{2nstt}_2$ with $\e^2 \mathbf{u}_{\e tt}$   in $L^2(\Omega\times(0, t))$ to find 
 \begin{equation} \label{tt3}
\begin{aligned} 
&\frac{1}{2}  \|\e^2 \mathbf{u}_{tt}\|_{L^2}^2(t)+\e \omt \Theta (2\n \eta_{tt} + \n \vartheta_{tt}) \cdot \mathbf{u}_{\e tt} dxds \\
& +\mu \|\e \sqrt{\rho}^{-1} \n \mathbf{u}_{\e tt}\|_{L_t^2(L^2)}^2+(\mu+\lambda) \|\e  \sqrt{\rho}^{-1} \dv  \mathbf{u}_{\e tt} \|_{L_t^2(L^2)}^2\\
&=\frac{1}{2}  \|\e^2 \mathbf{u}_{tt}(0)\|_{L^2}^2 +R_3+R_4, 
\end{aligned}   
\end{equation} 
where $R_3$ and $R_4$  are defined and estimated as follows
 \begin{equation} \nonumber
\begin{aligned} 
R_3:&= \frac{1}{2}\omt  \dv\mathbf{u} |\e^2 \mathbf{u}_{tt} |^2 dxds - \mu \omt \e^2 \n(\rho^{-1})  \cdot \n \mathbf{u}_{\e tt}\cdot \mathbf{u}_{\e tt} dxds \\
&\q - (\mu+\lambda) \omt \e^2 \dv \mathbf{u}_{\e tt} \n(\rho^{-1})\cdot  \mathbf{u}_{\e tt} dxds\\
&\lesssim  t^ \frac{1}{2} \| \mathbf{u}\|_{L_t^2(H^3)}  \|\e^2 \mathbf{u}_{tt}\|_{L_t^\infty(L^2)} ^2 + t^ \frac{1}{4} \|\e^2 \mathbf{u}_{tt}\|_{L_t^2(H^1)}^ \frac{3}{2}\|\e^2 \mathbf{u}_{tt}\|_{L_t^\infty(L^2)} ^ \frac{1}{2}  \|\rho^{-1}\|_{L_t^\infty(H^2)}\\
&\leq t^{\frac{1}{4}}  C_1(\mathcal{E} (t)), 
\end{aligned}   
\end{equation} 
and 
\begin{align} 
R_4:&=\omt \e^2 \mathbf{u}_{\e tt}\cdot g_2 dxds \nonumber\\
&\lesssim  t^{\frac 1 2} \|\e^2 \mathbf{u}_{tt}\|_{L_t^\infty(L^2)}^2\|\mathbf{u}\|_{L_t^2(H^3)} + \e \|\e^2 \mathbf{u}_{tt}\|_{L_t^\infty(L^2)} \| \mathbf{u}_{t}\|_{L_t^2(H^1)}^{\frac 3 2}\| \e^2 \mathbf{u}_{t}\|_{L_t^2(H^1)}^{\frac 1 2} \nonumber \\
& \q +  t^{\frac 1 2}  \|\e^2 \mathbf{u}_{tt}\|_{L_t^\infty(L^2)} \| \vartheta_{t}\|_{L_t^\infty(H^1)} (\|\e^2 \mathbf{u}_{tt}\|_{L_t^2(H^1)}   + \|\mathbf{u} \|_{L_t^\infty(H^1)} \|\e^2 \mathbf{u}_{t}\|_{L_t^2(H^2)}  ) 
\nonumber \\
&\q +\e  t^{\frac 1 4} \|\e^2 \mathbf{u}_{tt}\|_{L_t^\infty(L^2)}( \| \vartheta_{t}\|_{L_t^\infty(H^1)}^2+\| \e\vartheta_{tt}\|_{L_t^\infty(L^2)}^{\frac 1 2} \| \e\vartheta_{tt}\|_{L_t^2(H^1)}^{\frac 1 2}  ) ( \| \mathbf{u}_{t}\|_{L_t^2(H^1)}  \nonumber \\
&\q + \|\mathbf{u} \|_{L_t^\infty (H^1)} \| \mathbf{u}\|_{L_t^2(H^3)}  ) + \e  t^{\frac 1 4} ( \| \vartheta_{t}\|_{L_t^\infty(H^1)}\| \e \sigma_{t}\|_{L_t^\infty(H^1)}  +\| \e \sigma_{t}\|_{L_t^\infty(H^1)} ^2 \nonumber \\
&\q+\| \e^2 \sigma_{tt}\|_{L_t^\infty(L^2)}  ) \| \mathbf{u}\|_{L_t^2(H^3)} \|\e^2 \mathbf{u}_{tt}\|_{L_t^\infty(L^2)}^{\frac 1 2} \|\e^2 \mathbf{u}_{tt}\|_{L_t^2(H^1)}^{\frac 1 2} 
\nonumber\\
&\q +  t^{\frac 1 4} \|\e^2 \mathbf{u}_{tt}\|_{L_t^\infty(L^2)}^{\frac 1 2} \|\e^2 \mathbf{u}_{tt}\|_{L_t^2(H^1)}^{\frac 1 2} \| \e \sigma_{t}\|_{L_t^\infty(H^1)} \|\rho^{-1}\|_{L_{x, t}^\infty}\|\e^2 \mathbf{u}_{t}\|_{L_t^2(H^2)} 
\nonumber\\
&\leq (t^{\frac{1}{4}}+\e ) C_1(\mathcal{E} (t)). \nonumber 
\end{align} 
Now,  we deal with the singular terms in \ef{tt1},  \ef{tt2},  and \ef{tt3}. From integration by parts and $\mathbf{u}_{\e tt} |_{\p\Omega}=0$,  one has 
 \begin{equation}  \label{tt4} 
\begin{aligned} 
&\e \omt \Theta (2\n \eta_{tt} + \n \vartheta_{tt}) \cdot \mathbf{u}_{\e tt} dxds+\e \omt \Theta (2\eta_{tt}+ \vartheta_{tt} ) \dv\mathbf{u}_{\e tt} dxds \\
& = -\e \omt  (2 \eta_{tt} + \vartheta_{tt}) \dv( \Theta \mathbf{u}_{\e tt}) dxds+\e \omt \Theta (2\eta_{tt}+ \vartheta_{tt} ) \dv\mathbf{u}_{\e tt} dxds \\
&=-\e \omt  (2 \eta_{tt} + \vartheta_{tt}) \n\Theta \cdot \mathbf{u}_{\e tt} dxds\\
&\leq Ct^{\frac 1 4} (\|\eta_{tt}\|_{L_t^2(L^2)} +\| \vartheta_{tt}\|_{L_t^2(L^2)})\|\e \mathbf{u}_{\e tt}\|_{L_t^\infty(L^2)}\|\Theta \|_{L_t^\infty(H^2)}^{\frac 1 2}\|\Theta \|_{L_t^2(H^4)}^{\frac 1 2}\\
&\leq t^{\frac{1}{4}}  C_1(\mathcal{E} (t)).
\end{aligned}   
\end{equation} 
Consequently,  combining \ef{tt4} and   above estimates,  we obtain \ef{lemtt}.
\hfill$\Box$

\vskip3mm 
With Lemma \ref{llemtt},  we further derive some useful weighted  estimates of  first-order time derivative.
Applying the elliptic theory to $\ef{2nst}_2$,   one has   
\begin{equation}
\label{uth2}
\begin{aligned} 
\|\e^2  \mathbf{u}_{t}\|_{L_t^2(H^2)}
& \lesssim  \|\e \rho(\mathbf{u}_{\e tt} +\mathbf{u}\cdot \n\mathbf{u}_{\e t} + \mathbf{u}_t\cdot \n\mathbf{u}_{\e }   ) +  \rho \Theta (2\n \eta_t + \n \theta_t) \|_{L_t^2(L^2)} \\ 
&\q + \|\e \rho \theta_t (\mathbf{u}_{\e t} +\mathbf{u}\cdot \n\mathbf{u}_{\e }   ) +   \e^2 q_t ( \mu\Delta\mathbf{u}_{\e }  +(\mu+\lambda)\n\dv \mathbf{u}_{\e } )\|_{L_t^2(L^2)}
\\
&\lesssim   \|\rho\|_{L_{x, t}^\infty} (t \|\e^2 \mathbf{u}_{tt}\|_{L_t^\infty(L^2)} + \e \|\mathbf{u}\|_{L_{t}^\infty(H^1)} \|\mathbf{u}_t\|_{L_{t}^2(H^1)}^{\frac{1}{2}} \|\e^2 \mathbf{u}_t\|_{L_{t}^2(H^2)}^{\frac{1}{2}}\\
&\q +\e^2 \|\mathbf{u}_t\|_{L_{t}^\infty(L^2)} \|\mathbf{u}\|_{L_{t}^2(H^3)}   ) + t \|\rho\|_{L_{x, t}^\infty}  \|\Theta\|_{L_{x, t}^\infty} \|(\n \eta_t,  \n \theta_t) \|_{L_{t}^\infty(L^2)}
\\ 
&\quad +\e^2 \|\rho\|_{L_{x, t}^\infty} \|\theta_t\|_{L_{t}^\infty(H^1)} (  \| \mathbf{u}_t\|_{L_{t}^2(H^1)} +\|\mathbf{u}\|_{L_{t}^\infty(H^1)} \|\mathbf{u}\|_{L_{t}^2(H^2)}  ) 
\\
&\q + \e^3\|\e \sigma_t\|_{L_t^\infty(H^1)} \|\mathbf{u}\|_{L_{t}^2(H^3)}
\\
 & \leq (t +\e )C_1(\mathcal{E}(t)). 
\end{aligned}\end{equation}

Moreover,  multiplying $ \ef{2nst}_2$ by $\e^2 \rho \mathbf{u}_{\e tt} $ and integrating the resulting equality over $ \Omega\times[0, t]$ by parts give that
\begin{equation}\label{uth1}
\begin{aligned} 
&\frac{\mu}{2} \|\e \n \mathbf{u}_{\e t}\|_{L_t^\infty(L^2)} ^2+ \frac{\mu+\lambda}{2}  \|\e \dv \mathbf{u}_{\e t}\|_{L_t^\infty(L^2)}^2+\frac{1}{2}\| \sqrt{\rho} \e \mathbf{u}_{\e tt}\|_{L_t^2(L^2)} ^2
\\
& = \frac{\mu}{2}\|\e \n \mathbf{u}_{\e t}(0)\|_{ L^2}^2 + \frac{\mu+\lambda}{2} \|\e \dv \mathbf{u}_{\e t}(0)\|_{ L^2} ^2+\omt \rho \Theta (2\n \eta_t + \n \theta_t) \cdot \e \mathbf{u}_{\e tt}  \\
&\q -   [ \rho (\mathbf{u}\cdot \n\mathbf{u}_{\e } )_t
  + \rho \theta_t (\mathbf{u}_{\e t} +\mathbf{u}\cdot \n\mathbf{u}_{\e }   ) 
+   \e \sigma_t ( \mu\Delta\mathbf{u}_{\e }  +(\mu+\lambda)\n\dv \mathbf{u}_{\e } )] \cdot \e^2 \mathbf{u}_{\e tt} dxds\\
&\leq C_0(M_0)+Ct^{\frac 1 2} \|\rho\|_{L_{x, t}^\infty}  \|\Theta\|_{L_{x, t}^\infty} \|\e \mathbf{u}_{\e tt}\|_{L_t^2(L^2)} (\|\n \eta_t\|_{L_t^\infty(L^2)}+\|\n \theta_t\|_{L_t^\infty(L^2)}  )\\
&\q
+C \e \|\e \mathbf{u}_{\e tt}\|_{L_t^2(L^2)}  [\|\rho\|_{L_{x, t}^\infty} ( \|\mathbf{u}\|_{L_t^\infty(H^1)}\|\e^2\mathbf{u}_{t}\|_{L_t^2(H^2)}^{\frac 1 2} \|\mathbf{u}_{t}\|_{L_t^2(H^1)}^{\frac 1 2} \\
&\q +\|\mathbf{u}_{t}\|_{L_t^2(H^1)} \|\e \mathbf{u}\|_{L_t^\infty(H^2)}  + \|\theta_t \|_{L_t^\infty(H^1)} ( \e \|\mathbf{u}_{t}\|_{L_t^2(H^1)}+ \|\mathbf{u}\|_{L_t^\infty(H^1)}\|\e \mathbf{u}\|_{L_t^\infty(H^2)} )\\
&\q + \e \|\e q_t\|_{L_t^\infty(H^1)} \|\mathbf{u}\|_{L_t^2(H^3)}]\\
&\leq C_0(M_0)  {\rm exp}\{ (t^{\frac 1 2}+\e)C_1(\mathcal{E}(t)) \}. 
\end{aligned}\end{equation}

Therefore,  we conclude the following lemma.
\begin{lem} \label{leme2t}
For arbitrary $\e\in(0, 1]$ and $t\in [0, T]$, we have   
\be 
 \|\e^2( \n \mathbf{u}_{t}, \dv \mathbf{u}_{t}) \|_{L_t^\infty(L^2)}^2+ \|\e^2  \mathbf{u}_{t}\|_{L_t^2(H^2)}^2\leq C_0(M_0) {\rm exp}\{ (t^{\frac 1 2}+\e)C_1(\mathcal{E}(t)) \}.
 \ee 
\end{lem}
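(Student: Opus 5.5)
The lemma is essentially a repackaging of the two estimates \ef{uth2} and \ef{uth1} just derived, so the plan is to combine them and check that the right-hand sides are of the asserted form. Since $\mathbf{u}_{\e t}=\e\mathbf{u}_t$, we have $\e\n\mathbf{u}_{\e t}=\e^2\n\mathbf{u}_t$ and $\e\dv\mathbf{u}_{\e t}=\e^2\dv\mathbf{u}_t$. Hence the left-hand side of \ef{uth1} controls $\frac{\mu}{2}\|\e^2\n\mathbf{u}_t\|^2_{L_t^\infty(L^2)}+\frac{\mu+\lambda}{2}\|\e^2\dv\mathbf{u}_t\|^2_{L_t^\infty(L^2)}$. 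If $\mu+\lambda$ happens to be small or negative, I would instead bound $\|\dv \mathbf{u}_t\|_{L^2}\le \sqrt3\|\n\mathbf{u}_t\|_{L^2}$, using $2\mu+3\lambda\ge0$ and $\mu>0$. This gives the $L_t^\infty$ part of the statement, with a bound $C_0(M_0)\exp\{(t^{1/2}+\e)C_1(\mathcal{E}(t))\}$.

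For the $L_t^2(H^2)$ part I would use \ef{uth2}. It gives $\|\e^2\mathbf{u}_t\|_{L_t^2(H^2)}\le (t+\e)C_1(\mathcal{E}(t))$, hence $\|\e^2\mathbf{u}_t\|^2_{L_t^2(H^2)}\le (t+\e)^2C_1(\mathcal{E}(t))^2$. For $t,\e\in(0,1]$ this is at most $(t^{1/2}+\e)\widetilde C_1(\mathcal{E}(t))$. Then the elementary inequality $a\le C_0\exp\{a\}$ for $a\ge0$, with $C_0\ge1$, absorbs it into the form $C_0(M_0)\exp\{(t^{1/2}+\e)C_1(\mathcal{E}(t))\}$ after enlarging $C_1$. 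The same device converts any additive term of size $(t^{1/2}+\e)C_1$ coming from \ef{uth1} into the exponential form.

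The only point needing care is inside \ef{uth1}. The terms there involve $\|\e\mathbf{u}_{\e tt}\|_{L_t^2(L^2)}=\|\e^2\mathbf{u}_{tt}\|_{L_t^2(L^2)}$ and $\|\e^2\mathbf{u}_t\|_{L_t^2(H^2)}^{1/2}$. I would simply bound these by $\mathcal{E}(t)$, since both belong to $M_2(t)$. Alternatively, Lemma \ref{llemtt} and \ef{uth2} can be inserted for them, so that the estimate does not circularly require smallness. The leading singular term, $\int\rho\Theta(2\n\eta_t+\n\theta_t)\cdot\e\mathbf{u}_{\e tt}$, is handled by the $t^{1/2}$ from Hölder in time together with the $L_t^\infty(L^2)$ bounds on $\n\eta_t$ and $\n\theta_t$, which are contained in $M_1$ and $M$. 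I expect no genuine obstacle here: the lemma follows by adding the two displays and adjusting $C_0$ and $C_1$.
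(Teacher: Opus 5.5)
Your proposal is correct and follows the paper's own route. The paper states the lemma as an immediate consequence of the elliptic bound \ef{uth2} and the energy estimate \ef{uth1} (obtained by testing $\ef{2nst}_2$ with $\e^2\rho\mathbf{u}_{\e tt}$), with the leftover norms bounded by $\mathcal{E}(t)$ and the additive terms absorbed into the exponential. Your fallback for $\mu+\lambda$ is unnecessary, since $2\mu+3\lambda\ge0$ already gives $\mu+\lambda\ge\mu/3>0$.
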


Next, we apply the weighted estimates in Lemmas 
\ref{llemtt} and \ref{leme2t} to prove the high-order estimates of $(  \eta_t ,   \mathbf{u}_{\e t},  \vartheta_t   )$.

\begin{lem}\label{lem3.3}
For any $t\in[0, T]$ and $\e\in(0, 1]$,  the following estimates hold:
\begin{equation}\label{lem1t2}\begin{aligned}
& \left\|\left(\n  \eta_t ,  \dv \mathbf{u}_{\e t},  \n \vartheta_t   \right)\right\|^2_{L^2}(t) 
+ \left\| \left(  \n  \dv \mathbf{u}_{\e t},    \Delta \vartheta_t  \right) \right\|^2_{L^2_t(L^2)}\\
&\leq C_0(M_0){\rm exp}\{(t^{\frac 1 4} + \e^{\frac 1 4} ) C_1( \mathcal{E}(t)) \}+\delta \| \mathbf{u}_t\|_{L_t^2(H^1)}^2,
\end{aligned}\end{equation}  
where $\delta>0$ is a sufficiently small constant.
\end{lem}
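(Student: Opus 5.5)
We will follow Step 2 of the half-space model, now for the variable-coefficient system \eqref{2nst} on a general bounded domain. The domain is localized by a partition of unity. One cutoff $\chi_0\in C_0^\infty(\Omega)$ handles the interior, and cutoffs $\chi_k$ supported in the boundary patches $W^k$ are paired with the isothermal coordinates $(\psi,\phi,r)$ of Subsection 3.2. In the interior all derivatives act as tangential ones.

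\textbf{Tangential and interior part.} Apply $D_i$, for $i=1,2$ (or any $\partial_j$ in the interior), to \eqref{2nst}. Then test the $\eta_t$-equation with $4\chi^2 D_i\eta_t$, the $\mathbf u_{\e t}$-equation with $\chi^2\Theta D_i\mathbf u_{\e t}$, and the $\vartheta_t$-equation with $\chi^2 D_i\vartheta_t$. Because $D_i\mathbf u_{\e t}=0$ and $\chi D_\tau\partial_{\mathbf n}\vartheta_t=0$ on $\partial\widetilde\Omega$, the integrations by parts produce no boundary terms. As in \eqref{tt4}, the singular terms $\frac1\e\Theta\nabla(2D_i\eta_t+D_i\vartheta_t)\cdot D_i\mathbf u_{\e t}$ cancel against $\frac1\e(2D_i\eta_t+D_i\vartheta_t)\dv D_i\mathbf u_{\e t}$, up to two kinds of remainders:
\begin{itemize}
\item commutators with $\Theta$, $\chi$ and the coefficients $a_{kj}$;
\item terms of the form $\frac1\e\nabla\Theta\cdot D_i\mathbf u_{\e t}\,(\cdot)=\nabla\Theta\cdot D_i\mathbf u_t(\cdot)$.
\end{itemize}
These remainders are of order one. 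They are bounded by $\|\mathbf u_t\|_{L^2_t(H^1)}\|(\nabla\eta_t,\nabla\vartheta_t)\|_{L^2_t(L^2)}$, so each carries a factor $t^{1/2}$ or can be absorbed into $\delta\|\mathbf u_t\|^2_{L^2_t(H^1)}+C_\delta t\,C_1(\mathcal E)$. The commutators with $\frac1\e$ in front are $\frac1\e[D_i,a_{kj}]D_k(\cdot)$, which contain only first derivatives of $\eta_t,\vartheta_t$ multiplied by $\mathbf u_{\e t}$. These reduce to $\|\mathbf u_t\|_{L^2}\|\nabla(\eta_t,\vartheta_t)\|_{L^2}$ and are lower order. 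The remaining nonlinear source terms $g_1,h_1$ are handled exactly as in Lemma \ref{lem3.2}, using Remark \ref{ptt}. This step gives tangential control of $D_i(\eta_t,\mathbf u_{\e t},\vartheta_t)$ in $L^\infty_t(L^2)$ and of $D_i\nabla\mathbf u_{\e t}$, $D_i\nabla\vartheta_t$ in $L^2_t(L^2)$.

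\textbf{Normal part.} As in \eqref{sim6}, we take the normal component of the momentum equation $\eqref{2nst}_2$, multiplied by $\rho$. The second normal derivatives of $\mathbf u_{\e t}$ enter only through $(2\mu+\lambda)D_3\dv\mathbf u_{\e t}$; the combination $\mu(\partial_n\dv-\Delta\mathbf u^n)$ involves only tangential-normal derivatives, by the curl-curl identity. Test this identity with $\chi^2 D_3\dv\mathbf u_{\e t}$. In parallel, test $D_3\eqref{2nst}_1$ with $4\chi^2 D_3\eta_t$, and test $D_3\eqref{2nst}_3$ with $\chi^2 D_3\vartheta_t$ using the form $-\kappa p^{-1}\dv(\Theta\nabla\vartheta_t)$. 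The singular terms $\frac1\e D_3(2\eta_t+\vartheta_t)D_3\dv\mathbf u_{\e t}$ then cancel modulo commutators. The boundary term from the $\vartheta_t$-test vanishes by $\partial_{\mathbf n}\vartheta_t=0$, and the diffusion supplies $\|\chi D_3\nabla\vartheta_t\|^2_{L^2_t(L^2)}$. The time-derivative term $\rho\,\mathbf u^n_{\e tt}\,D_3\dv\mathbf u_{\e t}$ is handled as in \eqref{s13}. We integrate by parts in $D_3$ using $\mathbf u_{\e t}|_{\partial\Omega}=0$ and rewrite it as $-D_3\mathbf u^n_{\e tt}\,(D_1\mathbf u^1_{\e t}+D_2\mathbf u^2_{\e t})$, plus a time derivative of $|D_3\mathbf u^n_{\e t}|^2$, plus lower-order pieces. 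This is bounded by $\|\e\mathbf u_{\e tt}\|_{L^2_t(H^1)}\|\e^{-1}D_\tau\mathbf u_{\e t}\|$. Since $\e^{-1}D_\tau\mathbf u_{\e t}=D_\tau\mathbf u_t$, the whole term is at most
\[
\delta\|\mathbf u_t\|^2_{L^2_t(H^1)}+C_\delta\,\|\e^2\mathbf u_{tt}\|^2_{L^2_t(H^1)}.
\]
The factor $\|\e^2\mathbf u_{tt}\|^2_{L^2_t(H^1)}$ is bounded by Lemma \ref{llemtt}. The tangential-normal viscous terms are controlled by the tangential step. $\|\e\dv\mathbf u_t\|_{L^\infty_t(L^2)}$ follows from the $\eta_t$ equation together with Lemma \ref{leme2t}, since $\dv\mathbf u_{\e t}=-2\e(\eta_{tt}+\mathbf u\cdot\nabla\eta_t+\mathbf u_t\cdot\nabla\eta)$. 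Finally, $\Delta\vartheta_t$ and $\nabla\dv\mathbf u_{\e t}$ are recovered from their tangential and normal pieces.

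\textbf{Main obstacle.} The delicate point is the $\e$-weighted lower-order remainders, for instance $\frac1\e$ times a commutator hitting $\mathbf u_{\e t}$, and the $\mathbf u^n_{\e tt}$ term. These are only $O(1)$ or $O(\e^{-1}\cdot\e^{2})$ and carry no explicit factor of $t$. Their control requires interpolating the $M_2$ quantities from Lemmas \ref{llemtt}–\ref{leme2t} against $\|\mathbf u_t\|_{L^2_t(H^1)}$. This produces the factor $\e^{1/4}$ (for example $\|\e^2\mathbf u_t\|^{1/2}_{L^2_t(H^2)}\|\mathbf u_t\|^{1/2}_{L^2_t(H^1)}$) and the unavoidable term $\delta\|\mathbf u_t\|^2_{L^2_t(H^1)}$ in \eqref{lem1t2}. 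That term is closed only later, through the Stokes-lifting estimate of $\mathbf v_t$.
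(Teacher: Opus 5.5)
Your architecture is the paper's: an interior cutoff, tangential and normal estimates in isothermal coordinates, and the normal component of the momentum equation, where $D_{33}\mathbf u_{\e t}$ enters only through $D_3\dv\mathbf u_{\e t}$. Your handling of the $\mathbf u^n_{\e tt}$ term, which produces $\delta\|\mathbf u_t\|^2_{L^2_t(H^1)}+C_\delta\|\e^2\mathbf u_{tt}\|^2_{L^2_t(H^1)}$, is also the paper's.

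\textbf{The gap: mismatched weights in the tangential step.} The multipliers you chose there do not give the cancellation you claim. You test the $\eta_t$ and $\vartheta_t$ equations against the unweighted $4\chi^2D_i\eta_t$ and $\chi^2D_i\vartheta_t$. The singular momentum term, by contrast, carries the weight $\Theta$ (or $\Theta^2$, if you really test $D_i\eqref{2nst}_2$ with $\chi^2\Theta D_i\mathbf u_{\e t}$). After integrating by parts, besides the harmless $\nabla(\chi^2\Theta)$ terms, you are left with
\[
-\frac1\e\iint \chi^2(\Theta-1)\,D_i(2\eta_t+\vartheta_t)\,\dv D_i\mathbf u_{\e t}\,dxds=-\iint\chi^2\theta\,D_i(2\eta_t+\vartheta_t)\,\dv D_i\mathbf u_{t}\,dxds .
\]
Here $\theta=O(1)$ because the temperature variations are large. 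The dissipation only controls $\dv D_i\mathbf u_{\e t}=\e\,\dv D_i\mathbf u_t$, so this term is $O(\e^{-1})$ relative to what you control and cannot be absorbed. The appeal to \eqref{tt4} is misleading: there the $\eta_{tt}$ and $\vartheta_{tt}$ multipliers carry the factor $\Theta$.

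\textbf{How to fix it.} You must match the weights. The paper does this in two ways:
\begin{itemize}
\item In the interior it tests $\p_k(\Theta^{-1}\eqref{2nst}_2)$ against $\chi_0^2\p_k\mathbf u_{\e t}$ and keeps the $\eta_t,\vartheta_t$ multipliers unweighted, see \eqref{int1}--\eqref{int3}.
\item At the boundary it tests $D_\tau\eqref{2nstb}_2$ against $J\chi^2D_\tau U_{\e t}$ and puts $e^Y=\Theta$ on the $L_t,Y_t$ multipliers, see \eqref{ttan1}--\eqref{ttan2}.
\end{itemize}
Only with matched weights are the leftovers of the form $\nabla(\chi^2\Theta)\cdot D_i\mathbf u_t\,(\cdots)$, as you describe.

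\textbf{Your normal step is fine.} It works even with unweighted $\eta_t,\vartheta_t$ multipliers, and the reason is worth making explicit. After multiplying by $\rho$, the singular coefficient is $\rho\Theta=p=1+\e q$, so the mismatch is $\frac1\e(p-1)=q$. It leaves only $-\iint\chi^2 q\,D_3(2\eta_t+\vartheta_t)\,D_3\dv\mathbf u_{\e t}$, which is bounded by $t^{1/2}C_1(\mathcal E)$; no integration by parts is needed, so no derivative is lost. Multiplying by $\rho$ before differentiating would equally repair your tangential step. The extra $\rho_t|D_i\mathbf u_{\e t}|^2$ and $D_i\rho\,\mathbf u_{\e tt}\cdot D_i\mathbf u_{\e t}$ terms are harmless: the first carries a factor $t^{1/4}$, and the second fits into $\delta\|\mathbf u_t\|^2_{L^2_t(H^1)}+C_\delta\|\e^2\mathbf u_{tt}\|^2_{L^2_t(H^1)}$.

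\textbf{Two smaller points.} Your shortcut for $\|\dv\mathbf u_{\e t}\|_{L^\infty_t(L^2)}$ via $\dv\mathbf u_{\e t}=-2\e(\eta_{tt}+\mathbf u\cdot\n\eta_t+\mathbf u_t\cdot\n\eta)$ works. The key input, however, is $\|\e\eta_{tt}\|_{L^\infty_t(L^2)}$ from Lemma~\ref{llemtt}, not Lemma~\ref{leme2t}. The paper instead reads $\|D_3\mathbf u^n_{\e t}\|_{L^\infty_t(L^2)}$ off the time-derivative term in its normal estimate. Finally, the commutators $\frac1\e[D_i,a_{kj}]D_k(\cdot)$ pair with $D_i\mathbf u_{\e t}$, not $\mathbf u_{\e t}$. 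They are therefore bounded by $\|\mathbf u_t\|_{H^1}\|\n(\eta_t,\vartheta_t)\|_{L^2}$ rather than $\|\mathbf u_t\|_{L^2}\|\n(\eta_t,\vartheta_t)\|_{L^2}$, which is still lower order.
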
 
\textbf{Proof}. {\it Step 1. Interior estimate}. Let $\chi_0\in C_0^\infty(\Omega)$ be an interior cutoff function. 
Multiplying $\p_k\ef{2nst}_1$ by  $4\chi_0^2 \p_k \eta_{t}$ and integrating over $ \Omega\times (0, t)$,  we get 
\begin{equation}\label{int1}
\begin{aligned} 
&2\|\chi_0 \p_k \eta_t\|_{L^2}^2(t)+\frac{2}{\e} \omt \p_k \dv\mathbf{u}_{\e t}\cdot \chi_0^2 \p_k \eta_{t} dxds\\
&= 2\|\chi_0 \p_k \eta_t(0)\|_{L^2}^2+ 2\omt \dv(\chi_0^2 \mathbf{u} )| \p_k \eta_{t} |^2 dxds\\
&\q - 4\omt ( \p_k \mathbf{u}\cdot \n \eta_{t}+\p_k \mathbf{u}_t\cdot \n \eta+  \mathbf{u}_t \cdot \n \p_k \eta) \chi_0^2   \p_k \eta_{t} dxds \\
&\lesssim C_0(M_0) + t^{\frac 1 2} \| \p_k \eta_{t} \|_{L_t^\infty(L^2)}^2 \|\mathbf{u}\|_{L_t^2(H^3)} +  \| \p_k \eta_{t} \|_{L_t^\infty(L^2)}\\
&\q \cdot \bigg ( \e^{\frac 1 2}  \bigg\|\frac{\n \sigma}{\e }\bigg\|_{L_t^2(H^1)}  \| \e^2 \mathbf{u}_t\|_{L_t^2(H^2)}^{\frac 3 4} \|   \mathbf{u}_t\|_{L_t^2(H^1)}^{\frac 1 4}  + t^{\frac 1 4} \|   \mathbf{u}_t\|_{L_t^2(H^1)} \|\n \vartheta\|_{L_t^\infty(H^1)}^{\frac 1 2} \|\n \vartheta\|_{L_t^2(H^2)}^{\frac 1 2} \bigg), 
\end{aligned}  
\end{equation}  
by applying $ \eta = \frac{\e}{2} \sigma -\frac{\vartheta}{2} $. 
Then we take the inner product of $\p_k(\frac{1}{\Theta}\ef{2nst}_2)$ and $\chi_0^2 \p_k\mathbf{u}_{\e t}$  in $L^2(\Omega\times (0, t))$ to show
\begin{equation}\label{int2}
\begin{aligned} 
&\frac{1}{2} \|\chi_0 \sqrt{\Theta}^{-1} \p_k \mathbf{u}_{\e t}\|_{L^2}^2(t)  +\frac{1}{\e} \omt  \chi_0^2 \p_k (2\n \eta_{t} +\n \vartheta_{t}) \p_k \mathbf{u}_{\e t} dxds\\
&+ \mu \|\chi_0 \sqrt{p}^{-1}\p_k \n \mathbf{u}_{\e t}\|_{L_t^2(L^2)}^2+(\mu +\lambda) \|\chi_0 \sqrt{p}^{-1}\p_k \dv \mathbf{u}_{\e t}\|_{L_t^2(L^2)}^2\\
& = \frac{1}{2}  \|\chi_0 \sqrt{\Theta_0}^{-1} \p_k \mathbf{u}_{\e t}(0)\|_{L^2}^2+\sum_{i=1}^{3} \mathcal{I}_i, 
\end{aligned}  
\end{equation}   
where 
\begin{equation}\nonumber
\begin{aligned} 
\mathcal{I}_1& := \omt   \frac{1}{2} (  \p_t (\Theta^{-1}) \chi_0^2  +   \dv ( \Theta \mathbf{u}\chi_0^2 )  )  |\p_k  \mathbf{u}_{\e t}|^2 -(\p_k( \Theta \mathbf{u})\cdot\n \mathbf{u}_{\e t} ) \chi_0^2 \p_k  \mathbf{u}_{\e t}  dxds\\
&\lesssim t^ {\frac 1 2}  \|\chi_0\p_k  \mathbf{u}_{\e t} \|_{L_t^\infty(L^2)}^2 ( \|{\Theta}^{-1} \|_{L_{x, t}^\infty} \|\vartheta_t \|_{L_t^2(H^2)} +\|\Theta \|_{L_t^\infty(H^2)} \| \mathbf{u}\|_{L_t^2(H^3)}\\
&\q+ \|\Theta \|_{L_t^\infty(H^2)}^ {\frac 1 2} \|\Theta \|_{L_t^2(H^4)}^ {\frac 1 2} \|\mathbf{u}\|_{L_t^2(H^3)}^ {\frac 1 2}\|\mathbf{u}\|_{L_t^\infty(H^1)}^ {\frac 1 2} ), 
\end{aligned}  
\end{equation} 
\begin{equation}\nonumber
\begin{aligned} 
\mathcal{I}_2:&=  \omt\chi_0^2 \p_k  \mathbf{u}_{\e t} ( \mu \p_k(p^{-1})\Delta \mathbf{u}_{\e t}+(\mu+\lambda) \mu \p_k(p^{-1}) \n\dv  \mathbf{u}_{\e t} ) dxds\\
&\lesssim  t^{\frac 1 4}\|\chi_0  \p_k \mathbf{u}_{\e t} \|_{L_t^\infty(L^2)}^{\frac 1 2}\|\chi_0  \mathbf{u}_{\e t} \|_{L_t^2(H^2)}^{\frac 3 2} \|p^{-1}\|_{L_t^\infty(H^2)}, 
\end{aligned}  
\end{equation} 
and
\begin{equation}\nonumber
\begin{aligned} 
\mathcal{I}_3&:= \omt\chi_0^2 \p_k  \mathbf{u}_{\e t} \cdot  \p_k \bigg(\frac{1}{\Theta} g_1 \bigg) dxds\\
&\lesssim  t^{\frac 1 4} \|\chi_0\p_k  \mathbf{u}_{\e t}\|_{L_t^\infty(L^2)} ^{\frac{1}{2}}\|\chi_0\p_k  \mathbf{u}_{\e t} \|_{L_t^2(H^1)} ^{\frac{1}{2}} \|{\Theta}^{-1} \|_{L_{x, t}^\infty}(1+ \|{\Theta} \|_{L_{t}^\infty(H^2)}) [
\|\mathbf{u}\|_{L_t^2(H^3)}\\
&\q \cdot ( \|\chi_0\p_k  \mathbf{u}_{\e t}\|_{L_t^\infty(L^2)} +\|\mathbf{u}_t\|_{L_t^\infty(L^2)} ) +  \|\vartheta_{ t}\|_{L_t^\infty(H^1)} ( \e\|\mathbf{u}_t\|_{L_t^2(H^1)} \\
&\q  + \|\mathbf{u}\|_{L_t^\infty(H^1)}\|\mathbf{u}\|_{L_t^2(H^3)} ) + \|\rho^{-1}\|_{L_{t}^\infty(H^2)} \|\e \sigma_t\|_{L_t^\infty(H^1)}\|\mathbf{u}\|_{L_t^2(H^3)} 
] .
\end{aligned}  
\end{equation} 
Next,  integrating the inner product of $\p_k\ef{2nst}_3$ with $\chi^2_0 \p_k\vartheta_t$  in $L^2(\Omega\times (0, t))$ gives that 
\begin{equation}\label{int3}
\begin{aligned} 
&\frac{1}{2} \|\chi_0 \p_k \vartheta_t\|_{L^2}^2(t)+\frac{1}{\e} \omt \p_k \dv\mathbf{u}_{\e t}\cdot \chi_0^2 \p_k \vartheta_{t} dxds+\kappa\|\chi_0 \sqrt{\rho}^{-1} \p_k\n  \vartheta_t \|_{L_t^2(L^2)} ^2 \\ 
&= \frac{1}{2} \|\chi_0 \p_k \vartheta_t(0)\|_{L^2}^2 + \sum_{i=4}^{6} \mathcal{I}_i, 
\end{aligned}   
\end{equation}   
in which  
\begin{equation}\nonumber
\begin{aligned} 
\mathcal{I}_4&:= \omt \frac{1}{2} \dv(\chi_0^2 \mathbf{u} )| \p_k \vartheta_{t} |^2 - (\p_k\mathbf{u} \cdot \n \vartheta_{t}) \chi^2_0 \p_k\vartheta_t dxds\\
&\lesssim t^{\frac 1 2} \| \p_k \vartheta_{t} \|^2_{L_t^\infty(L^2)} \|\mathbf{u}\|_{L_t^2(H^3)}, 
\end{aligned}  
\end{equation}  
\begin{equation}\nonumber
\begin{aligned} 
\mathcal{I}_5& := \kappa \omt [ \p_k(p^{-1}) \dv(\Theta \n\vartheta_t)+p^{-1}\dv(\p_k \Theta \n\vartheta_t)] \chi_0^2   \p_k \vartheta_{t} - \Theta \p_k \n\vartheta_t\cdot \n(p^{-1}\chi^2 )\p_k \vartheta_t dxds \\
&\lesssim  t^{\frac 1 4}  \| \vartheta_{t} \|_{L_t^\infty(H^1)}^{\frac 1 2} \|  \vartheta_{t} \|_{L_t^2(H^2)}^{\frac 3 2}  \|p^{-1}\|_{L_t^\infty(H^2)} \|\Theta\|_{L_t^\infty(H^2)} , 
\end{aligned}  
\end{equation}  
and
\begin{equation}\nonumber 
\begin{aligned} 
\mathcal{I}_6:& = \omt\p_k h_1 \cdot \chi_0^2   \p_k \vartheta_{t}dxds \\
&\lesssim t^{\frac{1}{4}}\|\vartheta_t\|_{L_t^\infty(H^1)}^{\frac 1 2}  \|\vartheta_t\|_{L_t^2(H^2)}^{\frac 1 2}  [ \|\Theta^{-1} \|_{L_{t}^\infty (H^2) }   \|\mathbf{u}_t\|_{L_t^2(H^1)}  \|\Theta \|_{L_{t}^\infty (H^2) }   \\ 
&\q +\|p^{-1}\|_{L_t^\infty(H^2)}   \|\Theta \|_{L_t^2(H^4)} ^{\frac 1 2}  \|\Theta \|_{L_t^\infty(H^2)} ^{\frac 1 2} (\|\vartheta_t \|_{L_t^2(H^2)} + \|\e\sigma_t \|_{L_t^\infty(H^1)}  )  \\ 
&\q + \e  \|p^{-1}\|_{L_t^\infty(H^2)}  \|\e \sigma_t \|_{L_t^\infty(H^1)} \|\e\mathbf{u}\|_{L_t^\infty(H^2)}\|\mathbf{u}\|_{L_t^2(H^3)}  
\\  
&\q  +   \|p^{-1}\|_{L_t^\infty(H^2)}    \|\chi_0 \n\mathbf{u}_{\e t}\|_{L_t^2(H^1)}  \|\e\mathbf{u}\|_{L_t^\infty(H^2)}].
\end{aligned}  
\end{equation}   
Adding \ef{int1}, \ef{int2}, and \ef{int3}, we obtain
\begin{equation}\label{int4}
\begin{aligned} 
& \|\chi_0 \p_k \eta_t\|_{L^2}^2(t)+  \|\chi_0 \sqrt{\Theta}^{-1} \p_k \mathbf{u}_{\e t}\|_{L^2}^2(t)  +   \|\chi_0 \sqrt{p}^{-1}\p_k \n \mathbf{u}_{\e t}\|_{L_t^2(L^2)}^2 \\
&+ \|\chi_0 \sqrt{p}^{-1}\p_k \dv \mathbf{u}_{\e t}\|_{L_t^2(L^2)}^2 +  \|\chi_0 \p_k \vartheta_t\|_{L^2}^2(t) + \|\chi_0 \sqrt{\rho}^{-1} \p_k\n  \vartheta_t \|_{L_t^2(L^2)}^2  \\
&\leq C_0(M_0){\rm exp}\{(t^{\frac 1 4} + \e^{\frac 1 4} ) C_1( \mathcal{E}(t)) \}
\end{aligned}  
\end{equation}

{\it Step 2. Boundary  estimate}. 
Denote the unknowns in local coordinates as 
$$U (t, y):=\mathbf{u} (t, \Lambda(y)), \q  U_\e (t, y):=\mathbf{u}_\e (t, \Lambda(y)), \q 
S(t, y):=\sigma(t,  \Lambda(y)), $$
$$ L(t, y):=\eta(t, \Lambda(y)),  \q  K(t, y):=\rho(t, \Lambda(y)),  \q Y (t, y):=\vartheta(t, \Lambda(y)).$$
We rewrite the system $\eqref{2nst}$ in $[0, T]\times \widetilde{\Omega}$ as 
\begin{equation}\label{2nstb}
\left\{
\begin{aligned} 
&  L_{tt} +U^j a_{kj} D_k L_t + U^j_t a_{kj} D_k L     +\frac{1}{2 \e} a_{kj} D_k U^j_{\e t}   = 0,   \\ 
&  U^i_{\e tt} + U^j a_{kj} D_k U^i_{\e t}  +\frac{1}{\e} e^{Y}  a_{ki} D_k (2 L_t + Y_t)\\
&\quad - K^{-1} [ \mu a_{kj} D_k (a_{lj} D_l U^i_{\e t} ) 
 +(\mu+\lambda)  a_{ki} D_k (a_{lj} D_l U^j_{\e t} )  ] =   \mathcal{G}_1,   \\ 
& Y_{tt} +  U^j a_{kj} D_k Y_t +\frac{1}{\e}a_{kj} D_k U^j_{\e t} -\kappa  e^{-\e S}  a_{kj} D_k( e^{Y} a_{lj} D_l Y_t ) =  \mathcal{H}_1,
\end{aligned} \right. 
\end{equation}   
where $\mathcal{G}_1$ and $\mathcal{H}_1$ are defined as 
\begin{equation}\nonumber
\begin{aligned} 
\mathcal{G}_1:&= -    U^j_t a_{kj} D_k U^i_{\e }-    Y_t (U^i_{\e t} + U^j a_{kj} D_k U^i_{\e } ) \\
&\quad - K^{-1}  \e S_t [ \mu a_{kj} D_k (a_{lj} D_l U^i_{\e } ) +(\mu+\lambda)  a_{ki} D_k (a_{lj} D_l U^j_{\e } )  ], \\
\mathcal{H}_1:&=- U^j_t a_{kj} D_k  Y +\kappa e^{-\e S} a_{kj} D_k( e^{Y} Y_t a_{lj} D_l Y )-\kappa e^{-\e S} \e S_t  a_{kj} D_k( e^{Y} a_{lj} D_l Y )  
\\ 
&\quad  +\e^2  \p_t \left [ e^{-\e S} \left( \frac{\mu}{2} (a_{ki}D_k U^j+a_{kj}D_k U^i)^2 +  \lambda( a_{kj}D_k U^j)^2\right) \right ].
\end{aligned}  
\end{equation}  
The initial and boundary value conditions are  as follows
\be\label{bd}
\begin{aligned} 
&(U_{\e tt}, S_{tt}, L_{tt}, Y_{tt})|_{t=0}=(U_{\e tt}(0), S_{tt}(0), L_{tt}(0), Y_{tt}(0))(y)  &{\rm in}~\widetilde{\Omega}, \\
&U=0, \quad D_3 Y=0  &{\rm on}~ \partial \widetilde{\Omega}_T.
\end{aligned}   
\ee 

First,  we multiply  $D_\tau \ef{2nstb}_2 $ by $J\chi^2 D_\tau U^i_{\e t}   $ and integrate   over $  \widetilde{\Omega}\times(0, t) $  to get  
 \begin{equation}\label{ttan1}
\begin{aligned}
&\frac{1}{2}  \int_{\widetilde{\Omega}} J\chi^2 |D_\tau U_{\e t} |^2 dy+\frac{1}{\e} \iint_{\widetilde{\Omega}_t} D_\tau (e^{Y}  a_{ki} D_k (2 L_t + Y_t)) J\chi^2 D_\tau U_{\e t}^i dyds  \\ 
& +\mu \iint_{\widetilde{\Omega}_t} K^{-1} J\chi^2  a_{lj} D_{\tau l} U^i_{\e t} a_{kj} D_{\tau k} U^i_{\e t} dyds +(\mu+\lambda)  \iint_{\widetilde{\Omega}_t}   K^{-1} J\chi^2  a_{lj} D_{\tau l} U^j_{\e t} a_{ki} D_{\tau k} U^i_{\e t} dyds \\
&= \frac{1}{2}  \int_{\widetilde{\Omega}} J\chi^2 |D_\tau U_{\e t} (0)|^2 dy+ \sum_{i=1}^4 \mathcal{R}_i, 
\end{aligned} 
 \end{equation} 
where  the low-order terms $\mathcal{R}_1$ to $\mathcal{R}_4$ are defined and controlled in what follows. Integrating by parts and using $\chi^2 D_\tau U_{\e t} ^i|_{\p\tilde{\Omega}}=0$ gives
 \begin{equation}\nonumber
\begin{aligned}
  \mathcal{R}_1 & := -\iint_{\widetilde{\Omega}_t} J\chi^2 D_\tau U_{\e t} ^i D_\tau (U^j a_{kj}  D_k U^i_{\e t}) dy ds \\
&=  \iint_{\widetilde{\Omega}_t}  D_{\tau }( J\chi^2 D_\tau U_{\e t} ^i ) (U^j a_{kj}  D_k U^i_{\e t}) dyds
\\ 
&\lesssim  \e^{\frac 1 4}   \|D_{\tau } \mathbf{u}_{\e t}\|_{L_t^2(H^1)}   \|\e \mathbf{u} \|_{L_t^\infty(H^2)}^{\frac 3 4} \| \mathbf{u} \|_{L_t^\infty(H^1)}^{\frac 1 4}   \|\n \mathbf{u}_{ t}\|_{L_t^2(L^2)}. 
\end{aligned} 
\end{equation} 
And  $\mathcal{R}_2$ is defined as  
 \begin{equation}\nonumber  
\begin{aligned}
\mathcal{R}_2:&
= \mu \iint_{\widetilde{\Omega}_t} J\chi^2 D_\tau U_{\e t} ^i  D_\tau (e^{ Y-\e S }  a_{kj}  ) D_k (a_{lj} D_l  U_{\e t} ^i ) dyds \\  
&\q+\mu \iint_{\widetilde{\Omega}_t} J\chi^2 D_\tau U_{\e t} ^i  e^{ Y-\e S } a_{kj}  D_k ( D_\tau a_{lj} D_l  U_{\e t} ^i ) dyds \\
&\quad -\mu  \iint_{\widetilde{\Omega}_t} J a_{kj} D_k (e^{ Y-\e S } \chi^2  ) ( D_\tau U_{\e t}^i ) a_{lj} D_{l\tau}  U_{\e t} ^i dyds \\  
&=:\mathcal{R}_{21}+\mathcal{R}_{22}+\mathcal{R}_{23}.
\end{aligned}     
\end{equation}    
By integration by parts and $\chi^2 D_\tau U_{\e t} ^i |_{\p\widetilde{\Omega}}=0 $,  we verify  
\begin{align}
 \mathcal{R}_{21} &=  \mu \iint_{\widetilde{\Omega}_t} J\chi^2 D_\tau U_{\e t} ^i  D_\tau (e^{ Y-\e S } a_{kj}  ) D_k (a_{lj} D_l  U_{\e t} ^i ) dy ds \nonumber \\
&=-\mu \iint_{\widetilde{\Omega}_t} J\chi^2  [ D_{k\tau} U_{\e t} ^i  D_\tau (e^{ Y-\e S } a_{kj}  ) + D_{\tau} U_{\e t} ^i  D_{k\tau} (e^{ Y-\e S } a_{kj}  )] (a_{lj} D_l  U_{\e t} ^i ) dyds  \nonumber \\ 
&\lesssim 
\|D_{k\tau} U_{\e t}\|_{L_t^2(L^2)} [ \e^{\frac 1 4}      \|\e \Theta \|_{L_t^\infty(H^3)}^{\frac 3 4}\| \Theta \|_{L_t^\infty(H^2)}^{\frac 1 4}   \|\n \mathbf{u}_{ t}\|_{L_t^2(L^2)}+\e  \|\rho^{-1} \|_{L_{x, t}^\infty}  \|\n \mathbf{u}_{ t}\|_{L_t^2(L^2)} \nonumber \\ 
&\q + \e  \|\rho^{-1} \|_{L_{x, t}^\infty} \|\sigma\|_{L_t^\infty(H^2)} \| \mathbf{u}_{ t}\|_{L_t^2(H^1)}^{\frac 1 2} \|\e^2 \mathbf{u}_{ t}\|_{L_t^2(H^2)}^{\frac 1 2} ] + \e \|D_{\tau} U_{\e t}\|_{L_t^\infty(L^2)} \|\rho^{-1} \|_{L_{x, t}^\infty} \|\n \mathbf{u}_{ t}\|_{L_t^2(L^2)} \nonumber \\ 
&\q  + \|D_{\tau} U_{\e t}\|_{L_t^2(H^1)} \|\n \mathbf{u}_{ t}\|_{L_t^2(L^2)}[\e^{\frac 1 2}   \|\e \Theta \|_{L_t^\infty(H^3)}^{\frac 1 2}\| \Theta \|_{L_t^\infty(H^2)}^{\frac 1 2} +\e \| \Theta \|_{L_t^\infty(H^2)} \| p^{-1}\|_{L_t^\infty(H^2)} ] \nonumber \\  
&\q  + t^{\frac 1 4} \|D_{\tau} U_{\e t}\|_{L_t^\infty(L^2)}^{\frac 1 2}  \|D_{\tau} U_{\e t}\|_{L_t^2(H^1)}^{\frac 1 2}  \|\e^2 \mathbf{u}_{ t}\|_{L_t^2(H^2)} \|\rho^{-1} \|_{L_{x, t}^\infty} ( \|\n \sigma\|_{L_t^\infty(H^1)} +\e \| \n \sigma\|_{L_t^\infty(H^1)}^2) ,  \nonumber
\end{align}      
and  
 \begin{equation}\nonumber 
\begin{aligned} 
  \mathcal{R}_{22} & :=\mu \iint_{\widetilde{\Omega}_t} J\chi^2 D_\tau U_{\e t} ^i  e^{ Y-\e S } a_{kj}  D_k ( D_\tau a_{lj} D_l  U_{\e t} ^i ) dy ds\\
&= -\mu \iint_{\widetilde{\Omega}_t} J\chi^2 [D_{k\tau} U_{\e t} ^i  e^{ Y-\e S } a_{kj} +D_{\tau} U_{\e t} ^i D_k(  e^{ Y-\e S } a_{kj} )]( D_\tau a_{lj} D_l  U_{\e t} ^i ) dy ds\\
&\lesssim  \e \|D_{k\tau} U_{\e t}\|_{L_t^2(L^2)} \|\rho^{-1}\|_{L_{x, t}^\infty}  \|\n \mathbf{u}_{ t}\|_{L_t^2(L^2)}\\
&\q + \e t^{\frac 1 4}\|\rho^{-1}\|_{L_{t}^\infty(H^2)} \|\n \mathbf{u}_{ t}\|_{L_t^2(L^2)}\|D_{\tau} U_{\e t}\|_{L_t^\infty(L^2)}^{\frac 1 2}  \|D_{\tau} U_{\e t}\|_{L_t^2(H^1)}^{\frac 1 2}.
\end{aligned}     
\end{equation}  
In addition, we can estimate $\mathcal{R}_{23}$ in a similar way to  $\mathcal{R}_{21}$ to show that  
 \begin{equation}\nonumber 
\begin{aligned} 
  \mathcal{R}_{23} \leq (\e^{\frac 1 4}+t^{\frac 1 4}) C_1(\mathcal{E}(t)).
\end{aligned}     
\end{equation}   
Then $\mathcal{R}_3$ and  $\mathcal{R}_4$ defined and estimated by integration by parts and $\chi^2 D_\tau U_{\e t} ^i |_{\p\tilde{\Omega}}=0$, 
 \begin{equation}\nonumber 
\begin{aligned} 
 \mathcal{R}_3 & := (\mu+\lambda) \iint_{\widetilde{\Omega}_t} J\chi^2 D_\tau U_{\e t} ^i [ D_\tau (K^{-1}a_{ki}  ) D_k (a_{lj} D_l  U_{\e t} ^j )+K^{-1}a_{ki}  D_k ( D_\tau a_{lj} D_l  U_{\e t} ^j ) ] \\
&\quad -  J a_{ki} D_k (K^{-1}\chi^2  ) ( D_\tau U_{\e t}^i ) a_{lj} D_{l\tau}  U_{\e t} ^j dy ds\\ 
&= (\mu+\lambda)\iint_{\widetilde{\Omega}_t} J\chi^2 D_\tau U_{\e t} ^i  D_\tau (K^{-1}a_{ki}  ) D_k (a_{lj} D_l  U_{\e t} ^j ) \\ 
&\quad - J a_{ki}  D_k ( \chi^2 D_\tau U_{\e t} ^i  K^{-1})  D_\tau a_{lj} D_l  U_{\e t} ^j   -  J a_{ki} D_k (K^{-1}\chi^2  ) ( D_\tau U_{\e t}^i ) a_{lj} D_{l\tau}  U_{\e t} ^j dyds \\
&\lesssim  t^{\frac 1 4}\|\dv \mathbf{u}_{\e t}\|_{L_t^2(H^1) }\|\rho^{-1}\|_{L^\infty_{x, t}}^2  \|\rho\|_{L^\infty_{t}(H^2)} \|D_\tau U_{\e t}\|_{L^\infty_{t}(L^2)} ^{\frac{1}{2}}  \|D_\tau U_{\e t}\|_{L^2_{t}(H^1)} ^{\frac{1}{2}} +\e \| \mathbf{u}_{ t}\|_{L^2_{t}(H^1)}\\ 
&\q \cdot  ( \|\rho^{-1}\|_{L^\infty_{x, t}}\|\dv \mathbf{u}_{\e t}\|_{L_t^2(H^1)} +\|\rho^{-1}\|_{L^\infty_{x, t}}^2\|\rho \|_{L^\infty_{t}(H^2)}  \|D_\tau U_{\e t}\|_{L^\infty_{t}(L^2)} ^{\frac{1}{2}}  \|D_\tau U_{\e t}\|_{L^2_{t}(H^1)}  ^{\frac{1}{2}}) , 
\end{aligned}    
\end{equation} 
 \begin{equation}\nonumber  
\begin{aligned} 
 \mathcal{R}_4 & := \iint_{\widetilde{\Omega}_t} J\chi^2 D_\tau U_{\e t} ^i  D_\tau \mathcal{G}_1 dyds\\
 &\leq t^{\frac 1 4} \|D_\tau U_{\e t}\|_{L_t^\infty(L^2)} ^{\frac{1}{2}}\|D_\tau U_{\e t}\|_{L_t^2(H^1)} ^{\frac{1}{2}}[ \|\mathbf{u}\|_{L_t^2(H^3)}( \|D_\tau U_{\e t}\|_{L_t^\infty(L^2)} +\|\mathbf{u}_t\|_{L_t^\infty(L^2)} )+ \|\vartheta_{ t}\|_{L_t^\infty(H^1)}\\
&\q  \cdot( \e\|\mathbf{u}_t\|_{L_t^2(H^1)} + \|\mathbf{u}\|_{L_t^\infty(H^1)}\|\mathbf{u}\|_{L_t^2(H^3)} )+ \|\rho^{-1}\|_{L_{t}^\infty(H^2)} \|\e \sigma_t\|_{L_t^\infty(H^1)}\|\mathbf{u}\|_{L_t^2(H^3)} ] .
\end{aligned}  
\end{equation}

Taking the inner products of $D_\tau \ef{2nstb}_1 $ with $4 e^Y J\chi^2 D_\tau L_{ t}   $,   and $D_\tau \ef{2nstb}_3 $ with $e^Y J\chi^2 D_\tau Y_{ t}  $  in $L^2( \widetilde{\Omega}\times(0, t)) $, respectively,  and adding the resulting equalities, we obtain 
 \begin{equation}\label{ttan2} 
\begin{aligned} 
& \int_{\widetilde{\Omega}} 2 J\chi^2 e^Y |D_\tau L_{ t} |^2 +\frac{1}{2}   J\chi^2 e^Y  |D_\tau Y_{ t} |^2 dy + \kappa \iint_{\widetilde{\Omega}_t} e^{2Y-\e S} J\chi^2 a_{lj} D_{l\tau} Y_t  a_{kj} D_{k\tau} Y_t dyds \\  
& + \frac{1}{\e} \iint_{\widetilde{\Omega}_t} e^Y J\chi^2 D_\tau (2 L_t + Y_t ) D_\tau (  a_{kj} D_k U_{\e t}^j) 
 dyds \\
&= \int_{\widetilde{\Omega}} 2 J\chi^2 e^{Y_0} |D_\tau L_{t} (0)|^2 +\frac{1}{2}   J\chi^2 e^{Y_0}  |D_\tau Y_{t} (0)|^2 dy + \sum_{i=5}^8 \mathcal{R}_i,  
\end{aligned} 
\end{equation}  
where   
 \begin{equation}\nonumber  
\begin{aligned}  
\mathcal{R}_5:& = 2 \iint_{\widetilde{\Omega}_t} (\p_t(e^Y J\chi^2 +D_k ( e^Y J\chi^2  U^j a_{kj} )) 
|D_\tau L_t|^2 dyds - 4 \iint_{\widetilde{\Omega}} e^Y J\chi^2 D_\tau L_t   \\ 
&\q  \cdot [ D_\tau( U ^j a_{kj})   D_k L_t + \frac{1}{2}D_\tau  (U_t ^j a_{kj} )D_k ( \e S-Y  )   + \frac{1}{2}  U_t ^j  a_{kj} D_{k\tau}  ( \e S-Y ) ) ] dyds \\ 
&  
 \lesssim  t^{\frac 1 2}  \|D_\tau L_t\|_{L_t^\infty(L^2)}^2 (\|\vartheta_t \|_{L_t^2(H^2)}\|\Theta \|_{L^\infty_{x, t}} +\|\Theta \|_{L_t^\infty(H^2)}^ {\frac 1 2} \|\Theta \|_{L_t^2(H^4)}^ {\frac 1 2} \|\mathbf{u}\|_{L_t^2(H^3)}^ {\frac 1 2}\|\mathbf{u}\|_{L_t^\infty(H^1)}^ {\frac 1 2}  \\
&\q  +\|\Theta \|_{L^\infty_{x, t}}\| \mathbf{u}\|_{L_t^2(H^3)}) + \|\Theta \|_{L^\infty_{x, t}} \|D_\tau L_t\|_{L_t^\infty(L^2)} [ t^{\frac 1 2} \|\mathbf{u}\|_{L_t^2(H^3)} \|D_k L_t\|_{L_t^\infty(L^2)}    \\ 
 &\q  
 +\left \|\frac{\n \sigma}{\e}\right \|_{L_t^2(H^1)} (\e   \| \mathbf{u}_t\|_{L_t^2(H^1)}^{\frac 1 2} \|\e^2  \mathbf{u}_t\|_{L_t^2(H^2)}^{\frac 1 2}+\e^{\frac 1 2}   \| \mathbf{u}_t\|_{L_t^2(H^1)}^{\frac 1 4} \|\e^2 \mathbf{u}_t\|_{L_t^2(H^2)} ^{\frac 3 4} )] \\ 
&\q  + t^{\frac 1 4} \|D_\tau L_t\|_{L_t^\infty(L^2)} \|\mathbf{u}_t\|_{L_t^2(H^1)} \|\Theta\|_{L_t^\infty(H^2)} ^{\frac 1 2}\|\Theta\|_{L_t^2(H^4)} ^{\frac 1 2}  , 
\end{aligned} 
\end{equation}  
\begin{equation}\nonumber 
\begin{aligned} 
\mathcal{R}_6:&=\frac{1}{2}\iint_{\widetilde{\Omega}_t } \p_t(e^Y J\chi^2 )|D_\tau Y_t|^2 dyds 
+\frac{1}{2}\iint_{\widetilde{\Omega}_t }  J a_{kj}  D_k  ( e^Y\chi^2  U^j ) |D_\tau Y_t|^2 dyds \\
&\quad 
- \iint_{ \widetilde{\Omega}_t } D_k (   U^j a_{kj} D_k Y_t ) e^Y J\chi^2 D_\tau Y_t dyds  \\
& \lesssim  t^ {\frac 1 2}   \|D_\tau  Y_t\|_{L_t^\infty(L^2)}^2 ( \|\Theta\|_{L_{x, t}^\infty}\|\vartheta_t \|_{L_t^2(H^2)} +\|\Theta \|_{L_t^\infty(H^2)}^ {\frac 1 2} \|\Theta \|_{L_t^2(H^4)}^ {\frac 1 2} \|\mathbf{u}\|_{L_t^2(H^3)}^ {\frac 1 2}\|\mathbf{u}\|_{L_t^\infty(H^1)}^ {\frac 1 2}  \\
&\q  +\|\Theta\|_{L_{x, t}^\infty}\| \mathbf{u}\|_{L_t^2(H^3)}) + C \|\Theta\|_{L_{x, t}^\infty}  \|D_\tau Y_t\|_{L_t^\infty(L^2)} ( t^ {\frac 1 2} \| \mathbf{u}\|_{L_t^2(H^3)} \|  \vartheta_t\|_{L_t^\infty(H^1)}\\ 
&\q + t^{\frac 1 4}\|\mathbf{u}\|_{L_t^2(H^3)}^ {\frac 1 2}\|\mathbf{u}\|_{L_t^\infty(H^1)}^ {\frac 1 2}  \|\vartheta_t\|_{L_t^2(H^2)} ),  
\end{aligned}
\end{equation}  
\begin{equation}\nonumber 
\begin{aligned} 
\mathcal{R}_7:&=\kappa \iint_{\widetilde{\Omega}_t}  e^Y a_{lj} D_{l\tau } Y_t J a_{kj} D_{k}(e^{Y-\e S} \chi^2 ) D_{\tau } Y_t dyds + \kappa \iint_{\widetilde{\Omega}_t}  e^Y J \chi^2 D_{\tau } Y_t \\ 
&\quad \cdot [D_{\tau }(e^{-\e S} a_{kj} )D_k(  e^Y a_{lj} D_{l } Y_t  )+ e^{-\e S}a_{kj} D_k ( D_{\tau }(e^Y a_{lj})D_l Y_t )] dyds 
\\  
&\lesssim  t^{\frac 1 4}    \|\vartheta_t\|_{L_t^\infty(H^1)}^{\frac 1 2}\|\vartheta_t\|_{L_t^2(H^2)}^{\frac 3 2} (\|\Theta\|_{L_t^\infty(H^2)} +\|p^{-1}\|_{L_t^\infty(H^2)}  )(1+\|\Theta\|_{L_t^\infty(H^2)}  ), 
\end{aligned} 
\end{equation}

 \begin{equation}\nonumber
\begin{aligned} 
\mathcal{R}_8:&= \iint_{\widetilde{\Omega}_t}    e^Y J \chi^2 D_{\tau } Y_t D_{\tau } \mathcal{H}_1  dyds
\\ 
&\lesssim t^{\frac{1}{4}}  \|D_{\tau } Y_t\|_{L_t^\infty(L^2)}^{\frac 1 2}  \|D_{\tau } Y_t\|_{L_t^2(H^1)}^{\frac 1 2}  [ \|\Theta^{-1} \|_{L_{t}^\infty (H^2) }   \|\mathbf{u}_t\|_{L_t^2(H^1)}  \|\Theta \|_{L_{t}^\infty (H^2) }   \\ 
&\q +\|p^{-1}\|_{L_t^\infty(H^2)}   \|\Theta \|_{L_t^2(H^4)} ^{\frac 1 2}  \|\Theta \|_{L_t^\infty(H^2)} ^{\frac 1 2} (\|\vartheta_t \|_{L_t^2(H^2)} + \|\e\sigma_t \|_{L_t^\infty(H^1)}  )  \\ 
&\q + \e  \|p^{-1}\|_{L_t^\infty(H^2)}  \|\e \sigma_t \|_{L_t^\infty(H^1)} \|\e\mathbf{u}\|_{L_t^\infty(H^2)}\|\mathbf{u}\|_{L_t^2(H^3)}  ]
\\ 
&\q + \e^{\frac{1}{2}}\|\rho^{-1}\|_{L_{x, t}^\infty }  \|D_{\tau } Y_t\|_{L_t^2(H^1)} \|D_\tau  \mathbf{u}_{\e t}\|_{L_t^2(H^1)} \|\mathbf{u}\|_{L_t^\infty(H^1)}^{\frac 1 2}\|\e \mathbf{u}\|_{L_t^\infty(H^2)}^{\frac 1 2}   .
\end{aligned}   
\end{equation}

We then combine the large terms in \eqref{ttan1} and \eqref{ttan2} so that their leading-order contributions cancel, 
\begin{align} \nonumber
&\frac{1}{\e} \iint_{\widetilde{\Omega}_t}  e^Y J\chi^2 D_\tau (2 L_t + Y_t ) D_\tau (  a_{kj} D_k U_{\e t}^j)  dyds \nonumber\\
& +\frac{1}{\e} \iint_{\widetilde{\Omega}_t}  D_\tau (e^{Y}  a_{ki} D_k (2 L_t + Y_t)) J\chi^2 D_\tau U_{\e t}^i dyds \nonumber\\ 
 & =  - \frac{1}{\e} \iint_{\widetilde{\Omega}_t}  
 Ja_{kj} D_k ( e^Y  \chi^2 D_\tau (2 L_t + Y_t )  ) 
  D_\tau   U_{\e t}^j  +   e^Y J\chi^2 D_\tau (2 L_t + Y_t )  D_\tau  a_{kj} D_k  U_{\e t}^j dyds \nonumber \\ 
 &\q+\frac{1}{\e}  \iint_{\widetilde{\Omega}_t}  D_\tau (e^{Y}  a_{kj} D_k (2 L_t + Y_t)) J\chi^2 D_\tau U_{\e t}^j dyds  \nonumber \\
 &\leq C t^{\frac 1 4} \|\n\mathbf{u}_t\|_{L_t^2(L^2)} (\|\Theta\|_{L^\infty _{x, t}}+\|\Theta\|_{L_t^\infty(H^2)} ^{\frac 1 2} \|\Theta\|_{L_t^2(H^4)}^{\frac 1 2})( \|\n \eta_t\|_{L_t^\infty(L^2)} +\|\n \vartheta_t\|_{L_t^\infty(L^2)} ) .  \nonumber
\end{align}

Since the matrix $\{\sum_k a_{ki}a_{kj}\}$ is   positive definite,  we have 
$$ \sum_{i, j}a_{lj}D_{l \tau} U_{\e t}^i a_{kj}D_{k \tau} U_{\e t}^i\geq C |D_{y\tau} U_{\e t}|^2,   $$
for some positive constant $C$. 
Therefore, adding \ef{ttan1} and \ef{ttan2},  using the estimates of $\mathcal{R}_1 $ to $\mathcal{R}_8 $ and \ef{lemma1},  we derive that 
 \begin{equation}\label{ttan3}
\begin{aligned} 
&  \int_{\widetilde{\Omega}} J\chi^2 |D_\tau U_{\e t} |^2 dy(t )+\int_{\widetilde{\Omega}}  J\chi^2  |D_\tau L_{ t} |^2 +   J\chi^2  |D_\tau Y_{ t} |^2 dy(t) \\ 
& + \iint_{\widetilde{\Omega}_t}   J\chi^2   |D_{y \tau } U_{\e t} |^2 dyds  + \iint_{\widetilde{\Omega}_t}   J\chi^2   |D_{y \tau} Y_t |^2dyds \\  
&\leq C_0(M_0)+(\e^{\frac 1 4}+t^{\frac 1 4}) C_1( \mathcal{E}(t)).
\end{aligned}  
\end{equation}

Now,  we turn to the normal derivatives estimation.  Multiplying $\ef{2nstb}_2$ by $a_{3i}$ gives 
\begin{equation} \label{d33}
\begin{aligned} 
& - a_{3i} U^i_{\e tt} +(2\mu+\lambda) K^{-1}   D_3 (a_{lj} D_l U^j_{\e t} ) 
  -\frac{1}{\e} e^{Y}   D_3 (2 L_t + Y_t)\\
& =  \mu K^{-1} [   D_3 (a_{lj} D_l U^j_{\e t} )   - a_{3i} a_{kj} D_k (a_{lj} D_l U^i_{\e t} ) 
 ] +a_{3i} U^j a_{kj} D_k U^i_{\e t} -a_{3i}   \mathcal{G}_1.  \\ 
\end{aligned}  
\end{equation}  
Notice that 
\begin{equation} \nonumber 
\begin{aligned} 
&\mu K^{-1} [   D_3 (a_{lj} D_l U^j_{\e t} )   - a_{3i} a_{kj} D_k (a_{lj} D_l U^i_{\e t} ) 
 ]\\
 &=\mu  K^{-1} [ D_3a_{3j}D_3U^i_{\e t} + D_3a_{\tau j}D_\tau U^i_{\e t} + a_{\tau j}D_{3\tau}U^i_{\e t} - a_{3j}D_3 a_{3j} a_{3i}D_3 U^i_{\e t} \\
& - a_{\tau j}a_{3i}D_\tau a_{ij}D_i U^i_{\e t} - a_{\tau j}a_{\xi j}a_{3i}D_{\tau \xi} U^i_{\e t} - a_{3j}a_{3i}D_3a_{\tau j}D_\tau U^i_{\e t} ] \q {\rm  for }\q  \xi,  \tau = 1,  2,  
\end{aligned}  
\end{equation}  
which suggests that the right-hand side of \ef{d33} does not contain $D_{33}U_{\e t}$.

By multiplying \ef{d33} by $J\chi^2 D_3( a_{lj} D_l U^i_{\e t})$ and integrating the resulting equality in $L^2(\tilde{\Omega}\times(0, t))$,  we obtain  
\begin{equation} \label{d331}
\begin{aligned} 
&\frac{1}{2} \int_{\widetilde{\Omega}} J\chi^2 |a_{3i}D_3 U^i_{\e t} |^2 dy +(2\mu+\lambda) \iint_{\widetilde{\Omega}_t} J\chi^2 K^{-1} |D_3 (a_{lj} D_l U^j_{\e t} )|^2 dyds \\
&-\frac{1}{\e} \iint_{\widetilde{\Omega}_t} e^Y D_3 (2 L_t + Y_t) J\chi^2  D_3 (a_{lj} D_l U^j_{\e t} )dyds \\
& =: \frac{1}{2} \int_{\widetilde{\Omega}} J\chi^2 |a_{3i}D_3 U^i_{\e t}(0) |^2 dy +  \sum_{i=9}^{11} \mathcal{R}_i,  
\end{aligned}  
\end{equation} 
where $\mathcal{R}_9$ to $\mathcal{R}_{11}$ are defined and estimated as follows
\begin{equation} \nonumber 
\begin{aligned} 
\mathcal{R}_9 &:= - \iint_{\widetilde{\Omega}_t}J\chi^2  (a_{3i}D_3 U^i_{\e tt} )(a_{\tau j}D_\tau U^j_{\e t})  dy ds - \iint_{\widetilde{\Omega}_t}  J a_{3i} D_3(\chi^2 )  U^i_{\e tt} a_{l j}D_l U^j_{\e t}  dyds\\ 
&\leq C \|\e\mathbf{u}_{\e tt}\|_{L_t^2( H^1)}\|\n \mathbf{u}_{t}\|_{L_t^2( L^2)} , 
\end{aligned}  
\end{equation} 
\begin{equation} \nonumber 
\begin{aligned} 
\mathcal{R}_{10} &:= \iint_{\widetilde{\Omega}_t}  J\chi^2 D_3 (a_{lj} D_l U^j_{\e t} )[K^{-1} (   O(D_{y\tau } U_{\e t})+  O(D_y U_{\e t} ) ) +a_{3i} U^j a_{kj} D_k U^i_{\e t}  ] dy ds\\
&\leq  \delta \iint_{\widetilde{\Omega}_t} J\chi^2 |D_3 (a_{lj} D_l U^j_{\e t} )|^2 dyds +C_\delta \|\rho^{-1}\|_{L^\infty_{x, t}}^2 (  \| D_{y\tau } U_{\e t}\|^2_{L_t^2(L^2)} +\e^2 \| \n \mathbf{u}_{t}\|^2_{L_t^2(L^2)} ) \\
&\q + C \e^{\frac 1 2}  \| \n \mathbf{u}_{t}\|^2_{L_t^2(L^2)} \| \e \mathbf{u} \|^{\frac 3 2}_{L_t^\infty (H^2)} \| \mathbf{u} \|^{\frac 1 2}_{L_t^\infty (H^1)}, 
\end{aligned}  
\end{equation}  
\begin{equation} \nonumber 
\begin{aligned} 
\mathcal{R}_{11}& := - \iint_{\widetilde{\Omega}_t}  J\chi^2 D_3 (a_{lj} D_l U^j_{\e t} )  a_{3i}   \mathcal{G}_1 dy ds\\
&\leq  \delta \iint_{\widetilde{\Omega}_t} J\chi^2 |D_3 (a_{lj} D_l U^j_{\e t} )|^2 dyds +C_\delta  [\e^2  \| \mathbf{u}_{t}\|^2_{L_t^\infty(L^2)}  \| \mathbf{u}\|^2_{L_t^2(H^3)}  + \| \vartheta_t\|^2_{L_t^2(H^2)}\\
&\q \cdot ( \e^2  \| \mathbf{u}_{t}\|^2_{L_t^\infty(L^2)}  + \e \| \mathbf{u}\|^3_{L_t^\infty(H^1)}  \| \e\mathbf{u}\|_{L_t^\infty(H^2)})+ \e^2\|\rho^{-1}\|_{L^\infty_{x, t}}^2 \|\e \sigma_t\| _{L_t^\infty(H^1)}^2  \| \mathbf{u}\|_{L_t^2(H^3)}^2 ] . 
\end{aligned}  
\end{equation} 
Substituting these inequalities into \ef{d331} and choosing $\delta$ sufficiently small,   we conclude that
\begin{equation} \nonumber
\begin{aligned} 
&\frac{1}{2} \int_{\widetilde{\Omega}} J\chi^2 |a_{3i}D_3 U^i_{\e t} |^2 dy +(2\mu+\lambda) \iint_{\widetilde{\Omega}_t} J\chi^2 K^{-1} |D_3 (a_{lj} D_l U^j_{\e t} )|^2 dyds \\
&-\frac{1}{\e} \iint_{\widetilde{\Omega}_t} e^Y D_3 (2 L_t + Y_t) J\chi^2  D_3 (a_{lj} D_l U^j_{\e t} )dyds \\
&\leq C_0(M_0) + \delta \|\mathbf{u}_{t}\|_{L_t^2( H^1)}^2+C_\delta\|\e\mathbf{u}_{\e tt}\|_{L_t^2( H^1)}^2     +    C \|\rho^{-1}\|_{L^\infty_{x, t}}^2 \| D_{y\tau } U_{\e t}\|^2_{L_t^2(L^2)} + t^{\frac 1 2}C_1 (\mathcal{E}(t)). 
\end{aligned}   
\end{equation}  

To remove the singular terms in \ef{d331},  we take the inner products of   $D_3 \ef{2nstb}_1 $ with   $4 e^Y J\chi^2 D_3 L_{ t}   $,   and  $D_3 \ef{2nstb}_3 $ with  $e^Y J\chi^2 D_3 Y_{ t}  $  in $L^2( \widetilde{\Omega}\times (0, t))$. 
Then adding these  two equalities yields that 
\begin{equation} \label{d332}
\begin{aligned} 
& \int_{\widetilde{\Omega}} 2 J\chi^2 e^Y | D_3 L_{t} |^2  + \frac{1}{2}  J\chi^2 e^Y | D_3 Y_{t} |^2 dy  +\kappa \iint_{\widetilde{\Omega}_t} e^{2Y-\e S } J\chi^2 a_{lj} D_{l3} Y_t a_{kj} D_{k3} Y_t dyds  \\
&+ \frac{1}{\e} \iint_{\widetilde{\Omega}_t} e^Y J\chi^2 D_3 (2 L_t + Y_t)   D_3 (a_{lj} D_l U^j_{\e t} )dyds \\
&=\int_{\widetilde{\Omega}} 2 J\chi^2 e^{Y_0} | D_3 L_{t}(0) |^2  + \frac{1}{2}  J\chi^2 e^{Y_0} | D_3 Y_{t}(0) |^2 dy +  \sum_{i=12}^{15} \mathcal{R}_i .
\end{aligned}   
\end{equation}  
Here,  we estimate the low-order terms on the right-hand side of \ef{d332} by use of  the H\"{o}lder  inequality and the interpolation inequalities 
\begin{align} 
\mathcal{R}_{12}& := 2 \iint_{\widetilde{\Omega}_t}  J(\chi^2 \p_t e^Y +a_{kj} D_k (\chi^2 e^Y U^j ) )| D_3 L_{t} |^2 dyds- 4 \iint_{\widetilde{\Omega}_t} e^Y J\chi^2 D_3 L_{ t} \nonumber \\
&\q \cdot [ D_3(U^j a_{kj}) D_k L_{t} + \frac{1}{2}D_3(U^j_{t}  a_{kj}) D_k (\e S-Y )+\frac{1}{2} U^j_{t}  a_{kj}D_{3k} (\e S-Y ) ]dyds   \nonumber \\
&\lesssim  t^{\frac 1 2} \|D_3 L_t\|_{L_t^\infty(L^2)}^2 (\|\Theta \|_{L^\infty_{x, t}} \|\vartheta_t \|_{L_t^2(H^2)} +\|\Theta \|_{L_t^\infty(H^2)}^ {\frac 1 2} \|\Theta \|_{L_t^2(H^4)}^ {\frac 1 2} \|\mathbf{u}\|_{L_t^2(H^3)}^ {\frac 1 2}\|\mathbf{u}\|_{L_t^\infty(H^1)}^ {\frac 1 2} \nonumber \\
&\q  + \|\Theta \|_{L^\infty_{x, t}} \| \mathbf{u}\|_{L_t^2(H^3)}) + \|\Theta \|_{L^\infty_{x, t}} \|D_3 L_t\|_{L_t^\infty(L^2)} [ t^{\frac 1 2} \|\mathbf{u}\|_{L_t^2(H^3)} \|D_k L_t\|_{L_t^\infty(L^2)}  \nonumber  \\
 &\q  
 +\left \|\frac{\n \sigma}{\e}\right \|_{L_t^2(H^1)} (\e   \| \mathbf{u}_t\|_{L_t^2(H^1)}^{\frac 1 2} \|\e^2  \mathbf{u}_t\|_{L_t^2(H^2)}^{\frac 1 2}+\e^{\frac 1 2}   \| \mathbf{u}_t\|_{L_t^2(H^1)}^{\frac 1 4} \|\e^2 \mathbf{u}_t\|_{L_t^2(H^2)} ^{\frac 3 4} )] \nonumber \\ 
&\q  + t^{\frac 1 4} \|\Theta \|_{L^\infty_{x, t}}  \|D_3 L_t\|_{L_t^\infty(L^2)}  \|\mathbf{u}_t\|_{L_t^2(H^1)} \|\vartheta\|_{L_t^\infty(H^2)} ^{\frac 1 2}\|\vartheta\|_{L_t^2(H^4)} ^{\frac 1 2}  ,\nonumber  
\end{align}   
 \begin{equation} \nonumber 
\begin{aligned} 
\mathcal{R}_{13} :& = - \kappa \iint_{\widetilde{\Omega}_t} 
J a_{kj} D_k (e^{Y-\e S } \chi^2 )( D_{3} Y_t ) e^Y a_{lj} D_{l3} Y_t dyds  + \kappa \iint_{\widetilde{\Omega}_t}  J \chi^2 D_{3} Y_t \\ 
&\q \cdot [  D_3 (e^{Y-\e S }a_{kj} )D_k(e^Y a_{lj} D_{l} Y_t )+e^{Y-\e S }a_{kj}D_k( D_3(e^Y a_{lj}) D_{l} Y_t) ]dyds \\
&\lesssim t^{\frac 1 4} \|\vartheta_t\|_{L_t^\infty(H^1)}^{\frac 1 2}\|\vartheta_t\|_{L_t^2(H^2)}^{\frac 3 2} (\|\Theta\|_{L_t^\infty(H^2)} + \|p^{-1}\|_{L_t^\infty(H^2)}  )(1+\|\Theta\|_{L_t^\infty(H^2)}  ), 
\end{aligned}  
\end{equation}  
\begin{equation}\nonumber
\begin{aligned} 
\mathcal{R}_{14}:&=\frac{1}{2}\iint_{\widetilde{\Omega}_t} J(\chi^2 \p_t e^Y +  a_{kj}  D_k  ( e^Y\chi^2  U^j ) )|D_3 Y_t|^2 dy ds 
  \\ 
&\quad 
-\iint_{\widetilde{\Omega}_t}  D_3 (   U^j a_{kj}) D_k Y_t  e^Y J\chi^2 D_3 Y_t dy ds \\
&\lesssim t^ {\frac 1 2} \|D_3 Y_t\|_{L_t^\infty(L^2)}^2 (\|\vartheta_t \|_{L_t^2(H^2)} \|\Theta \|_{L_{x, t}^\infty}  +\|\Theta \|_{L_t^\infty(H^2)}^ {\frac 1 2} \|\Theta \|_{L_t^2(H^4)}^ {\frac 1 2} \|\mathbf{u}\|_{L_t^2(H^3)}^ {\frac 1 2}\|\mathbf{u}\|_{L_t^\infty(H^1)}^ {\frac 1 2}  \\
&\q  +\| \mathbf{u}\|_{L_t^2(H^3)}\|\Theta \|_{L_{x, t}^\infty} ) + C \|\Theta \|_{L_{x, t}^\infty} \|D_3 Y_t\|_{L_t^\infty(L^2)} ( t^ {\frac 1 2} \| \mathbf{u}\|_{L_t^2(H^3)} \|  \vartheta_t\|_{L_t^\infty(H^1)}\\
&\q + t^{\frac 1 4}\|\mathbf{u}\|_{L_t^2(H^3)}^ {\frac 1 2}\|\mathbf{u}\|_{L_t^\infty(H^1)}^ {\frac 1 2}  \|\vartheta_t\|_{L_t^2(H^2)} ), 
\end{aligned}  
\end{equation}  
and
\begin{equation}\nonumber
\begin{aligned} 
\mathcal{R}_{15}:&= \iint_{\widetilde{\Omega}_t} e^Y J\chi^2 D_3 Y_{ t} D_3\mathcal{H}_1 dyds \\
&\lesssim t^{\frac{1}{4}}  \|D_{3 } Y_t\|_{L_t^\infty(L^2)}^{\frac 1 2}  \|D_{3 } Y_t\|_{L_t^2(H^1)}^{\frac 1 2}  [\|\Theta^{-1} \|_{L_{t}^\infty (H^2) }  \|\mathbf{u}_t\|_{L_t^2(H^1)} \|\Theta \|_{L_t^\infty(H^2)}  \\
&\q +\|p^{-1}\|_{L_t^\infty(H^2)}   \|\Theta \|_{L_t^2(H^4)} ^{\frac 1 2}  \|\Theta \|_{L_t^\infty(H^2)} ^{\frac 1 2} (\|\vartheta_t \|_{L_t^2(H^2)} + \|\e\sigma_t \|_{L_t^\infty(H^1)}  )  \\ 
&\q + \e  \|p^{-1}\|_{L_t^\infty(H^2)}  \|\e \sigma_t \|_{L_t^\infty(H^1)} \|\e\mathbf{u}\|_{L_t^\infty(H^2)}\|\mathbf{u}\|_{L_t^2(H^3)}  ]\\
&\q+  \|p^{-1}\|_{L_t^\infty(H^2)}  \|\Theta\|_{L_{x, t}^\infty } \|\vartheta_t\|_{L_t^2(H^2)}^{\frac 1 2} \|\vartheta_t\|_{L_t^\infty(H^1)}^{\frac 1 2} \|\e^2 \mathbf{u}_{t}\|_{L_t^2(H^2)}  \|\mathbf{u}\|_{L_t^\infty(H^1)}^{\frac 1 2}\|\mathbf{u}\|_{L_t^2(H^3)}^{\frac 1 2}.
\end{aligned}  
\end{equation}  
Collecting the estimates above and applying  \ef{lemtt}, \ef{uth2} and  \ef{ttan3},  we conclude that 
\begin{equation}\label{d333}
\begin{aligned} 
 & \int_{\widetilde{\Omega}} J\chi^2 |a_{3i}D_3 U^i_{\e t} |^2 dy +\int_{\widetilde{\Omega}}  J\chi^2 e^Y | D_3 L_{t} |^2  +  J\chi^2 e^Y | D_3 Y_{t} |^2 dy
\\
& +  \iint_{\widetilde{\Omega}_t} J\chi^2   |D_3 (a_{lj} D_l U^j_{\e t} )|^2 dyds + \iint_{\widetilde{\Omega}_t}   J\chi^2  | D_{y3} Y_t |^2 dyds \\
&\leq C_0(M_0){\rm exp}\{(t^{\frac 1 4} + \e^{\frac 1 4} ) C_1( \mathcal{E}(t)) \}+\delta \|\mathbf{u}_{t}\|_{L_t^2( H^1)}^2 
 \end{aligned}  
\end{equation}  
for any $\e\in(0, 1]$ and $t\in(0, T]$.  The proof is complete. 
\hfill$\Box$

\begin{rmk}   \label{rmk}
Applying Lemma \ref{lem1} to $\n\vartheta_t$, and using Lemmas \ref{lem3.2} and \ref{lem3.3}, we deduce
\begin{equation} \nonumber
\begin{aligned} 
 \|\vartheta_t\|_{L_t^2(H^2)} ^2
& =  \|\vartheta_t\|_{L_t^2(L^2)}^2 + \|\n\vartheta_t\|_{L_t^2(H^1)}^2 \\
& \lesssim  \|\vartheta_t\|_{L_t^2(H^1)}^2 +\|\Delta\vartheta_t\|_{L_t^2(L^2)}^2 +\|\cl\n \vartheta_t\|_{L_t^2(L^2)}^2  +\| \n\vartheta_t\cdot\mathbf{n}\|_{L_t^2(H^{\frac 1 2}(\p\Omega))}^2 \\
& \lesssim  \|\vartheta_t\|_{L_t^2(H^1)} ^2 +\|\Delta\vartheta_t\|_{L_t^2(L^2)}^2 \\
&\leq C_0(M_0){\rm exp}\{(t^{\frac 1 4} + \e^{\frac 1 4} ) C_1( \mathcal{E}(t)) \}+\delta \|\mathbf{u}_{t}\|_{L_t^2( H^1)}^2 . 
 \end{aligned}    
\end{equation}  
Moreover, due to the relation $\theta=e^\vartheta -1$,  we obtain that
\begin{equation}  \label{the1}
\begin{aligned} 
\|\n \theta_t\|_{L_t^\infty(L^2)}& =\|  \n e^{\vartheta}\vartheta_t +e^{\vartheta}  \n \vartheta_t\| _{L_t^\infty(L^2)}\\ 
& 
\leq C \|\vartheta_t\|_{L_t^\infty(H^1)} \|\Theta\|_{L_t^\infty(H^2)}   \\
&\leq C_0(M_0){\rm exp}\{(t^{\frac 1 4} + \e^{\frac 1 4} ) C_1( \mathcal{E}(t)) \}+\delta \|\mathbf{u}_{t}\|_{L_t^2( H^1)}^2 ,
 \end{aligned}    
\end{equation}  
and 
\begin{equation}  \label{the2}
\begin{aligned} 
\|\Delta \theta_t\|_{L_t^2(L^2)}
& =\|2 \n e^{\vartheta} \n \vartheta_t + e^{\vartheta}  \Delta \vartheta_t + \vartheta_t \Delta e^{\vartheta}  \| _{L_t^2(L^2)} \\ 
& \leq C \| \Theta\|_{L_{t}^\infty(H^2)}   \|\vartheta_t\|_{L_t^2(H^2)}\\
&\leq C_0(M_0){\rm exp}\{(t^{\frac 1 4} + \e^{\frac 1 4} ) C_1( \mathcal{E}(t)) \}+\delta \|\mathbf{u}_{t}\|_{L_t^2( H^1)}^2 .
 \end{aligned}    
\end{equation}   

\end{rmk}

\subsection{Uniform estimates of $(\mathbf{v}_t,q_t)$}
Now, we apply  the equations $\eqref{ns0}_{2, 3}$ to bound $\|\mathbf{v}_t\|_{L_t^2(H^1)}$ by use of the uniform bounds for $\theta_t$. 
To this end,  we differentiate $\eqref{ns0}_{2, 3}$ with respect to $t$ to get
\begin{equation}\label{dtns0}\left\{\begin{aligned}
&\rho(\mathbf{v}_{tt}+\mathbf{u}\cdot\nabla\mathbf{v}_t)+\frac{1}{\e}\nabla q_t-\mu\Delta \mathbf{v}_t-\xi\nabla\dv \mathbf{v}_t-\beta\nabla\Delta\theta_t\\
&\qquad =f_t-\rho_t(\mathbf{v}_t+\mathbf{u}\cdot\nabla\mathbf{v}) -\rho\mathbf{u}_t\cdot\nabla\mathbf{v}  &{\rm in}\  \Omega_t, \\
&\frac{1}{2} (q_{tt}+\mathbf{u}\cdot\nabla q_t)+\frac{1}{\e}\dv\mathbf{v}_t=g_t-\frac{1}{2}\mathbf{u}_t\cdot\nabla q  &{\rm in}\  \Omega_t, \\
&\mathbf{v}_t  =-\frac{\kappa}{2}\n_\tau \theta_t, \q \frac{\partial \theta_t}{\partial \mathbf{n}} =0 &{\rm on}\  \p\Omega_t, 
\end{aligned}\right.\end{equation} 
where $f_t,g_t$, and $h_t$ are given by 
\begin{equation}\begin{aligned} 
f_t:=\frac{\kappa}{2} 
\left\{\nabla \rho_t\cdot(\theta_t+ \mathbf{u}\cdot\nabla\theta)+\nabla \rho\cdot[\rho^{-1} (\frac{\kappa}{2}\Delta\theta-\dv\mathbf{v}+h )]_t+(\rho\nabla\mathbf{u}\nabla\theta)_t-\nabla h_t \right \}, 
\nonumber 
\end{aligned}\end{equation}
\begin{equation}\begin{aligned}
g_t:=-(q\dv\mathbf{u})_t+\frac \e 2 [2\mu|D(\mathbf{u})|^2+\lambda(\dv \mathbf{u})^2 ]_t, \nonumber
\end{aligned}\end{equation}
\begin{equation}\begin{aligned}
h_t:=-\e (q\dv\mathbf{u})_t+\e^2 [2\mu|D(\mathbf{u})|^2+\lambda(\dv \mathbf{u})^2 ]_t.\nonumber
\end{aligned}\end{equation}

\begin{lem}\label{lemut}
For any $t\in[0, T]$ and $\e\in (0,1]$,  one has
\begin{equation}\label{lem2}\begin{aligned}
\|( \mathbf{u}_t, q_t )\|^2_{L_t^\infty(L^2)} +\|\mathbf{u}_t \|^2_{L^2_t(H^1)}\leq C_0(M_0){\rm exp}\{(t^{\frac 1 8} + \e^{\frac 1 4} ) C_1(\mathcal{E}(t)) \}.
\end{aligned}\end{equation}
\end{lem}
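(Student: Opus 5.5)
We follow Step 3 of the half-space sketch, now for the full system \eqref{dtns0}. First we construct the Stokes lifting. Let $\ell$ solve \eqref{stokes} with $g=-\frac{\kappa}{2}\n_\tau\theta_t$ and $\ell_0=0$. Then $g\cdot\mathbf n=0$ on $\p\Omega_t$, so Lemma \ref{lem2.4} applies. Arguing as in \eqref{s10} with the trace theorem and Lemma \ref{lem2.5} (taking $s=\frac12$, $m_1=0$, $m_2=2$), we get
$$\|\ell\|^2_{L_t^\infty(L^2)}+\|\ell\|^2_{L_t^2(H^1)}+\|\ell_t\|^2_{L_t^2(H^{-1})}\lesssim \|\theta_t\|^2_{L_t^2(H^2)}+\|\theta_t\|_{L_t^2(H^2)}\|\theta_t\|_{H^1_t(L^2)}.$$
To bound $\|\theta_{tt}\|_{L^2_t(L^2)}$ we use $\p_t\eqref{ns0}_4$, giving $\theta_{tt}\sim \rho^{-1}(\frac{\kappa}{2}\Delta\theta_t-\dv\mathbf v_t)+$ lower order. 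Hence
$$\|\ell\|^2_{L_t^\infty(L^2)}+\|\ell\|^2_{L_t^2(H^1)}+\|\ell_t\|^2_{L_t^2(H^{-1})}\le \delta\|\dv\mathbf v_t\|^2_{L^2_t(L^2)}+C_\delta\|\theta_t\|^2_{L^2_t(H^2)}+t^{1/4}C_1(\mathcal E(t)).$$
Remark \ref{rmk} bounds $\|\theta_t\|^2_{L_t^2(H^2)}$ by $C_0(M_0)\exp\{(t^{1/4}+\e^{1/4})C_1\}+\delta\|\mathbf u_t\|^2_{L^2_t(H^1)}$.

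Next come the energy identities with the test function $Z:=\mathbf v_t-\ell$. It satisfies $Z|_{\p\Omega_t}=0$ and $\dv Z=\dv\mathbf v_t$. We multiply $\eqref{dtns0}_1$ by $Z$ and $\eqref{dtns0}_2$ by $2q_t$, then integrate over $\Omega\times(0,t)$. The singular terms give $-\frac1\e\iint q_t\dv Z+\frac1\e\iint q_t\dv\mathbf v_t\cdot 2\cdot\frac12=0$ exactly, because $\ell$ is divergence-free. This is the point where the weight must be arranged so that no $\frac1\e\nabla q_t\cdot\ell$ term survives with a variable coefficient. Integration by parts is free of boundary terms since $Z=0$ on $\p\Omega$.

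The time-derivative term is $\rho\mathbf v_{tt}\cdot Z=\frac12\frac{d}{dt}(\rho|Z|^2)+\rho\ell_t\cdot Z$ plus transport corrections handled via $\eqref{ns1}_1$. The piece $\rho\ell_t\cdot Z$ is bounded by $\|\ell_t\|_{L^2_t(H^{-1})}\|\rho Z\|_{L^2_t(H^1_0)}$, which is admissible because $\rho\in H^2$ and $Z\in H^1_0$. The viscous terms $-\mu\Delta\mathbf v_t-\xi\nabla\dv\mathbf v_t$ give $\mu\|\n Z\|^2+\xi\|\dv Z\|^2$ minus the cross terms $\mu\n\ell:\n Z$, which are absorbed using $\|\ell\|_{L^2_t(H^1)}$. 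The third-order term $-\beta\nabla\Delta\theta_t\cdot Z$ is integrated by parts to $\beta\Delta\theta_t\dv Z$ and bounded by $\delta\|Z\|^2_{H^1}+C_\delta\|\theta_t\|^2_{H^2}$.

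The forcing terms $f_t-\rho_t(\cdots)-\rho\mathbf u_t\cdot\nabla\mathbf v$ and $g_t-\frac12\mathbf u_t\cdot\nabla q$ are estimated as in Lemma \ref{lem3.2}. We use H\"older, Lemma \ref{inter} and Remark \ref{ptt}, and obtain factors $t^{1/8}$ or $\e^{1/4}C_1(\mathcal E(t))$. The worst term is $\nabla h_t$ inside $f_t$, which contains $\e\nabla(q\dv\mathbf u)_t$. We move its gradient onto $Z$ and use the $\e$-weights in $M_1,M_2$; for example, $\e\|\nabla q_t\|\lesssim\|\e\nabla\sigma_t\|$ together with $\|\e\dv\mathbf u_t\|_{L^2_t(H^1)}$.

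Finally we close the estimate. Summing the identities and choosing $\delta$ small, we get
$$\|Z\|^2_{L_t^\infty(L^2)}+\|q_t\|^2_{L_t^\infty(L^2)}+\|Z\|^2_{L_t^2(H^1)}\le C_0(M_0)e^{(t^{1/8}+\e^{1/4})C_1}+\delta\|\dv\mathbf v_t\|^2_{L^2_t(L^2)}+C\|\theta_t\|^2_{L_t^2(H^2)}.$$
Write $\mathbf u_t=Z+\ell+\frac\kappa2\nabla\theta_t$ and $\dv\mathbf v_t=\dv Z$. Inserting Remark \ref{rmk} (with its own $\delta$ chosen so that $\delta C<\frac12$) lets the term $\delta\|\mathbf u_t\|^2_{L_t^2(H^1)}$ be absorbed on the left. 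The $L^\infty_t(L^2)$ bound for $\mathbf u_t$ follows from those for $Z$ and $\ell$ and from \eqref{the1}. We expect the main obstacle to be the $H^{1/2}_t(H^{-1/2}(\p\Omega))$ trace norm of $\n_\tau\theta_t$. It forces the appearance of $\theta_{tt}$, hence of $\dv\mathbf v_t$, and this circularity must be broken by the small-$\delta$ absorption above. Handling $\rho\ell_t\cdot Z$ with only $H^{-1}$ control of $\ell_t$ is a secondary difficulty.
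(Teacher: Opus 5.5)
Your route is the paper's own: the Stokes lifting $\ell$ with $g=-\frac{\kappa}{2}\nabla_\tau\theta_t$, $\ell_0=0$, and the test function $Z=\mathbf v_t-\ell$. The remaining steps also coincide with the paper's: the $H^1_0$--$H^{-1}$ duality for $\rho\ell_t\cdot Z$, the integration by parts of $\beta\nabla\Delta\theta_t$, the bound \eqref{ell} via Lemmas \ref{lem2.4}--\ref{lem2.5}, and the final absorption through $\mathbf u_t=Z+\ell+\frac{\kappa}{2}\nabla\theta_t$ and Remark \ref{rmk}.

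There is, however, a coefficient slip at exactly the step you flag as delicate, and as written the singular terms do not cancel. In $\eqref{dtns0}_2$ the factor $\frac12$ multiplies only $q_{tt}+\mathbf u\cdot\nabla q_t$; the singular term is $\frac1\e\dv\mathbf v_t$, not $\frac{1}{2\e}\dv\mathbf v_t$. Testing with $2q_t$ therefore produces $+\frac{2}{\e}\iint q_t\dv\mathbf v_t$. The momentum equation contributes $-\frac1\e\iint q_t\dv Z=-\frac1\e\iint q_t\dv\mathbf v_t$, so an $O(\e^{-1})$ term survives. That term cannot be bounded uniformly in $\e$.

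To fix it, test $\eqref{dtns0}_2$ with $q_t$, as in \eqref{low23}, which gives $\frac14\|q_t\|^2_{L^2}$ on the left. Equivalently, test $\eqref{dtns0}_1$ with $2Z$: the two multipliers must be in ratio $1:1$. With that correction your argument closes as in the paper.

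Your explicit conversion of $\delta\|\theta_{tt}\|^2_{L^2_t(L^2)}$ into $\delta\|\dv Z\|^2_{L^2_t(L^2)}$ plus $\|\theta_t\|^2_{L^2_t(H^2)}$, via $\partial_t\eqref{ns0}_4$, is a slightly cleaner way of disposing of that term. The paper's \eqref{low26} simply leaves it as $\delta\|\theta_{tt}\|^2_{L^2_t(L^2)}$.
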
 
\textbf{Proof}.   
To prove \ef{lem2}, the test functions $\mathbf{v}_t $ cannot be employed in $\eqref{dtns0}_1$ because $\mathbf{v}_t $ lacks the boundary conditions required to eliminate the boundary term arising from the diffusion term $-\int_\Omega \Delta \mathbf{v}_t \cdot \mathbf{v}_t  dx$.

By  Lemma \ref{lem2.4}, we choose a new test function $Z:=\mathbf{v}_t-\ell $, where 
$\ell\in \mathcal{V}^1(\Omega_t)$   is the solution to   Stokes problem \ef{stokes} with   $\ell_0=0$ and $g=-\frac{\kappa}{2} \n_\tau \theta_t$. The    test function $Z$ satisfies
$$ 
Z|_{\p\Omega_t}=0 \q {\rm and } \q \dv Z=\dv \mathbf{v}_t\q {\rm in}~ \Omega_t.
$$ 
We take the inner product of  $\eqref{dtns0}_1$ and $Z$ in $L^2(\Omega\times(0, t))$,  then use integration by parts and $Z|_{\p\Omega_t}=0$ to discover 
\begin{equation}\label{low22}\begin{aligned}
&\frac{1}{2} \|\sqrt{\rho}Z \|^2_{L^2}(t)-\frac{1}{\e}\omt  q_t\dv  \mathbf{v}_t dxds  +\mu\|\nabla Z \|^2_{L_t^2(L^2)}+\xi\|\dv Z  \|^2_{L_t^2(L^2)}\\
&=\frac{1}{2} \|\sqrt{\rho_0} Z (0)\|^2_{L^2}  +Q_1+Q_2+Q_3+Q_4.
\end{aligned}\end{equation} 
Due to the duality $\langle \cdot, \cdot\rangle_{{ H_0^1(\Omega)}\times { H^{-1}(\Omega)}}$, we derive 
\begin{equation}\begin{aligned}\nonumber
Q_1:&= - \omt  \rho(\ell_{t}+\mathbf{u}\cdot\n \ell)\cdot Z dxds\\
& \lesssim \| \rho  \|_{L_t^\infty(H^2)}[\|\ell_{t}\|_{L_t^2(H^{-1})} \|Z\|_{L_t^2(H^{1}_0)} +\|\mathbf{u}\|_{L_t^2(L^\infty)}\|\n \ell\|_{L_t^2(L^2)}\|Z\|_{L_t^\infty(L^2)}  ]\\ 
& \leq \delta  \|Z\|_{L_t^2(H^{1}_0)}^2+ C_\delta \| \rho  \|_{L_t^\infty(H^{2})}^2 \|\ell_{t}\|_{L_t^2(H^{-1})}^2\\
&\q + Ct^{\frac{1}{8}} \| \rho  \|_{L_t^\infty(H^2)} \|\mathbf{u}\|_{L_t^2(H^2)}^{\frac{3}{4}}\|\mathbf{u}\|_{L_t^\infty(H^1)}^{\frac{1}{4}}\|\n \ell\|_{L_t^2(L^2)}\|Z\|_{L_t^\infty(L^2)}.
\end{aligned}\end{equation} 
Applying integration by parts and  $Z|_{\p\Omega_t}=0$,  we deduce  
\begin{equation}\begin{aligned}\nonumber
Q_2:&=\mu\omt  \Delta\ell \cdot Z dxds+
\beta\omt\n\Delta {\theta}_t\cdot  Z  dxds\\
&=-\mu\omt  \n\ell \cdot \n Z dxds-\beta\omt\Delta {\theta}_t\dv  Z  dxds \\
&\leq \delta  \| Z\|_{L_t^2(H^1)}^2+C_\delta (  \|\n \ell \|_{L_t^2(L^2)}^2 +\|  \theta_t\|^2_{L^2_t(H^2)} ).
\end{aligned}\end{equation}  
By use of Lemmas  \ref{lem2.4} and \ref{lem2.5}, we have 
\be\begin{aligned}\label{ell}
& \|\ell\|^2_{L_t^2(H^1)}+\|\ell_t\|^2_{L_t^2(H^{-1})}   \\ 
&\leq C(\kappa, \Omega) [ \|\n_\tau \theta_t\|^2_{L^2_t(H^{ 1/2}(\partial\Omega))} +\|\n_\tau \theta_t\|^2_{H^{1/ 2}_t(H^{- 1 /2}(\partial\Omega))} ]\\
 &\leq C(\kappa, \Omega) [ \|  \theta_t\|^2_{L^2_t(H^2(\Omega))} +\|  \theta_t\|^2_{H^{1/ 2}_t(H^{1   }(\Omega))} ]\\
&\leq C(\kappa, \Omega) [ \|  \theta_t\|^2_{L^2_t(H^2 )} +\|  \theta_t\|_{H^{1}_t(L^{2} )}\|  \theta_t\|_{L^2_t(H^2)} ]\\
&\leq C(\kappa, \Omega) [ \|  \theta_t\|^2_{L^2_t(H^2 )} + \|  \theta_t\|_{L^2_t(H^2)} (\|  \theta_t\|_{L^2_t(L^2)} +\|  \theta_{tt}\|_{L^2_t(L^2)} ) ]\\
&\leq \delta \|  \theta_{tt}\|_{L^2_t(L^2)} ^2+C_\delta C(\kappa, \Omega)\|  \theta_t\|^2_{L^2_t(H^2)}. 
\end{aligned}  
\ee    
Choosing $\delta<1$ sufficiently small 
 in the estimates for $Q_1$ and $Q_2$ and substituting \ef{ell}, we obtain 
\begin{equation}\begin{aligned}\nonumber
Q_1+Q_2 
&\leq \delta  \|  \theta_{tt}\|_{L^2_t(L^2)} ^2+ C_\delta[ (\| \rho  \|_{L_t^\infty(H^{2})}^2+1)\| \rho  \|_{L_t^\infty(H^{2})}^2 \|  \theta_t\|^2_{L^2_t(H^2)}+ C(\kappa, \beta,\Omega)\|  \theta_t\|^2_{L^2_t(H^2)} ]\\  
&\q + Ct^{\frac{1}{8}} \| \rho  \|_{L_t^\infty(H^2)} \|\mathbf{u}\|_{L_t^2(L^2)}^{\frac{3}{4}}\|\mathbf{u}\|_{L_t^\infty(H^1)}^{\frac{1}{4}}\|\n \ell\|_{L_t^2(L^2)}\|Z\|_{L_t^\infty(L^2)}. 
\end{aligned}\ee   
In addition,  the interpolation inequalities and the Young inequality are used to show
\begin{equation}
\begin{aligned}\nonumber
Q_3&:=\omt  f_t \cdot Z dxds \\
&\lesssim 
t^{\frac 1 4}  \|Z\|_{L_t^2(H^1)} ^{\frac 1 2}\|Z\|_{L_t^\infty (L^2)} ^{\frac 1 2} [ \|\rho_t\|_{L_t^2(H^1)} (\|\theta_t\|_{L_t^\infty (H^1)} +\|\mathbf{u}\|_{L_t^\infty (H^1)}\|\theta\|_{L_t^\infty (H^2)} ) \\
&\q+ \|\rho\|_{L_t^\infty (H^2)}  \|\rho^{-1}\|_{L_{x, t}^\infty }(\|\theta_t\|_{L_t^2(H^2)} +\|\mathbf{v}_t\|_{L_t^2(H^1)}+\|\e q_t\|_{L_t^\infty (H^1)} \|\mathbf{u}\|_{L_t^2(H^2)} \\
&\q +\e \|q\|_{L_t^\infty (H^2)} \|\mathbf{u}_t\|_{L_t^2(H^1)}+ \|\e^2\mathbf{u}_t\|_{L_t^2(H^2)}^{\frac 1 2} \|\mathbf{u}_t\|_{L_t^2(H^1)}^{\frac 1 2} \|\e \mathbf{u}\|_{L_t^\infty (H^2)} ) \\
&\q +\|\rho_t\|_{L_t^\infty (L^2)} \|\mathbf{u}\|_{L_t^2 (H^3)} \|\theta\|_{L_t^\infty (H^2)} +\|\rho\|_{L_{x, t}^\infty }(\|\mathbf{u}_t\|_{L_t^2(H^1)}\|\theta\|_{L_t^\infty (H^2)} \\
&\q + \|\mathbf{u}\|_{L_t^2(H^2)}\|\theta_t\|_{L_t^\infty (H^1)}  ) + \|\e q_t\|_{L_t^\infty (H^1)} \|\mathbf{u}\|_{L_t^2(H^2)} + \| q\|_{L_t^\infty (H^2)} (\e \|\mathbf{u}_t\|_{L_t^2(H^1)} \\
&\q +\|\e \dv\mathbf{u}_t\|_{L_t^2(H^1)})]   + \delta ( \|Z\|_{L_t^2(H^{1}_0)}^2+\|Z\|_{L_t^\infty(L^2)}^2+ \| \mathbf{u}\|_{L_t^2(H^3)}^2 )\\
&\q  +C_\delta \|\e ^2\mathbf{u}_t\|_{L_t^2(H^2)} ^4 \|\mathbf{u}\|_{L_t^\infty(H^1)}^2, 
\end{aligned}\end{equation}   
and   
\begin{equation}\begin{aligned}\nonumber
Q_4&:=-\omt  [\rho_t(\mathbf{v}_t+\mathbf{u}\cdot\nabla\mathbf{v}) -\rho\mathbf{u}_t\cdot\nabla\mathbf{v}] \cdot Z dxds\\ 
&\lesssim \|Z\|_{L_t^\infty(L^2)} ^{\frac 1 2}\|Z\|_{L_t^2(H^1)}^{\frac 1 2} 
[\|\rho_t\|_{L_t^2(H^1)}(  t^{\frac 1 8}\|\mathbf{u}\|_{L_t^\infty(H^1)} \| \mathbf{v}\|_{L_t^2(H^3)}^{\frac 1 4} \| \mathbf{v}\|_{L_t^\infty(H^1)}^{\frac 3 4}  \\
&\q +t^{\frac 1 4}\|\mathbf{v}_t\|_{L_t^\infty(L^2)})]  + t^{\frac 1 2} \|\rho\|_{L_{x, t}^\infty}\|Z\|_{L_t^\infty(L^2)}\|\mathbf{u}_t\|_{L_t^\infty(L^2)}\|\mathbf{v}\|_{L_t^2(H^3)}. 
\end{aligned} 
\end{equation}  
Next,  we multiply $\eqref{dtns0}_2$ by $q_t$ and integrate the resulting equality over $\Omega\times(0, t)$ to get 
\begin{equation}\label{low23}\begin{aligned}
& \frac{1}{4} \|q_t \|^2_{L^2}(t)+\frac{1}{\e}\omt q_t \dv\mathbf{v}_t dxds\\
&=\frac{1}{4} \|q_t(0) \|^2_{L^2}+\frac{1}{4}\omt |q_t|^2\dv\mathbf{u}dxds +\omt q_t(g_t-\frac{1}{2}\mathbf{u}_t\cdot\nabla q  )dxds\\
&\leq C_0(M_0)+C t^{\frac 1 2} \|q_t\|_{L_t^\infty(L^2)} (\|q_t\|_{L_t^\infty(L^2)} \|\mathbf{u}\|_{L_t^2(H^3)}  +\|q\|_{L_t^\infty(H^2)} \|\mathbf{u}_t\|_{L_t^2(H^1)} ).
\end{aligned}\end{equation}   
Summing up $\eqref{low22}$ and $\eqref{low23}$ together,   one can eliminate the large terms. Then letting $\delta<1$ be sufficiently small, we conclude that
\begin{equation} \begin{aligned}\label{low26}
&  \|\sqrt{\rho}Z \|^2_{L^2}(t)+  \|q_t \|^2_{L^2}(t)  + \| Z \|^2_{L_t^2(H^1)} \\
&\leq C_0(M_0)+\delta( \| \mathbf{u}\|_{L_t^2(H^3)}^2 +\|  \theta_{tt}\|_{L^2_t(L^2)} ^2)+C_\delta[ (\| \rho  \|_{L_t^\infty(H^{2})}^2+1)\| \rho  \|_{L_t^\infty(H^{2})}^2 \|  \theta_t\|^2_{L^2_t(H^2)}
\\
&\q + C(\kappa, \beta,\Omega)\|  \theta_t\|^2_{L^2_t(H^2)}  +\|\e ^2\mathbf{u}_t\|_{L_t^2(H^2)} ^4 \|\mathbf{u}\|_{L_t^\infty(H^1)}^2]+t^{\frac 1 8}  C_1(\mathcal{E}(t)).
\end{aligned}\end{equation} 
Hence, due to $ \mathbf{u}_t=\mathbf{v}_t+\frac{\kappa}{2}\n\theta_t=Z+\ell+\frac{\kappa}{2}\n\theta_t $, we derive 
 \be\nonumber\begin{aligned}
\|\mathbf{u}_t\|_{L_t^\infty(L^2)}^2+\|\mathbf{u}_t\|_{L_t^2(H^1)}^2
&\leq C( \|Z\|_{L_t^\infty(L^2)}^2+\|Z\|_{L_t^2(H^1)}^2+\|\ell\|_{L_t^\infty(L^2)}^2+\|\ell\|_{L_t^2(H^1)}^2\\
&\q +\|\n\theta_t\|_{L_t^\infty(L^2)}^2+\|\n\theta_t\|_{L_t^2(H^1)}^2),\\
&\leq C_0(M_0){\rm exp}\{(t^{\frac 1 8} + \e^{\frac 1 4} ) C_1(\mathcal{E}(t)) \}+C\delta\|\mathbf{u}_t\|_{L_t^2(H^1)}^2.
\end{aligned} \ee 
by  applying \ef{lemma1}, \ef{uth2}, \ef{lem1t2}, \ef{the1}, \ef{the2} and \ef{ell}. Choosing $\delta$ sufficiently small such that $C\delta<1$ yields \ef{lem2}. 
\hfill$\Box$

\subsection{Uniform estimates of $\|\nabla^2\dv\mathbf{u}\|_{L^2_t(L^2)}$ and $\|\nabla^2\Delta \theta\|_{L^2_t(L^2)}$}
This subsection aims to estimate $\|\mathbf{u}\|^2_{H^3}$ and $\frac{1}{\e}\|\nabla q\|^2_{H^1}$ by the usual estimates for the steady Stokes problem (Chapter \uppercase\expandafter{\romannumeral4} of Galdi's book \cite{Galdi}). We first rewrite the momentum equation $\eqref{ns1}_2$ as an inhomogeneous Stokes system with Dirichlet boundary condition:
\begin{equation}\label{Stokes}\left\{\begin{aligned}
&-\mu\Delta \mathbf{u} +\frac{1}{\e}\nabla q =
-\rho (\mathbf{u} _t+\mathbf{u} \cdot\nabla \mathbf{u} )+(\mu+\lambda)\nabla\dv \mathbf{u} :=f_s, \\
&\dv\mathbf{u}=g, \\
&\mathbf{u}|_{\partial \Omega}=0, 
\end{aligned}\right.\end{equation}
where $f_s$ was treated as a source term. Therefore,  we can obtain the following lemma. 
\begin{lem}   \label{lemSto} 
For any $t\in [0,T]$ and $\e\in (0,1]$,   we have 
\begin{equation}\label{lemStokes}\begin{aligned}
&\|\mathbf{u}\|^2_{L^2_t(H^2)}+ \left\|\frac{\nabla q}{\e} \right\|^2_{L^2_t(L^2)}\leq   \delta \|\mathbf{u}\|^2_{L^2_t(H^3)} + t C_\delta  [ \|\rho\|^2_{L^\infty_{x, t}}(\|\mathbf{u}\|^6_{L_t^\infty(H^1)}+ \|\mathbf{u}_t\|^2_{L^\infty_t(L^2)})+  \|\mathbf{u}\|^2_{L_t^\infty(H^1)} ], \\
&\|\mathbf{u}\|^2_{L^2_t(H^3)}+\left\|\frac{\nabla q}{\e} \right\|^2_{L^2_t(H^1)}
\leq  
t C  [ \|\rho\|^2_{L^\infty_{x, t}}(\|\mathbf{u}\|^6_{L_t^\infty(H^1)}+ \|\mathbf{u}_t\|^2_{L^\infty_t(L^2)})+  \|\mathbf{u}\|^2_{L_t^\infty(H^1)} ]  \\ 
&\q   +C[ \|\rho\|^2_{L_t^\infty(H^2)} (\|\mathbf{u}_t\|^2_{L^2_t(H^1)}  +t^{\frac 1 4}  \|\mathbf{u}\|^{\frac{5}{2}}_{L^\infty_t(H^1)} \|\mathbf{u}\|^{\frac 3 2}_{L_t^2(H^3)})  +\|\n^2\dv \mathbf{u}\|^2 _{L^2_t(L^2)}]. 
\end{aligned}\end{equation} 
\end{lem}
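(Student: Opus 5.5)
We treat \eqref{Stokes} at each fixed time as a steady Stokes system with Dirichlet data and apply the classical regularity theory (Galdi, Chapter IV): for $k=0,1$,
\[
\|\mathbf{u}\|_{H^{k+2}}+\Big\|\frac{\nabla q}{\e}\Big\|_{H^{k}}\le C\big(\|f_s\|_{H^{k}}+\|\dv\mathbf{u}\|_{H^{k+1}}+\|\mathbf{u}\|_{L^2}\big),
\]
and then square and integrate in time. The pressure is determined up to a constant, but only $\nabla q$ enters, so this causes no difficulty. The constants depend only on $\Omega,\mu$, hence not on $\e$.

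For the first inequality ($k=0$), bound $\|f_s\|_{L^2}\le \|\rho\|_{L^\infty}(\|\mathbf{u}_t\|_{L^2}+\|\mathbf{u}\|_{L^6}\|\nabla\mathbf{u}\|_{L^3})+C\|\nabla\dv\mathbf{u}\|_{L^2}$. The convective term is controlled by interpolation, $\|\nabla \mathbf{u}\|_{L^3}\le C\|\mathbf{u}\|_{H^1}^{1/2}\|\mathbf{u}\|_{H^2}^{1/2}$, followed by Young's inequality. This absorbs $\|\mathbf{u}\|_{H^2}$ to the left and leaves $\|\rho\|_{L^\infty}^2\|\mathbf{u}\|_{H^1}^6$, up to lower-order factors. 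The two terms $\|\nabla\dv\mathbf{u}\|_{L^2}$ (from $(\mu+\lambda)\nabla\dv\mathbf{u}$ and from $\|g\|_{H^1}$) are second-order; we interpolate them as $\|\mathbf{u}\|_{H^2}\le C\|\mathbf{u}\|_{H^3}^{1/2}\|\mathbf{u}\|_{H^1}^{1/2}\le \delta\|\mathbf{u}\|_{H^3}+C_\delta\|\mathbf{u}\|_{H^1}$ (Lemma \ref{inter}). Integrating in time and bounding each $L^\infty_t$ quantity by its sup produces the factor $t$, which gives the first estimate.

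For the second inequality ($k=1$), $\|g\|_{H^2}\le C\|\nabla^2\dv\mathbf{u}\|_{L^2}+C\|\mathbf{u}\|_{H^2}$, and $(\mu+\lambda)\|\nabla\dv\mathbf{u}\|_{H^1}$ is bounded the same way. The term $\|\rho\mathbf{u}_t\|_{H^1}$ is bounded by $\|\rho\|_{H^2}\|\mathbf{u}_t\|_{H^1}$ using $H^2\hookrightarrow L^\infty$ and $\nabla\rho\in L^3$. For $\|\rho\,\mathbf{u}\cdot\nabla\mathbf{u}\|_{H^1}$ we use $\|\rho\|_{H^2}(\|\mathbf{u}\|_{L^\infty}\|\mathbf{u}\|_{H^2}+\|\nabla \mathbf{u}\|_{L^4}^2)$ and interpolate between $H^1$ and $H^3$ via Lemma \ref{inter}, so the time integral is bounded by $t^{1/4}\|\mathbf{u}\|^{5/2}_{L^\infty_t(H^1)}\|\mathbf{u}\|^{3/2}_{L^2_t(H^3)}$ after Hölder in time. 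The remaining lower-order term $\|\mathbf{u}\|_{L^2_t(H^2)}^2$ is replaced by the first inequality with $\delta$ chosen small enough that $\delta\|\mathbf{u}\|^2_{L^2_t(H^3)}$ is absorbed on the left.

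The main obstacle is bookkeeping rather than a new idea. The exponents in the convective term must be arranged to produce exactly the stated powers with a positive power of $t$, and the second-order $\nabla\dv\mathbf{u}$ terms must be handled via $\nabla^2\dv\mathbf{u}$ plus absorbable interpolation remainders rather than left as uncontrolled terms. The crucial point is that the singular pressure $\e^{-1}\nabla q$ is treated as the Stokes pressure, so it appears on the left and needs no cancellation.
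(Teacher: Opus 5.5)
Your proposal is correct and follows essentially the same route as the paper. Both arguments apply steady Stokes regularity at orders $k=0,1$ with $\e^{-1}\nabla q$ as the Stokes pressure, and use $\|\mathbf{u}\|_{H^2}^2\le \delta\|\mathbf{u}\|_{H^3}^2+C_\delta\|\mathbf{u}\|_{H^1}^2$ for the second-order $\nabla\dv\mathbf{u}$ contributions. Both obtain $t^{1/4}\|\mathbf{u}\|^{5/2}_{L^\infty_t(H^1)}\|\mathbf{u}\|^{3/2}_{L^2_t(H^3)}$ from the $L^\infty$ and $L^4$ interpolations, and close the second estimate by reusing the first.

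One small point, which the paper shares: absorbing the convective term by Young's inequality actually gives $\|\rho\|^4_{L^\infty_{x,t}}\|\mathbf{u}\|^6_{L^\infty_t(H^1)}$, not $\|\rho\|^2_{L^\infty_{x,t}}\|\mathbf{u}\|^6_{L^\infty_t(H^1)}$. Your phrase ``up to lower-order factors'' hides this. It is harmless, since such factors are only ever bounded by $C_1(\mathcal{E}(t))$.
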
 
\textbf{Proof}. We can derive the following estimates by employing the standard estimates of the steady Stokes problem and the interpolation inequalities 
\begin{equation}\label{uh2}\begin{aligned}
&\|\mathbf{u}\|^2_{L^2_t(H^2)}+\left\|\frac{\nabla q}{\e} \right\|^2_{L^2_t(L^2)}
\leq C(\|f_s\|^2_{L^2_t(L^2)}+\|g\|^2_{L^2_t(H^1)})\\
&\leq C[\|\rho\|^2_{L^\infty_{x, t}}(t\|\mathbf{u}_t\|^2_{L^\infty_t(L^2)}  +t^{\frac 1 2}\|\mathbf{u}\|^3_{L^\infty_t(H^1)} \|\mathbf{u}\|_{L^2_t(H^2)}) +\|\dv \mathbf{u}\|^2 _{L^2_t(H^1)}]\\  
&\leq \delta \|\mathbf{u}\|^2_{L^2_t(H^3)} + t C_\delta  [ \|\rho\|^2_{L^\infty_{x, t}}(\|\mathbf{u}\|^6_{L_t^\infty(H^1)}+ \|\mathbf{u}_t\|^2_{L^\infty_t(L^2)})+  \|\mathbf{u}\|^2_{L_t^\infty(H^1)} ]. 
\end{aligned}\end{equation} 
Similarly,  by use of \ef{uh2},  one gets 
\begin{equation}\begin{aligned}
&\|\mathbf{u}\|^2_{L^2_t(H^3)}+\left\|\frac{\nabla q}{\e} \right\|^2_{L^2_t(H^1)} 
\leq C(\|f_s\|^2_{L^2_t(H^1)}+\|g\|^2_{L^2_t(H^2)})\\ 
& \leq C[\|\rho\|^2_{L_t^\infty(H^2)}(\|\mathbf{u}_t\|^2_{L^2_t(H^1)}  +t^{\frac 1 4}  \|\mathbf{u}\|^{\frac{5}{2}}_{L^\infty_t(H^1)} \|\mathbf{u}\|^{\frac 3 2}_{L_t^2(H^3)})  +\|\dv \mathbf{u}\|^2 _{L^2_t(H^2)}]\\ 
& \leq \delta \|\mathbf{u}\|^2_{L_t^2(H^3)} + 
t C_\delta  [ \|\rho\|^2_{L^\infty_{x, t}}(\|\mathbf{u}\|^6_{L_t^\infty(H^1)}+ \|\mathbf{u}_t\|^2_{L^\infty_t(L^2)})+  \|\mathbf{u}\|^2_{L_t^\infty(H^1)} ]  \\
&\q   + C[\|\rho\|^2_{L_t^\infty(H^2)}(\|\mathbf{u}_t\|^2_{L^2_t(H^1)}  +t^{\frac 1 4}  \|\mathbf{u}\|^{\frac{5}{2}}_{L^\infty_t(H^1)} \|\mathbf{u}\|^{\frac 3 2}_{L_t^2(H^3)})  +\|\n^2\dv \mathbf{u}\|^2 _{L^2_t(L^2)}].  
\end{aligned}\end{equation}  
Choosing $\delta $   sufficiently small,  we obtain \eqref{lemStokes} immediately.
\hfill$\Box$

\vskip3mm

In the spirit of the ideas in  \cite{Valli1983, Valli1986} by Valli and Zajaczkowski,  we establish the estimates of $\|\nabla^2\dv\mathbf{u}\|_{L^2_t(L^2)}$ and $\|\nabla^2\Delta \theta\|_{L^2_t(L^2)}$
by dividing them into  the interior part and the boundary part. The interior estimates follow as the whole space case. For the boundary estimates, we apply the isothermal coordinates   near the boundary $\p\Omega$.  
 \subsubsection{Interior estimate}
\begin{lem}\label{lemint}
For any $\e\in(0, 1]$ and $t\in [0, T]$,  the following estimates hold
\begin{equation}\label{lemin}\begin{aligned}
&\frac{1}{2} \|\chi_0 \partial_{jk}\mathbf{v}\|^2_{L_t^\infty (L^2)} +\frac{1}{4} \|\chi_0\partial_{jk}q\|^2_{L_t^\infty (L^2)} +\mu\|\chi_0\partial_{jk}\nabla\mathbf{v}\|^2_{L_t^2(L^2)} +(\mu+\lambda)\|\chi_0\partial_{jk}\dv\mathbf{v}\|^2_{L_t^2(L^2)} \\ 
&\leq C_0(M_0) {\rm exp }\{(t^{\frac{1}{16}}+\e ) C_1(\mathcal{E}(t))\},  
\end{aligned}\end{equation} 
where $\chi_0\in C_0^\infty(\Omega)$ is a cut-off function. 
\end{lem}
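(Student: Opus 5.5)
The plan is an $L^2$ energy estimate for second derivatives of the $(\mathbf{v},q)$ subsystem $\eqref{ns0}_{2,3}$, localized away from the boundary. Because $\chi_0$ has compact support, no boundary terms arise and every integration by parts is free. For fixed $j,k$, apply $\partial_{jk}$ to $\eqref{ns0}_2$ and $\eqref{ns0}_3$. Test the momentum equation with $\chi_0^2\partial_{jk}\mathbf{v}$ and the pressure equation with $\chi_0^2\partial_{jk}q$, integrate over $\Omega\times(0,t)$, and add the two identities.

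The singular terms cancel up to commutators. Integrating by parts,
\[
\frac1\e\iint\chi_0^2\partial_{jk}\nabla q\cdot\partial_{jk}\mathbf{v}
+\frac1\e\iint\chi_0^2\partial_{jk}q\,\partial_{jk}\dv\mathbf{v}
=-\frac1\e\iint 2\chi_0\nabla\chi_0\cdot\partial_{jk}\mathbf{v}\,\partial_{jk}q .
\]
The remainder is not small, and I plan to handle it with the pressure bound. Rewrite it as $\iint \chi_0\nabla\chi_0\,\partial_{jk}\mathbf{v}\cdot(\partial_{jk}q/\e)$. Since $\|\nabla q/\e\|_{L^2_t(H^1)}$ is part of $\mathcal{E}$ through Lemma \ref{lemSto}, and $\|\partial_{jk}\mathbf{v}\|_{L^2_t(L^2)}\le t^{1/2}\|\mathbf{v}\|_{L^\infty_t(H^2)}$, this term is bounded by $t^{1/2}C_1(\mathcal{E})$.

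The viscous terms give
\[
\mu\|\chi_0\partial_{jk}\nabla\mathbf{v}\|^2+\xi\|\chi_0\partial_{jk}\dv\mathbf{v}\|^2 ,
\]
plus cutoff commutators that are absorbed by Young's inequality. Since $\xi\ge\mu+\lambda$, this produces the dissipation on the left of \eqref{lemin}.

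The third-order coupling is the delicate term:
\[
-\beta\iint\chi_0^2\partial_{jk}\nabla\Delta\theta\cdot\partial_{jk}\mathbf{v}
=\beta\iint\chi_0^2\partial_{jk}\Delta\theta\,\partial_{jk}\dv\mathbf{v}+\text{comm.}
\]
This is bounded by $\delta\|\chi_0\partial_{jk}\dv\mathbf{v}\|^2_{L^2_t(L^2)}+C_\delta\|\theta\|^2_{L^2_t(H^4)}$, which is only a bounded quantity, not a small one. To make it small, I would substitute $\Delta\theta$ using $\eqref{ns0}_4$:
\[
\tfrac\kappa2\Delta\theta=\rho(\theta_t+\mathbf{u}\cdot\nabla\theta)+\dv\mathbf{v}-h .
\]
The $\dv\mathbf{v}$ part then contributes $\frac{2\beta}{\kappa}\|\chi_0\partial_{jk}\dv\mathbf{v}\|^2$ with a favorable sign, since $\beta>0$; this is where the hypothesis $\beta>0$ is used. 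The remaining parts involve $\partial_{jk}\theta_t$, which is controlled in $L^2_t(L^2)$ by Remark \ref{rmk} and Lemma \ref{lemut}, and lower-order products. Their pairing with $\partial_{jk}\dv\mathbf{v}$ is handled by Young's inequality. The resulting $\|\theta_t\|^2_{L^2_t(H^2)}$ is already bounded by $C_0(M_0)\exp\{(t^{1/8}+\e^{1/4})C_1\}$.

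Next, the transport terms $\rho\mathbf{u}\cdot\nabla\partial_{jk}\mathbf{v}$ and $\mathbf{u}\cdot\nabla\partial_{jk}q$ are integrated by parts, giving $\dv(\rho\mathbf{u})$ or $\dv\mathbf{u}$ times squares. The commutator terms are of the form $\partial^2\rho\,\mathbf{v}_t$, $\partial\mathbf{u}\,\partial^2\mathbf{v}$, and $\partial^2\mathbf{u}\,\partial q$, together with $f$, $g$, $h$. These are estimated with Lemma \ref{inter}, Hölder in time, and the bounds of Lemmas \ref{lemrho}, \ref{lemut}, \ref{leme2t}, giving $t^{1/16}C_1(\mathcal{E})$. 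The small exponent comes from terms like $\|\nabla^2\mathbf{u}\|_{L^4}\le C\|\mathbf{u}\|_{H^3}^{7/8}\|\mathbf{u}\|_{H^1}^{1/8}$ integrated in time. Terms carrying an explicit factor $\e$, from $g$ and $h$, give $\e C_1$.

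The term $\rho\,\partial_{jk}\mathbf{v}_t\cdot\partial_{jk}\mathbf{v}$ yields $\frac12\frac{d}{dt}\|\sqrt\rho\chi_0\partial_{jk}\mathbf{v}\|^2$ minus $\rho_t$ terms. Finally, $\rho^{-1}$ and $\rho$ are bounded by Lemma \ref{lemrho}, so the weight can be removed, and the initial data are bounded by $C_0(M_0)$.

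I expect the main obstacle to be the $\beta\nabla\Delta\theta$ coupling together with the singular commutator $\e^{-1}\nabla\chi_0\,\partial^2\mathbf{v}\,\partial^2 q$. Both must be shown to be small in $t$ rather than merely bounded. If the substitution via $\eqref{ns0}_4$ leaves an unfavorable $\partial_{jk}\Delta\theta$ term, the fallback is to add $\beta\times$ the $\partial_{jk}\nabla$-energy estimate of $\eqref{ns0}_4$, mimicking \eqref{s3}, which produces $\beta\|\chi_0\partial_{jk}\Delta\theta\|^2$ and cancels the cross term exactly.
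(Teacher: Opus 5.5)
Your treatment of the acoustic part is the same as the paper's. You test $\partial_{jk}\eqref{ns0}_{2,3}$ against $\chi_0^2\partial_{jk}\mathbf{v}$ and $\chi_0^2\partial_{jk}q$, cancel the $\e^{-1}$ terms up to the cutoff commutator, and bound that commutator by $\|\nabla^2 q/\e\|_{L^2_t(L^2)}$ via Lemma \ref{lemSto}.

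You differ in the $\beta$-coupling. The paper does exactly what you list as the fallback. It adds $\partial_{jk}\eqref{ns0}_4$ tested against $-\beta\chi_0^2\partial_{jk}\Delta\theta$, which has three effects:
\begin{itemize}
\item it cancels $\beta\int\chi_0^2\partial_{jk}\Delta\theta\,\partial_{jk}\dv\mathbf{v}\,dx$ up to a cutoff commutator;
\item it turns the $\rho\,\partial_{jk}\theta_t$ contribution into the energy $\frac{\beta}{2}\frac{d}{dt}\int\chi_0^2\rho|\partial_{jk}\nabla\theta|^2dx$;
\item it produces the dissipation $\frac{\beta\kappa}{2}\|\chi_0\partial_{jk}\Delta\theta\|^2_{L^2}$.
\end{itemize}
This is where $\beta>0$ is needed. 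All remaining terms are then small in $t$ or carry $\e$ (through \eqref{fh1}). The argument uses only $\mathcal{E}$, Lemma \ref{lemrho} and the initial data, and it yields exactly the factor $(t^{1/16}+\e)$.

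Your substitution of $\Delta\theta$ from $\eqref{ns0}_4$ is also legitimate. At this step it does not even need $\beta>0$, because the total coefficient of $\|\chi_0\partial_{jk}\dv\mathbf{v}\|^2$ becomes $\xi+\frac{2\beta}{\kappa}=3\mu+2\lambda\geq\frac{5}{3}\mu>0$.

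Its price is the pairing $\frac{2\beta}{\kappa}\iint\chi_0^2\rho\,\partial_{jk}\theta_t\,\partial_{jk}\dv\mathbf{v}\,dxds$, which is of order one. After Young's inequality it leaves $C\|\rho\|^2_{L^\infty_t(H^2)}\|\theta_t\|^2_{L^2_t(H^2)}$ with no small prefactor. You can control this only through Remark \ref{rmk} and Lemma \ref{lemut}, whose bounds carry $t^{1/8}+\e^{1/4}$. So your route closes with $C_0(M_0)\exp\{(t^{1/16}+\e^{1/4})C_1(\mathcal{E}(t))\}$. That is enough for Proposition \ref{lemprop}, but it is weaker than the stated $(t^{1/16}+\e)$. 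To obtain the lemma as written, use your fallback.

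Two smaller fixes:
\begin{itemize}
\item $\|\mathbf{v}\|_{L^\infty_t(H^2)}$ is not a component of $\mathcal{E}$, which contains only $\mathbf{u}\in L^\infty_t(H^1)$ and $\theta\in L^\infty_t(H^2)$. Either justify it by time interpolation, or use $\|\mathbf{v}\|_{L^2_t(H^2)}\leq t^{1/4}\|\mathbf{v}\|^{1/2}_{L^\infty_t(H^1)}\|\mathbf{v}\|^{1/2}_{L^2_t(H^3)}$ as the paper does.
\item The $f$-term must be integrated by parts once, down to $\|\mathbf{v}\|_{H^3}\|f\|_{H^1}$. Otherwise $\partial_{jk}f$ involves $\nabla^3\rho$ and $\e\nabla^3 q$, which are not controlled.
\end{itemize}
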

\textbf{Proof}. Applying $\partial_{jk}$ to $\eqref{ns0}_2$,  multiplying by $\chi_0^2\partial_{jk}\mathbf{v}$ and integrating the resulting equation in $\Omega$ to derive
\begin{equation}\label{lemin1}\begin{aligned}
& \frac{1}{2}\frac{d}{dt}\int_\Omega  \chi_0^2 \rho|\partial_{jk}\mathbf{v}|^2 dx +\mu\int_\Omega  \chi_0^2 |\partial_{jk}\nabla\mathbf{v}|^2  dx +\xi \int_\Omega \chi_0|\partial_{jk}\dv\mathbf{v}|^2 dx\\
&+\frac{1}{\e} \int_{\Omega}\partial_{jk}\nabla q\cdot \chi_0^2\partial_{jk}\mathbf{v}dx-\beta \int_\Omega 
 \partial_{jk}\n \Delta \theta \cdot  \chi_0^2\partial_{jk}\mathbf{v} dx
=\sum_{i=1}^4 I_i, 
\end{aligned}\end{equation} 
where $I_1$ through $I_4$ are controlled by interpolation inequalities and Sobolev inequalities,  
\begin{equation*} 
\begin{aligned} 
I_1:=\frac{1}{2} \int_\Omega  \chi_0^2 \rho_t |\partial_{jk}\mathbf{v}|^2 dx\leq C  \|\rho_t\| _{L^2}  \|\mathbf{v}\|_{H^3}^{\frac 7 4} \|\mathbf{v}\|_{H^3}^{\frac 1 4}, 
\end{aligned} 
\end{equation*} 
\begin{equation*} 
\begin{aligned} 
I_2&:= -  \int_\Omega [\p_j \rho \p_k (\mathbf {v}_t+\mathbf{u}\cdot\n\mathbf {v}) +\p_k \rho \p_j (\mathbf {v}_t+\mathbf{u}\cdot\n\mathbf {v}) \\
&\q +\rho \p_{jk}( \mathbf{u}\cdot \n \mathbf{v}) + \p_{jk}\rho ( \mathbf{u}\cdot \n \mathbf{v})
]\chi_0^2 \partial_{jk}\mathbf{v}  dx\\
&\leq  C \int_\Omega  \{ |\n \rho |(| \n \mathbf{v}_t|+|\n \mathbf{u}\| \n \mathbf{v}|+ | \mathbf{u}\| \n^2 \mathbf{v}|)+|\rho |( |\n^2 \mathbf{u}\| \n \mathbf{v}|\\
&\quad +|\n \mathbf{u}\| \n^2 \mathbf{v}| +|\mathbf{u}\| \n^3 \mathbf{v}| ) +|\n^2\rho \|\mathbf{u}\| \n \mathbf{v}|\} | \n^2 \mathbf{v}| dx \\ 
&\leq C \|\rho\|_{H^2}\| \mathbf{v} \|_{H^1}^{\frac 1 4}\| \mathbf{v} \|_{H^3}^{\frac 3 4} ( \| \mathbf{v}_t \|_{H^1} + \| \mathbf{v} \|_{H^1}^{\frac 1 2}\| \mathbf{v} \|_{H^3}^{\frac 1 2}\| \mathbf{u} \|_{H^1}^{\frac 1 2}\| \mathbf{u} \|_{H^3}^{\frac 1 2}\\
&\q + \|\mathbf{u}\|_{H^3} \|\mathbf{v}\|_{H^1}+\|\mathbf{u}\|_{H^1}\|\mathbf{v}\|_{H^3} ), 
\end{aligned} 
\end{equation*}  
\begin{equation*}  
\begin{aligned}  
I_3 :&= -2\int_\Omega \chi_0 [\mu \p_{jk }  \n\mathbf{v}:(\n \chi_0 \p_{jk }  \mathbf{v}) +\xi \p_{jk }  \dv \mathbf{v} \n \chi_0\cdot \p_{jk } \mathbf{v} ] dx\leq C\| \mathbf{v}\|_{H^3}^{\frac  3 2}\|\mathbf{v}\|_{H^1}^{\frac  1 2},  
\end{aligned} 
\end{equation*}  
\begin{equation*} 
\begin{aligned}  
I_4  : &= \int_\Omega   \chi_0^ 2 \p_{jk } f \p_{jk }  \mathbf{v} dx=  -\int_\Omega \p_{j } f \p_k(\chi_0^ 2\p_{jk }  \mathbf{v} )dx\leq C \|\mathbf{v}\|_{H^3}\|f\|_{H^1}, 
\end{aligned} 
\end{equation*} 
and
\begin{equation}\label{fh1}\begin{aligned}
\| f\|_{L_t^2(H^1)} &\lesssim \|\rho\|_{L_t^\infty(H^2)}[ t^{\frac{1}{8}}\|\theta_t\|_{L_t^\infty(H^1)}^{\frac 1 4} \|\theta_t\|_{L_t^2(H^2)}^{\frac 3 4}  + t^{\frac{1}{4}}\|\theta_t\|_{L_t^\infty(H^1)}^{\frac 1 2} \|\theta_t\|_{L_t^2(H^2)}^{\frac 1 2}  \\ 
&
+ t^{\frac{1}{8}} \| \mathbf{u}\|^{\frac 3 8}_{L_t^2(H^3)}\| \mathbf{u}\|^{\frac 5 8}_{L_t^\infty(H^1)}\| \theta\|^{\frac 3 8}_{L_t^2(H^4)}\| \theta\|^{\frac 5 8}_{L_t^\infty(H^2)}+\e \| q\| _{L_t^\infty(H^2)}\| \mathbf{u}\|_{L_t^2(H^3)} 
\\ 
& + t^{\frac {1} {16}}\| \mathbf{u}\|_{L_t^\infty(H^1)}\| \theta\|^{\frac 7 8}_{L_t^2(H^4)}\| \theta\|^{\frac 1 8}_{L_t^\infty(H^2)} + t^{\frac{1}{8}} \| \mathbf{u}\|^{\frac 3 4}_{L_t^2(H^3)}\| \mathbf{u}\|^{\frac 1 4}_{L_t^\infty(H^1)} \| \theta\| _{L_t^\infty(H^2)}  \\
& +t^{\frac {1} {16}} \|\e^2 \mathbf{u}\|_{L_t^\infty(H^3)}\| \mathbf{u}\|_{L_t^2(H^3)}^{\frac 7 8} \| \mathbf{u}\|_{L_t^\infty(H^1)}^{\frac 1 8} ] \\
&\leq  (t^{\frac{1}{16}}+\e )C_1(\mathcal{E}(t)). 
\end{aligned}\end{equation} 
Next,  we integrate the inner product of $\partial_{jk}\eqref{ns0}_3$ and $\chi_0^2\partial_{jk}q$ in $\Omega$ to get
\begin{equation}\label{lemin2}
\begin{aligned}
&\frac{1}{4}\frac{d}{dt}\int_\Omega \chi_0|\partial_{jk}q|^2 dx
+\frac{1}{\e} \int_{\Omega}\partial_{jk} \dv\mathbf{v} \chi_0^2\partial_{jk}q dx
\\
&= {\frac 1 4} \int_\Omega  \dv (\chi_0^2 \mathbf{u}) |\partial_{jk}q|^2 dx   -\frac{1}{2}  \om \chi_0^2 \partial_{jk}q(  [\partial_{jk},  \mathbf{u}\cdot\nabla] q   +\partial_{jk} g  ) dx  \\
&\leq C(\|\mathbf{u}\|_{H^3}\|q\|_{H^2}^2  +\e \|q\|_{H^2} \|\mathbf{u}\|_{H^3}^2) . 
\end{aligned}
\end{equation} 
Then we take the inner product of $\partial_{jk}\eqref{ns0}_4$ and $-\beta \chi_0^2\partial_{jk}\Delta \theta $ to obtain
\begin{equation}\label{lemin3}\begin{aligned}
& \frac{\beta}{2} \frac{d}{dt}\int_\Omega \chi_0^2\rho| \partial_{jk}\n \theta |^2 dx +\frac{\kappa\beta}{2} \int_\Omega \chi_0^2 | \partial_{jk}\Delta \theta |^2 dx -\beta \int_\Omega 
\chi_0^2 \partial_{jk}\dv\mathbf{v} \partial_{jk}\Delta \theta dx =\sum_{i=5}^7 I_i, 
\end{aligned}\end{equation}
where $I_5$,  $I_6$ and $I_7$ are estimated in sequence, 
\begin{equation*} 
\begin{aligned} 
I_5: &=\frac{\beta }{2} \int_\Omega  \chi_0^2 \rho_t |\partial_{jk}\n\theta |^2 dx-\beta  \int_\Omega \n( \chi_0^2 \rho) \partial_{jk}\theta_t \cdot \partial_{jk}\n\theta dx\\
& \leq \delta \|\chi_0\partial_{jk}\n\theta\|_{L^2}^2+C_\delta \|\rho_t\| _{L^2}  \|\rho_t\| _{H^1} \|\theta\|_4^2 +C \|\rho\|_2\|\theta_t\|_2\|\theta\|_{H^2}^{\frac 1 2}\|\theta\|_{H^4}^{\frac 1 2},  
\end{aligned} 
\end{equation*}  
\begin{equation*} 
\begin{aligned} 
I_6 &= \beta \int_\Omega \chi_0^2 \partial_{jk}\Delta \theta [ \p_{jk }\rho (\theta_t+\mathbf{u}\cdot \n \theta )+ \p_{j }\rho \p_k(\theta_t+\mathbf{u}\cdot \n \theta )  \\
&\quad +\p_{k}\rho \p_j(\theta_t+\mathbf{u}\cdot \n \theta )+\rho \partial_{jk}(\mathbf{u}\cdot \n \theta)  ] dx \\
&\leq C \|\theta\|_{H^4}\|\rho\|_{H^2} [ \|\theta_t\|_{L^\infty}+ \| \mathbf{u}\|_{L^\infty} \|\n \theta\|_{L^\infty} +\|\theta_t\|_{L^3}+ \|\n\mathbf{u}\|_{L^6}  \|\n\theta\|_{L^6} \\
&\q +\|\mathbf{u}\|_{L^6} \|\n^2 \theta\|_{L^6} +\|\n\mathbf{u}\|_{L^3} \|\n \theta\|_{L^6 } + \|\mathbf{u}\|_{L^6} \|\n^3\theta\|_{L^3} +\|\n^2\mathbf{u}\|_{L^2} \|\n \theta\|_{L^\infty} 
]
\\ 
&\leq C \|\theta\|_{H^4}\|\rho\|_{H^2}[\|\theta_t\|_{H^1}^{\frac 1 4}  \|\theta_t\|_{H^2}^{\frac 3 4}+\|\theta_t\|_{H^1}^{\frac 1 2}  \|\theta_t\|_{H^2}^{\frac 1 2}+\|\theta\|_{H^2}^{\frac 5 8} \|\theta\|_{H^4}^{\frac 3 8} (\| \mathbf{u}\|_{H^1}^{\frac 5 8} \| \mathbf{u}\|_{H^3}^{\frac 3 8} \\ 
&\q  + \|\mathbf{u}\|_{H^3}^{\frac 1 2}\|\mathbf{u}\|_{H^1}^{\frac 1 2} ) +\|\theta\|_{H^2} (\|\mathbf{u}\|_{H^3}^{\frac 1 2}\|\mathbf{u}\|_{H^1}^{\frac 1 2}+ \|\mathbf{u}\|_{H^3}^{\frac 1 4}\|\mathbf{u}\|_{H^1}^{\frac 3 4}  ) \\
&\q +\|\mathbf{u}\|_{H^1} (\|\theta\|_{H^2} ^{\frac 1 2}\|\theta\|_{H^4}^{\frac 1 2}+\|\theta\|_{H^2} ^{\frac 1 4}\|\theta\|_{H^4}^{\frac 3 4} )], 
\end{aligned} 
\end{equation*} 
\begin{equation*} 
\begin{aligned} 
I_7 &= -\beta \int_\Omega \chi_0^2 \partial_{jk}\Delta \theta \partial_{jk} h dx
\leq  C\|\theta\|_{H^4} \|h\|_{H^2}.
\end{aligned}
\end{equation*} 
Combining \eqref{lemin1}, \eqref{lemin2}, and \eqref{lemin3}, we deal with the singular and higher-order terms on the left-hand side as follows:  
\begin{equation*} 
\begin{aligned} 
&\frac{1}{\e} \int_{\Omega}\partial_{jk}\nabla q\cdot \chi_0^2\partial_{jk}\mathbf{v}dx + \frac{1}{\e} \int_{\Omega}\partial_{jk} \dv\mathbf{v} \chi_0^2\partial_{jk}q dx\\
&= -\frac{1}{\e} \int_{\Omega}\partial_{jk} q\cdot \n(\chi_0^2)\partial_{jk}\mathbf{v}dx\leq C \|\mathbf{v}\|_{H^3}^{\frac 1 2}\|\mathbf{v}\|_{H^1}^{\frac 1 2} \left\|\frac{\n^2 q}{\e}\right\|_{L^2} , 
\end{aligned} 
\end{equation*} 
and
\begin{equation*} 
\begin{aligned} 
&-\beta \int_\Omega 
 \partial_{jk}\n \Delta \theta \cdot  \chi_0^2\partial_{jk}\mathbf{v} dx-\beta \int_\Omega 
\chi_0^2 \partial_{jk}\dv\mathbf{v} \partial_{jk}\Delta \theta dx \\
&= \beta \int_\Omega 
 \partial_{jk} \Delta \theta \cdot \n( \chi_0^2)\partial_{jk}\mathbf{v} dx \leq C \|\mathbf{v}\|_{H^3}^{\frac 1 2}\|\mathbf{v}\|_{H^1}^{\frac 1 2} \|\theta\|_{H^4} .
\end{aligned}
\end{equation*} 
 Thus,  integrating the equalities  \eqref{lemin1}, \eqref{lemin2}, and  \eqref{lemin3} on $[0, t]$,  utilizing \ef{fh1} and \ef{lemma1},  we conclude \ef{lemin}.
 \hfill$\Box$

\subsubsection{Boundary estimate} 
Define the unknowns in local coordinates as 
$$U(t, y):=\mathbf{u} (t, \Lambda(y)), \q
V(t, y):=\mathbf{v} (t, \Lambda(y)), \q
Q(t, y):=q(t,  \Lambda(y)), $$
$$ K(t, y):=\rho(t, \Lambda(y)),  \q H (t, y):=\theta(t, \Lambda(y)).$$
We rewrite system $\eqref{ns0}_{2, 3, 4}$ in $[0, T]\times \widetilde{\Omega}$ as

\begin{equation}\label{ns2}\left\{\begin{aligned}
&K(V^i_{t}+U^j a_{kj}D_k V^i ) +\frac{1}{\e}a_{ki}D_k Q
-\mu a_{kj}D_k(a_{lj}D_l V^i )\\
&\q\q -\xi a_{ki}D_k(a_{lj}D_l V^j ) -\beta a_{ki}D_k (a_{pj}D_p (a_{lj}D_l H )) =\mathcal{F}, \\
&\frac{1}{2}(Q_{t}+U^j a_{kj}D_k Q )+\frac{1}{\e} a_{kj}D_k V^j=\mathcal{G},  \\ 
&K (H_{t}+U^j a_{kj}D_k H )-\frac{\kappa}{2} a_{kj}D_k(a_{lj}D_l H)+a_{kj}D_k V^j=\mathcal{H},
\end{aligned}\right.\end{equation}
where  $\mathcal{F}, \mathcal{G}, $ and $\mathcal{H}$ are defined as 
$$  
\mathcal{F}:= \frac{\kappa}{2}\left [a_{ki}D_k K (H_t+U^j a_{lj}D_l H  ) +K a_{ki}D_k U^j a_{lj}D_l H-a_{ki}D_k \mathcal{H} \right ], 
$$ 
$$  
\mathcal{G}:= -Q a_{kj}D_k U^j +\frac{\e}{2} \left [\frac{\mu}{2} (a_{ki}D_k U^j+a_{kj}D_k U^i)^2 +\lambda( a_{kj}D_k U^j)^2\right ],  
$$
$$
\mathcal{H}:= -\e Q a_{kj}D_k U^j+\e^2 \left [\frac{\mu}{2} (a_{ki}D_k U^j+a_{kj}D_k U^i)^2 +\lambda( a_{kj}D_k U^j)^2\right ].
$$ 
Moreover,  $(U,  Q,  K,  H)$ satisfies the following initial and boundary value conditions: 
\be \label{boundary2}
\begin{aligned}
&(U,  Q,  K,  H)| _{t=0}
=(U_0,  Q_0,  K_0,  H_0) (y) &{\rm in}~\widetilde{\Omega}, \\ 
& U =0, \quad D_3 H =0  
&{\rm on}~ \partial \widetilde{\Omega}_T.
\end{aligned}
\ee

Now,  we give the estimates of the tangential derivatives.

\begin{lem} \label{lemtan} 
For any  $\e\in(0, 1]$ and $t\in [0, T]$,   the following estimates hold: 
 \be\begin{aligned} \label{uh3tan}
&  \frac{1}{4}\int_{\widetilde{\Omega}} J\chi^2|D_{\xi\tau}Q|^2dy
+\frac{1}{4}  \int_{\widetilde{\Omega}} J\chi^2  |D_{\xi\tau} U |^2 dy 
+ \frac{1}{2}(\frac{\xi\kappa}{4}+\frac{\beta}{2})
\int_{\wt{\Omega}}J\chi^2  |a_{kj}D_{k\xi\tau}  H|^2dy ,    \\ 
&+  c_1 
\iint_{\widetilde{\Omega}_t } J\chi^2 |D_{y\xi\tau}U|^2  dyds+ c_1
\iint_{\widetilde{\Omega}_t} J\chi^2 |D_{y\xi\tau}V|^2  dyds   \\
& + \frac{\kappa^2}{16}(9\mu+6\lambda-\kappa) \iint_{\wt{\Omega}_t}J\chi^2|D_{\xi\tau} (a_{kj}D_k(a_{lj}D_l H ) )|^2 dy ds 
\\    
&  \leq C_0(M_0) {\rm exp}\{(t^{\frac{1}{16}}+\e^{\frac 1 4}) C_1(\mathcal{E}(t)) \},
\end{aligned} \ee
where the constant $c_1$ depends only on $\mu,\xi$, and $\widetilde{\Omega}$.
\end{lem}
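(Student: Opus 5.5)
We transfer the half-space tangential computation of Section 2 (the combination \eqref{s1}+\eqref{s2}+$(\frac{\xi\kappa}{4}+\frac{\beta}{2})$\eqref{s3}+\eqref{s4}) to the local coordinates \eqref{ns2}, with the weight $J\chi^2$ and tangential derivatives $D_{\xi\tau}$, $\xi,\tau=1,2$.

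\emph{Step 1: the momentum equation.} Apply $D_{\xi\tau}$ to $\eqref{ns2}_1$ and test it with $\frac12 J\chi^2 D_{\xi\tau}U$. This is an admissible test function because $\chi D_\tau D_\xi U=0$ on $\partial\widetilde\Omega$, so integrating the viscous terms by parts produces no boundary terms. Since $V=U-\frac\kappa2 a_{lj}D_lH$, we use the algebraic splittings from Section 2. First, $\frac\mu2 \nabla D_{\xi\tau}U\cdot\nabla D_{\xi\tau}V$ becomes $\frac\mu4|\cdot U|^2+\frac\mu4|\cdot V|^2-\frac{\mu\kappa^2}{16}|D_{\xi\tau}\nabla^2 H|^2$. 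Second, $\frac\xi2 D_{\xi\tau}\dv U\,D_{\xi\tau}\dv V$ is rewritten as a square of $\dv V$ plus a cross term $\frac{\xi\kappa}{4}D_{\xi\tau}\Delta H\,D_{\xi\tau}\dv U$ and a negative multiple of $|D_{\xi\tau}\Delta H|^2$. The key identity $\|D_{\xi\tau}\nabla^2H\|^2=\|D_{\xi\tau}\Delta H\|^2$ holds in the half-space only up to boundary terms that vanish by $\partial_3\theta=0$. In curved coordinates it holds modulo commutator terms: $[D_{\xi\tau},a_{kj}D_k]$ and derivatives of $J\chi^2$ and $a_{ij}$. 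The boundary integrals either vanish by $D_3H=0$ together with $\chi D_{\xi\tau}$ of it, or involve traces of $D_{\xi\tau}D_\zeta H$ of lower order. These lower-order pieces are bounded by $\|\theta\|_{H^3}\|\theta\|_{H^4}$ and, by interpolation, by $t^{1/16}C_1(\mathcal E)$. The $\beta$-term is integrated by parts (again, no boundary term since $D_{\xi\tau}U=0$ there), giving $\frac\beta2 D_{\xi\tau}\Delta H\,D_{\xi\tau}\dv U$. The term $KV_t$ is handled using $V_t=U_t-\frac\kappa2\nabla H_t$: the first part yields $\frac14\frac{d}{dt}\int J\chi^2K|D_{\xi\tau}U|^2$, and the second is bounded by $\delta\|D_{\xi\tau}\dv U\|^2+C_\delta\|\theta_t\|_{H^2}^2$, which is controlled by Remark~\ref{rmk} and Lemma~\ref{lemut}.

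\emph{Step 2: pressure and temperature.} Test $D_{\xi\tau}\eqref{ns2}_2$ with $J\chi^2D_{\xi\tau}Q$. Then test $D_{\xi\tau}\eqref{ns2}_3$ with $-(\frac{\xi\kappa}4+\frac\beta2)J\chi^2 D_{\xi\tau}(a_{kj}D_k(a_{lj}D_lH))$, integrating by parts once in the time term to get $\frac12\frac d{dt}\int J\chi^2 K|a_{kj}D_{k\xi\tau}H|^2$; this is where $D_3H=0$ is used. Finally, test $D_{\xi\tau}\eqref{ns2}_3$ with $-\frac1{2\e}J\chi^2D_{\xi\tau}Q$, as in \eqref{s4}. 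The $\frac1\e$-terms then cancel up to commutators. The contributions $\frac1\e D_{\xi\tau}Q\,D_{\xi\tau}(a_{ki}D_k\cdot)$ pair with $\frac1\e\dv$ after integration by parts, and the $\frac1{2\e}\kappa\Delta H$ part cancels against the $-\frac1{2\e}D_{\xi\tau}Q\,D_{\xi\tau}\dv U$ arising from $\dv V=\dv U-\frac\kappa2\Delta H$. What remains is of the form $\frac1\e D_{\xi\tau}Q\cdot(\text{lower order in }U)$ or $D_{\xi\tau}Q\,D_{\xi\tau}H_t/\e$. These are bounded by $\delta\|\nabla q/\e\|_{H^1}^2$, which is treated via Lemma~\ref{lemSto} after using $\delta$-absorption of $\|\mathbf u\|_{H^3}$, plus $C_\delta\|\theta_t\|_{H^2}^2$. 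The cross terms $D_{\xi\tau}\Delta H\,D_{\xi\tau}\dv U$ cancel exactly with the chosen coefficient $\frac{\xi\kappa}4+\frac\beta2$. The net coefficient of $|D_{\xi\tau}\Delta H|^2$ is $\frac\kappa2(\frac{\xi\kappa}4+\beta-\frac{\mu\kappa}8)=\frac{\kappa^2}{16}(9\mu+6\lambda-\kappa)>0$, which is positive by $\beta>0$. Uniform ellipticity of $\sum_k a_{ki}a_{kj}$ then gives the $c_1$ lower bounds.

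\emph{Step 3: the remaining terms.} The remaining terms are commutators, convection, $\mathcal F,\mathcal G,\mathcal H$, and derivatives of $K$ and $J\chi^2$. We bound them with the Hölder inequality, Lemma~\ref{inter}, and the bounds from Lemmas~\ref{lemrho}--\ref{lemut}, in the same way as in Lemma~\ref{lemint} and \eqref{fh1}. This gives $(t^{1/16}+\e^{1/4})C_1(\mathcal E(t))$ after integrating in time and using Remark~\ref{ptt} for the $\e$-weighted terms.

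\emph{Main obstacle.} The step I expect to be hardest is the curved-coordinate version of $\|D_{\xi\tau}\nabla^2H\|=\|D_{\xi\tau}\Delta H\|$ together with the exact cancellation of the $\frac1\e$-terms. Here the commutator of $D_{\xi\tau}$ with $a_{kj}D_k$ produces terms $\frac1\e D_{\xi\tau}Q\,D_yD_{\tau}U$ that are only one derivative lower. My first attempt would be to integrate by parts tangentially, which is allowed since tangential derivatives produce no boundary terms, and bound the result by $\|\nabla q/\e\|_{H^1}\|\mathbf u\|_{H^2}$. I would then use the first inequality of Lemma~\ref{lemSto} to make this small.
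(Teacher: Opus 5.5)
Your proposal is correct and follows the paper's proof essentially line by line. You use the same four tests: momentum with $\frac12 J\chi^2 D_{\xi\tau}U$, pressure with $J\chi^2D_{\xi\tau}Q$, and temperature with both $-(\frac{\xi\kappa}{4}+\frac{\beta}{2})J\chi^2D_{\xi\tau}(a_{kj}D_k(a_{lj}D_lH))$ and $-\frac{1}{2\e}J\chi^2D_{\xi\tau}Q$. You then apply the splittings \eqref{tan1}--\eqref{tan2} and get the same cancellation of the singular and cross terms, ending with the coefficient $\frac{\kappa^2}{16}(9\mu+6\lambda-\kappa)$.

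Your remark that, in curved coordinates, replacing $|D_{\xi\tau}\nabla^2H|^2$ by $|D_{\xi\tau}\Delta H|^2$ leaves a curvature boundary term of size $\|\theta\|_{H^3}\|\theta\|_{H^4}$ is a point the paper passes over by writing \eqref{tan1} as an exact equality. Your argument handles that term correctly.
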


\noindent \textbf{Proof}. First,  integrating the inner product of $D_{\xi\tau}\eqref{ns2}_2$ and $J\chi^2 D_{\xi\tau }Q$ in $\widetilde{\Omega}$ gives the following estimates
\begin{equation}\label{tanq}
\begin{aligned} 
& \frac{1}{4}\frac{d}{dt}\int_{\widetilde{\Omega}} J\chi^2|D_{\xi\tau}Q|^2dy+\frac{1}{\e}\int_{\widetilde{\Omega}} J\chi^2   D_{\xi\tau}Q D_{\xi\tau}[a_{kj}D_k (U^j-\frac{\kappa}{2} a_{pj}D_p H)] dy\\ 
&=\frac{1}{4} \int_{\widetilde{\Omega}}D_k( J\chi^2 U^j a_{kj}) |D_{\xi\tau}Q |^2 dy -\frac{1}{2} \int_{\widetilde{\Omega}} J\chi^2 D_{\xi\tau}Q [D_{\xi\tau}(U^j a_{kj} )D_k Q\\
&\q +D_{\tau}(U^j a_{kj} )D_{k\xi} Q +D_{\xi}(U^j a_{kj} )D_{k\tau} Q ] 
 dy + \int_{\widetilde{\Omega}} J\chi^2 D_{\xi\tau}Q D_{\xi\tau}\mathcal{G}dy\\
 &\leq C (\| q\|_{H^2}^2 \|\mathbf{u}\|_{H^3}+\e \| q\|_{H^2}\|\mathbf{u}\|_{H^3}^2).
\end{aligned}
\end{equation} 
We take the inner product of $D_{\xi\tau}\eqref{ns2}_1$ and $J\chi^2D_{\xi\tau}U^i$ to get 
\begin{equation}\label{tanu1}
\begin{aligned}
 &\frac 1 2 \frac{d}{dt} \int_{\widetilde{\Omega}} J\chi^2 K |D_{\xi\tau} U |^2 dy 
 +\mu \int_{\widetilde{\Omega}} J\chi^2 a_{kj}D_{k\xi\tau}U^i a_{lj}D_{l\xi\tau}V^i dy\\
 & 
 +\xi \int_{\widetilde{\Omega}} J\chi^2 a_{ki}D_{k\xi\tau}U^i a_{lj}D_{l\xi\tau}V^j dy +\frac{1}{\e}\int_{\widetilde{\Omega}} J\chi^2 D_{\xi\tau}U^i D_{\xi\tau}(a_{ki}D_k Q) dy\\
& -\beta \int_{\widetilde{\Omega}} J\chi^2 D_{\xi\tau}U^i  D_{\xi\tau}[ a_{ki}D_k (a_{pj}D_p (a_{lj}D_l H ))] dy= \sum_{i=1}^{5} \mathcal{O}_i  , 
\end{aligned}\end{equation}
where 
\begin{equation*} 
\begin{aligned} 
\mathcal{O}_1&:= -\mu \int_{\widetilde{\Omega}} J a_{lj}D_{l\xi\tau}V^i a_{kj}D_k(\chi^2)D_{\xi\tau}U^i dy-
 \mu \int_{\widetilde{\Omega}} J\chi^2 D_{\xi\tau}U^i[D_{\xi\tau}a_{kj} D_k(a_{lj} D_l V^i ) \\
 &\q 
 +D_{\tau}a_{kj} D_{k\xi}(a_{lj} D_l V^i ) +D_{\xi}a_{kj} D_{k\tau}(a_{lj} D_l V^i )  +a_{kj}  D_k( D_{\xi\tau}a_{lj} D_l V^i\\
 &\q +D_{\xi}a_{lj} D_{l\tau} V^i +D_{\tau}a_{lj} D_{l\xi} V^i ) ] \\
 &\leq  C\| \mathbf{v}\|_{H^3}\| \mathbf{u}\|_{H^2}, 
\end{aligned} 
\end{equation*}
\begin{equation*}
\begin{aligned}
\mathcal{O}_2&:=  -\xi \int_{\widetilde{\Omega}} J a_{lj}D_{l\xi\tau}V^j  a_{ki} D_k(\chi^2)D_{\xi\tau}U^i dy
- \xi \int_{\widetilde{\Omega}} J\chi^2 D_{\xi\tau}U^i [D_{\xi\tau}a_{ki} D_k(a_{lj} D_l V^j ) \\
 &\q  
 +D_{\tau}a_{ki} D_{k\xi}(a_{lj} D_l V^j ) +D_{\xi}a_{ki} D_{k\tau}(a_{lj} D_l V^j )  +a_{ki}  D_k( D_{\xi\tau}a_{lj} D_l V^j\\
 &\q +D_{\xi}a_{lj} D_{l\tau} V^j +D_{\tau}a_{lj} D_{l\xi} V^j ) ]  \\
 &\leq  C\| \mathbf{v}\|_{H^3}\| \mathbf{u}\|_{H^2}, 
\end{aligned} 
\end{equation*} 
\begin{equation*}
\begin{aligned}
\mathcal{O}_3&:= \frac{1}{2}  \int_{\widetilde{\Omega}} J \chi^2 K_t |D_{\xi\tau} U |^2 dy- \int_{\widetilde{\Omega}}  J \chi^2 D_{\xi\tau} U^i [D_{\xi\tau}K U_t^i + D_{\xi}K D_{\tau} U_t^i\\
&\quad + D_{\tau}K D_{\xi} U_t^i  +  \frac{\kappa}{2}  D_{\xi\tau} (K a_{ki}D_k H )]\\
&\leq C \|\rho_t\|_{L^6} \|D_{\xi\tau} U\|_{L^2} \|D_{\xi\tau} U\|_{L^3} +C\|D_{\xi\tau} U\|_{L^3}\|\rho\|_{H^2} ( \| \mathbf{u}_t\|_{H^1} + \|\theta\|_{H^3} ), 
\end{aligned}  
\end{equation*}  
\begin{equation*}
\begin{aligned}
\mathcal{O}_4 &:=  - \int_{\widetilde{\Omega}}  J \chi^2 D_{\xi\tau} U^i  D_{\xi\tau}(KU^ja_{kj}D_k V^i  ) dy
\leq C \|D_{\xi\tau}U\|_{L^2} \|\rho\|_{H^2}
\|\mathbf{u}\|_{H^2}\|\n \mathbf{v}\|_{H^2}, 
\end{aligned} 
\end{equation*} 
\begin{equation*}
\begin{aligned}
\mathcal{O}_5 &:=  - \int_{\widetilde{\Omega}}  J \chi^2 D_{\xi\tau} U^i  D_{\xi\tau}\mathcal{F}dy
=  \int_{\widetilde{\Omega}}  D_\tau (J \chi^2 D_{\xi\tau} U^i ) D_{\xi}\mathcal{F}dy \leq C\|\mathbf{u}\|_{H^3} \|\mathcal{F}\|_{H^1} .
\end{aligned} 
\end{equation*}

An equivalent form of $\eqref{ns2}_3$ is given by
\be\label{theta0}
K (H_{t}+U^j a_{kj}D_k H )- {\kappa} a_{kj}D_k(a_{lj}D_l H)+a_{kj}D_k U^j=\mathcal{H}. 
\ee  
Next,  we multiply $D_{\xi\tau}\eqref{ns2}_3$ by $-J\chi^2 D_{\xi\tau }(a_{kj}D_k(a_{lj}D_l H))$ and integrate  over $\widetilde{\Omega}$ to get 
\begin{equation}\label{tantheta1}\begin{aligned}
&\frac{1}{2}\frac{d} {dt} 
\int_{\wt{\Omega}}J\chi^2 K|a_{kj}D_{k\xi\tau}  H|^2dy+\kappa \int_{\wt{\Omega}}J\chi^2|D_{\xi\tau} (a_{kj}D_k(a_{lj}D_l H ) )|^2 dy\\
&- \int_{\wt{\Omega}}J\chi^2 D_{\xi\tau}  (a_{kj}D_k(a_{lj}D_l H )) D_{\xi\tau}(a_{kj}D_k U^j) dy = \sum_{i=6}^{7} \mathcal{O}_i ,
\end{aligned}\end{equation}
where
\begin{equation*}
\begin{aligned}
\mathcal{O}_6 &:=  \frac{1}{2}\int_{\widetilde{\Omega}} K_t  J \chi^2 |a_{kj} D_{k\xi\tau} H|^2 dy -\int_{\widetilde{\Omega}} D_k( J\chi^2 K a_{kj}  )D_{\xi\tau} H_t a_{lj} D_{l\xi\tau} H dy\\
&\quad + \int_{\widetilde{\Omega}} D_{\xi\tau}(KU^j a_{kj}D_k H ) J\chi^2 D_{\xi\tau} (a_{kj}D_k(a_{lj}D_l H ) ) dy \\
&\leq  C[ \|\rho_t\|_{L^3} \| D_{k\xi\tau}H \|_{L^2} \| D_{k\xi\tau}H \| _{L^6}+ \|\rho\|_{H^2} \|\theta_t\|_{H^2} \|\n^3 \theta\|_{L^3}\\
&\q + \|\rho\|_{H^2}(\|\n^2\mathbf{u}\|_{L^3} \|\n\theta\|_{L^6} + \|\mathbf{u}\|_{L^6} \|\n^3\theta\|_{L^3} + \| \mathbf{u}\|_{L^\infty} \|\n\theta\|_{L^\infty} )  ], 
\end{aligned} 
\end{equation*} 
\begin{equation*}
\begin{aligned}
\mathcal{O}_7 &:=  \int_{\widetilde{\Omega}}  J\chi^2 D_{\xi\tau} (a_{kj}D_k(a_{lj}D_l H ) ) D_{\xi\tau}\mathcal{H} dy \\
&\leq C \| \theta\|_{H^4} ( \e \|q\|_{H^2} \|\mathbf{u}\|_{H^3}+ \e^2  \|\mathbf{u}\|_{H^3} \|\n \mathbf{u}\|_{L^\infty}+ \e^2  \|\mathbf{u}\|_{H^3} \|\n^2 \mathbf{u}\|_{L^3} ).
\end{aligned} 
\end{equation*} 
Taking the inner product of $D_{\xi\tau}\eqref{ns2}_3$ and $-\frac{1}{2\e} J\chi^2 D_{\xi\tau }Q$ in $L^2(\wt{\Omega})$ gives that
\begin{equation}\label{tantheta2}\begin{aligned}
& \frac{\kappa}{2\e}\int_{\wt{\Omega}}D_{\xi\tau}(a_{kj}D_k(a_{lj}D_l H )) J\chi^2 D_{\xi\tau} Q dy
- \frac{1}{2\e}\int_{\wt{\Omega}}D_{\xi\tau}(a_{kj}D_kU^j )J\chi^2 D_{\xi\tau} Q dy
= \sum_{i=8}^{9} \mathcal{O}_i ,
\end{aligned}\end{equation}
where 
\begin{equation*} 
\begin{aligned} 
\mathcal{O}_8 &:= \frac{1}{2\e} \int_{\widetilde{\Omega}}  J\chi^2 D_{\xi\tau }QD_{\xi\tau}[K (H_{t}+U^j a_{kj}D_k H )]dy \\
&\leq C \left\|\frac{\n^2 q}{\e}\right\|_{L^2} [ \|\rho\|_{H^2}(
 \|\theta_t\|_{H^2}+  \|D_{\xi\tau }U\|_{L^2} \|\n\theta\|_{L^\infty} + \| U\|_{L^\infty} \|D_{k\xi\tau }\theta\|_{L^2} 
) ], 
\end{aligned} 
\end{equation*} 
\begin{equation*}
\begin{aligned} 
\mathcal{O}_9 &:= -\frac{1}{2\e} \int_{\widetilde{\Omega}}  J\chi^2 D_{\xi\tau }QD_{\xi\tau}\mathcal{H} dy\\
& \leq C \left\|\frac{\n^2 q}{\e}\right\|_{L^2}( \e \|q\|_{H^2} \|\mathbf{u}\|_{H^3}+ \e^2  \|\mathbf{u}\|_{H^3} \|\n \mathbf{u}\|_{L^\infty}+ \e^2  \|\mathbf{u}\|_{H^3} \|\n^2 \mathbf{u}\|_{L^3} ).
\end{aligned} 
\end{equation*}

To  obtain the tangential  estimates of $V$,  we transform the mixed viscosity terms in the left-hand side of \ef{tanu1}, into
\begin{align} \label{tan1}
&\mu \int_{\widetilde{\Omega}} 
J\chi^2 a_{kj}D_{k\xi\tau}U^i a_{lj}D_{l\xi\tau}V^i dy   \nonumber    \\
&=\frac{\mu}{2} \int_{\widetilde{\Omega}} J\chi^2 a_{kj}D_{k\xi\tau}U^i a_{lj}D_{l\xi\tau} U^i dy 
+\frac{\mu}{2} \int_{\widetilde{\Omega}} J\chi^2 [ a_{kj}D_{k\xi\tau}U^i a_{lj}D_{l\xi\tau} U^i   \nonumber  \\
&\quad -2a_{kj}D_{k\xi\tau}U^i a_{lj}D_{l\xi\tau}( \frac{\kappa}{2}a_{pi}D_p H  )+a_{kj}D_{k\xi\tau}( \frac{\kappa}{2}a_{pi}D_p H) a_{lj}D_{l\xi\tau}( \frac{\kappa}{2}a_{qi}D_q H) ] dy   \nonumber  \\
&\quad -\frac{\mu\kappa^2 }{8} \int_{\widetilde{\Omega}} J\chi^2 a_{kj}D_{k\xi\tau}(  a_{pi}D_p H)  a_{lj}D_{l\xi\tau}(  a_{qi}D_q H)  dy    \\
&= \frac{\mu}{2} \int_{\widetilde{\Omega}} J\chi^2 a_{kj}D_{k\xi\tau}U^i a_{lj}D_{l\xi\tau} U^i dy 
+\frac{\mu}{2} \int_{\widetilde{\Omega}} J\chi^2 a_{kj}D_{k\xi\tau}V^i a_{lj}D_{l\xi\tau} V^i dy  \nonumber  \\
&\quad -\frac{\mu\kappa^2 }{8} \int_{\widetilde{\Omega}} J\chi^2 a_{kj}D_{k\xi\tau}(  a_{pj}D_p H)  a_{li}D_{l\xi\tau}(  a_{qi}D_q H)  dy,   \nonumber  
\end{align} 
and
\begin{equation}\label{tan2}
\begin{aligned} 
& \xi \int_{\widetilde{\Omega}} J\chi^2 a_{ki}D_{k\xi\tau}U^i a_{lj}D_{l\xi\tau}V^j dy \\
& = \xi \int_{\widetilde{\Omega}} J\chi^2 a_{ki}D_{k\xi\tau}V^i a_{lj}D_{l\xi\tau}V^j dy + \frac{\xi\kappa}{2} \int_{\widetilde{\Omega}} J\chi^2 a_{ki}D_{k\xi\tau}(  a_{pi}D_p H) a_{lj}D_{l\xi\tau}U^j dy\\
&\quad - \frac{\xi\kappa^2}{4} \int_{\widetilde{\Omega}} J\chi^2 a_{ki}D_{k\xi\tau}(  a_{pi}D_p H) a_{lj}D_{l\xi\tau}(  a_{pj}D_p H) dy .
\end{aligned} 
\end{equation}

Computing  $\ef{tanq}+ \frac{1}{2}\ef{tanu1}+ \ef{tantheta2}+ (\frac{\xi\kappa}{4}+\frac{\beta}{2})\cdot \ef{tantheta1}$,   utilizing \ef{tan1} and  \ef{tan2},  we conclude that the left  hand side of the resulting equality is 
\begin{align} 
&\frac{1}{4}\frac{d}{dt}\int_{\widetilde{\Omega}} J\chi^2|D_{\xi\tau}Q|^2dy+\frac{1}{\e}\int_{\widetilde{\Omega}} J\chi^2   D_{\xi\tau}Q D_{\xi\tau}[a_{kj}D_k (U^j-\frac{\kappa}{2} a_{pj}D_p H)] dy \nonumber\\ 
& +\frac{1}{4} \frac{d}{dt} \int_{\widetilde{\Omega}} J\chi^2 K |D_{\xi\tau} U |^2 dy 
 + \frac{\mu}{4} \int_{\widetilde{\Omega}} J\chi^2 a_{kj}D_{k\xi\tau}U^i a_{lj}D_{l\xi\tau} U^i dy  \nonumber\\
&
+\frac{\mu}{4} \int_{\widetilde{\Omega}} J\chi^2 a_{kj}D_{k\xi\tau}V^i a_{lj}D_{l\xi\tau} V^i dy -\frac{\mu\kappa^2 }{16} \int_{\widetilde{\Omega}} J\chi^2 |a_{kj}D_{k\xi\tau}(  a_{pj}D_p H) |^2dy\nonumber
\\
&+
\frac{\xi}{2} \int_{\widetilde{\Omega}} J\chi^2 a_{ki}D_{k\xi\tau}V^i a_{lj}D_{l\xi\tau}V^j dy
+ \frac{\xi\kappa}{4} \int_{\widetilde{\Omega}} J\chi^2 a_{ki}D_{k\xi\tau}(  a_{pi}D_p H) a_{lj}D_{l\xi\tau}U^j dy \nonumber \\
&
- \frac{\xi\kappa^2}{8} \int_{\widetilde{\Omega}} J\chi^2 |a_{ki}D_{k\xi\tau}(  a_{pi}D_p H) |^2 dy +\frac{1}{2\e}\int_{\widetilde{\Omega}} J\chi^2 D_{\xi\tau}U^i D_{\xi\tau}(a_{ki}D_k Q) dy \nonumber\\
& -\frac{\beta}{2} \int_{\widetilde{\Omega}} J\chi^2 D_{\xi\tau}U^i  D_{\xi\tau}[ a_{ki}D_k (a_{pj}D_p (a_{lj}D_l H ))] dy \nonumber\\
& +\frac{\kappa}{2\e}\int_{\wt{\Omega}}D_{\xi\tau}(a_{kj}D_k(a_{lj}D_l H )) J\chi^2 D_{\xi\tau} Q dy
- \frac{1}{2\e}\int_{\wt{\Omega}}D_{\xi\tau}(a_{kj}D_kU^j )J\chi^2 D_{\xi\tau} Q dy \nonumber\\
& + \frac{1}{2}(\frac{\xi\kappa}{4}+\frac{\beta}{2})\frac{d} {dt} \int_{\wt{\Omega}}J\chi^2 K|a_{kj}D_{k\xi\tau}  H|^2dy+\kappa  (\frac{\xi\kappa}{4} +\frac{\beta}{2})\int_{\wt{\Omega}}J\chi^2|D_{\xi\tau} (a_{kj}D_k(a_{lj}D_l H ) )|^2 dy \nonumber\\
& - (\frac{\xi\kappa}{4}+\frac{\beta}{2})\int_{\wt{\Omega}}J\chi^2 D_{\xi\tau}  (a_{kj}D_k(a_{lj}D_l H )) D_{\xi\tau}(a_{kj}D_k U^j) dy 
=: \sum_{i=1}^5\mathcal{N}_i,  \nonumber
\end{align} 
where $\mathcal{N}_1$ to $\mathcal{N}_5$ are defined and estimated as follows
\begin{equation*}
\begin{aligned}
\mathcal{N}_1&:= \frac{d}{dt}\bigg[\frac{1}{4}\int_{\widetilde{\Omega}} J\chi^2|D_{\xi\tau}Q|^2dy
+\frac{1}{4}  \int_{\widetilde{\Omega}} J\chi^2 K |D_{\xi\tau} U |^2 dy\\ 
&\q 
+ \frac{1}{2}(\frac{\xi\kappa}{4}+\frac{\beta}{2})
\int_{\wt{\Omega}}J\chi^2 K|a_{kj}D_{k\xi\tau}  H|^2dy \bigg], 
\end{aligned} 
\end{equation*} 
\begin{equation*} 
\begin{aligned}
\mathcal{N}_2&:= 
 \frac{\mu}{4} \int_{\widetilde{\Omega}} J\chi^2 a_{kj}D_{k\xi\tau}U^i a_{lj}D_{l\xi\tau} U^i dy +\frac{\mu}{4} \int_{\widetilde{\Omega}} J\chi^2 a_{kj}D_{k\xi\tau}V^i a_{lj}D_{l\xi\tau} V^i dy\\
&\q
+
\frac{\xi}{2} \int_{\widetilde{\Omega}} J\chi^2 a_{ki}D_{k\xi\tau}V^i a_{lj}D_{l\xi\tau}V^j dy, 
\end{aligned} 
\end{equation*} 
\begin{equation*}
\begin{aligned}
\mathcal{N}_3&:= \kappa  (\frac{\xi\kappa}{4}+\frac{\beta}{2})\int_{\wt{\Omega}}J\chi^2|D_{\xi\tau} (a_{kj}D_k(a_{lj}D_l H ) )|^2 dy\\
&\q -(\frac{\mu\kappa^2 }{16} +\frac{\xi\kappa^2}{8})\int_{\widetilde{\Omega}} J\chi^2 |a_{kj}D_{k\xi\tau}(  a_{pj}D_p H) |^2dy\\
&= \frac{\kappa^2}{16}(9\mu+6\lambda-\kappa) \int_{\wt{\Omega}}J\chi^2|D_{\xi\tau} (a_{kj}D_k(a_{lj}D_l H ) )|^2 dy\\
&\q -(\frac{\mu\kappa^2 }{16} +\frac{\xi\kappa^2}{8})\int_{\widetilde{\Omega}} J\chi^2 |[a_{kj}D_{k},  D_{\xi\tau}]  a_{pj}D_p H |^2dy , 
\end{aligned}  
\end{equation*} 
\begin{align}
\mathcal{N}_4&:=  \frac{\xi\kappa}{4} \int_{\widetilde{\Omega}} J\chi^2 a_{ki}D_{k\xi\tau}(  a_{pi}D_p H) a_{lj}D_{l\xi\tau}U^j dy\nonumber \\
&\q -\frac{\beta}{2} \int_{\widetilde{\Omega}} J\chi^2 D_{\xi\tau}U^i  D_{\xi\tau}[ a_{ki}D_k (a_{pj}D_p (a_{lj}D_l H ))] dy  \nonumber \\
&\quad - (\frac{\xi\kappa}{4}+\frac{\beta}{2})\int_{\wt{\Omega}}J\chi^2 D_{\xi\tau}  (a_{kj}D_k(a_{lj}D_l H )) D_{\xi\tau}(a_{kj}D_k U^j) dy  \nonumber  \\
&= \frac{\xi\kappa}{4} \int_{\widetilde{\Omega}} J\chi^2 a_{ki}D_{k\xi\tau}(  a_{pi}D_p H) a_{lj}D_{l\xi\tau}U^j dy  + \frac{\beta}{2} \int_{\widetilde{\Omega}}  D_{\xi\tau}[ (a_{pj}D_p (a_{lj}D_l H ))]   \nonumber   \\
&\q \cdot J\chi^2 a_{ki} D_{k\xi\tau}U^i   dy  - (\frac{\xi\kappa}{4}+\frac{\beta}{2})\int_{\wt{\Omega}}J\chi^2 D_{\xi\tau}  (a_{kj}D_k(a_{lj}D_l H )) D_{\xi\tau}(a_{kj}D_k U^j) dy   \nonumber \\
&\quad -\frac{\beta}{2} \int_{\widetilde{\Omega}} J\chi^2 D_{\xi\tau}U^i  [ D_{\xi\tau}a_{ki}D_k (a_{pj}D_p (a_{lj}D_l H ))+ D_{\xi}a_{ki}D_{k\tau} (a_{pj}D_p (a_{lj}D_l H ))  \nonumber \\
&\quad + D_{\tau}a_{ki}D_{k\xi} (a_{pj}D_p (a_{lj}D_l H ))] dy   \nonumber \\
&\leq C (\|\mathbf{u}\|_{H^3}\|\theta\|_{H^3}+ \|\mathbf{u}\|_{H^2}\|\theta\|_{H^4}) ,  \nonumber 
\end{align} 
\begin{equation*}
\begin{aligned}
\mathcal{N}_5&:= \frac{1}{\e}\int_{\widetilde{\Omega}} J\chi^2   D_{\xi\tau}Q D_{\xi\tau}(a_{kj}D_k U^j) dy +\frac{1}{2\e}\int_{\widetilde{\Omega}} J\chi^2 D_{\xi\tau}U^i D_{\xi\tau}(a_{ki}D_k Q) dy\\
&\quad - \frac{1}{2\e}\int_{\wt{\Omega}}D_{\xi\tau}(a_{kj}D_kU^j )J\chi^2 D_{\xi\tau} Q dy \\
&=\frac{1}{2\e}\int_{\widetilde{\Omega}} J\chi^2   D_{\xi\tau}Q D_{\xi\tau}(a_{kj}D_k U^j) dy - \frac{1}{2\e}\int_{\widetilde{\Omega}} J\chi^2 a_{ki}D_{k\xi\tau}U^i D_{\xi\tau} Q dy\\
&\quad +\frac{1}{2\e}\int_{\widetilde{\Omega}} J\chi^2 D_{\xi\tau}U^i [ D_{\xi\tau}a_{ki}D_k Q+ D_{\xi}a_{ki}D_{k\tau} Q + D_{\tau}a_{ki}D_{k\xi} Q] dy\\
&\leq C \left\| \mathbf{u} \right\|_{H^2} \left\|\frac{\nabla^2 q}{\e}\right\|_{L^2},
\end{aligned} 
\end{equation*} 
and $[a,b]:=ab-ba$ denotes the commutator notation.  
Due to the positive definiteness of $\{\sum_k a_{ki}a_{kj}\}$,  one has 
\begin{equation}\nonumber\begin{aligned}
 \sum_{i, j}a_{kj}D_{k\xi\tau} U^ia_{lj}D_{l\xi\tau} U^i\geq C |D_{y\xi\tau} U|^2, 
\end{aligned}\end{equation}
for some positive constant $C$. Then we derive 
\begin{equation*}
\begin{aligned}
\mathcal{N}_2&\geq  c_1
\int_{\widetilde{\Omega}} J\chi^2 |D_{y\xi\tau}U|^2  dy+ c_1
\int_{\widetilde{\Omega}} J\chi^2 |D_{y\xi\tau}V|^2  dy, 
\end{aligned} 
\end{equation*} 
for some positive constant $c_1$ depending on $\mu, \xi$ and $\wt{\Omega}$. 
Integrating $\ef{tanq}+ \frac{1}{2}\ef{tanu1}+ \ef{tantheta2}+ (\frac{\xi\kappa}{4}+\frac{\beta}{2})\cdot \ef{tantheta1}$ on  $[0, t]$,  and combining all estimates above yield that   
\begin{align} 
&  \frac{1}{4}\int_{\widetilde{\Omega}} J\chi^2|D_{\xi\tau}Q|^2dy
+\frac{1}{4}  \int_{\widetilde{\Omega}} J\chi^2 K |D_{\xi\tau} U |^2 dy 
+ \frac{1}{2}(\frac{\xi\kappa}{4}+\frac{\beta}{2})
\int_{\wt{\Omega}}J\chi^2 K|a_{kj}D_{k\xi\tau}  H|^2dy ,   \nonumber \\ 
&+  c_1 
\iint_{\widetilde{\Omega}_t } J\chi^2 |D_{y\xi\tau}U|^2  dyds+ c_1
\iint_{\widetilde{\Omega}_t} J\chi^2 |D_{y\xi\tau}V|^2  dyds \nonumber  \\
& + \frac{\kappa^2}{16}(9\mu+6\lambda-\kappa) \iint_{\wt{\Omega}_t}J\chi^2|D_{\xi\tau} (a_{kj}D_k(a_{lj}D_l H ) )|^2 dy ds \nonumber
\\    
&  \leq C_0(M_0) + \delta   \int_{\widetilde{\Omega}} J\chi^2 K |D_{\xi\tau} U |^2 dy 
+ \delta  \int_{\wt{\Omega}}J\chi^2 K|a_{kj}D_{k\xi\tau}  H|^2dy  \nonumber  \\
&\q +C_\delta [t^{\frac{1}{4}} \|\rho_t\|_{L_t^2(H^1)} ^2 \|\mathbf{u}\|_{L_t^2(H^3)}^{\frac 3 2} \|\mathbf{u}\|_{L_t^\infty(H^1)}^{\frac 1 2} + t \|\rho\|_{L_t^\infty(H^2)} ^2( \|\mathbf{u}_t\|_{L_t^2(H^1)}^2 +\|\theta\|_{L_t^2(H^3)}^2)  \nonumber \\
&\q + t^{\frac 1 2}\|\rho\|_{L_t^\infty(H^2)} ^2 \|\mathbf{u}\|_{L_t^2(H^3)} \|\mathbf{u}\|_{L_t^\infty(H^1)} \|\mathbf{v}\|_{L_t^2(H^3)} ^2 + t^{\frac 1 2} \|\theta\|_{L_t^2(H^4)}^2 \|\rho_t\|_{L_t^2(H^1)} \|\rho_t\|_{L_t^\infty(L^2)}  \nonumber  \\
&\q + t^{\frac 5 8}\left\|\frac{\nabla^2 q}{\e}\right\|_{L_t^2(L^2)} ^2\|\rho\|_{L_t^\infty(H^2)} ^2( \|\mathbf{u}\|_{L_t^2(H^3)}^{\frac 3 4} \|\mathbf{u}\|_{L_t^\infty(H^1)}^{\frac 5 4}  +\|\theta\|_{L_t^2(H^3)}^{\frac 3 4} \|\theta\|_{L_t^\infty(H^1)}^{\frac 5 4}    ) 
] \nonumber  \\ 
&  \q+ C \|\mathbf{u}\|_{L_t^2(H^3)} ( t^{\frac 1 2}\| q\|_{L_t^\infty(H^2)}^2 + t^{\frac 1 2}\| \theta\|_{L_t^\infty(H^2)}^{\frac 1 2}\| \theta\|_{L_t^2(H^4)}^{\frac 1 2} + \|\n f\|_{L_t^2(L^2)} )  \nonumber \\
&\q 
+ C \e \| q\|_{L_t^\infty(H^2)}\|\mathbf{u}\|_{L_t^2(H^3)} ^2 + C t^{\frac 1 4}  \| \mathbf{u}\|_{L_t^2(H^3)}^{\frac 1 2} \| \mathbf{u}\|_{L_t^\infty(H^1)}^{\frac 1 2} (\| \mathbf{v}\|_{L_t^2(H^3)}+\|\theta\|_{L_t^2(H^4)} \nonumber 
\\  
&\q  + \left\|\frac{\nabla^2 q}{\e}\right\|_{L_t^2(L^2)}) 
+ C \| \theta\|_{L_t^2(H^4)} (   t^{\frac{1}{16}}\|\e^2\mathbf{u}\|_{L_t^\infty(H^3)} \| \mathbf{u}\|_{L_t^2(H^3)}^{\frac 7 8} \| \mathbf{u}\|_{L_t^\infty (H^1)}^{\frac 1 8}   \nonumber \\ 
&\q  +\e \|q\|_{L_t^\infty (H^2)} \|\mathbf{u}\|_{L_t^2(H^3)}+    t^{\frac{1}{4}} \|\e^2\mathbf{u}\|_{L_t^\infty(H^3)} \| \mathbf{u}\|_{L_t^2(H^3)}^{\frac 1 2} \| \mathbf{u}\|_{L_t^\infty (H^1)}^{\frac 1 2}  )  \nonumber  \\ 
& \q  + C \|\rho\|_{L_t^\infty(H^2) } [ t^{\frac 1 8}\|\theta_t\|_{L_t^2(H^2)} \| \theta\|_{L_t^\infty(H^2) }^{\frac 1 4}\| \theta\|_{L_t^2(H^4) }^{\frac 3 4} + t^{\frac 1 2 } \| \theta\|_{L_t^\infty(H^2) } \| \mathbf{u}\|_{L_t^2(H^3)}  \nonumber   \\ 
&\q + t^{\frac 1 2 } \|\mathbf{u}\|_{L_t^\infty(H^1) } \|\theta\|_{L_t^2(H^4)} +t^{\frac 5 4} \| \mathbf{u}\|_{L_t^2(H^3)}^{\frac 3 8} \| \mathbf{u}\|_{L_t^\infty (H^1)}^{\frac 5 8}    \| \theta\|_{L_t^2(H^4)}^{\frac 3 8} \| \theta\|_{L_t^\infty (H^1)}^{\frac 3 8}  ]  \nonumber \\
&\q + C \left\|\frac{\n^2 q}{\e}\right\|_{L_t^2(L^2)}  [ \|\rho\|_{L_t^\infty(H^2) }
 \|\theta_t\|_{L_t^2(H^2) }+\e \|q\|_{L_t^\infty(H^2) } \|\mathbf{u}\|_{L_t^2(H^3) }  \nonumber  \\ 
&\q + t^{\frac{1}{16}}\|\e^2\mathbf{u}\|_{L_t^\infty(H^3)} \| \mathbf{u}\|_{L_t^2(H^3)}^{\frac 7 8} \| \mathbf{u}\|_{L_t^\infty (H^1)}^{\frac 1 8} +    t^{\frac{1}{4}} \|\e^2\mathbf{u}\|_{L_t^\infty(H^3)} \| \mathbf{u}\|_{L_t^2(H^3)}^{\frac 1 2} \| \mathbf{u}\|_{L_t^\infty (H^1)}^{\frac 1 2}  ] ,  \nonumber 
\end{align}   
by use of interpolation inequalities and Sobolev embedding inequalities.  Letting $\delta>0$ be sufficiently small in above inequalities,  employing estimates    \ef{lemrho}, \ef{lem1t2}, \ef{the1}, \ef{the2}, and  \ef{fh1},  we immediately derive \ef{uh3tan}.
\hfill$\Box$

\vskip3mm

Next,  we aim to establish the estimations of normal derivatives.
Multiplying $\ef{ns2}_1$ by $a_{3i}$ and summing with respect to $i$,  we get 
\begin{equation}\label{D3t}
\begin{aligned}
&(\mu+\xi) D_3(a_{lj}D_l V^j )-\frac{1}{\e} D_3 Q+\beta  D_3 (a_{pj}D_p (a_{lj}D_l H ) )\\
&=a_{3i} K(V^i_{t}+U^j a_{kj}D_k V^i ) 
- a_{3i} \mathcal{F} + \mu [D_3(a_{lj}D_l V^j )-a_{3i}a_{kj}D_k(a_{lj}D_l V^i )], 
\end{aligned} 
\end{equation} 
where we used the fact that
$$\mu [D_3(a_{lj}D_l V^j )-a_{3i}a_{kj}D_k(a_{lj}D_l V^i )]=\mu [D_3(a_{lj}D_l U^j )-a_{3i}a_{kj}D_k(a_{lj}D_l U^i )]\q {\rm in}~{\widetilde{\Omega}_t}.$$
Since $\sum_{j=1}^3 a_{3j}a_{3j} = 1$ and $\sum_{j=1}^3 a_{1j}a_{3j} = \sum_{j=1}^3 a_{2j}a_{3j} = 0$,  we discover that the last term on the right-hand side of \ef{D3t} has the form: 
\begin{equation*}
\begin{aligned}
&\mu ( D_3a_{3j}D_3U^i + D_3a_{\tau j}D_\tau U^i + a_{\tau j}D_{3\tau}U^i - a_{3j}D_3 a_{3j} a_{3i}D_3U^i \\
& - a_{\tau j}a_{3i}D_\tau a_{ij}D_i U^i - a_{\tau j}a_{\xi j}a_{3i}D_{\tau \xi} U^i - a_{3j}a_{3i}D_3a_{\tau j}D_\tau U^i ) \q  \q {\rm  for }~  \xi,  \tau = 1,  2,  
\end{aligned} 
\end{equation*}  
which indicates that the last term of  the right-hand side of \ef{D3t} does not contain second-order normal derivative $D_{33}U$.

In order to verify the normal derivative of $a_{kj}D_k(a_{lj}D_l H)$,  applying $D_3 $ to $\ef{ns2}_3$ gives 
\begin{equation}\label{D3H} 
\begin{aligned} 
D_3 ( K (H_{t}+U^j a_{kj}D_k H ))-\frac{\kappa}{2} D_3(a_{kj}D_k(a_{lj}D_l H))+ D_3(a_{kj}D_k V^j) =D_3 \mathcal{H}.
\end{aligned}  
\end{equation} 
Below, we  use \ef{D3t} and \ef{D3H} to establish the estimates of  mixed tangential-normal derivatives and normal derivatives. 
\begin{lem}\label{lemmix}
For any $\e\in(0, 1]$ and $t\in [0, T]$,  the following estimate holds 
\begin{equation}\label{mix}
\begin{aligned}
&\frac{1}{4}  \int_{\widetilde{\Omega}} J\chi^2 | D_{3\tau} Q|^2 dy (t)+\frac{\beta }{2}  \int_{\widetilde{\Omega}}  J\chi^2 |a_{kj}D_{k\tau 3} H |^2 dy(t)\\
&+(\mu+\xi) \iint_{\widetilde{\Omega}_t} J\chi^2 |D_{3\tau}(a_{lj}D_l V^j )|^2 dyds+\frac{\beta\kappa}{2} \iint_{\widetilde{\Omega}_t} J\chi^2| D_{3\tau} (a_{pj}D_p (a_{lj}D_l H ))|^2 dyds\\
 &\leq C_0(M_0) {\rm exp}\{(t^{\frac{1}{16}}+\e^{\frac 1 4}) C_1(\mathcal{E}(t)) \}.
\end{aligned} 
\end{equation}  
\end{lem}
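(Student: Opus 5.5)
We follow the mixed tangential–normal step of the half-space sketch, now in the local coordinates of \ef{ns2}. Three identities are integrated over $\widetilde{\Omega}\times(0,t)$ and added:
\begin{itemize}
\item $D_{3\tau}\ef{ns2}_2$ times $J\chi^2 D_{3\tau}Q$;
\item $D_\tau\ef{D3t}$ times $J\chi^2 D_{3\tau}(a_{lj}D_lV^j)$;
\item $D_\tau\ef{D3H}$ times $-\beta J\chi^2 D_{3\tau}(a_{pj}D_p(a_{lj}D_lH))$, which is the local form of $-\beta\,\p_i\ef{sim4}\cdot\p_i\p_3\Delta\theta$.
\end{itemize}
The first gives $\frac14\frac{d}{dt}\int J\chi^2|D_{3\tau}Q|^2$ and a singular term $\frac1\e\int J\chi^2 D_{3\tau}Q\,D_{3\tau}(a_{kj}D_kV^j)$. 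The second gives the dissipation $(\mu+\xi)\int J\chi^2|D_{3\tau}(a_{lj}D_lV^j)|^2$, the singular term $-\frac1\e\int J\chi^2 D_{3\tau}Q\,D_{3\tau}(a_{lj}D_lV^j)$, and the coupling $\beta\int J\chi^2 D_{3\tau}(a_{pj}D_p(a_{lj}D_lH))\,D_{3\tau}(a_{lj}D_lV^j)$. The third gives $\frac{\beta\kappa}{2}\int J\chi^2|D_{3\tau}(a_{pj}D_p(a_{lj}D_lH))|^2$, minus the same coupling term, plus a time-derivative term.

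\emph{Cancellations.} The two $O(1/\e)$ terms cancel exactly, because $D_{3\tau}$ commutes with $D_3$ and $D_\tau$ and no integration by parts is needed. What remains is lower-order commutator terms of type $\frac1\e[D_{3\tau},a_{kj}]D_kV^j$. These are bounded by $\|\n^2 q/\e\|_{L^2}\|\mathbf v\|_{H^2}$ and then absorbed using Lemma \ref{lemSto} and the factor $t^{1/4}$. The third-order couplings in $H$ and $V$ cancel between the second and third identities.

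\emph{Time-derivative term in $H$.} In the third identity the term $\beta\int J\chi^2 K D_{3\tau}H_t\, D_{3\tau}(a_{pj}D_p(a_{lj}D_lH))$ is integrated by parts in the direction $D_p$. This yields $\frac{\beta}{2}\frac{d}{dt}\int J\chi^2K|a_{kj}D_{k\tau3}H|^2$ plus remainders of the form $K_t(\cdots)$, commutators, and boundary terms. The boundary terms vanish, or are lower order, since $D_3H=0$ on $\p\widetilde\Omega$ and $a_{3j}$ is the normal.

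\emph{Main obstacle: the right-hand side of \ef{D3t}.} After $D_\tau$ is applied, the source terms must be controlled by the dissipation with a small constant plus already-known quantities. The viscous remainder $\mu[D_3(a_{lj}D_lV^j)-a_{3i}a_{kj}D_k(a_{lj}D_lV^i)]$ may be rewritten in $U$ and contains no $D_{33}U$. Hence $D_\tau$ of it involves only $D_{y\xi\tau}U$, or $D_{33}$ derivatives of lower total order, and these are bounded by Lemma \ref{lemtan} through $\iint J\chi^2|D_{y\xi\tau}U|^2$. The term $D_\tau(a_{3i}KV_t^i)$ needs $\|\mathbf v_t\|_{L^2_t(H^1)}$, which Lemma \ref{lemut} together with \ef{the2} provides. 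The convective and $\mathcal F$ terms are handled by \ef{fh1} and interpolation, which gives $t^{1/16}$ or $\e$ factors. Finally, $D_\tau D_3\mathcal H$ and $D_\tau D_3(K(H_t+\cdots))$ from \ef{D3H} are bounded by $\|\theta_t\|_{L^2_t(H^2)}$ (Remark \ref{rmk} and Lemma \ref{lemut}) and by $\e$-weighted products.

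\emph{Closing.} Young's inequality with small $\delta$ absorbs the dissipative terms. Integrating in time and inserting Lemmas \ref{lemrho}, \ref{lemut}, \ref{lemSto}, \ref{lemtan} and \ref{lem3.3} yields \ef{mix}, with the bound $C_0(M_0)\exp\{(t^{1/16}+\e^{1/4})C_1(\mathcal E(t))\}$. The point needing the most care is to verify that every top-order term on the right of \ef{D3t}, after $D_\tau$, is either tangential (and so covered by Lemma \ref{lemtan}) or absorbable.
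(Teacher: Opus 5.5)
Your proposal is correct and follows the paper's proof. You use the same three multipliers: $D_\tau\eqref{D3t}$ against $J\chi^2D_{3\tau}(a_{lj}D_lV^j)$, $D_{3\tau}\eqref{ns2}_2$ against $J\chi^2D_{3\tau}Q$, and $-\beta D_\tau\eqref{D3H}$ against $J\chi^2D_{3\tau}(a_{pj}D_p(a_{lj}D_lH))$; the same cancellation of the singular and third-order coupling terms; the same integration by parts of the $KD_{3\tau}H_t$ term; and the same control of the right-hand side via $\iint J\chi^2|D_{y\xi\tau}U|^2$ from Lemma \ref{lemtan} together with \eqref{fh1}, \eqref{the2} and \eqref{lem2}. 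One small correction: both singular terms pair $D_{3\tau}Q$ with the full quantity $D_{3\tau}(a_{kj}D_kV^j)$, so they cancel with no commutator remainder, and your extra bound by $\|\nabla^2 q/\e\|_{L^2}\|\mathbf v\|_{H^2}$ is unnecessary, though harmless.
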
 
\textbf{Proof}. 
Integrating the product $D_\tau \ef{D3t}\cdot J\chi^2 D_{3\tau} (a_{lj}D_l V^j)$  in $L^2(\wt{\Omega}\times[0, t])$ gives that 
\begin{align}\label{mix1} 
&(\mu+\xi) \iint_{\widetilde{\Omega}_t} J\chi^2 |D_{3\tau}(a_{lj}D_l V^j )|^2 dyds -\frac{1}{\e} \iint_{\widetilde{\Omega}_t} J\chi^2 D_{3\tau}(a_{lj}D_l V^j )  D_{3\tau} Q dyds \nonumber \\ 
& 
+\beta \iint_{\widetilde{\Omega}_t} J\chi^2 D_{3\tau}(a_{lj}D_l V^j ) D_{3\tau}   (a_{pj}D_p (a_{lj}D_l H ))  dyds \nonumber  \\
&=  \iint_{\widetilde{\Omega}_t} J\chi^2 D_{3\tau}(a_{lj}D_l V^j ) D_{\tau}  [a_{3i} K(V^i_{t}+U^j a_{kj}D_k V^i ) 
- a_{3i} \mathcal{F} \nonumber  \\
&\quad + \mu (O( D_{3\tau}U) +O(D_{\xi \tau}U ) +O(D_{y}U )) ]  dyds\\
&\leq \delta \iint_{\widetilde{\Omega}_t} J\chi^2 |D_{3\tau}(a_{lj}D_l V^j )|^2 dyds+C_\delta \{  \|f\|_{L_t^2(H^1)}^2+ \|\rho\|_{L_t^\infty(H^2)}^2[ \|\mathbf{v}_t\|_{L_t^2(H^1)}^2    \nonumber  \\
&\q+  \|\mathbf{u}\|^2_{L_t^\infty(H^1)} (  t^{\frac 1 2}  \|\mathbf{v}\|_{L_t^\infty(H^1)}\|\mathbf{v}\|_{L_t^2(H^3)} +  t^{\frac{1}{8}} \|\mathbf{v}\|_{L_t^\infty(H^1)}^{\frac 1 4}\|\mathbf{v}\|_{L_t^2(H^3)}^{\frac 7 4}  \nonumber \\
&\q +  t^{\frac{1}{4}} \|\mathbf{v}\|_{L_t^\infty(H^1)}^{\frac 1 2}\|\mathbf{v}\|_{L_t^2(H^3)}^{\frac 3 2} ) 
] + \iint_{\widetilde{\Omega}_t} J\chi^2 |D_{y \xi\tau}  U |^2 dyds +t^{\frac 1 2 } \|\mathbf{v}\|_{L_t^\infty(H^1)} \|\mathbf{v}\|_{L_t^2(H^3)} \} ,  \nonumber 
\end{align}  
where  Young's inequality,  interpolation inequalities and Sobolev  inequalities are used. 
Then,  we multiply $D_{3\tau} \ef{ns2}_2$ by $ J\chi^2 D_{3\tau} Q$  in $L^2(\wt{\Omega}\times[0, t])$ to show 
\begin{align} \label{mix2} 
& \frac{1}{4}  \int_{\widetilde{\Omega}} J\chi^2 | D_{3\tau} Q|^2 dy +\frac{1}{\e} \iint_{\widetilde{\Omega}_t} J\chi^2 D_{3\tau} Q D_{3\tau}( a_{kj}D_k V^j) dyds \nonumber  \\
&= \frac{1}{4}  \int_{\widetilde{\Omega}} J\chi^2 | D_{3\tau} Q_0|^2 dy+\frac{1}{4} \iint_{\widetilde{\Omega}_t} J a_{kj}  D_k(\chi^2  U^j  ) |D_{3\tau} Q |^2  dyds \nonumber    \\
&\q + \iint_{\widetilde{\Omega}_t} J\chi^2 D_{3\tau} Q \Big  [ -\frac{1}{2}\Big (D_{3\tau}(U^j a_{kj})D_k Q +D_{3}(U^j a_{kj})D_{k\tau} Q \\
&\q +D_{\tau}(U^j a_{kj})D_{k3} Q \Big )+D_{3\tau }\mathcal{G} \Big] dyds \nonumber \\
&\leq C_0(M_0) + C (t^{\frac 1 2} \|q\|_{L_t^\infty(H^2)}^2 \|\mathbf{u}\|_{L_t^2(H^3)} + \e \|q\|_{L_t^\infty(H^2)} \|\mathbf{u}\|_{L_t^2(H^3)}^2). \nonumber 
\end{align}  
By taking the product $-\beta D_{\tau} \ef{D3H}$ with $ J\chi^2 D_{3\tau} (a_{kj}D_k(a_{lj}D_l H))$  in $L^2(\wt{\Omega}\times[0, t])$,  one obtains
\begin{align} \label{mix3}
&\frac{\beta }{2}  \int_{\widetilde{\Omega}} K J\chi^2 |a_{kj}D_{k\tau 3} H |^2 dy+\frac{\beta\kappa}{2} \iint_{\widetilde{\Omega}_t} J\chi^2| D_{3\tau} (a_{pj}D_p (a_{lj}D_l H ))|^2 dyds \nonumber
\\
&
-\beta \iint_{\widetilde{\Omega}_t} J\chi^2 D_{3\tau}(a_{lj}D_l V^j ) D_{3\tau}   (a_{pj}D_p (a_{lj}D_l H ))  dyds \nonumber \\
&= \frac{\beta }{2}  \int_{\widetilde{\Omega}} K_0 J\chi^2 |a_{kj}D_{k\tau 3} H_0 |^2 dy+\beta \iint_{\widetilde{\Omega}_t}   \frac{1 }{2} K_t J\chi^2 |a_{kj}D_{k\tau 3} H |^2  \nonumber \\
&\q - J a_{kj} D_k(K\chi^2) D_{3\tau}  H_{t} a_{lj}D_{l\tau 3} H   -   J\chi^2 D_{3\tau} (a_{kj}D_k(a_{lj}D_l H))[D_{3\tau}  K H_{t} \nonumber
\\
&\q  +D_{\tau}  K D_3H_{t}  +D_{3}  K D_\tau H_{t} +D_{3\tau}( K U^j a_{kj}D_k H ) +  D_{3\tau} \mathcal{H} ] dyds \\
&\lesssim C_0(M_0)+ t^{\frac{1}{16}} \|\rho_t\|_{L_t^2(H^1)} \|\theta\|_{L_t^2(H^4)} ^{\frac 7 8}\|\theta\|_{L_t^\infty(H^2)} ^{\frac 1 8} + \|\theta\|_{L_t^2(H^4)} [\|\rho\|_{L_t^\infty(H^2)} ( \|\theta_t\|_{L_t^2(H^2)}  \nonumber \\
&\q + t^{\frac{1}{4}}\|\theta\|_{L_t^\infty(H^2)} \|\mathbf{u}\|_{L_t^2(H^3)}^{\frac 1 2}\|\mathbf{u}\|_{L_t^\infty(H^1)}^{\frac 1 2} + t^{\frac{1}{8}}\|\mathbf{u}\|_{L_t^\infty(H^1)}\|\theta\|_{L_t^\infty(H^2)} ^{\frac 1 4}\|\theta\|_{L_t^2(H^4)} ^{\frac 3 4} \nonumber \\
&\q + t^{\frac{1}{8}}\|\mathbf{u}\|_{L_t^2(H^3)}^{\frac 3 8}\|\mathbf{u}\|_{L_t^\infty(H^1)}^{\frac 5 8}\|\theta\|_{L_t^2(H^4)}^{\frac 3 8}\|\theta \|_{L_t^\infty(H^2)}^{\frac 5 8} ) + \e \|q\|_{L_t^\infty(H^2)}\|\mathbf{u}\|_{L_t^2(H^3)} \nonumber \\
& \q +\|\e^2\mathbf{u}\|_{L_t^\infty(H^3)} ( t^{\frac{1}{16}} \| \mathbf{u}\|_{L_t^2(H^3)}^{\frac 7 8} \| \mathbf{u}\|_{L_t^\infty (H^1)}^{\frac 1 8}  +
t^{\frac{1}{4}}   \| \mathbf{u}\|_{L_t^2(H^3)}^{\frac 1 2} \| \mathbf{u}\|_{L_t^\infty (H^1)}^{\frac 1 2} )]. \nonumber
\end{align}    
Summing over \ef{mix1},   \ef{mix2}, and  \ef{mix3} to eliminate the singular terms,  combining \ef{fh1},  \ef{lem1t2},  \ef{the1},  \ef{the2}, and \ef{lem2},  we consequently obtain \ef{mix}. 
\hfill $\Box$

\begin{lem}\label{lemD33}
For any $\e\in(0, 1]$ and $t\in [0, T]$,   it holds that
\begin{equation}\label{D33} 
\begin{aligned} 
&(\mu+\xi) \iint_{\widetilde{\Omega}_t} J\chi^2 |D_{33}(a_{lj}D_l V^j )|^2 dyds
+\frac{1}{4} \int_{\widetilde{\Omega}} J\chi^2 | D_{33} Q|^2 dy \\
&+\frac{\beta\kappa}{2} \iint_{\widetilde{\Omega}_t} J\chi^2| D_{33} (a_{pj}D_p (a_{lj}D_l H ))|^2 dyds \\
 &\leq C_0(M_0) {\rm exp}\{(t^{\frac{1}{16}}+\e^{\frac 1 4}) C_1(\mathcal{E}(t)) \}  +\iint_{\widetilde{\Omega}_t} J\chi^2 |D_{3 3\tau}  U |^2 dyds .
\end{aligned}  
\end{equation} 
\end{lem}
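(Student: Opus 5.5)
The plan mirrors the proof of Lemma \ref{lemmix}, with $D_\tau$ replaced by $D_3$, in the spirit of the normal estimate \eqref{s7} for the half-space model. There are three pieces. First, we take the product of $D_3\eqref{D3t}$ with $J\chi^2 D_{33}(a_{lj}D_lV^j)$ over $\widetilde{\Omega}_t$. Second, we test $D_{33}\eqref{ns2}_2$ with $J\chi^2 D_{33}Q$. Third, we take the product of $-\beta D_3\eqref{D3H}$ with $J\chi^2 D_{33}(a_{pj}D_p(a_{lj}D_lH))$. None of these multipliers requires boundary information, because all integrations by parts are in time or in tangential directions, or they fall on lower-order coefficient terms.

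The key structural point concerns the singular terms. The first identity produces $-\frac1\e\iint J\chi^2 D_{33}Q\,D_{33}(a_{lj}D_lV^j)$, while the second produces $+\frac1\e\iint J\chi^2 D_{33}Q\,D_{33}(a_{kj}D_kV^j)$, so these cancel exactly with no commutator. Likewise, the cross term $\beta\iint J\chi^2 D_{33}(a_{lj}D_lV^j)\,D_{33}(a_{pj}D_p(a_{lj}D_lH))$ cancels against the one produced by the third identity. What remains on the left-hand side are the dissipation terms $(\mu+\xi)\|\chi D_{33}\dv\|^2$ and $\frac{\beta\kappa}{2}\|\chi D_{33}\Delta\|^2$, together with the energy term $\frac14\int J\chi^2|D_{33}Q|^2$. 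The $H$-energy term $\frac\beta2\int KJ\chi^2|a_{kj}D_{k33}H|^2$ is nonnegative and can simply be kept or dropped.

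On the right-hand side, the transport, $\mathcal G$, $\mathcal H$, $\mathcal F$ and $K(V_t+\dots)$ contributions are bounded exactly as in \eqref{mix1}--\eqref{mix3}. They are controlled by $\|f\|_{L_t^2(H^1)}$ (using \eqref{fh1}), $\|\mathbf v_t\|_{L^2_t(H^1)}$ and $\|\theta_t\|_{L^2_t(H^2)}$ (using Lemma \ref{lemut}, \eqref{the1} and \eqref{the2}), and products carrying a factor $t^{1/16}$ or $\e$. Each of these fits within $C_0(M_0)\exp\{(t^{1/16}+\e^{1/4})C_1(\mathcal E(t))\}$ after Young's inequality with a small $\delta$ absorbing $\delta\|\chi D_{33}(a_{lj}D_lV^j)\|^2$.

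The main obstacle is the viscous remainder $\mu[D_3(a_{lj}D_lV^j)-a_{3i}a_{kj}D_k(a_{lj}D_lV^i)]$. Its $D_3$ derivative contains $D_{33\tau}U$ as well as $D_{3\tau\xi}U$, $D_{y\xi\tau}U$ and lower-order terms. The terms with two tangential derivatives are already bounded by Lemma \ref{lemtan}, and the $D_{3\tau}$-type terms by Lemma \ref{lemmix}. The genuinely third-order normal-tangential term $D_{33\tau}U$ cannot be closed at this stage, so after Young's inequality it is left on the right-hand side with coefficient one. This is exactly the term $\iint J\chi^2|D_{33\tau}U|^2$ appearing in \eqref{D33}, and it will later be handled through Stokes regularity for $D_\tau\mathbf u$, as in the half-space discussion following \eqref{s7}.
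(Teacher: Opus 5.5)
Your proposal follows the paper's proof essentially step for step: the same three multipliers ($D_3\eqref{D3t}$ against $J\chi^2D_{33}(a_{lj}D_lV^j)$, $D_{33}\eqref{ns2}_2$ against $J\chi^2D_{33}Q$, and $-\beta D_3\eqref{D3H}$ against $J\chi^2D_{33}(a_{pj}D_p(a_{lj}D_lH))$), the same exact cancellation of the $\frac{1}{\epsilon}$ and $\beta$ cross terms, and the same decision to leave $\iint_{\widetilde{\Omega}_t}J\chi^2|D_{33\tau}U|^2\,dyds$ on the right for Lemma \ref{lemD33t}. Two small corrections are needed. First, the transport term in $D_{33}\eqref{ns2}_2$ is also integrated by parts in the normal direction; this is harmless since $U=0$ on $\partial\widetilde{\Omega}$. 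Second, no $H$-energy term $\frac{\beta}{2}\int_{\widetilde{\Omega}}KJ\chi^2|a_{kj}D_{k33}H|^2\,dy$ is available here, unlike the mixed case where $D_{3\tau}H_t=0$ on $\partial\widetilde{\Omega}$: the normal integration by parts needed to produce it leaves a nonvanishing boundary term of the form $\int_{\partial\widetilde{\Omega}}J\chi^2K\,D_{33}H_t\,D_{333}H$. So, as the paper does, $KD_{33}H_t$ must be treated as a source and bounded by $\|\theta\|_{L^2_t(H^4)}\|\theta_t\|_{L^2_t(H^2)}$.
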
 
\textbf{Proof}. The proof of this lemma proceeds along similar lines to the proof of Lemma \ref{lemmix}.  We integrate the inner product of $D_3\ef{D3t}$ with $ J\chi^2 D_{33} (a_{lj}D_l V^j)$ in $L^2({\wt{\Omega}}\times [0, t])$ to obtain 
\begin{align}\label{D33-1} 
&(\mu+\xi) \iint_{\widetilde{\Omega}_t} J\chi^2 |D_{33}(a_{lj}D_l V^j )|^2 dyds -\frac{1}{\e} \iint_{\widetilde{\Omega}_t} J\chi^2 D_{33}(a_{lj}D_l V^j )  D_{33} Q dyds  \nonumber  \\
& 
+\beta \iint_{\widetilde{\Omega}_t} J\chi^2 D_{33}(a_{lj}D_l V^j ) D_{33}   (a_{pj}D_p (a_{lj}D_l H ))  dyds  \nonumber  \\
&=  \iint_{\widetilde{\Omega}_t} J\chi^2 D_{33}(a_{lj}D_l V^j ) D_{3}  [a_{3i} K(V^i_{t}+U^j a_{kj}D_k V^i ) 
- a_{3i} \mathcal{F} \\
&\quad + \mu (O( D_{3\tau}U) +O(D_{\xi \tau}U ) +O(D_{y}U )) ] dyds  \nonumber \\ 
&\leq \delta \iint_{\widetilde{\Omega}_t} J\chi^2 |D_{33}(a_{lj}D_l V^j )|^2 dyds+C_\delta \{   \|f\|_{L_t^2(H^1)}^2+ \|\rho\|_{L_t^\infty(H^2)}^2[ \|\mathbf{v}_t\|_{L_t^2(H^1)}^2   \nonumber  \\
&\q+  \|\mathbf{u}\|^2_{L_t^\infty(H^1)} (  t^{\frac 1 2}  \|\mathbf{v}\|_{L_t^\infty(H^1)}\|\mathbf{v}\|_{L_t^2(H^3)} +  t^{\frac{1}{8}} \|\mathbf{v}\|_{L_t^\infty(H^1)}^{\frac 1 4}\|\mathbf{v}\|_{L_t^2(H^3)}^{\frac 7 4}  \nonumber \\
&\q +  t^{\frac{1}{4}} \|\mathbf{v}\|_{L_t^\infty(H^1)}^{\frac 1 2}\|\mathbf{v}\|_{L_t^2(H^3)}^{\frac 3 2} ) 
] +t^{\frac 1 2 } \|\mathbf{v}\|_{L_t^\infty(H^1)} \|\mathbf{v}\|_{L_t^2(H^3)}     \nonumber \\
&\q    + \iint_{\widetilde{\Omega}_t} J\chi^2 |D_{3 \xi\tau}  U |^2 dyds + \iint_{\widetilde{\Omega}_t} J\chi^2 |D_{3 3\tau}  U |^2 dyds \}. \nonumber  
\end{align}  
Next,  integrating the resulting equality $D_{33} \ef{ns2}_2\cdot J\chi^2 D_{33} Q$  in $L^2(\wt{\Omega}\times[0, t])$ implies that 
\begin{align} \label{D33-2}
& \frac{1}{4}  \int_{\widetilde{\Omega}} J\chi^2 | D_{33} Q|^2 dy +\frac{1}{\e} \iint_{\widetilde{\Omega}_t} J\chi^2 D_{33} Q D_{33}( a_{kj}D_k V^j) dyds \nonumber \\
&= \frac{1}{4}  \int_{\widetilde{\Omega}} J\chi^2 | D_{33} Q_0|^2 dy 
+\frac{1}{4} \iint_{\widetilde{\Omega}_t} J a_{kj}  D_k(\chi^2  U^j  ) |D_{33} Q |^2  dyds \nonumber    \\
&\q + \iint_{\widetilde{\Omega}_t} J\chi^2 D_{33} Q \Big  [ -\frac{1}{2}\Big (D_{33}(U^j a_{kj})D_k Q +D_{3}(U^j a_{kj})D_{k\tau} Q \\
&\q +D_{3}(U^j a_{kj})D_{k3} Q \Big )+D_{33 }\mathcal{G} \Big] dyds \nonumber \\
&\leq C_0(M_0) + C (t^{\frac 1 2} \|q\|_{L_t^\infty(H^2)}^2 \|\mathbf{u}\|_{L_t^2(H^3)} + \e \|q\|_{L_t^\infty(H^2)} \|\mathbf{u}\|_{L_t^2(H^3)}^2). \nonumber 
\end{align} 
In addition,  multiplying $-\beta D_{3} \ef{D3H}$ by $ J\chi^2 D_{33} (a_{kj}D_k(a_{lj}D_l H))$ and integrating  in $L^2(\wt{\Omega}\times[0, t])$,  we verify that  
\begin{align} \label{D33-3} 
&\frac{\beta\kappa}{2} \iint_{\widetilde{\Omega}_t} J\chi^2| D_{33} (a_{pj}D_p (a_{lj}D_l H ))|^2 dyds \nonumber
\\
&-\beta \iint_{\widetilde{\Omega}_t} J\chi^2 D_{33}(a_{lj}D_l V^j ) D_{33}   (a_{pj}D_p (a_{lj}D_l H ))  dyds \nonumber \\ 
&= \beta \iint_{\widetilde{\Omega}_t} J\chi^2 D_{33} (a_{kj}D_k(a_{lj}D_l H))D_{33}  [K(H_{t}+ U^j a_{kj} D_k H)   + \mathcal{H} ] dyds\\
&\leq  C \|\theta\|_{L_t^2(H^4)} [\|\rho\|_{L_t^\infty(H^2)} (\|\theta_t\|_{L_t^2(H^2)}  + t^{\frac{1}{4}}\|\theta\|_{L_t^\infty(H^2)} \|\mathbf{u}\|_{L_t^2(H^3)}^{\frac 1 2}\|\mathbf{u}\|_{L_t^\infty(H^1)}^{\frac 1 2}  \nonumber \\
&\q + t^{\frac{1} {8}}\|\mathbf{u}\|_{L_t^\infty(H^1)}\|\theta\|_{L_t^\infty(H^2)} ^{\frac 1 4}\|\theta\|_{L_t^2(H^4)} ^{\frac 3 4} + t^{\frac{1}{8}}\|\mathbf{u}\|_{L_t^2(H^3)}^{\frac 3 8}\|\mathbf{u}\|_{L_t^\infty(H^1)}^{\frac 5 8}  \nonumber \\
&\q \cdot \|\theta\|_{L_t^2(H^4)}^{\frac 3 8}\|\theta \|_{L_t^\infty(H^2)}^{\frac 5 8} ) + \e \|q\|_{L_t^\infty(H^2)}\|\mathbf{u}\|_{L_t^2(H^3)}   \nonumber \\ 
&\q +\|\e^2\mathbf{u}\|_{L_t^\infty(H^3)} ( t^{\frac{1}{16}} \| \mathbf{u}\|_{L_t^2(H^3)}^{\frac 7 8} \| \mathbf{u}\|_{L_t^\infty (H^1)}^{\frac 1 8}  +
t^{\frac{1}{4}} \| \mathbf{u}\|_{L_t^2(H^3)}^{\frac 1 2} \| \mathbf{u}\|_{L_t^\infty (H^1)}^{\frac 1 2} )]. \nonumber 
\end{align}   
Adding \ef{D33-1},  \ef{D33-2} and \ef{D33-3} together,  we can eliminate the singular terms and highest-order terms on the left-hand side. Finally,  by use of the estimates \ef{fh1},  \ef{lem1t2}, \ef{the1}, \ef{the2}, and \ef{lem2},    we have proved \ef{D33}. 
\hfill $\Box$

\vskip3mm  

At this stage,  we require the  estimate for  $\iint_{\widetilde{\Omega}_t} J\chi^2 |D_{3 3\tau}  U |^2 dyds$.
\begin{lem}\label{lemD33t} 
For any $\e\in(0, 1]$ and $t\in [0, T]$,   we have the following estimates 
\begin{equation}\label{D33tau}
\begin{aligned}  
\iint_{\widetilde{\Omega}_t} J\chi^2 |D_{33\tau} U|^2 dyds
\leq C_0(M_0) {\rm exp}\{(t^{\frac{1}{16}}+\e^{\frac 1 4}) C_1(\mathcal{E}(t)) \}  .
\end{aligned}  
\end{equation} 
\end{lem}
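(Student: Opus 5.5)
We follow the half-space model of Step 4 in Section 2: there $\partial_3^2\partial_i\mathbf{u}=\Delta\partial_i\mathbf{u}-\sum_{j=1}^2\partial_j^2\partial_i\mathbf{u}$, and $\Delta\partial_i\mathbf{u}$ was controlled through the Stokes problem for $\partial_i\mathbf{u}$. In local coordinates we treat $W:=\chi D_\tau U$, transported back to $x$-variables as $w:=\chi\,\p_\tau\mathbf{u}$ with $\p_\tau$ the tangential vector field $a_{\tau j}\p_{x_j}$ (extended smoothly into $W\cap\Omega$). Since $\mathbf{u}|_{\p\Omega}=0$ and $\p_\tau$ is tangential, $w|_{\p\Omega}=0$. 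Applying $\chi\p_\tau$ to the Stokes system \eqref{Stokes} gives
\begin{equation}\nonumber
-\mu\Delta w+\nabla\Big(\frac{\chi\p_\tau q}{\e}\Big)=\chi\p_\tau f_s+[\,\chi\p_\tau,-\mu\Delta]\mathbf{u}+\frac{1}{\e}[\nabla,\chi\p_\tau]q,\qquad \dv w=\chi\p_\tau\dv\mathbf{u}+[\dv,\chi\p_\tau]\mathbf{u},\qquad w|_{\p\Omega}=0.
\end{equation}
The $H^2$ Stokes estimate (the first step of Lemma \ref{lemSto}) then gives $\|w\|_{H^2}^2\lesssim\|\text{RHS}\|_{L^2}^2+\|\dv w\|_{H^1}^2$. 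This bounds $D_{33\tau}U$ on $\mathrm{supp}\,\chi$ up to lower-order terms.

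Next we check each right-hand term. $\|\chi\p_\tau f_s\|_{L^2}\lesssim\|\rho\|_{H^2}(\|\mathbf{u}_t\|_{H^1}+\|\mathbf{u}\|_{H^1}^{1/2}\|\mathbf{u}\|_{H^3}^{1/2}\|\mathbf{u}\|_{H^2})+\|\n\dv\mathbf{u}\|_{H^1}$. After time integration this is controlled by Lemma \ref{lemut} (for $\mathbf{u}_t$) and by the $t^{1/4}$-interpolation argument used in Lemma \ref{lemSto}. The commutator $[\chi\p_\tau,\Delta]\mathbf{u}$ involves only second derivatives of $\mathbf{u}$, hence $\|\mathbf{u}\|_{H^2}$, and it is absorbed by the first line of \eqref{lemStokes}. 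The pressure commutator $\e^{-1}[\nabla,\chi\p_\tau]q$ is bounded by $\|\nabla q/\e\|_{L^2}$, again from the first line of \eqref{lemStokes}.

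For $\|\dv w\|_{H^1}$ we need $\|\n\p_\tau\dv\mathbf{u}\|_{L^2}$ near the boundary, i.e.\ $D_{3\tau}$ and $D_{\xi\tau}$ of $a_{lj}D_lU^j$. We write $\dv\mathbf{u}=\dv\mathbf{v}+\frac{\kappa}{2}\Delta\theta$. The pieces $D_{3\tau}(a_{lj}D_lV^j)$ and $D_{3\tau}(a_{pj}D_p(a_{lj}D_lH))$ are controlled in $L^2_t(L^2)$ by Lemma \ref{lemmix}. The purely tangential pieces $D_{\xi\tau}$ are controlled by Lemma \ref{lemtan}, whose left side contains $|D_{y\xi\tau}V|^2$ and $|D_{\xi\tau}(a_{kj}D_k(a_{lj}D_lH))|^2$. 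Commutator terms from the variable coefficients $a_{kj}$ are lower order, bounded by $\|\mathbf{u}\|_{H^2}$ and $\|\theta\|_{H^3}$, and hence by $t$ or interpolation factors times $C_1(\mathcal{E}(t))$.

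Collecting these bounds gives
\begin{equation}\nonumber
\iint_{\widetilde{\Omega}_t}J\chi^2|D_{33\tau}U|^2dyds\lesssim\|\chi\p_\tau\mathbf{u}\|_{L^2_t(H^2)}^2+\text{l.o.t.}\le C_0(M_0){\rm exp}\{(t^{\frac1{16}}+\e^{\frac14})C_1(\mathcal{E}(t))\},
\end{equation}
using \eqref{lemma1}, \eqref{lem2}, \eqref{uh3tan}, \eqref{mix} and the first line of \eqref{lemStokes}. In that first line, the $\delta\|\mathbf{u}\|_{L^2_t(H^3)}^2$ term contributes $\delta\,\mathcal{E}(t)$ with $\delta$ arbitrary, and the remaining terms carry a factor $t$.

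The main obstacle is the singular pressure. We must not differentiate $q/\e$ in the normal direction. The plan avoids this by keeping $\chi\p_\tau q/\e$ as the Stokes pressure unknown, so only the zeroth-order commutator $\e^{-1}[\nabla,\chi\p_\tau]q$ appears, and that term is bounded by the $L^2$ norm of $\nabla q/\e$ already available. A second point to check is that the tangential field $\p_\tau$ preserves the homogeneous Dirichlet condition, which holds because $\p_\tau$ is tangent to $\p\Omega$ and $\mathbf{u}$ vanishes there.
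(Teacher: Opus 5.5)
Your proposal is correct and is essentially the paper's argument. Like you, the paper differentiates the Dirichlet Stokes form \eqref{Stokes} tangentially, takes $(\chi D_\tau U)\circ\Lambda^{-1}$ as velocity and $(\chi D_\tau Q)\circ\Lambda^{-1}/\e$ as pressure (so only the zeroth-order term $\e^{-1}D_kQ$ appears), and controls the $D_{3\tau}$ and $D_{\xi\tau}$ derivatives of $a_{lj}D_lU^j$ through $\dv\mathbf u=\dv\mathbf v+\frac{\kappa}{2}\Delta\theta$ together with Lemmas \ref{lemSto}, \ref{lemtan} and \ref{lemmix}. The only difference is cosmetic: you use the full $H^2$ Stokes bound, whereas the paper bounds $\|\Delta_x[(\chi D_\tau U)\circ\Lambda^{-1}]\|_{L^2}$ and then removes the purely tangential part $D_{\varsigma\xi\tau}U$ via Lemma \ref{lemtan}.

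One slip needs fixing: the viscous part of $\chi\p_\tau f_s$ is $(\mu+\lambda)\chi\p_\tau\n\dv\mathbf u$, and it must be estimated by the same $D_{3\tau}$/$D_{\xi\tau}$ bounds you use for $\|\dv w\|_{H^1}$. It cannot be bounded by the full norm $\|\n\dv\mathbf u\|_{H^1}$ you wrote, because its $D_{33}$ part is controlled in Lemma \ref{lemD33} only modulo $\iint_{\widetilde{\Omega}_t}J\chi^2|D_{33\tau}U|^2dyds$ itself, which would make the argument circular.
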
 
\textbf{Proof}. We apply $\ef{ns1}_2$ to introduce an auxiliary Stokes problem in original coordinates near the boundary   
\begin{equation}   \nonumber
\left\{\begin{aligned} 
& -\mu \Delta_x[ (\chi D_\tau U) \circ \Lambda^{-1} ] +\frac{1}{\e} \n_x[( \chi D_\tau Q  ) \circ \Lambda^{-1} ] = \mathcal{F}_1 &{\rm in}~ W\cap \Omega, \\
& \dv_x [ (\chi D_\tau U) \circ \Lambda^{-1}  ] =\mathcal{G}_1 &{\rm in}~ W\cap \Omega, \\
&(\chi D_\tau U) \circ \Lambda^{-1}=0 &{\rm on}~ \p(W\cap \Omega), 
\end{aligned}\right. 
\end{equation} 
where
\begin{equation} \nonumber
\begin{aligned} 
 \mathcal{F}_1^i&:=- \chi D_\tau[ K (U^i_{t}+U^j a_{kj}D_k U^i )  ] +(\mu+\lambda)\chi D_\tau[  a_{ki}D_k(a_{lj}D_l U^j ) ]\\
 &\q + O(1)( D_l U^i+  D_{kl} U^i+ \frac{1}{\e}  D_k Q), \\
  \mathcal{G}_1&:= O(1)( D_\tau U^j+  D_{k} U^j + D_{k \tau } U^j).
\end{aligned}  
\end{equation}  
According to the regularity theory of the Stokes problem,  one has
\be \nonumber
\om |\Delta_x[ (\chi D_\tau U) \circ \Lambda^{-1} ]|^2 dx\leq C( \| \mathcal{F}_1\|^2_{L^2(W\cap\Omega)} +\| \mathcal{G}_1\|^2_{H^1(W\cap\Omega)} ).
\ee
Employing \ef{a3j},  we notice that 
\be\nonumber
D_{33\tau}U = \sum_{k, l=1}^3 \bigg(\sum_{j=1}^3 a_{kj}a_{lj} \bigg) D_{kl\tau}U- \sum_{1\leq k, l\leq  2} \sum_{j=1}^3 a_{kj}a_{lj}  D_{kl\tau}U, 
\ee 
as well as
\begin{equation} \nonumber
\begin{aligned} 
& \om |\Delta_x[ (\chi D_\tau U) \circ \Lambda^{-1} ]|^2 dx \\
& = \int_{\tilde{\Omega}} J \bigg| \sum_{j=1}^3 \sum_{k=1}^3 a_{kj} D_k \bigg( \sum_{l=1}^3 a_{lj}D_l(\chi D_\tau U) \bigg )  \bigg|^2 dy\\
&= \int_{\tilde{\Omega}} J \chi^2 \bigg| \sum_{j, k, l=1}^3   a_{kj}  a_{lj}  D_{kl\tau} U     \bigg|^2 dy +O(1) \int_{\tilde{\Omega}} (  |D_\tau U|^2+|D_{y\tau} U|^2  ) dy.
\end{aligned}  
\end{equation} 
Therefore,  we deduce 
\begin{equation} \nonumber
\begin{aligned} 
 \int_{\tilde{\Omega}} J \chi^2  |D_{33\tau} U |^2 dy  &\leq  C( \| \mathcal{F}_1\|^2_{L^2(W\cap\Omega)} +\| \mathcal{G}_1\|^2_{H^1(W\cap\Omega)} )\\ 
 &\q +\int_{\tilde{\Omega}} J \chi^2  |D_{\varsigma \xi\tau} U |^2 dy +O(1) \int_{\tilde{\Omega}} (  |D_\tau U|^2+|D_{y\tau} U|^2  ) dy,
\end{aligned}  
\end{equation} 
where $ \varsigma,  \xi, \tau=1, 2$.  In view of the definitions of $\mathcal{F}_1$ and $\mathcal{G}_1$,  one shows 
\begin{equation} \nonumber
\begin{aligned} 
\| \mathcal{F}_1\|^2_{L^2(W\cap\Omega)} 
&\lesssim  \int_{\tilde{\Omega}}  J\chi^2 |D_{\tau k}(a_{lj}D_l U^j )|^2 dy+ \left\|\frac{\n q}{\e} \right\|_{L^2}^2 + \|\mathbf{u}\|_{H^1}^2 \\
& \q + \|\rho\|_{H^2}^2  (\|\mathbf{u}_t \|_{H^1} ^2   + \|\mathbf{u}\|_{H^1}^{\frac 5 2} \|\mathbf{u}\|_{H^3}^{\frac 3 2}  )   + \|\mathbf{u}\|_{H^1} \|\mathbf{u}\|_{H^3}   , 
\end{aligned}  
\end{equation}  
and 
\be\nonumber 
\n_x \mathcal{G}_1(\Lambda^{-1}(x))= O(1)( D_l U^j+  D_{kl} U^j + \chi  D_{k \tau }(a_{lj} D_l U^j )),  
\ee 
which gives that   
 \be\nonumber
\| \mathcal{G}_1\|_{H^1(W\cap\Omega)}^2 \leq C   \int_{\tilde{\Omega}}  J\chi^2 |D_{\tau k}(a_{lj}D_l U^j )|^2 dy + \|\mathbf{u}\|_{H^1} \|\mathbf{u}\|_{H^3} .
\ee   
Thus,  from Lemmas \ref{lemSto},  \ref{lemtan}, and \ref{lemmix},  we obtain \ef{D33tau}.
\hfill $\Box$

\vskip3mm  

\noindent \textbf{Proof of Proposition  \ref{lemprop}}. Combining Lemmas \ref{lemrho}-\ref{lemD33t}  and Remark \ref{rmk}, we consequently obtain the energy inequality \ef{prop}. \hfill $\Box$

\vskip3mm 

At last, we use Proposition \ref{lemprop} to  prove Theorem \ref{uethm}.  To justify the uniform estimates,  we restore the superscript  $\e$ in what follows.

\noindent  \textbf{Proof of Theorem \ref{uethm}}. 
 Notice that  
 \be\label{Theta} \Theta^\e=\Theta_0^\e+\int_0^t \Theta_s^\e ds \geq A- \sqrt{t} C(\Omega)\| \Theta_t^\e\|_{L_t^2(H^2)}
 \geq   A- C(\Omega)\sqrt{t \mathcal{E}^\e (t)}  .
\ee
Thus, to prove \ef{bound}, it suffices to prove that there exist $\e_0\in (0, 1]$ and $T \in (0, 1]$ such that $\mathcal{E}^\e(t) < C$. 
Let $ T^\epsilon < +\infty $ be the maximal existence time  for the strong solution given in Theorem \ref{le}.  From the results of Proposition \ref{lemprop}, it follows that for any $ 0 \leq t \leq \min\{T^\epsilon, 1\} $, 
\be \label{ue1}
\mathcal{E}^\epsilon(t) \leq C_0(M_0^\epsilon) \exp \{(t^{\frac{1}{16}} + \epsilon^{\frac{1}{4}})C_1(\mathcal{E}^\epsilon(t)) \},
\ee  
where $ M_0^\epsilon \leq D_0 $ for $  \epsilon \in(0, 1] $. In the sequel, we choose a finite $ D > C_0(D_0) $, then select $ \epsilon_0 \in (0, 1] $ and $ T_1 \leq 1 $ sufficiently small such that
\be\label{ue2}
C_0(D_0) \exp \{(T_1^{\frac{1}{16}} + \epsilon_0^{\frac{1}{4}})C_1(D)\} < D\q {\rm and}\q  C(\Omega)\sqrt{T_1 D}<\frac A4.
\ee
Thereby, the estimate \ef{Theta} gives 
$$\Theta^\e\geq A-C(\Omega) \sqrt{t \mathcal{E}^\e(t)}\geq A-C(\Omega) \sqrt{T_1 D} >\frac{3A}{4},$$ which recovers the assumption that  $\Theta^\e\geq \frac{A}{2}$. 
Let $ t < \min\{T^\epsilon, T_1\} $.
We claim that $ \mathcal{E}^\epsilon(t) \neq D $. Otherwise, we obtain from \ef{ue1} and  \ef{ue2} that 
$$ 
D=\mathcal{E}^\epsilon(t)\leq C_0(M^\e_0) \exp \{(t^{\frac{1}{16}} + \epsilon^{\frac{1}{4}})C_1(\mathcal{E}^\e (t))\} < D,
$$
which is a contradiction. 
Moreover, we assume without restriction that $ D_0 < D $, so that $ M_0^\epsilon < D $.
 Since  $ \mathcal{E}^\epsilon(t) $ is continuous, we have
\be\label{ue3}
\mathcal{E}^\epsilon(t) < D \quad \text{for} \quad t < \min\{T^\epsilon, T_1\} \quad \text{and} \quad \epsilon \in (0, \epsilon_0]. 
\ee

Therefore, we conclude that $ T^\epsilon > T_1 $ for $ \epsilon \in (0, \epsilon_0] $.  Otherwise, using the  uniform boundedness in \ef{ue3} and applying Theorem \ref{le} repeatedly,  we can extend the time interval of existence to $[0, T_1]$, which contradicts   the maximality of $ T^\epsilon $. 
\hfill$\Box$

\vskip3mm
 \noindent \textbf{Acknowledgments}. 
This work was partially supported  by the National Natural Science Foundation of China under Grant Nos. 12131007 and 12471216.

\vskip3mm
\noindent \textbf{Conflict of Interest.} The authors declare that they have no conflict of interest. 

\vskip3mm
\noindent \textbf{Data availability.} No data were  used   in the article.

\end{document}